\title{Twisted Hilbert modular surfaces, arithmetic intersections and the Jacquet-Langlands correspondence}
\date{}
\newcommand{\Abf}{\mathbf A}
\newcommand{\Gbf}{\mathbf G}
\newcommand{\Vbf}{\mathbf V}
\newcommand{\zbf}{\mathbf z}
\newcommand{\Fcal}{\mathcal F}
\newcommand{\Lcal}{\mathcal L}
\newcommand{\Mcal}{\mathcal M}
\newcommand{\Ocal}{\mathcal O}
\newcommand{\bbU}{\mathbb U}
\newcommand{\Xcal}{\mathcal{X}}
\newcommand{\Ecal}{\mathcal{E}}
\newcommand{\Zed}{\mathfrak{Z}}
\newcommand{\Lbb}{\mathbb L}
\newcommand{\Dbb}{\mathbb D}
\newcommand{\Kstar}{\mathbb K}
\newcommand{\KstarN}{{\Kstar(N)}}
\newcommand{\adeg}{\widehat{\deg}\;}
\DeclareMathOperator{\Q}{\mathbb{Q}}
\DeclareMathOperator{\Z}{\mathbb{Z}}
\DeclareMathOperator{\C}{\mathbb{C}}
\DeclareMathOperator{\A}{\mathbb{A}}
\DeclareMathOperator{\R}{\mathbb{R}}
\DeclareMathOperator\Aut{Aut}
\DeclareMathOperator\End{End}
\DeclareMathOperator\Hom{Hom}
\DeclareMathOperator\Spec{Spec}
\DeclareMathOperator{\cpd}{\overline{\partial}}
\DeclareMathOperator{\ach}{\widehat{ch}}
\DeclareMathOperator{\atd}{\widehat{td}}
\DeclareMathOperator{\ch}{ch}
\DeclareMathOperator{\td}{td}
\DeclareMathOperator{\Real}{\texttt{Re}}
\DeclareMathOperator{\Imag}{\texttt{Im}}
\DeclareMathOperator{\Gm}{\mathbb{G}}
\newcommand{\ChernHatOne}{\widehat{c_1}}
\newcommand{\ChernHatTwo}{\widehat{c_2}}
\newcommand{\ac}{\widehat{c}}
\newcommand{\ov}{\overline}
\newcommand{\Scal}{\mathcal{S}}
\newcommand{\OF}{\mathcal O_F}
\newcommand{\dF}{\mathbf d_F}
\newcommand{\kbf}{\mathbf k}
\newcommand{\cz}{\overline{z}}
\DeclareMathOperator{\Nrd}{Nrd}
\DeclareMathOperator{\Trd}{Trd}
\DeclareMathOperator{\vol}{vol}
\DeclareMathOperator{\Ta}{\mathtt{Ta}}
\DeclareMathOperator{\Lie}{\mathsf{Lie}}
\DeclareMathOperator{\Id}{\mathtt{Id}}
\DeclareMathOperator\Fil{\mathtt{Fil}}
\DeclareMathOperator{\tr}{tr}
\newcommand{\Ch}[1]{\widehat{\mathsf{CH}}{}^{\, #1}}
\newcommand{\la}{\langle} 	
\newcommand{\ra}{\rangle} 	
\newcommand{\lie}[1]{\mathfrak{#1} } 
\newcommand{\isomto}{\overset{\sim}{\longrightarrow}} 	
\newcommand{\taut}{\mathsf{taut}}
\newcommand{ \refThm}[1]{\hyperref[#1]{Theorem \ref{#1}}}
\newcommand{ \refSec}[1]{\hyperref[#1]{Section \ref{#1}}}
\newcommand{ \refProp}[1]{\hyperref[#1]{Proposition \ref{#1}}}
\newcommand{ \refLemma}[1]{\hyperref[#1]{Lemma \ref{#1}}}
\newcommand{\refRemark}[1]{\hyperref[#1]{Remark \ref{#1}}}
\newcommand{\refDef}[1]{\hyperref[#1]{Definition \ref{#1}}}
\newcommand{\refCor}[1]{\hyperref[#1]{Corollary \ref{#1}}}
\theoremstyle{plain}
\newtheorem{theorem}{Theorem}[section]
\newtheorem*{theorem*}{Theorem}
\newtheorem*{maintheorem*}{Main Theorem}
\newtheorem{lemma}[theorem]{Lemma}
\newtheorem{proposition}[theorem]{Proposition}
\newtheorem{definition}[theorem]{Definition}
\newtheorem{corollary}[theorem]{Corollary}
\newtheorem*{claim*}{Claim}
\numberwithin{equation}{section}
\theoremstyle{remark}
\newtheorem{remark}[theorem]{Remark}
\begin{document}

\author{Gerard Freixas i Montplet\thanks{Centre National de la Recherche Scientifique (CNRS), France. \url{gerard.freixas@imj-prg.fr}} \  and Siddarth Sankaran\thanks{Department of Mathematics, University of Manitoba, Canada. \url{siddarth.sankaran@umanitoba.ca}}}

\maketitle

\begin{abstract}
We study arithmetic intersections on quaternionic Hilbert modular surfaces and Shimura curves over a real quadratic field. 
Our first main result is the determination of the degree of the top arithmetic Todd class of an arithmetic twisted Hilbert modular surface. This quantity is then related to the arithmetic volume of a Shimura curve, via the arithmetic Grothendieck-Riemann-Roch theorem and the Jacquet-Langlands correspondence.
\end{abstract}

\tableofcontents

\section{Introduction}
The aim of this paper is to compare arithmetic intersection numbers on Shimura varieties attached to inner forms of $GL_2$ over a real quadratic field $F$. 

To illustrate our approach, first consider the arithmetic volume of  a Shimura variety attached to an inner form of $GL_2$ over $\Q$. In the case of a modular curve, this volume was computed by Bost and K\"uhn, see  \cite{Kuhn-thesis}, while the case of Shimura curves was handled by Kudla-Rapoport-Yang \cite{KRY-book}; comparing these formulas, one finds that up to some contributions at primes of bad reduction, they are essentially identical. In \cite{Freixas-Q}, the first author gave an explanation for this coincidence via his arithmetic Grothendieck-Riemann-Roch theorem for pointed stable curves \cite{Freixas:ARR}, which expresses the arithmetic volumes in terms of holomorphic modular forms and Maa\ss\ forms. The automorphic contributions can be matched in the two cases via the Jacquet-Langlands correspondence, yielding an identity of arithmetic volumes. 

In the present work, we begin with a division quaternion algebra $B$ over $F$ that is split at both infinite places, and whose  discriminant 
\[
D_B \ := \ (p_1 \cdots p_r) \cdot \OF
\]
is a nonempty product of split rational primes. Let $\Gbf$ denote the algebraic group over $\Q$ such that
\begin{equation}\label{def:G}
	\Gbf(A) = \{ b \in B \otimes_{\Q} A \ | \ \Nrd(b) \in A^{\times}  \}
\end{equation}
for any $\Q$-algebra $A$; here $\Nrd$ is the reduced norm on $B$. Given a sufficiently small compact open subgroup $K \subset \Gbf(\A_{\Q,f})$, we obtain the \emph{twisted Hilbert modular surface}
\[
M \ := \  \Gbf(\Q) \big\backslash X \times \Gbf(\A_{\Q,f}) / K 
\]
where $X := \{ (z_1, z_2) \in \C^2  | \Imag(z_1) \Imag(z_2) > 0 \}$. This is a Shimura variety with a proper canonical model over $\Q$. Moreover, it admits a PEL interpretation, as described by Kudla-Rapoport \cite{KR-HB}, giving rise to a smooth, proper integral model $\Mcal$ over $\Spec \Z[1/N]$ for a suitable integer $N$, cf.\ Section \ref{sec:HMSIntModel} below.

We may also construct a \emph{Shimura curve} as follows: fix an embedding $v_1 \colon F \hookrightarrow \R$ and an inert prime $\ell$ that is relatively prime to $D_B$. Let $B_1$ denote the quaternion algebra over $F$ whose discriminant is $D_{B_1} = \ell D_B$, and is ramified at $v_1$, and define
\[
	\Gbf_1 \ := \ \mathrm{Res}_{F/\Q} (B_1^{\times})
\]
viewed as an algebraic group over $\Q$. For a fixed sufficiently small $K_1 \subset \Gbf_1(\A_{\Q,f})$, we have the \emph{Shimura curve}
\[
 S_1  \ = \  \Gbf_1(\Q) \big\backslash \lie H^{\pm} \times \Gbf_1(\A_{\Q,f}) / K_1.
\]
This Shimura variety, which is not of PEL type, admits a proper canonical model over $F$. Although the choice of an integral model will not play a role in the present work, for concreteness fix a model $\Scal_1$ that is smooth and proper over $\Spec \OF[1/N]$. 


The arithmetic Grothendieck-Riemann-Roch theorem, due to Gillet-Soul\'e \cite{Gillet-Soule:ARR}, relates the coherent cohomology of a hermitian vector bundle on an arithmetic variety to certain intersections of arithmetic characteristic classes. For our particular Shimura varieties, and applied to the trivial bundle, it takes the following form: for the twisted Hilbert modular surface $\Mcal$, which is an arithmetic three-fold, the formula reads
\begin{align}
	\widehat{\deg} (\det H^{\bullet}(\Mcal,& \Ocal_{\Mcal})_Q) \  \notag \\ 
  \label{eqn:IntroARRHMS}	&\equiv \ -\frac{1}{24} \adeg\ac_{1}(\overline{\Omega_{\Mcal}})\ac_{2}(\overline{\Omega_{\Mcal}})-\frac{1}{4}\left[2\zeta'(-1)+\zeta(-1) \right] 
  \deg \left( c_{1}(\Omega_{\Mcal(\C)})^{2}\right)
   \ \in \ \R_N;
\end{align}
here $\overline \Omega_{\Mcal}$ is the cotangent bundle $\Omega_{\Mcal} = \Omega_{\Mcal / \Z[1/N]}$ equipped with an invariant metric (canonical up to scaling), and the cohomology on the left hand side is equipped with the \emph{Quillen metric}  (see Section \ref{subsec:hol-an-torsion} below) which incorporates both the Petersson norm of twisted (i.e.\ quaternionic) Hilbert modular forms and the holomorphic analytic torsion of $\Mcal(\C)$. Furthermore, the equation \eqref{eqn:IntroARRHMS} takes place in the group 
\[ \R_N := \R \Big/ \bigoplus_{p|N} \Q \cdot \log p. \]

Similarly, for the Shimura curve $\Scal_1$, we have
\begin{align}
	\widehat{\deg}( \det H^{\bullet}(\Scal_1,& \Ocal_{\Scal_1})_{Q}) \notag \\
	\label{eqn:IntroARRSC} & \equiv  \ \frac{1}{12}\adeg\ac_{1}(\overline{\Omega}_{\Scal_{1}})^2 \ +\frac{1}{2}\left[2\zeta'(-1)+\zeta(-1)\right]  \, \deg c_{1}(\Omega_{\Scal_{1}(\C)} ) \ \in \ \R_{N}.
\end{align}

There are two main themes in this paper. The first is an explicit evaluation of the right hand side of \eqref{eqn:IntroARRHMS}, whose rough form is in the spirit of the conjectures of Maillot-R\"ossler \cite{MR}.
\begin{theorem} \label{thm:introc1c2}
	\[  \ \frac{ \widehat \deg \ \ChernHatOne( \overline \Omega_{\Mcal}) \cdot \ChernHatTwo( \overline \Omega_{\Mcal})} {\deg \left( c_1(\Omega_{\Mcal(\C)})^2 \right)} \ \equiv\   -4 \log \pi - 2\gamma + 1  +  \frac{\zeta'_F(2) }{\zeta_F(2)}   \ \in \ \R_N. \]
Here $\gamma = - \Gamma'(1)$ is the Euler-Mascheroni constant.
\end{theorem}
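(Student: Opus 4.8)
The argument has three parts: an algebraic reduction, the evaluation of an arithmetic characteristic number, and special-value bookkeeping.

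\emph{Reduction via Kodaira--Spencer.} The Kodaira--Spencer isomorphism for the universal abelian scheme $\pi\colon \mathcal{A}\to\Mcal$ identifies $\Omega_{\Mcal}$ with $\mathrm{Sym}^2$ of the Hodge bundle $\underline\omega=\pi_*\Omega_{\mathcal{A}/\Mcal}$ compatibly with the $\OF$-action; since $\OF\otimes\Z[1/N]$ acts through its two characters, this splits $\Omega_{\Mcal}\cong\mathcal{L}_1^{\otimes 2}\oplus\mathcal{L}_2^{\otimes 2}$ into the two weight-$2$ automorphic line bundles. The invariant metric respects this decomposition orthogonally, so with $\widehat\mu_i:=\ChernHatOne(\overline{\mathcal{L}_i})$ one gets $\ChernHatOne(\overline\Omega_{\Mcal})=2(\widehat\mu_1+\widehat\mu_2)$ and $\ChernHatTwo(\overline\Omega_{\Mcal})=4\widehat\mu_1\widehat\mu_2$, hence
\[
\adeg\ChernHatOne(\overline\Omega_{\Mcal})\ChernHatTwo(\overline\Omega_{\Mcal})=8\,\adeg(\widehat\mu_1^2\widehat\mu_2+\widehat\mu_1\widehat\mu_2^2)=16\,\adeg(\widehat\mu_1^2\widehat\mu_2),
\]
the last step using the symmetry $z_1\leftrightarrow z_2$, realized by an automorphism of the integral model after inverting $N$ that interchanges the two archimedean places of $F$. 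Likewise $c_1(\mathcal{L}_i)^2=0$ as a $(2,2)$-form (it is pulled back from one $\mathfrak{H}$-factor), so $\deg c_1(\Omega_{\Mcal(\C)})^2=8\deg(\mu_1\mu_2)$. Thus the theorem is equivalent to the identity $\adeg(\widehat\mu_1^2\widehat\mu_2)/\deg(\mu_1\mu_2)\equiv -2\log\pi-\gamma+\tfrac12+\tfrac12\,\zeta_F'(2)/\zeta_F(2)$ in $\R_N$.

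\emph{Evaluation of $\adeg(\widehat\mu_1^2\widehat\mu_2)$.} I would proceed by a second application of arithmetic Grothendieck--Riemann--Roch, now to $\pi\colon\mathcal{A}\to\Mcal$ with the trivial sheaf: since $R\pi_*\Ocal_{\mathcal{A}}=\bigwedge^\bullet\Lie(\mathcal{A})$, its determinant is a power of $\widehat\mu_1+\widehat\mu_2$, and the Bismut--K\"ohler analytic torsion form of an abelian scheme is known in closed form; feeding in the Kodaira--Spencer identification, this expresses $\adeg(\widehat\mu_1^2\widehat\mu_2)$ through the Faltings height of the universal family, a generalized Chowla--Selberg / Kronecker limit quantity equal to $\zeta_F'(-1)/\zeta_F(-1)$ together with explicit $\Gamma$- and $\pi$-factors. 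Equivalently, one may run the computation through \eqref{eqn:IntroARRHMS} itself: identify $H^\bullet(\Mcal,\Ocal_{\Mcal})$ with spaces of quaternionic Hilbert modular forms ($H^1$ with weights $(2,0),(0,2)$; $H^2\cong H^0(\Omega^2_{\Mcal})^\vee$ with the dual of weight-$(2,2)$ cusp forms), note that the Quillen metric is the Petersson ($L^2$) metric times the holomorphic analytic torsion $T(\Mcal(\C))$ of the invariant metric, and compute $\widehat{\deg}(\det H^\bullet(\Mcal,\Ocal_{\Mcal})_Q)$. Both routes require the same inputs, and the main obstacle is precisely their assembly: controlling $T(\Mcal(\C))$ (equivalently, the generalized Chowla--Selberg evaluation) and pinning down all the archimedean normalizations, while checking that contributions from primes of bad reduction and from $\Z[1/N]$-lattice discrepancies are $\Q$-linear combinations of $\log p$ with $p\mid N$, hence vanish in $\R_N$.

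\emph{Geometric degree and special values.} The denominator $\deg(\mu_1\mu_2)$ is an automorphic volume, equal up to a rational power of $\pi$ to $\zeta_F(-1)$ by Siegel's volume formula (equivalently, to $\chi(\Mcal(\C))$ by Hirzebruch proportionality), so after dividing, the ratio in the first paragraph becomes $\zeta_F'(-1)/\zeta_F(-1)$ plus archimedean constants. Finally, passing from $s=-1$ to $s=2$ via the functional equation — the completed function $\dF^{s/2}(\pi^{-s/2}\Gamma(s/2))^2\zeta_F(s)$ is invariant under $s\mapsto 1-s$ — together with $\psi(1)=-\gamma$, $\psi(-\tfrac12)=2-\gamma-2\log 2$, $\Gamma(-\tfrac12)^2=4\pi$, and the Riemann functional equation to handle the $2\zeta'(-1)+\zeta(-1)$ occurring in \eqref{eqn:IntroARRHMS}, one checks that the $\log\dF$ and $\log 2$ contributions cancel and that the remaining combination is exactly $-4\log\pi-2\gamma+1+\zeta_F'(2)/\zeta_F(2)$ in $\R_N$, as claimed.
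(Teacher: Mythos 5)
Your first step (the Kodaira--Spencer splitting $\Omega_{\Mcal}\simeq\Lcal_1\oplus\Lcal_2$ into line bundles attached to the two embeddings of $F$, orthogonal for the invariant metric, with $c_1(\Lcal_i)^2=0$ as a form) is exactly how the paper begins, and the symmetry reduction to a single mixed term $\adeg\big(\ac_1(\ov\Lcal_1)^2\ac_1(\ov\Lcal_2)\big)$ is harmless. But from that point on there is a genuine gap: you never actually evaluate any arithmetic intersection number, and the route you sketch does not plausibly produce one. A second application of arithmetic Grothendieck--Riemann--Roch to the universal abelian scheme $\mathcal A\to\Mcal$ only yields an identity in $\Ch{1}(\Mcal)$ for $\ac_1$ of the determinant of cohomology, which is a multiple of $\ac_1(\ov\Lcal_1)+\ac_1(\ov\Lcal_2)$; Faltings-height/Chowla--Selberg/Bismut--K\"ohler inputs of the kind you invoke control self-intersections of that \emph{total} Hodge or tautological class (this is precisely what H\"ormann's theorem, used in the paper, gives for $\adeg\ac_1(\ov{\omega^{\taut}})^3$), not the mixed class $\ac_1(\ov\Lcal_1)^2\ac_1(\ov\Lcal_2)$ you have reduced to. Your alternative route ``through \eqref{eqn:IntroARRHMS} itself'' is circular for the same reason: it requires exactly the Petersson-norm and analytic-torsion information you concede you cannot assemble (``the main obstacle is precisely their assembly''), so the heart of the theorem is left unproved.

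The paper's actual mechanism for passing from the mixed intersection to a computable pure one is the ingredient missing from your outline. One embeds $\Mcal$ into a twisted Siegel threefold $\widetilde\Mcal$ attached to $\Lambda=L\oplus\langle\mathbb 1\rangle$ and chooses, via Lemma \ref{lem:modForm}, a weakly holomorphic input form $f$ whose negative Fourier coefficients are supported on $m\not\equiv\square\pmod p$; the Borcherds lift $\Psi(f)$ pulls back to a nonzero rational section of a power of $\omega^{\taut}$, hence represents $\ac_1(\ov\Omega)$ up to a constant, and its divisor lies in special cycles on which Proposition \ref{prop:SCBundle} provides an isomorphism $\pi_i^*\Lcal_1\simeq\pi_i^*\Lcal_2$ altering metrics only by $\Z_{(p)}^{\times}$-constants. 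This is what yields $4\,\adeg(\ac_1\ac_2)\equiv\adeg(\ac_1^3)+\tfrac1r\sum_v\int(-\log\|\mathbf j^*\Psi\|_P^2)\,c_1(\ov\Omega)^{\wedge 2}$ in $\R_{(p)}$; then $\adeg(\ac_1^3)$ comes from H\"ormann's arithmetic volume formula (already expressed at $s=2$, so no functional-equation gymnastics at $s=-1$ are needed), and the archimedean integral is computed \`a la Kudla via the factorization $j^*\Theta_\Lambda=\Theta_L\otimes\Theta^{Jac}$, the Siegel--Weil formula, and the Kudla--Yang analysis of Eisenstein Fourier coefficients, producing the constants $-4\log\pi-2\gamma+1+\zeta_F'(2)/\zeta_F(2)$. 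Without a substitute for this divisor-level comparison of $\Lcal_1$ and $\Lcal_2$ (or some other way to access the mixed class), your proposal does not close.
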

The proof uses the fortunate coincidence that $\Mcal$ can be simultaneously realized as a PEL Shimura variety and a $GSpin$ Shimura variety; the first interpretation allows us to decompose the cotangent bundle in a convenient way, while the second allows us to express $\ac_1(\ov\Omega_{\Mcal})$ in terms of Borcherds forms.  One technical obstacle is that the absence of cusps on $\Mcal$ renders the arithmetic properties of Borcherds forms less accessible; to circumvent this, we embed $\Mcal$ in an arithmetic twisted Siegel modular threefold and work there. In the end, the proof combines a formula of H\"ormann \cite{Hor} for the arithmetic volume of $\Mcal$, a formula for the integral of a Borcherds form in the style of Kudla \cite{Kudla-IBF}, and work of Kudla-Yang \cite{KY} on the Fourier coefficients of Eisenstein series for $GL_2$.

The second main result in this paper is a comparison of the left hand sides of the arithmetic Grothendieck-Riemann-Roch formulas \eqref{eqn:IntroARRHMS} and \eqref{eqn:IntroARRSC}.
\begin{theorem} \label{thm:intro-main-comparison}
\begin{equation}\label{eq:intro-main-comparison}
	\frac{\adeg\ac_{1}(\ov{\Omega}_{\Mcal})\ac_{2}(\ov{\Omega}_{\Mcal})}{\deg \left( c_{1}(\Omega_{\Mcal(\C)})^{2}\right)} \ \equiv \ 	\frac{\adeg \left(\ac_{1}(\ov{\Omega}_{S_{1}})^{2}\right)}{\deg c_{1}(\Omega_{\Scal_{1}(\C)})}\quad\text{in }\R/\log |\ov{\Q}^{\times}|,
\end{equation}
where $\ov{\Q}$ is the algebraic closure of $\Q$ in $\C$.   As a corollary,
	\begin{equation} \label{eq:intro-main-scvol}
	 \frac{	\adeg \left( \ac_1(\overline \Omega_{\Scal_1})^2 \right) }{ \deg c_1(\Omega_{\Scal_{1}(\C)})  }\ \equiv  \  -4 \log \pi - 2\gamma + 1  +  \frac{\zeta'_F(2) }{\zeta_F(2)}  \in \R / \log|\ov\Q^{\times}|.
	\end{equation}
\end{theorem}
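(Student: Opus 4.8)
The plan is to deduce Theorem~\ref{thm:intro-main-comparison} by comparing the two arithmetic Grothendieck-Riemann-Roch formulas \eqref{eqn:IntroARRHMS} and \eqref{eqn:IntroARRSC}, using the Jacquet-Langlands correspondence to identify the analytically defined quantities on their left-hand sides. First I would rewrite \eqref{eqn:IntroARRHMS} and \eqref{eqn:IntroARRSC} so as to isolate the normalized arithmetic intersection numbers appearing in \eqref{eq:intro-main-comparison}. Dividing \eqref{eqn:IntroARRHMS} by $\deg(c_1(\Omega_{\Mcal(\C)})^2)$ and \eqref{eqn:IntroARRSC} by $\deg c_1(\Omega_{\Scal_1(\C)})$, and subtracting the appropriate multiples, the transcendental zeta-value terms $[2\zeta'(-1)+\zeta(-1)]$ enter with coefficients $-1/4$ and $1/2$ respectively; the combination $\tfrac14 \cdot \frac{\adeg \ac_1 \ac_2}{\deg c_1^2}$ versus $\tfrac1{12}\cdot\frac{\adeg \ac_1^2}{\deg c_1}$ must therefore be shown to differ by a multiple of the Quillen-metric terms $\widehat\deg(\det H^\bullet(\cdot,\Ocal))$. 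So the real content is an equality (in $\R/\log|\ov\Q^\times|$, which contains $\R_N$) of the two normalized Quillen degrees, and then Theorem~\ref{thm:introc1c2} supplies \eqref{eq:intro-main-scvol} by direct substitution.

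Next I would unwind the Quillen metric into its two constituents, following Section~\ref{subsec:hol-an-torsion}: the $L^2$ (Petersson) metric on $\det H^\bullet(\cdot,\Ocal)$ and the holomorphic analytic torsion. On the automorphic side, $H^\bullet(\Mcal,\Ocal_{\Mcal})$ and $H^\bullet(\Scal_1,\Ocal_{\Scal_1})$ are governed by spaces of (quaternionic) Hilbert modular forms and of modular forms on the Shimura curve; the Jacquet-Langlands correspondence between automorphic representations of $\Gbf$ and $\Gbf_1$ — already invoked in \cite{Freixas-Q} in the rational case — matches these spaces together with their Petersson inner products up to an explicit rational factor coming from the local behaviour at $\ell$ and at the ramified primes $p_1,\dots,p_r$, hence up to $\log|\ov\Q^\times|$. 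For the analytic torsion contributions one needs that the torsion of $\Mcal(\C)$ and that of $\Scal_1(\C)$ enter \eqref{eqn:IntroARRHMS} and \eqref{eqn:IntroARRSC} in a way that is itself controlled by modular/Maa\ss\ forms via the arithmetic Riemann-Roch theorem for the relevant base, so that the Jacquet-Langlands identification of spectral data forces the torsion terms to agree modulo $\log|\ov\Q^\times|$; this is the step that mirrors the "matching of automorphic contributions" in \cite{Freixas-Q}, now in the setting of a surface fibered over a curve rather than a curve over $\Spec\Z$.

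I would organize the bookkeeping by working place-by-place: away from $N\ell D_B$ the identification is an isomorphism of hermitian structures, while at the finitely many bad primes the discrepancy is a product of local volume/orbital-integral factors which are rational powers of those primes, hence trivial in $\R/\log|\ov\Q^\times|$ (this is precisely why the statement is only modulo $\log|\ov\Q^\times|$ rather than an exact equality). Passing from $F$ to $\Q$ as the base of the Shimura curve $\Scal_1$ requires a descent/transfer argument for the arithmetic degree, which should only affect the answer by the rational constant $[F:\Q]=2$ and discriminant factors $\log \dF$ — again harmless modulo $\log|\ov\Q^\times|$. Finally, \eqref{eq:intro-main-scvol} follows by substituting the value from Theorem~\ref{thm:introc1c2} into \eqref{eq:intro-main-comparison}.

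The main obstacle I anticipate is the analytic torsion comparison: holomorphic analytic torsion is not multiplicative or functorial in any naive way, so identifying the torsion of the twisted Hilbert modular surface with that of the Shimura curve cannot be done directly. The strategy to overcome this is the same one that makes the theorem possible at all — never compare torsions directly, but rather feed both into their respective arithmetic Riemann-Roch formulas and compare the \emph{combination} (Quillen degree $=$ intersection numbers $+$ torsion) after the Jacquet-Langlands correspondence has matched the spectral/automorphic input; the torsion then cancels against a corresponding spectral quantity, leaving only the arithmetic intersection numbers and rational constants. Making this cancellation precise, and controlling the archimedean Petersson-norm comparison constants well enough to see they are algebraic, will be the technical heart of the argument.
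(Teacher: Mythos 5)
Your overall shape is right (feed both sides into arithmetic GRR, match the Quillen degrees via Jacquet--Langlands and Shimura's period relations, accept an ambiguity in $\log|\ov\Q^{\times}|$), but there is a genuine gap: as written, your matching of the automorphic spectra would fail. The Jacquet--Langlands correspondence from $\Gbf_1$ (quaternion algebra ramified at $v_1$ and $\ell$) to $\Gbf^{\star}$ is an \emph{injection} whose image consists only of representations that are special/discrete at $\ell$, i.e.\ with conductor divisible by $\ell$. The ``$\ell$-old'' part of the spectrum of $\Mcal$ --- representations unramified at $\ell$ --- has no counterpart on $\Scal_1$ whatsoever, and its contribution to the analytic torsion and to the $L^2$ degrees is transcendental; it is not a ``local volume factor at bad primes'' that dies in $\R/\log|\ov\Q^{\times}|$. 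The paper's fix is the $\ell$-new linear combination: one takes $\Kstar$ Iwahori at $\ell$, lets $\Kstar'\supset\Kstar$ be maximal at $\ell$, and defines $\adeg^{\ell-\mathrm{new}}:=\adeg_{\Kstar}-2\,\adeg_{\Kstar'}$ (the factor $2$ coming from Casselman's dimension count of Iwahori-fixed vectors in unramified representations), and likewise $\deg^{\ell-\mathrm{new}}c_1(\Omega)^2=2\deg c_1(\Omega_{\Scal_1(\C)})$. Only after this subtraction does the spectrum match that of $\Scal_1(\C)=S_1\sqcup S_2$, multiplicities included. Your proposal has no mechanism for removing the $\ell$-old contribution, so the claimed equality of normalized Quillen degrees is false as stated.

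Two further points where your route diverges from what actually works. First, the analytic torsion comparison is \emph{not} a cancellation inside GRR: the paper proves directly (Theorem \ref{thm:JL-an-torsion}) that $T^{\ell-\mathrm{new}}(\Mcal^{\star}_{\Kstar}(\C),\ov\Ocal)=-T(\Scal_1(\C),\ov\Ocal)$, by showing via the representation-theoretic analysis of the Dolbeault Laplacian (Proposition \ref{prop:Dolbeault-Maass}, Corollary \ref{corollary:multiplicities}, Proposition \ref{theorem:multiplicities}) that only representations with a weight $-2$ discrete-series archimedean component contribute, and then matching the two spectral zeta functions term by term --- eigenvalues \emph{and} multiplicities --- under Jacquet--Langlands. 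Second, the automorphic machinery (conductors, Atkin--Lehner/Casselman theory, JL) lives on $\mathrm{Res}_{F/\Q}B^{\times}$, not on the norm-one-in-$\Q^{\times}$ group $\Gbf$ defining the PEL model; one must first transfer the normalized ratio to the coarse surface $\Mcal^{\star}_{\Kstar}$ via the finite \'etale covers of Section \ref{subsec:restriction-scalars}, using that both numerator and denominator scale by the degree of the cover. These reductions, together with the vanishing of $H^1(\Mcal^{\star}_{\Kstar}(\C),\Ocal)$ and Shimura's relation $\|f\|_{L^2}=C\|g_1\|_{L^2}\|g_2\|_{L^2}$ with $C\in\ov\Q^{\times}$ for the $H^0(\omega)$ terms, are what make the comparison close up.
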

The arithmetic Grothendieck-Riemann-Roch formula expresses both arithmetic intersection numbers in terms of automorphic representations. In particular the holomorphic analytic torsion encodes the contributions from Maa\ss\ forms. After some preliminary reduction steps, the proof amounts to identifying those representations that give non-trivial contributions to the arithmetic Grothendieck-Riemann-Roch for $\Mcal$, and comparing, via Jacquet-Langlands, with the analogous contributions for the Shimura curve. The Shimura period relations provide the matching of the contributions from holomorphic forms; our comparison of holomorphic analytic torsions, which is the technical heart of this section, can be viewed as a non-holomorphic counterpart to Shimura's result.


Observe that \eqref{eq:intro-main-comparison} takes place in $\R / \log|\ov \Q^{\times}|$. Though one  hopes for a relation in $\R$, or slightly less ambitiously in $\R_N$, there are serious obstacles:

\begin{enumerate}
	\item the ambiguity $\log|\ov{\Q}^{\times}|$ could be strengthened to $\log |\Q^{\times}|$ if the Shimura period relations were compatible with Galois action; to the authors' knowledge, this has not been established.
	\item Even given this  compatibility, a refinement from $\R / \log |\Q^{\times}|$ to $\R_N$ for a suitable $N$ (depending, say, on $F$, $D_{B}$ , $K$ and $K_1$), would require a significantly more difficult treatment of integrality questions. In particular, one needs a compatibility statement of Jacquet-Langlands on quaternionic holomorphic modular forms with integral models of  Shimura varieties, analagous to the work of Prasanna \cite{Prasanna} in the case $F=\Q$. The corresponding statement for real quadratic $F$ is not known, although there are partial results due to Hida \cite{Hida} and Ichino-Prasanna \cite{Ichino-Prasanna}.

	\item If there were a version of this theorem valued in $\R$, it would necessarily be sensitive to the choices of compact open  subgroups $K$ and $K_1$, as well as the bad reduction of integral models.
	
	On the other hand, the theorem holds, as stated,  for all sufficiently small $K$ and $K_1$. This provides evidence for the conjecture that the arithmetic degree of the top arithmetic Todd class (normalized by the geometric degree, and viewed in $\R / \log|\ov{\Q}^{\times}|$) should be the same for any positive dimensional Shimura variety attached to an inner form of $GL_2/F$, and is therefore a quantity intrinsic to that group.
	
	\item Yuan-Zhang-Zhang \cite{YZZ} have computed the full arithmetic volume (i.e.\ in $\R$) of a Shimura curve for a particular choice of level structure $K_1$ and integral model. The formula \eqref{eq:intro-main-scvol} is consistent with their computation; comparing their formula with Theorem \ref{thm:introc1c2} gives some measure of evidence for the presence of an integral Jacquet-Langlands in this particular case.
		
\end{enumerate}
	

Finally, note that we have excluded the case $B=M_2(F)$ throughout. The corresponding Shimura variety, which is a classical Hilbert modular surface, is not compact; passing to a toroidal compactification introduces logarithmic singularities for the metric on the cotangent bundle, and so the arithmetic Grothendieck-Riemann-Roch theorem does not apply. On the other hand, the methods of this paper suggest an automorphic approach to a conjectural Grothendieck-Riemann-Roch formula such that Theorem \ref{thm:intro-main-comparison} continues to hold, and in particular, a conjectural formula for the boundary contributions in terms of derivatives of Shimizu $L$-functions.  The authors, together with D.\ Eriksson, will address this question in forthcoming work.

%

\paragraph{Acknowledgements} The authors thank Dennis Eriksson for many  discussions on the subject of this paper, in particular his insightful comments on the non-compact case.

We also thank G.\ Chenevier, M.-H. Nicole, S.\ Kudla,  K.\ Prasanna, M.\ Raum and X.\ Yuan for helpful conversations, and  X.\ Yuan, S.-w.\ Zhang, and W.\ Zhang  for generously sharing a preliminary version of their article.

Work on this article was carried out during a Junior Trimester Program at the Hausdorff Institute in Bonn and at the CRM in Montr\'eal; the authors are grateful to these institutions for the hospitality. G.F.\ acknowledges financial support from UMI-CNRS in Montr\'eal and the ANR, and S.S. financial support from NSERC.

\section{Complex Shimura varieties}\label{section:complex}
In this section we briefly describe the complex points of the Shimura varieties to be studied in this article.
\subsection{Twisted Hilbert modular surfaces}\label{subsec:complex-twisted}
Recalling that $B$ is a totally indefinite division quaternion algebra, we may fix  isomorphisms
\[
	B\otimes_{F, v_1}\R  \ \simeq \ M_2(\R) \ \simeq \ B \otimes_{F,v_2} \R,
\]
so that $B\otimes_{\Q} \R \simeq M_2(\R) \oplus M_2(\R)$ and
\[
	\Gbf(\R) \ \simeq \ \{  (A_1,A_2) \in GL_2(\R) \times GL_2(\R) \ | \ \det A_1 = \det A_2    \}.
\]
Let
\[ X \ := \ \left\{ (z_1, z_2) \in \C^2 \ | \ \Imag(z_1) \cdot \Imag(z_2) > 0 \right\}, \]
which admits an action of $\Gbf(\R)$   by fractional linear transformations in each component:
\[
	(A_1,A_2) \cdot (z_1, z_2) \ = \ \left( \frac{ a_1 z_1 + b_1 }{c_1 z_1 + d_1}, \ \frac{a_2 z_2 + b_2}{c_2 z_2 + d_2} \right)
\]
for all $(z_1, z_2) \in X$ and $(A_1, A_2) = ( (\begin{smallmatrix}a_1&b_1 \\ c_1 & d_1 \end{smallmatrix}), (\begin{smallmatrix}a_2&b_2 \\ c_2 & d_2 \end{smallmatrix})) \in \Gbf(\R)$.
%
\begin{definition}
The \emph{complex twisted Hilbert modular surface} attached to $B$ and of level $K \subset \Gbf(\A_{\Q,f})$ is the quotient
\begin{displaymath}
	M_K \ := \ \Gbf(\Q) \Big\backslash  \left[X \times \left( \Gbf(\A_{\Q,f }) / K \right) \right]
\end{displaymath}
where  $\Gbf(\Q)$ acts on $X$ via the inclusion $\Gbf(\Q) \subset \Gbf(\R)$, and on $\Gbf(\A_{\Q,f})/K$ by left multiplication. The quotient $M_K$ has a canonical structure of compact complex orbifold of dimension two in general, and is a manifold when $K$ is sufficiently small. 
 \end{definition}
 This can be written in a perhaps more familiar way: fix a set of elements $\{ h_1, \dots , h_t \} \subset \Gbf(\A_{\Q,f})$ such that
 \[ 	
 	\Gbf(\A_{\Q,f}) \ = \ \coprod_i \Gbf(\Q)^+ \, h_i \, K
 \]
 where $\Gbf(\Q)^+ = \{ b \in \Gbf(\Q) \ | \ {}^{\iota} b  \cdot b > 0  \}$, and define $\Gamma_i := \Gbf(\Q)^+ \cap h_i K h_i^{-1}$. Then it is straightforward to check that
 \begin{equation} \label{eqn:ShBUnif}
M_K \ \simeq \  \Gbf(\Q)^+ \Big\backslash \left[ \lie H^2 \ \times \ \Gbf(\A_{\Q,f}) / K \right] \ \simeq \ \coprod_{i} \Gamma_i \backslash \lie H^2;
 \end{equation}
 the first identification follows from the existence of elements of $\Gbf(\Q)$ of negative norm.
 
It will also be useful to explicitly realize the pair $(X, \Gbf)$ as a Shimura datum in the sense of Deligne's axioms, as follows: for a point $z = (z_1, z_2) \in X$ with $z_j = x_j + i y_j$, let 
\begin{equation}\label{eqn:JzDef}
	 J_z \ = \ \left( \frac1{y_1} \begin{pmatrix} x_1 & - x_1^2 - y_1^2 \\ 1 & -x_1  \end{pmatrix} ,  \  \frac1{y_2} \begin{pmatrix} x_2 & - x_2^2 - y_2^2 \\ 1 & -x_2  \end{pmatrix} \right)  \ \in B \otimes_{\Q} \R \ = \ M_2(\R) \oplus M_2(\R). 
\end{equation}
Then the map $h_z \colon \C^{\times} \to \Gbf(\R)$, determined by setting 
\[ h_z(a+bi) = a+bJ_z, \]
is induced from an algebraic map from Deligne's torus $\mathrm{Res}_{\C/\R} \mathbb G_m$ to $\Gbf_{/\R}$, and $X$ is identified with a $\Gbf(\R)$-conjugacy class of such maps. The reflex field of this Shimura datum is $\Q$, and so the theory of canonical models implies that $M_K$ is the set of complex points of a smooth projective surface over $\Q$ whenever $K$ is sufficiently small.

\subsection{Quaternionic Shimura curves}

As in the introduction, let  $\Gbf_{1}=\mathrm{Res}_{F/Q} B_1^{\times}$, so that 
\begin{displaymath}
	\Gbf_{1}(\R)\simeq \mathbb H \times GL_2(\R)
\end{displaymath}
where $\mathbb{H}$ are the Hamiltonians.

Let $\lie H^{\pm} = \{ z \in \C \ | \  \Imag(z) \neq 0 \}$ denote the union of the upper and lower half-planes, which admits an action of $\Gbf_1(\R)$ via the second factor:
\[
	\left( h, (\begin{smallmatrix} a & b \\ c& d\end{smallmatrix}) \right)  \cdot z \ = \ \frac{az + b}{cz + d}.
\]
\begin{definition}
The \emph{complex Shimura curve} attached to $B_1$ of level $K_1 \subset \Gbf_1(\A_{\Q,f})$ is the curve
\[
	S_1 = S_{1,K_1} \ := \ \Gbf_1(\Q) \big\backslash \left[ \lie H^{\pm} \times (\Gbf_1(\A_{\Q,f}) / K_1) \right].
\]
If $K_1$ is sufficiently small, then $S_1$ is a complex compact manifold of dimension one.
\end{definition}

We view this construction in terms of Deligne's axioms for a Shimura variety as follows: for a point $z = x + i y \in \lie H^{\pm}$, consider the map $h_{1,z} \colon \mathrm{Res}_{\C/\R}\mathbb G_m \to (\Gbf_1)_{\R}$ determined, on $\R$-points, by sending $a + bi \in \C^{\times}$ to 
\[
	h_{1,z}(a + bi) \ = \ (1, a + b \cdot j_z )\in \Gbf_1(\R), \qquad \text{where} \qquad j_z = \frac{1}{y}   \begin{pmatrix} x & - x^2 - y^2 \\ 1 & -x  \end{pmatrix}.
\]
In this way, we may identify $\lie H^{\pm}$ with a $\Gbf_1(\R)$-conjugacy class of such maps. The reflex field for the Shimura datum $(\lie H^{\pm}, \Gbf_1)$ in this case is $F$, which we view as a subfield of $\C$ via the first embedding $v_1\colon F \to \C$. The theory of canonical models implies that there is a smooth projective curve $\Scal_1 = \Scal_1(K_1)$ over $F$ such that $\Scal_{1,v_1}(\C) = S_1$, assuming $K_1$ is sufficiently small. 
%
%
%

%


\section{Arithmetic Chow groups and arithmetic Grothendieck-Riemann-Roch}
In this section, which is intended primarily to set up notation, we briefly review the theory of arithmetic Chow groups of Gillet-Soul\'e, as well as the arithmetic Grothendieck-Riemann-Roch theorem; further details can be found in \cite{Gillet-Soule:AIT, Gillet-Soule:ACC, Gillet-Soule:ARR}.

\paragraph{Arithmetic varieties and arithmetic Chow groups}
Let $\kbf$ be a number field with ring of integers $\Ocal_{\kbf}$. In the following, let $S$ denote either $\Spec(\kbf)$, or a Zariski open subscheme of $\Spec(\Ocal_{\kbf})$. 

An \emph{arithmetic variety} $\Xcal$ over $S$ is a regular scheme $\Xcal$ together with a flat and projective map $f \colon \Xcal \to S$. For each complex embedding $\sigma\colon \kbf \to \C$, let $\Xcal_{\sigma} := \Xcal\times_{\sigma} \Spec \C$, and fix, once and for all, a K\"ahler form on the compact complex manifold
\[ \Xcal(\C) := \coprod_{\sigma \colon \kbf \to \C} \Xcal_{\sigma}(\C) ,\]
which we require to be invariant under the natural action of complex conjugation $F_{\infty}$ on $\Xcal(\C)$.

In \cite{Gillet-Soule:AIT}, the authors define the \emph{arithmetic Chow groups} $\Ch{p}(\Xcal)$
attached to $\Xcal$. Elements of $\Ch{p}(\Xcal)$ are represented by pairs 
\[ \widehat Z = (Z, g) \]
where $Z$ is a codimension $p$ cycle on $\Xcal$, and $g$ is a real $(p-1,p-1)$-current on $\Xcal(\C)$ such that the current
\[
	dd^c \, g \ + \ \delta_{Z(\C)} 
\]
is represented by a smooth $(p,p)$ form on $\Xcal(\C)$; here $\delta_{Z(\C)}$ is the current defined by integration over $Z(\C)$. In $\Ch{p}(\Xcal)$, the \emph{rational arithmetic cycles} are deemed to be zero; these are cycles  of the form $(\mathtt{div}(f),-\log|f|^{2}\delta_{W(\C)})$, where  $W$ is a codimension $p-1$ integral subscheme of $\Xcal$, and $f\in \kbf(W)^{\times}$, as well as those of the form $(0, \partial \eta + \overline \partial \eta')$ for currents $\eta$ and $\eta'$ on $\Xcal(\C)$. 

There is a natural map
\[
a \colon	A^{(p-1,p-1)}(\Xcal(\C)) \to \Ch{p}(\Xcal) , \qquad  \eta \mapsto (0,\eta)
\]
where $A^{(p-1,p-1)}(\Xcal(\C))$ is the space of smooth $(p-1,p-1)$ differential forms satisfying $F_{\infty}^{\ast}(g)=(-1)^{p-1}g$; cycles in the image of this map can be thought of as ``purely archimedean".

After tensoring with $\Q$, the arithmetic Chow groups can be equipped with a product structure
\[
	\Ch{p}(\Xcal)_{\Q} \times \Ch{q}(\Xcal)_{\Q} \to \Ch{p+q}(\Xcal)_{\Q}, \qquad \text{denoted by }  (\widehat Z_1, \widehat Z_2) \mapsto \widehat Z_1 \cdot \widehat Z_2
\]
as well as a proper pushforward map
\[
	f_* \colon \Ch{\dim\Xcal}(\Xcal)_{\Q} \to \Ch{1}(S)_{\Q}.
\]
Finally, if $S \subset \Spec(\Ocal_{\kbf})$ is a Zariski open subset, and $N$ is an integer such that $\Spec(\Ocal_{\kbf}[1/N]) \subset S$, then there is an \emph{arithmetic degree map}
\[
	\widehat\deg \colon \Ch{1}(S) \ \to \ \R_N := \R / \oplus_{p | N} \Q \cdot \log p.
\]
More concretely, let $\widehat{Z}$ be an arithmetic divisor on $\Spec\Ocal_{\kbf}[1/N]$; it can be represented by a couple $(\sum_{\mathscr{p}}n_{\mathfrak{p}}(\mathfrak{p}),g)$, where the $\mathfrak{p}$ are the maximal ideals of $\Ocal_{\kbf}[1/N]$ and $g=(g_{\sigma})_{\sigma\colon\kbf\to\C}$ is a tuple of complex numbers, invariant under the action of $F_{\infty}$. Its arithmetic degree is given by the formula
\begin{displaymath}
	\adeg\widehat{Z}=\sum_{\mathfrak{p}}n_{\mathfrak{p}}\log\sharp(\Ocal_{K}/\mathfrak{p})+\frac{1}{2}\sum_{\sigma\colon \kbf\to\C}g_{\sigma}.
\end{displaymath}
Similarly, when $S = \Spec(\kbf)$, one has an arithmetic degree map $\widehat\deg$ valued in the group $\R / \log| \Q^{\times}|$:
\[
	\widehat\deg (a(g)) = \widehat\deg((0,g)) \ = \ \frac12 \sum_{\sigma \colon \kbf \to \C} g_{\sigma}.
\]

By a customary abuse of notation, if $\widehat Z_1$ and $\widehat Z_2$ are cycles of codimension $p$ and $\dim \Xcal - p$ respectively, we will often write $\widehat Z_1 \cdot \widehat Z_2 $ to denote the quantity $\widehat\deg\; f_{*} \left( \widehat Z_1 \cdot \widehat Z_2 \right)$, in either $\R_N$ or $\R/\log|\Q^{\times}|$ as above.

\paragraph{Arithmetic characteristic classes}
A \emph{hermitian vector bundle} on $\Xcal$ is a pair
\[
	\overline \Ecal \ := \ (\Ecal, \la\cdot, \cdot \ra) 
\]
where $\Ecal$ is a vector bundle over $\Xcal$, and $\la\cdot, \cdot \ra$ is a smooth hermitian form on the holomorphic vector bundle $\Ecal_{\C}$ over $\Xcal(\C)$, i.e.\ a family of smooth hermitian forms $(\la\cdot, \cdot \ra_{\sigma})_{\sigma\colon\kbf\to\C}$ where each $\la\cdot, \cdot \ra_{\sigma}$ is a hermitian form on the holomorphic vector bundle $\Ecal_{\sigma}$ on $\Xcal_{\sigma}(\C)$. Furthermore, we require $(\la\cdot, \cdot \ra_{\sigma})$ to be invariant under the natural action of complex conjugation.

Given a hermitian vector bundle, Gillet-Soul\'e construct an element
\[
	\widehat c(\overline \Ecal) \ = \  1 + \widehat c_1(\overline \Ecal) +  \widehat c_2(\overline \Ecal) + \dots \ \in \ \Ch{\bullet}(\Xcal)  \ = \ \oplus \Ch{p}(\Xcal)
\]
called the \emph{total Chern class}, and whose components
\[ \widehat c_p(\overline \Ecal) \in  \Ch{p}(\Xcal)  \]
are \emph{arithmetic Chern classes}.
For example, if $\overline \Lcal $ is a hermitian line bundle and $s$ is any meromorphic section of $\Lcal$, then
\[ \widehat c_1(\overline \Lcal) \ = \ \left( \mathtt{div} (s) , \, - \log || s_{\C} ||^2 \right), \]
and $\widehat c_1(\overline \Ecal) = \widehat c_1(\det \overline \Ecal)$ for a hermitian vector bundle $\overline \Ecal$. 

From the total Chern class, and using the product structure on $\Ch{\bullet}(\Xcal)$, one can build further characteristic classes: for example, by mirroring the usual constructions in differential geometry, we have the \emph{arithmetic Chern character}
\[
	\ach(\ov{\Ecal})=\mathrm{rk}\,\Ecal+\ac_{1}(\ov{\Ecal})+\frac{1}{2} \left(\ac_{1}(\ov{\Ecal})^{2}-2\ac_{2}(\ov{\Ecal}) \right)+..., 
\]
which is additive on orthogonal direct sums of vector bundles, and is multiplicative on tensor products. Similarly, we have the \emph{arithmetic Todd class}
\[
	\atd(\ov{\Ecal})=1+\frac{1}{2}\ac_{1}(\ov{\Ecal})+\frac{1}{12}(\ac_{1}(\ov{\Ecal})^{2}+\ac_{2}(\ov{\Ecal}))+\frac{1}{24}\ac_{1}(\ov{\Ecal})\ac_{2}(\ov{\Ecal})+\ldots
\]
which is multiplicative on orthogonal direct sums. Moreover, all these constructions are functorial with respect to pullbacks.

In this article, we compute top degree arithmetic intersection products of the form $\widehat{c}_{1}(\ov{\Lcal})\cdot\widehat{Z}$. Representing $\widehat{c}_{1}(\Lcal)$ as $(\mathtt{div} (s),-\log\|s_{\C}\|^{2})$ and $\widehat{Z}$ as $(Z,g)$, there is a recurrence formula
\begin{displaymath}
	\widehat{c}_{1}(\ov{\Lcal})\cdot\widehat{Z}=\left(\widehat{Z}\mid\mathtt{div} (s)\right)-\int_{\Xcal(\C)}\log\|s_{\C}\| \omega.
\end{displaymath}
Here, $\left(\widehat{Z}\mid\mathtt{div} (s)\right)$ is the so-called height pairing of Bost-Gillet-Soul\'e \cite{BGS}, and $\omega$ is the smooth differential form representing the current $dd^{c}g+\delta_{Z(\C)}$. The height pairing is bilinear and enjoys of several functoriality properties. As a special case, suppose $\widehat{Z}$ is of the form $\widehat{c}_{n}(\ov{\Ecal})$, and let $j\colon\widetilde{W}\to W$ be a proper and generically finite morphism from an arithmetic variety $\widetilde{W}$ to a closed integral subscheme $W$ of $\Xcal$, of dimension $n$. Then
\begin{displaymath}
	(\widehat{c}_{n}(\ov{\Ecal})\mid W)=\frac{1}{\deg j}\adeg \widehat{c}_{n}(j^{\ast}\ov{\Ecal}),
\end{displaymath}
where $\deg j$ is the generic degree of $j$.

\subsection{The arithmetic Grothendieck-Riemann-Roch theorem}
The arithmetic Grothendieck-Riemann-Roch theorem describes the behaviour of the arithmetic Chern character under push-forwards. First, recall the usual Grothendieck-Riemann-Roch formula in degree 1: if $f\colon\Xcal\to S$ is a projective morphism between regular noetherian schemes, then there is an equality
\[
c_1( \det Rf_* \Ecal) = f_* (\ch( \Ecal) \cdot \td(T_f))^{(1)}  \in \mathrm{CH}^1(S)_{\Q}
\]
where $T_f$ is the tangent complex of $f \colon \Xcal \to S$. For an arithmetic variety $\Xcal\to S$ as above, the equality is trivial: indeed, $\mathrm{CH}^1(S)_{\Q}=0$, because the class group is finite. However, the Grothendieck-Riemann-Roch equality can be upgraded to the level of arithmetic Chow groups, which carry more information and do not vanish. For this, we first need to describe an appropriate metric for the left hand side.

\subsubsection{Holomorphic analytic torsion and the Quillen metric}\label{subsec:hol-an-torsion}
Fix an embedding $\sigma \colon \kbf \to \C$, and let $E = \Ecal_{\sigma}$, viewed as a hermitian holomorphic vector bundle over the compact K\"ahler manifold $X = \Xcal_{\sigma}(\C)$. Attached to $E$ is the \emph{Dolbeault complex}
\begin{displaymath}
	\cdots\longrightarrow A^{0,k}(X,E)\overset{\cpd_{E}}\longrightarrow A^{0,k+1}(X,E)\longrightarrow\cdots
\end{displaymath}
where $A^{0,k}(X,E)$ is the space of $(0,k)$ differential forms valued in $E$. 

The choice of a K\"ahler form $\omega$ on $X$ and the hermitian metric on $E$ determine a pointwise hermitian product, also denoted $\la \cdot, \cdot \ra$  on $A^{(0,k)}(X,E) = A^{(0,k)}(X) \otimes_{C^{\infty}(X)}A^0(X,E)$. This in turn determines a global $L^2$ pairing $h_{L^2}$ on  $A^{0,k}(X,E)$, defined by setting
\begin{displaymath}
	h_{L^2}(s,t)=\int_{X}\langle s_{x},t_{x}\rangle_{x}\frac{\omega^{n}}{n!}(x), \qquad \text{ for } s,t\in A^{0,k}(X,E).
\end{displaymath}


\begin{definition} The $\cpd_{E}$ laplacian, which depends on the choices of metrics as above, is defined to be 
\begin{displaymath}
	\Delta_{\cpd_{E}}^{0,k} \ := \ \cpd_{E}\cpd_{E}^{\ast}+\cpd_{E}^{\ast}\cpd_{E},
\end{displaymath}
where $\cpd_{E}^{\ast}$ is the adjoint to $\cpd_{E}$ with respect to the $L^{2}$ pairing. 
\end{definition}The Laplace operator $\Delta_{\cpd_{E}}^{0,k}$ is an elliptic, positive, essentially self-adjoint differential operator of order 2. It has a discrete spectrum, and the eigenspaces of $\Delta_{\cpd_{E}}^{0,k}$ are finite dimensional. 

Note that Hodge theory provides a canonical isomorphism of finite dimensional vector spaces
\begin{equation}\label{eq:hodge-theory}
	\ker\Delta_{\cpd_{E}}^{0,k}\overset{\sim}{\longrightarrow} H^{0,k}_{\cpd}(X,E)\simeq H^{k}(X,E).
\end{equation}
In particular, the coherent cohomology of $E$ can be realized in spaces of $E$ valued differential forms and thereby inherits the $L^{2}$ metric. 

While 0-eigenspaces (i.e. harmonic forms) are identified with cohomology, the geometric meaning of the strictly positive spectrum is encoded in the \emph{spectral zeta function}, defined as follows. For $s\in\C$ with $\Real(s)\gg 0$, the operator 
\[ \left(\Delta_{\cpd_{E}}^{0,k}\mid_{(\ker\Delta_{\cpd_E}^{0,k})^{\perp}}\right)^{-s} \]
is trace class, and
\begin{displaymath}
	\zeta_{E,k}(s):=\tr\left((\Delta_{\cpd_{E}}^{0,k}\mid_{(\ker\Delta_{\cpd_E}^{0,k})^{\perp}})^{-s}\right)
\end{displaymath}
is holomorphic on its domain. We can equivalently write $\zeta_{E,k}(s)$ as the absolutely convergent sum
\begin{displaymath}
	\zeta_{E,k}(s)=\sum_{\lambda>0}\frac{1}{\lambda^{s}},
\end{displaymath}
where $\lambda$ runs over the discrete strictly positive part of the spectrum of $\Delta_{\cpd_E}^{0,k}$, counted with multiplicity. The theory of the heat operator $e^{-t\Delta_{\cpd_E}^{0,k}}$ shows that the function $\zeta_{E,k}(s)$ admits a meromorphic continuation to the whole complex plane $\C$, and is holomorphic at $s=0$. We then define\footnote{Elsewhere in the literature, the symbol $\det$ is often decorated with a prime symbol, to indicate that 0-eigenvalues were removed.}
\begin{displaymath}
	\det\Delta_{\cpd_{E}}^{0,k}:=\exp(-\zeta_{E,k}^{\prime}(0));
\end{displaymath}
it is a real number, which can informally be thought of as the  ``product over all the strictly positive eigenvalues"
\begin{displaymath}
	``\prod_{\lambda>0}\lambda".
\end{displaymath}
\begin{definition} \label{def:holAnTor}
The \emph{holomorphic analytic torsion} of $\overline{E}$, with respect to the K\"ahler form $\omega$, is defined as
\begin{displaymath}
	T(\overline{E},\omega):=\sum_{k=0}^{n}(-1)^{k}k\log\det\Delta_{\cpd_{E}}^{0,k}=\sum_{k=0}^{n}(-1)^{k+1}k\zeta_{E,k}^{\prime}(0).
\end{displaymath}
\end{definition}
If the choice of K\"ahler metric on $X$ is implicit, we may write $T(X,\ov{E})$. 

We now return to the arithmetic setting. In order to define the Quillen metric on $\det Rf_* \Ecal$, first note that this bundle inherits an $L^2$ metric, denoted $h_{L^2}$, and obtained via Hodge theory, cf.\ \eqref{eq:hodge-theory}.
\begin{definition} The Quillen metric $h_Q = (h_{Q,\sigma})$ is the metric on $\det Rf_* \Ecal$ obtained by rescaling the $L^2$ metric by the analytic torsion. More precisely,  for an embedding $\sigma \colon \kbf \to \C$, it is defined by
\begin{displaymath}
	h_{Q,\sigma}=h_{L^2,\sigma} \cdot \exp(T(\overline{\Ecal}_{\sigma},\omega_{\sigma})).
\end{displaymath}
Let 
\[ \overline{ \det R f_* \Ecal}_Q \ = \  (\det R f_* \Ecal, h_Q ) \]
denote the corresponding hermitian vector bundle.
\end{definition}

\subsubsection{Arithmetic Grothendieck-Riemann-Roch}
We can now state the arithmetic Grothendieck-Riemann-Roch theorem:
\begin{theorem}[Gillet-Soul\'e \cite{Gillet-Soule:ARR}]
There is an equality in $\widehat{\mathsf{CH}}^{1}(S)_{\Q}$
\begin{displaymath}
	\ac_{1}(\overline{\det Rf_{\ast}\Ecal}_{Q})=f_{\ast}(\ach(\overline{\Ecal})\atd(\overline{T}_{f}))^{(1)}-a\left(\int_{\Xcal(\C)}\ch(\Ecal_{\C})\td(T_{\Xcal(\C)})R(T_{\Xcal(\C)})\right).
\end{displaymath}
Here $R$ is the additive genus determined by the formal power series with real coefficients
\begin{displaymath}
	R(x)=\sum_{\substack{m \text{ odd}\\ m\geq 1}}\left(2\zeta'(-m)+\zeta(-m)\sum_{k=1}^{m}\frac{1}{k}\right)\frac{x^{m}}{m!}.
\end{displaymath}
\end{theorem}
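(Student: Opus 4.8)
The proof, due to Gillet-Soul\'e, follows the classical strategy for Grothendieck-Riemann-Roch --- reducing to a closed immersion and a projection --- with the analytic behaviour of Quillen metrics as the essential new input. First, using relative projectivity of $f$, one factors it as a closed immersion $i\colon\Xcal\hookrightarrow\mathbb P:=\mathbb P(\Ecal_0)$ into a projective bundle over $S$ followed by the structure map $\pi\colon\mathbb P\to S$, and proves the asserted identity separately for $\pi$ and for $i$. Recombining the two cases is not formal: holomorphic analytic torsion fails to be multiplicative under composition of proper maps, and this failure is an anomaly term --- it is precisely this anomaly that produces the additive $R$-genus. So one needs, besides the functoriality of $\ach$, $\atd$ and $f_\ast$ on arithmetic Chow groups, a precise composition law for Quillen metrics / higher analytic torsion, of Bismut-K\"ohler and Berthomieu-Bismut type.

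For the projective bundle projection $\pi$, the plan is to invoke the projective bundle formula for arithmetic Chow groups: $\Ch{\bullet}(\mathbb P(\Ecal_0))$ is freely generated over $\Ch{\bullet}(S)$ by the powers of $\ac_1$ of the tautological bundle with its canonical metric. This reduces the statement to the universal case, and ultimately to an explicit evaluation of $\ac_1$ of $\det R\pi_\ast\Ocal(k)$, endowed with its Quillen metric, on $\mathbb P^n_{\Z}$ equipped with the Fubini-Study metric. Here the strictly positive spectrum of the Dolbeault Laplacian on $(\mathbb P^n,\Ocal(k))$ can be written out, its zeta-regularized determinant contributes values of $\zeta'(-m)$, and comparing with the naive term $\pi_\ast(\ach\,\atd)^{(1)}$ forces exactly the correction $-a\bigl(\int\ch\,\td\,R\bigr)$; this is the arithmetic Riemann-Roch computation for $\mathbb P^n$ of Gillet-Soul\'e, the combinatorics of the torsion being supplied by Zagier.

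For the closed immersion $i\colon\Xcal\hookrightarrow\mathbb P$, the plan is to resolve $i_\ast\Ecal$ by a finite complex of hermitian vector bundles on $\mathbb P$ adapted --- in the sense of Bismut's condition $(\mathrm A)$ --- to the normal bundle $N_{\Xcal/\mathbb P}$, and to compare the Quillen metric on $\det Rf_\ast\Ecal$ computed on $\Xcal$ with the one computed from the resolution on $\mathbb P$. This comparison is governed by the immersion formula of Bismut-Gillet-Soul\'e, phrased through singular Bott-Chern currents; combined with deformation to the normal cone at the level of $\Ch{\bullet}$ and the curvature formula for the Quillen metric, it yields the identity for $i$. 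Assembling the $\pi$-case, the $i$-case and the composition anomaly then gives the theorem.

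The main obstacle is entirely archimedean: proving the curvature formula for the Quillen metric, the immersion formula with its singular Bott-Chern currents, and above all the composition anomaly for analytic torsion --- and then checking that the correction term it produces is exactly $a\bigl(\int_{\Xcal(\C)}\ch(\Ecal_{\C})\,\td(T_{\Xcal(\C)})\,R(T_{\Xcal(\C)})\bigr)$ and no other closed form. Controlling these currents, and identifying the genus $R$, is where essentially all the work lies.
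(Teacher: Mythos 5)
This statement is quoted in the paper as a known theorem of Gillet--Soul\'e \cite{Gillet-Soule:ARR}; the paper supplies no proof of it, only the specialization recorded in Corollary \ref{cor:ARR}. Your outline is an accurate summary of the actual Gillet--Soul\'e argument: factorization through a closed immersion into a relative projective space followed by the projection, the explicit spectral computation on projective space (with Zagier's identification of the resulting genus $R$), the immersion formula via singular Bott--Chern currents, and the compatibility of Quillen metrics under composition. The only point worth sharpening is attribution: the comparison of Quillen metrics under closed immersions --- the deepest analytic input --- is the theorem of Bismut--Lebeau rather than of Bismut--Gillet--Soul\'e, whose papers supply the curvature and anomaly formulas and the singular Bott--Chern currents that the Bismut--Lebeau formula is phrased in.
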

We will be interested in arithmetic surfaces and threefolds with $\overline{\Ecal}$ the trivial bundle $\overline{\Ocal}$. Also, instead of the tangent complex, it will be more convenient to work with the cotangent complex $\Omega_{f}$, which, in our applications, will be a vector bundle. Specializing the above theorem, we obtain:
\begin{corollary}\label{cor:ARR}
\begin{enumerate}
	\item If $\Xcal$ is an arithmetic surface, i.e. has Krull dimension 2, then
	\begin{displaymath}
		\ac_{1}(\overline{\det Rf_{\ast}\Ocal}_{Q})=\frac{1}{12}f_{\ast}(\ac_{1}(\overline{\Omega}_{f})^{2})+a\left((2\zeta'(-1)+\zeta(-1))\int_{\Xcal(\C)}c_{1}(\Omega_{f_{\C}})\right)
		.
	\end{displaymath}
	\item If $\Xcal$ is an arithmetic threefold, i.e. has Krull dimension 3, then
	\begin{displaymath}
		\ac_{1}(\overline{\det Rf_{\ast}\Ocal}_{Q})=-\frac{1}{24}f_{\ast}(\ac_{1}(\overline{\Omega}_{f})\ac_{2}(\overline{\Omega}_{f}))-a\left(\frac{1}{2}(2\zeta'(-1)+\zeta(-1))\int_{\Xcal(\C)}c_{1}(\Omega_{f_{\C}})^{2}\right).
	\end{displaymath} 
\end{enumerate}
\end{corollary}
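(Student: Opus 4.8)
The plan is to specialize the arithmetic Grothendieck-Riemann-Roch theorem to the trivial hermitian bundle $\overline{\Ecal}=\overline{\Ocal}$ (with the trivial metric) and then extract the relevant homogeneous components. Since $\ach(\overline{\Ocal})=1$ and $\ch(\Ocal_{\C})=1$, the theorem reduces to
\[
	\ac_{1}(\overline{\det Rf_{\ast}\Ocal}_{Q}) \ = \ f_{\ast}\bigl(\atd(\overline{T}_{f})\bigr)^{(1)} \ - \ a\left(\int_{\Xcal(\C)}\td(T_{\Xcal(\C)})\,R(T_{\Xcal(\C)})\right).
\]
In our applications $f$ is smooth, so $\Omega_{f}$ is a vector bundle --- a line bundle in case (1), of rank $2$ in case (2) --- and the tangent complex is $T_{f}=\Omega_{f}^{\vee}$ as a hermitian bundle. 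Using that arithmetic Chern classes satisfy $\ac_{p}(\overline{\Ecal}^{\vee})=(-1)^{p}\ac_{p}(\overline{\Ecal})$ (the line bundle case is immediate, and the general case follows by the splitting principle), we obtain $\ac_{1}(\overline{T}_{f})=-\ac_{1}(\overline{\Omega}_{f})$, together with $\ac_{2}(\overline{T}_{f})=0$ in case (1) and $\ac_{2}(\overline{T}_{f})=\ac_{2}(\overline{\Omega}_{f})$ in case (2). On the archimedean side, $\Xcal(\C)\to S(\C)$ maps to a finite set of points, so $\Omega_{f_{\C}}=\Omega_{\Xcal(\C)}$ and $c_{1}(T_{\Xcal(\C)})=-c_{1}(\Omega_{f_{\C}})$.

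It remains to pick out the correct graded pieces. In case (1), $\Xcal$ has Krull dimension $2$, so $f_{\ast}$ lowers the degree by $1$ and we need the degree-$2$ component $\atd(\overline{T}_{f})^{(2)}=\tfrac{1}{12}\bigl(\ac_{1}(\overline{T}_{f})^{2}+\ac_{2}(\overline{T}_{f})\bigr)=\tfrac{1}{12}\ac_{1}(\overline{\Omega}_{f})^{2}$; since $\Xcal(\C)$ is a complex curve and $R(x)=(2\zeta'(-1)+\zeta(-1))x+O(x^{3})$ has no constant term, the degree-$1$ component of $\td(T_{\Xcal(\C)})R(T_{\Xcal(\C)})$ is $(2\zeta'(-1)+\zeta(-1))c_{1}(T_{\Xcal(\C)})=-(2\zeta'(-1)+\zeta(-1))c_{1}(\Omega_{f_{\C}})$; substituting both into the displayed identity gives part (1). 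In case (2), $\Xcal$ has Krull dimension $3$, so we need the degree-$3$ component $\atd(\overline{T}_{f})^{(3)}=\tfrac{1}{24}\ac_{1}(\overline{T}_{f})\ac_{2}(\overline{T}_{f})=-\tfrac{1}{24}\ac_{1}(\overline{\Omega}_{f})\ac_{2}(\overline{\Omega}_{f})$; since $\Xcal(\C)$ is now a complex surface and $R$ has only odd-degree terms, the only contribution to the degree-$2$ component of $\td(T_{\Xcal(\C)})R(T_{\Xcal(\C)})$ comes from $\td^{(1)}(T_{\Xcal(\C)})R^{(1)}(T_{\Xcal(\C)})=\tfrac{1}{2}(2\zeta'(-1)+\zeta(-1))c_{1}(T_{\Xcal(\C)})^{2}=\tfrac{1}{2}(2\zeta'(-1)+\zeta(-1))c_{1}(\Omega_{f_{\C}})^{2}$, the sign disappearing in the square; substituting gives part (2).

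I do not expect any real obstacle here: the argument is purely formal once one tracks signs carefully and is precise about which graded component survives each pushforward and each integration. The only point requiring a little attention is the compatibility of arithmetic Chern classes with dualization, which is what lets us rewrite everything in terms of $\overline{\Omega}_{f}$ and $\Omega_{\Xcal(\C)}$.
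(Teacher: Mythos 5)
Your proposal is correct and is exactly the specialization the paper intends: the paper states the corollary with no further argument beyond ``specializing the above theorem,'' and your computation of the relevant graded pieces of $\atd(\overline{T}_{f})$ and of $\td(T_{\Xcal(\C)})R(T_{\Xcal(\C)})$, together with the duality relations $\ac_{p}(\overline{\Ecal}^{\vee})=(-1)^{p}\ac_{p}(\overline{\Ecal})$, fills in precisely the omitted details. The sign bookkeeping (in particular that $R$ contributes only in odd degrees, so only $\td^{(1)}R^{(1)}$ survives in the threefold case, and that the sign of $c_{1}(T_{\Xcal(\C)})$ cancels in the square) is handled correctly.
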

Applying the arithmetic degree map, we have 
\begin{displaymath}
	\widehat{\deg}\ \ac_{1}(\overline{\det Rf_{\ast}\Ocal}_{Q})=\frac{1}{12}\ac_{1}(\overline{\Omega}_{f})^2 \ +\frac{1}{2}[2\zeta'(-1)+\zeta(-1)]\int_{\Xcal(\C)}c_{1}(\Omega_{f_{\C}})^{2}
\end{displaymath}
for an arithmetic surface, and
\begin{displaymath}
	\widehat{\deg}\ \ac_{1}(\overline{\det Rf_{\ast}\Ocal}_{Q})=-\frac{1}{24}\ac_{1}(\overline{\Omega}_{f})\ac_{2}(\overline{\Omega}_{f})-\frac{1}{4}[2\zeta'(-1)+\zeta(-1)]\int_{\Xcal(\C)}c_{1}(\Omega_{f_{\C}})^{2}.
\end{displaymath}
for an arithmetic threefold; here we have abused notation, writing, for instance
\[
	\ac_{1}(\ov{\Omega}_{f})\ac_{2}(\ov{\Omega}_{f})=\widehat{\deg}\ f_{\ast}(\ac_{1}(\ov{\Omega}_{f})\ac_{2}(\ov{\Omega}_{f})).
\]

\section{Integral models of twisted Hilbert modular surfaces} \label{sec:HMSIntModel}
The aim of this section is to review the construction of integral models for twisted Hilbert modular surfaces in terms of PEL data, following \cite{KR-HB}; of crucial importance is the interpretation of $M_K$ as a $GSpin$ Shimura variety. In addition, we introduce and study the automorphic vector bundles of interest in this article.

\paragraph{Notation:} We continue with the setting in the introduction, so that $B$ is a totally indefinite quaternion algebra whose discriminant $D_B$ is a product of split rational primes, and $\Gbf$ is the group \eqref{def:G}.

Fix, once and for all, a maximal order $\Ocal_B$ of $B$ and a sufficiently small compact open subset $K \subset \Gbf(\A_{\Q,f})$.
In addition, fix an integer $N$ such that
\begin{enumerate}[(i)]
	\item $2 D_B \dF$ divides $N$, where $\dF$ is the discriminant of $F$; and
	\item $K_p$ is maximal for all $p \nmid N$, i.e.\
		\[ K_p \ = \ \left( \Ocal_B \otimes_{\Z} \Z_p \right)^{\times} \cap \Gbf(\Q_p)  \qquad \text{for all } p \nmid N .\]
\end{enumerate}

\subsection{$GSpin$ realization.} The assumption on the discriminant of $B$ implies that 
\[ B \ = \ B_0 \otimes_{\Q} F \]
for an indefinite quaternion algebra $B_0$ over $\Q$. More concretely, we may choose
$B_0$ by fixing a rational prime $q$ that is non-split in $F$, and taking
	\[ \mathrm{disc}(B_0) \ = \ 
		\begin{cases} 	p_1 \, \cdots \, p_r, & \text{if }r \text{ is even,} \\
						q \, p_1  \, \cdots \, p_r & \text{if } r \text{ is odd.} 
		\end{cases}
	\]
Abusing notation, we let $b \mapsto {}^{\iota}b$ denote the main involution of $B_0$, so that the involution on $B$ is given by
\[ {}^{\iota} (b_0 \otimes a)  \ = \ ( {}^{\iota} b_0 \otimes a) .\] With this presentation, the automorphism of $B$ induced by the non-trivial Galois automorphism $'\colon F \to F$ is
\[ \sigma \colon B \to B, \qquad {}^{\sigma}(b_0 \otimes a) \ = \ b_0 \otimes a'. \]
Define a 4 dimensional $\Q$-vector space 
\[  V_0 \ := \ \{ b \in B \ | \ {}^{\sigma}b \ = \ {}^{\iota} b \} \]
with quadratic form 
\[  Q_0(v) = \Nrd(v). \] 
Here $\Nrd$ stands for the reduced norm of $B$, which takes rational values on $V_{0}$, essentially by definition of $V_0$. For concreteness, if $B_0 = (\frac{a,b}{\Q})$ for some $a,b \in \Q^{\times}$, then there is a basis for $V_0$ such that $ Q_0$ takes the diagonal form
	\[ \begin{pmatrix} 1 & & & \\ & - \,  \dF \, a & & \\ & &  - \, \dF \, b & \\ &&& \dF \,  a \, b \end{pmatrix}.\]
Since $B_0$ is indefinite, at least one of $a$ or $b$ is positive, and so the signature of $ V_0$ is $(2,2)$, and  moreover 
\begin{equation}\label{eqn:Q0DiagForm}
	\det  Q_0 \ \equiv \ \dF \  \text{in} \ \Q^{\times} / \Q^{\times,2}.
\end{equation}

Define now a non-commutative algebra $C = B\langle v_0 \rangle $ extending $B$, by adjoining the formal symbol $v_0$ subject to the relations
\[ v_0^2 \ = \  1 \qquad \text{and} \qquad v_0 \cdot b = {}^{\sigma} b \cdot v_0. \]
Extend the involution $\iota$ to $C$ by setting ${}^{\iota}v_0 = v_0$. Finally, introduce the $\Q$-vector space 
\[ V \ :=  \  V_0 \cdot v_0 \ \subset C, \]
equipped with the quadratic form $Q(v) = Q_{0}(\tilde v) $ whenever $v = \tilde v \cdot v_0$ for some $\tilde v\in V_{0}$. One easily checks the following facts on the Clifford algebra $C(V)$ of the quadratic space $V$:

\begin{lemma} The inclusion $V \subset C$ induces isomorphisms $C(V) \simeq C$ and $C^+( V) \simeq B$. In particular, $GSpin(V) \simeq \Gbf$ as algebraic groups over $\Q$. \qed
\end{lemma}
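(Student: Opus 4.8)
The plan is to verify directly that the given relations define a Clifford algebra and then unwind the definition of $GSpin$. First I would check that the quadratic form $Q$ on $V = V_0 \cdot v_0$ is nondegenerate of signature $(2,2)$: since $v_0^2 = 1$ and multiplication by $v_0$ is a bijection $V_0 \to V$, and $Q(\tilde v v_0) = Q_0(\tilde v)$, the form $Q$ is isometric to $Q_0$, which we already know is nondegenerate of signature $(2,2)$ from \eqref{eqn:Q0DiagForm} and the surrounding discussion. Next, I would compute, for $v = \tilde v v_0 \in V$ with $\tilde v \in V_0$, the square
\[
	v^2 \ = \ \tilde v v_0 \tilde v v_0 \ = \ \tilde v \cdot {}^{\sigma}\tilde v \cdot v_0^2 \ = \ \tilde v \cdot {}^{\sigma}\tilde v,
\]
and observe that by definition of $V_0$ we have ${}^{\sigma}\tilde v = {}^{\iota}\tilde v$, so $v^2 = \tilde v \cdot {}^{\iota}\tilde v = \Nrd(\tilde v) = Q_0(\tilde v) = Q(v)$ in $C$. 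Thus the inclusion $V \hookrightarrow C$ satisfies the defining Clifford relation, hence factors through an algebra homomorphism $C(V) \to C$.

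Then I would argue this map is an isomorphism by a dimension count. The Clifford algebra $C(V)$ of a $4$-dimensional quadratic space has dimension $2^4 = 16$ over $\Q$; on the other hand $C = B\langle v_0 \rangle$ is free of rank $2$ as a left $B$-module (with basis $1, v_0$), hence has dimension $2 \cdot 8 = 16$ as well. Since a surjective map between $\Q$-algebras of equal finite dimension is an isomorphism, it suffices to check surjectivity: the image contains $V$, hence contains all products of elements of $V$; because $V_0 = V \cdot v_0$ and $v_0 \in V_0 v_0 \cdot v_0^{-1}$... more cleanly, for any $\tilde v \in V_0$ the element $\tilde v v_0 \in V$ lies in the image, and the image is a subalgebra, so it contains $(\tilde v v_0)(\tilde w v_0) = \tilde v\,{}^{\sigma}\tilde w$ for all $\tilde v, \tilde w \in V_0$; since $V_0$ spans $B_0 \otimes F = B$ over $\Q$ (indeed $V_0$ has the same $\Q$-dimension $4$ as $B_0$, and ${}^{\sigma}$ permutes a spanning set), products $\tilde v\,{}^{\sigma}\tilde w$ span $B$, and then $B \cdot (\tilde v v_0)$ recovers $B v_0$; so $B + Bv_0 = C$ lies in the image. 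This gives $C(V) \simeq C$, and restricting to the even part, $C^+(V)$ corresponds to the degree-zero part of $C$ under the natural $\Z/2$-grading, which is exactly the subalgebra generated by products of two elements of $V$, i.e.\ (by the computation above) $B$; so $C^+(V) \simeq B$.

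Finally, I would deduce the statement on algebraic groups. By definition $GSpin(V) \subset C^+(V)^{\times}$ is the subgroup of invertible even Clifford elements $g$ such that $g V g^{-1} = V$ (the spinor similitude group). Under the identification $C^+(V) \simeq B$, conjugation by $b \in B^{\times}$ on $V = V_0 v_0 \subset C$ corresponds, via $v_0 b v_0^{-1} = {}^{\sigma}b$, to the twisted conjugation $\tilde v \mapsto b \tilde v\, {}^{\sigma}b^{-1}$ on $V_0$; one checks this preserves $V_0$ (using that ${}^{\sigma}$ and $\iota$ commute and the defining condition ${}^{\sigma}\tilde v = {}^{\iota}\tilde v$ is stable under $\tilde v \mapsto b \tilde v\,{}^{\sigma}b^{-1}$ precisely when... ) and rescales $Q_0$ by $\Nrd(b)$, so every $b \in B^{\times}$ gives an element of the spinor similitude group, and conversely. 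Matching the similitude characters — the spinor norm on $GSpin(V)$ with $\Nrd$ on $B^{\times}$ — identifies $GSpin(V)$ with the group $\Gbf$ of \eqref{def:G} of units of $B$ with reduced norm in $\Q^{\times}$, functorially in $\Q$-algebras. The main obstacle I anticipate is the bookkeeping in this last step: one must be careful that the abstract definition of $GSpin$ used (e.g.\ as in \cite{KR-HB}) matches the concrete twisted-conjugation action, and that the clutch of involutions $\iota$, $\sigma$ on $B$ and on $C$ interact correctly so that the action genuinely preserves $V$ rather than some larger space; the Clifford-algebra identification itself is essentially a dimension count once the key relation $v^2 = Q(v)$ is in hand.
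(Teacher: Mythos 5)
The paper offers no proof of this lemma (it is prefaced by ``one easily checks''), and your overall strategy --- verify the Clifford relation $v^2 = Q(v)\cdot 1$, identify $C(V)$ with $C$ by surjectivity plus a $16 = 16$ dimension count, and then unwind the definition of $GSpin$ --- is certainly the intended one. The central computation $v^2 = \tilde v\,{}^{\sigma}\tilde v = \tilde v\,{}^{\iota}\tilde v = \Nrd(\tilde v) = Q(v)$ is correct. However, two of your intermediate claims are false as stated, and each one sits exactly where the specific structure of $V_0$ matters.

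First, the surjectivity step. $V_0$ does \emph{not} span $B$: it is $4$-dimensional while $B$ is $8$-dimensional over $\Q$ (concretely $V_0 = \Q\cdot 1 \oplus B_0^{0}\varpi$, where $B_0^{0}$ is the trace-zero part of $B_0$), and the products $\tilde v\,{}^{\sigma}\tilde w$ span only the $7$-dimensional subspace $B_0 \oplus B_0^{0}\varpi$ --- the central element $\varpi$ is not a linear combination of such products. This is the familiar fact that for $\dim V = 4$ the even Clifford algebra is not spanned by scalars and degree-two products; the degree-four part is needed. The argument is repaired by using that the image is a subalgebra: for instance $(i\varpi\, v_0)(j\varpi\, v_0) = -\dF\, k$ gives $k$ and hence $B_0$, and then $k\cdot(k\varpi) = k^2\varpi \in \Q^{\times}\varpi$ gives $\varpi$; so the subalgebra generated by the degree-two products is all of $B$, and $B + B v_0 = C$ lies in the image. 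You should replace ``products span $B$'' by ``the subalgebra generated by these products is $B$'' and supply this computation.

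Second, in the $GSpin$ step the assertion that ``every $b \in B^{\times}$ gives an element of the spinor similitude group'' is wrong, and the verification you left as an ellipsis is precisely where the defining condition of $\Gbf$ in \eqref{def:G} enters. Under $C^+(V)\simeq B$, conjugation by $b$ sends $\tilde v v_0$ to $(b\tilde v\,{}^{\sigma}b^{-1})v_0$, and using ${}^{\iota}b = \Nrd(b)\,b^{-1}$ together with the commutation of $\iota$ and $\sigma$ one finds that $b\tilde v\,{}^{\sigma}b^{-1}$ satisfies the defining equation ${}^{\sigma}(\cdot)={}^{\iota}(\cdot)$ of $V_0$ for all $\tilde v \in V_0$ if and only if ${}^{\sigma}\Nrd(b) = \Nrd(b)$, i.e.\ $\Nrd(b)$ lies in $\mathbb{G}_{m/\Q}\subset \mathrm{Res}_{F/\Q}\mathbb{G}_m$. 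If every $b$ normalized $V$ you would conclude $GSpin(V) \simeq \mathrm{Res}_{F/\Q}B^{\times}$, which has the wrong dimension ($9$ rather than $7$). With this correction the identification $GSpin(V)(A) = \{b \in (B\otimes A)^{\times} : \Nrd(b)\in A^{\times}\} = \Gbf(A)$ is functorial in $A$ and the spinor norm matches $\Nrd$, completing the proof.
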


\subsection{Lattices and symplectic structure.} 
Consider the lattice
\[ L \ := \ \Ocal_B \cdot v_0 \ \cap \ V; \]
by construction, the quadratic form on $V$ described in the previous section restricts to an integral form on $L$, and so we may view $L$ as a quadratic lattice of signature $(2,2)$. Note moreover that $C^+(L) = \Ocal_B$.
%

The inclusion $C(L) \hookrightarrow C(V)$ induces an action of $C(L)$ on $C(V)$ by left multiplication. In addition, the ring of integers $\OF$ embeds into $B = B_0 \otimes F$ and hence into $C(V)$, so there is an action of $\OF$ on $C(V)$ by right multiplication. The two aforementioned actions commute, thus endowing $C(V)$ with the structure of a $C(L) \otimes_{\Z} \OF$-module.

Fix an element 
\begin{equation} \label{eqn:thetadef} \theta \in B_0 \cap C^+(L)\end{equation}
 with ${}^{\iota} \theta = - \theta $ and $\theta^2 = - D_{B}$. Then the involution of $C(V)$ defined by
\[ c \mapsto c^* = \theta \cdot {}^{\iota} c \cdot \theta^{-1} \]
is positive, and the alternating form
\begin{equation}\label{eqn:UZPol}
	\langle c_1, c_2 \rangle \ := \ \Trd(    c_2^{\iota} \cdot \theta  \cdot c_2)
\end{equation}
is non-degenerate. Note also that
\[ \langle (c \otimes a) \cdot c_1 , \ c_2 \rangle \ = \ \langle c_1,  \ (  c^* \otimes a) \cdot c_2 \rangle  \qquad \text{ for all } c\otimes a \in C(L) \otimes \OF. \]
This motivates extending the involution $\ast$ to $C(L)\otimes\OF$ by acting trivially on the second factor. 


\paragraph{Notation:} It will be useful to write $U$ for the symplectic $C(L) \otimes \OF$-module given by the couple $(C(V),\langle\cdot,\cdot\rangle)$, and $\langle\cdot,\cdot\rangle_{U}$ for its symplectic form.

\subsection{A PEL moduli problem}  Recall that we had fixed  a compact open subgroup $K \subset \Gbf(\A_{\Q,f})$, which we assume is sufficiently small; in addition, we fixed an integer $N$ such that $K$ is maximal outside $N$, and that is divisible by $2 \, \dF \, \mathrm{disc}(B)$.

Following \cite[\S 12]{KR-HB}, consider the moduli functor  $\Mcal_K$ over $\Spec(\Z[1/N])$ that attaches to a scheme $S \to \Spec \Z[1/N]$ the set of $N$-primary isogeny classes of tuples
\[ \Mcal_K(S) \ = \ \{ \underline A =  (A, i, \lambda, \overline \eta) \} / \simeq \] where
	\begin{enumerate}[(i)]
		\item $A$ is an abelian scheme of relative dimension 8 over $S$;
		\item $\lambda \colon A \to A^{\vee}$ is a $\Z[1/N]^{\times}$-class of principal polarizations;
		\item $i \colon C(L) \otimes_{\Z} \OF \to \End_S(A) \otimes_{\Z} \Z[1/N]$ is a map such that:
			\begin{itemize}
				\item	$ \det( i(c \otimes a) |_{\Lie A} ) \ = \ \Nrd(c)^2 \cdot N_{F/\Q}(a)^4 \ \in \ \mathcal O_S$ ;
				\item 
				$i(c \otimes a)^* \ = \ i( c^* \otimes a)$ for the Rosati involution $*$ induced by $\lambda$; since we extended $\ast$ to $C(L)\otimes\OF$ by acting trivially on the second factor, the condition is more compactly stated as $i(b)^{\ast}=i(b^{\ast})$ for $b\in C(L)\otimes\OF$.
			\end{itemize}
		\item $\overline \eta$ is a $K_N$-equivalence class of $C(L) \otimes \OF$-linear isomorphisms
				\[ \eta \colon \Ta_N(A) \otimes_{\Z} \Q  \ \isomto \ U(\A_N) \]
			such that the pullback of the symplectic form $\langle \cdot, \cdot \rangle_{U}$ under $\eta$ is an $\A_N^{\times}$-multiple of the pairing induced by $\lambda$. 
			
			Here
			\[ K_N = \prod_{p\mid N}K_{p} \ \qquad \text{ and } \qquad \A_{N}=\prod_{p\mid N}\Q_{p}, \]
			and $\Ta_{N}(A)=\prod_{p\mid N}\Ta_{p}(A) $ is the product of the $p$-adic Tate modules $\Ta_p(A)$ of $A$. 
	\end{enumerate}
An \emph{$N$-primary isogeny} between tuples $\underline A$ and $\underline A'$ is a $C(L) \otimes \OF$-equivariant isogeny 
\[ \varphi \colon A \to A' \]
 such that $(i)$ the prime factors of  $\deg \varphi$  divide  $N$; and $(ii)$ the pullbacks of $\lambda_{A'}$ (resp.\ $\eta_{A'})$ lie in the same equivalence class as $\lambda_A$ (resp. $\eta_A$). 

\begin{proposition}[{\cite[\S 12]{KR-HB}}] \label{prop:MoKUnif} If $K$ is sufficiently small, then the moduli problem $\Mcal_K$ is representable by a smooth and projective scheme over $\Spec(\Z[1/N])$, and
\[ \Mcal_K(\C) \ \simeq \ M_K \ = \  \Gbf(\Q) \ \backslash X \times \Gbf(\A_{\Q,f}) / K, \]
where $X = \{ (z_1, z_2) \in \C^2 \ | \ \Imag(z_1) \cdot \Imag(z_2) > 0  \}$.
\end{proposition}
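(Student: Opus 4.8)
This is \cite[\S 12]{KR-HB}; the plan is to recall the four ingredients of that argument --- representability by a quasi-projective $\Z[1/N]$-scheme, smoothness, properness, and the complex uniformization --- and to indicate where the hypotheses enter. For representability I would observe that the data assembled in the preceding subsections, namely the $\Z$-order $C(L)\otimes_{\Z}\OF$ inside the semisimple algebra $C(V)\otimes_{\Q}F$, the positive involution $\ast$, the symplectic $C(L)\otimes\OF$-module $U=(C(V),\langle\cdot,\cdot\rangle_U)$, and the Hodge cocharacter attached to $h_z$ as in \eqref{eqn:JzDef}, constitute a PEL datum of the type handled by Kottwitz, whose reflex field is $\Q$ by the preceding Lemma together with the signature computation \eqref{eqn:Q0DiagForm}; positivity of $\ast$ and non-degeneracy of $\langle\cdot,\cdot\rangle_U$ were checked above. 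Since $K$ is sufficiently small, objects of $\Mcal_K$ admit no non-trivial automorphisms (rigidity), and the usual quotient construction via geometric invariant theory then represents $\Mcal_K$ by a quasi-projective scheme over $\Spec\Z[1/N]$.

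Smoothness over $\Z[1/N]$ I would deduce from Grothendieck--Messing deformation theory. The role of the conditions on $N$ (that $2\,\dF\,D_B\mid N$ and that $K_p$ be maximal for $p\nmid N$) is precisely that, for such $p$, the lattice $L\otimes\Z_p$ is self-dual and the $p$-divisible group with its $C(L)\otimes\OF$-action is of a type for which the determinant (Kottwitz) condition $\det(i(c\otimes a)|_{\Lie A})=\Nrd(c)^2 N_{F/\Q}(a)^4$ carves out a \emph{smooth} closed subscheme of the naive local model, i.e.\ of the relevant Grassmannian of admissible Hodge filtrations on the crystal. Deformations of a point of $\Mcal_K$ are then unobstructed, so $\Mcal_K$, being of finite type, is smooth over $\Z[1/N]$, of relative dimension $2$ (as one reads off from $\dim_{\C}X$).

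The main obstacle is properness, and it is exactly here that the standing hypothesis $B\neq M_2(F)$ --- that is, that $B$ is a division algebra --- is used. I would check the valuative criterion: given a discrete valuation ring $R$ of residue characteristic prime to $N$ and a point of $\Mcal_K$ over $\mathrm{Frac}(R)$, a finite base change puts the underlying abelian variety in semi-abelian reduction, and the cocharacter group of the toric part of the special fibre, rationalized, is a non-zero $C(L)\otimes\OF$-stable subquotient of $U\otimes\Q\simeq C(V)$ --- equivalently, via the $GSpin$ realization, it produces an isotropic $\Q$-subspace of $(V,Q)$. But $Q$ is, up to the twist by $v_0$, the restriction of the reduced norm of $B$, hence \emph{anisotropic} over $\Q$ as soon as $B$ is a division algebra; so the toric part vanishes, the reduction is abelian, and the point extends over $R$ by the N\'eron mapping property, the auxiliary finite extension being harmless since good reduction descends. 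Together with quasi-projectivity this yields projectivity over $\Z[1/N]$. (This is the integral incarnation of the fact that $M_K$ is compact precisely because $\Gbf$ is anisotropic modulo its centre.) I expect the one genuinely delicate point to be the passage between the degeneration data of the abelian variety and the linear algebra of $(V,Q)$; everything else is formal.

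Finally, for the complex uniformization I would run the standard dictionary between PEL moduli and adelic double cosets: a $\C$-point of $\Mcal_K$ is, up to $\Q$-isogeny, a complex structure on $C(V)\otimes_{\Q}\R$ commuting with $C(L)\otimes\OF$ and polarizing $\langle\cdot,\cdot\rangle_U$, together with a $K$-orbit of level data, and the space of such complex structures is a single $GSpin(V)(\R)$-conjugacy class, which the preceding Lemma identifies with the $\Gbf(\R)$-conjugacy class of the cocharacters $h_z$, i.e.\ with $X$. The level data trace out $\Gbf(\A_{\Q,f})/K$ and the isogenies the left $\Gbf(\Q)$-action, giving $\Mcal_K(\C)\simeq\Gbf(\Q)\backslash X\times\Gbf(\A_{\Q,f})/K=M_K$ as claimed in Proposition~\ref{prop:MoKUnif}; the only thing I would verify by hand is that the matrices $J_z$ of \eqref{eqn:JzDef} make the two presentations of the Shimura datum coincide on the nose.
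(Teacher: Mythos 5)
The paper gives no proof of this proposition beyond the citation to \cite[\S 12]{KR-HB}, and your sketch is a faithful reconstruction of the standard PEL argument there: representability and smoothness from Kottwitz's framework together with the self-duality of $L\otimes\Z_p$ for $p\nmid N$ and the determinant condition, properness from the anisotropy of $(V,Q)$ (equivalently of $\Gbf$ modulo its centre) since $B$ is a division algebra, and the uniformization from the usual dictionary between isogeny classes with level structure and adelic double cosets. Your self-identified delicate point --- relating the degeneration data to an isotropic subspace of $V$ rather than merely a $C(L)\otimes\OF$-stable isotropic subspace of $U$ --- is indeed where the real work lies, but the overall route is the correct one.
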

For the applications we have in mind, it will be more convenient to consider the base change to $\OF[1/N]$:
\begin{definition} \label{def:MKdef}
Let $\Scal = \Spec \OF[1/N]$, and consider the base change
	\[ \Mcal_{K ,\Scal}\ :=\ \Mcal_K \ \times_{\Z[1/N]}  \ \Scal. \]
Because $\dF $ divides $N$, the scheme $\Mcal_{K,\Scal}$ is again projective and smooth over $\Scal$; in particular, it is an arithmetic variety over $\Scal$.
\end{definition}


\subsection{Automorphic vector bundles}\label{subsec:automorphic-vector}
In the following sections, we fix a sufficiently small $K \subset \Gbf(\A_{\Q,f})$, and suppress it from the notation, writing e.g. $\Mcal = \Mcal_K$, $\Mcal_{\Scal} = \Mcal_{K,\Scal}$, etc.
\subsubsection{The cotangent bundle and the Petersson metric.}\label{subsubsec:Petersson-metric}

Since $\Mcal_{\Scal}\to \Scal=\Spec\OF[1/N]$ is smooth, the sheaf of relative K\"ahler differentials $\Omega_{\Mcal_\Scal/\Scal}$ is locally free of rank $2$. On $\Mcal_{\Scal}(\C)$, the attached analytic vector bundle is described as follows. First of all, we decompose $\Mcal_{\Scal}(\C)$ into connected components: for this, let $v_1, v_2 \colon F \to \C$ denote the two complex embeddings of $F$ into $\C$, and write
	\[ \Mcal_{\Scal}(\C) \ = \ \Mcal_{v_1}(\C) \ \coprod \ \Mcal_{v_2}(\C), \]
	where, by Proposition \ref{prop:MoKUnif} and \eqref{eqn:ShBUnif}, we have
	\[ \Mcal_{v_j}(\C) \ \simeq  \ \Gbf(\Q) \backslash X \times \Gbf(\A_{\Q,f}) / K \  \simeq \  \coprod_{i} \Gamma_i \backslash \lie H^2 \]
for $j=1,2$ and some arithmetic subgroups $\Gamma_i \subset \Gbf(\Q)^+$.

As an analytic coherent sheaf, the restriction of $\Omega_{\Mcal_{\Scal}(\C)}$ to a component $\Gamma_{i}\backslash\lie H^{2}$ is canonically isomorphic to the sheaf of $\Gamma_{i}$ invariant holomorphic differentials on $\lie H^{2}$. In the coordinates $(z_{1},z_{2})$ of $\lie H^{2}$, a local section $\theta$ of $\Omega_{\Mcal_{\Scal}(\C)}$ can be written
\begin{displaymath}
	\theta=f(z_{1},z_{2})dz_{1}+g(z_{1},z_{2})dz_{2},
\end{displaymath}
where $f$ and $g$ are holomorphic functions on some $\Gamma_{i}$ invariant analytic open subset of $\lie H^{2}$. Define a  hermitian metric $||\cdot ||_P$ on $\Omega_{\lie H^2}$ by setting\footnote{The normalization appearing here has been chosen to coincide with that of \cite{BBK}.}, at a point $\zbf= (z_1, z_2) \in \lie H^2$, 
\begin{displaymath}
	|| dz_{1} ||_{P,\zbf}^{2}\ = \ 16 \, \pi ^2  \, \Imag(z_1)^{2},\quad || dz_{2} ||_{P,\zbf}^{2}\ = \ 16 \, \pi^2 \, \Imag(z_2)^{2},\quad \langle dz_{1},dz_{2}\rangle_{P,\zbf}=0.
\end{displaymath}

Since this metric is easily seen to be $\Gamma_i$-invariant, it descends to a smooth hermitian metric $||\cdot||_P$ on the vector bundle $\Omega_{\Mcal_{\Scal}(\C)}$ that we call the \emph{Petersson metric}, and we obtain a hermitian vector bundle
\[ \overline \Omega_{\Mcal_{\Scal}/\Scal} \ := \ (\Omega_{\Mcal_{\Scal}/\Scal},\|\cdot\|_{P}). \]

\subsubsection{The universal Lie algebra and Kodaira-Spencer morphisms.} The moduli scheme $\Mcal_{\Scal}$ admits a universal abelian scheme, with additional structure, that we denote
\[ \underline{\Abf} \ = \  \left( \Abf, i_{\Abf} , \lambda_{\Abf} , \overline \eta_{\Abf} \right). \]
If $T_{\Abf/\Mcal_{\Scal}}$ is the relative tangent bundle and $e\colon\Mcal_{\Scal}\to\Abf$ the zero section, define the \emph{universal relative Lie algebra} to be
\begin{displaymath}
	\Lie(\Abf):=e^{\ast} T_{\Abf/\Mcal_{\Scal}},
\end{displaymath}
a locally free $\mathcal{O}_{\Mcal_{\Scal}}$-module of rank $8$. 

This module admits a decomposition into two rank four pieces, as follows. The action $i_{\Abf}$ induces a morphism
	\[ \Lie(i_{\Abf}) \colon C(L) \otimes_{\Z} \OF  \ \to \ \End_{\mathcal O_{\Mcal_{\Scal}}}( \Lie(\Abf) ); \]
we will often employ the abbreviation $ i = \Lie(i_{\Abf})$ and hope the meaning remains clear despite the abuse of notation. Restricting this action to elements of the form $1 \otimes a \in C(L) \otimes \OF$ yields a $\Z/2$ grading 
	\[ \Lie(\Abf)  = \Lie(\Abf)_1 \oplus \Lie(\Abf)_2, \]
 where
	\[ \Lie(\Abf)_1 \ := \ \{x \in \Lie(\Abf) \ |  \ i \left(1 \otimes a\right) (x) \ = \ \tau(a) \cdot x \ \text{ for all } a \in \OF \} \]
and
	\[ \Lie(\Abf)_2 \ := \ \{x \in \Lie(\Abf) \ | \ i \left(1 \otimes a\right) (x) \ = \ \tau(a') \cdot x \ \text{ for all } a \in \OF \}; \]
here $\tau \colon \OF[1/N] \to \mathcal O_{\Mcal_{\Scal}} $ is the structure map and  $a \mapsto a'$ is the non-trivial automorphism of $F$. In this splitting, we are implicitly using the assumption $2 \dF\mid N$, hence $2 \dF \in \mathcal{O}_{\Mcal_{\Scal}}^{\times}$.

The Kodaira-Spencer map, cf.\ \cite[Theorem 6.4.1.1]{Lan}, gives rise to a canonical exact sequence
	\begin{equation*}
		\begin{CD}
			0 @>>> \ker(KS) @>>> 	\Lie(\Abf)^{\vee} \, \otimes_{\mathcal O_{\Mcal_{\Scal}} }  \Lie(\Abf^t)^{\vee}  @>KS>> \Omega_{\Mcal_{\Scal}/\Scal} @>>> 0
		\end{CD}
	\end{equation*}
	where the superscript ${}^{\vee}$ denotes the $\mathcal O_{\Mcal_{\Scal}}$-linear dual module, $\Abf^t$ is the dual abelian scheme, and the kernel $\ker(KS)$ is the submodule generated by
	\begin{equation*}
		\left\{ \lambda^{\vee} (x) \otimes y - \lambda^{\vee}(y) \otimes x, \  i(b)^{\vee} z \otimes y - z \otimes i(b)^{t,\vee} y  \ | \ x,y \in \Lie(\Abf^{t})^{\vee}, \ z \in \Lie(\Abf)^{\vee}\text{ and } b \in C(L) \otimes \OF \right\} .
	\end{equation*}
Using the polarization $\lambda = \lambda_{\Abf}$ to identify $\Lie(\Abf)$ with $\Lie(\Abf^t)$, together with the compatibility of the Rosatti involution and the involution $\ast$ on $C(L)\otimes\OF$, we obtain a canonical isomorphism
{ \small	
\begin{equation} \label{eqn:KSbig} \Omega_{\Mcal_{\Scal}/\Scal} \ \simeq \ \left\{ \varphi \in \Hom \left(\Lie(\Abf), \Lie(\Abf)^{\vee} \right)  \ | \ \varphi \text{ is symmetric and } \varphi \circ i(b) \ = \ i(b^*)^{\vee} \circ \varphi \ \text{ for all } b \in C(L) \otimes \OF \ \right\}  \end{equation}}

Fix an element $\varpi \in \OF$ with $\varpi' = - \varpi$ and $\varpi^2 = \dF$, and note that $2 \tau(\varpi) \in \mathcal O_{\Mcal}^{\times}$. If $x \in \Lie(\Abf)_1 $, $y \in \Lie(\Abf)_2$ and $\varphi$ is as in \eqref{eqn:KSbig}, then
\begin{align*}  \tau(\varpi) \cdot \varphi(x)(y)  \ = \  \varphi \left(i(1\otimes \varpi) x\right)(y)  \ = \  \varphi \left( x\right) \left( i(1 \otimes \varpi ) y\right) \ = \ - \tau(\varpi) \cdot \varphi(x)(y),  \end{align*} 
	which in turn implies $\varphi(x)(y) = 0$.
From this, it follows that the morphisms in \eqref{eqn:KSbig} preserve the $\Z/2$ grading, and so $\Omega_{\Mcal_{\Scal}/\Scal}$ decomposes into a sum of line bundles
	\begin{equation} \label{eqn:KSmap}
	\Lcal_1 \oplus \Lcal_2 \ \simeq \ \Omega_{\Mcal_{\Scal}/\Scal},
	\end{equation}
where 
	{\small \[ \Lcal_j \ = \ \left\{ \varphi \in \Hom \left(\Lie(\Abf)_j, \Lie(\Abf)_j^{\vee} \right)  \ | \ \varphi \text{ is symmetric and }\varphi \circ i(b) \ = \ i(b^*)^{\vee} \circ \varphi \ \text{ for all } b \in C(L)   \right\}; \]}
here we observe that $\Lie(\Abf)_j$ is a $C(L) \otimes \mathcal O_{\Mcal}$-module of rank one, and in particular the symmetry condition in \eqref{eqn:KSbig} follows automatically from the equivariance.

The next statement describes the compatibility of the decomposition \eqref{eqn:KSmap} with the pointwise Petersson metric $||\cdot||_{P}$.

\begin{proposition} \label{prop:OmegaCpxDecom}
(i) Fix a complex embedding $v\colon \OF[1/N] \hookrightarrow \C$. The decomposition
\[ \mathcal L_{1,v} \ \oplus \ \mathcal L_{2,v} \ \simeq \Omega_{\Mcal_{v}(\C)}  \]
induced by \eqref{eqn:KSmap} is orthogonal with respect to the metric $|| \cdot ||_{P} $. 

(ii) For $i = 1,2$, the square of the first Chern form of the line bundle $\Lcal_{i,v}$ with the induced Petersson metric vanishes:
\[ c_1( \ov{\Lcal_{i,v}}  )^2 = 0.\]

(iii) There is an equality of Chern-Weil differential forms 
\[ c_1(\ov{\Omega_{\Mcal_{v}(\C)}})^2 \ =  \ 2 c_2(\ov{\Omega_{\Mcal_{v}(\C)}}). \]

\begin{proof} We work over a single component in the decomposition
\[ \Mcal_{v} (\C) \ = \ \coprod_i \Gamma_i \backslash \lie H^2. \]
Let $\mathbf A_{/ \lie H^2}$ denote the pullback of the universal abelian scheme $\mathbf A$ via the map $\lie H^2 \to \Gamma_i \backslash \lie H ^2 \subset \Mcal_{v}(\C) $. The de Rham homology $H^{dR}_{1}(\mathbf A_{ / \lie H^2})$, with its $C(L) \otimes \OF$ action and polarization, can be identified with the constant sheaf $\underline{U_{\C}}$, where
\[ U_{\C} \ = \ B_{\C }\ \oplus \ B _{\C} \cdot v_0 \ \simeq \ \left( M_2(\C) \oplus M_2(\C) \right) \ \oplus \ \left( M_2(\C) \oplus M_2(\C) \right) \cdot v_0; \]
here we used the isomorphism $B_{\R} \simeq M_2(\R) \oplus M_2(\R)$. 
Define
\[ \mathcal C_1 := \left\{ c \in U_{\C} \ |  \ i(1 \otimes a) c  = v(a) \cdot c \ \text{for all } a \in \OF \right\} \  = \ \{ (X,0) + (0,Y) \cdot v_0 \ | \ X,Y \in M_2(\C) \} \]
and
\[ \mathcal C_2 := \left\{ c \in U_{\C} \ |  \ i(1 \otimes a) c  = v(a') \cdot c \  \text{for all } a \in \OF \right\} \  = \ \{ (0,X) + (Y,0) \cdot v_0 \ | \ X,Y \in M_2(\C) \}.\]

Given a point $\zbf  = (z_1,z_2)\in \lie H^2$, let $A_{\zbf}$ denote the corresponding abelian variety, which is determined by complex structure \eqref{eqn:JzDef}. The (dual of the) Hodge exact sequence then reads
	\[ \begin{CD} 0 @>>> U_{\C}^{(0,-1)} \ = \ \Lie(A_{\zbf}^t)^{\vee} @>>> U_{\C} @>>> \Lie(A_{\zbf}) \ = \ U^{(-1,0)}_{\C} @>>> 0 \end{CD} \]
Concretely, we may identify the graded component
\[ \Lie( A_{\zbf})_1 \ = \ \{ c \in \mathcal C_1 \ | \ i \, c \ = c \cdot J_{\mathbf z} \}  \]
where 
\[ J_{\zbf} \ = \ \left( j_{z_1}, \ j_{z_2} \right) \ = \ \left(\frac{1}{y_1} \begin{pmatrix} x_1 & - x_1^2 - y_1^2 \\ 1 & - x_1 \end{pmatrix}, \  \frac{1}{y_2} \begin{pmatrix} x_2 & - x_2^2 - y_2^2 \\ 1 & - x_2 \end{pmatrix} \right) \in  B^{\times}(\R) \simeq  GL_2(\R) \times GL_2(\R).  \] 
On the other hand, for an element $c = (X,0) + (0,Y) v_0 \in \mathcal C_1$ 
	\begin{align*}
		c \cdot J_{\zbf} \ =& \ [(X, \ 0) + (0, \ Y) v_0] \cdot (j_{z_1}, \ j_{z_2}) \\
		=& \ \left( X \cdot j_{z_1} , \  0  \right) \ + \ (0, \ Y) \cdot (j_{z_2},    \ j_{z_1}) \cdot v_0 \\
		=&  \  \left( X \cdot j_{z_1} , \  0  \right) \ + \ (0, \ Y\cdot    j_{z_1}) \cdot v_0 ,
	\end{align*}
so
\[ (\Lie(\Abf_{ / \lie H^2})_1)_{\zbf} \ = \  \Lie(A_{\zbf})_1 \ = \ \left\{ c = (X,0)  + (0,Y) v_0 \in \mathcal C_1 \ | \  X\cdot (i - j_{z_1} ) \ = \ Y\cdot(i - j_{z_1}) \ = \ 0 \right\}. \]
From this description, it is easy to verify that the four nowhere vanishing sections
\[
\left( \begin{pmatrix} 1 & - \overline{z_1} \\ 0 & 0 \end{pmatrix}, \ 0 \right), \  \ \  \left( \begin{pmatrix} 0 & 0 \\ 1 & - \overline{z_1}  \end{pmatrix}, \ 0 \right),   \ \  \left( 0 , \ \begin{pmatrix} 1 & - \overline{z_1} \\ 0 & 0 \end{pmatrix}  \right)\cdot v_0, \ \ \  \left(0,  \begin{pmatrix} 0& 0 \\ 1 & - \overline{z_1}  \end{pmatrix} \right) \cdot v_0
\]
form a basis of $(\Lie(\Abf_{\ \lie H^2})_1)_{\zbf}$, where $\zbf = (z_1, z_2)$, and moreover the first element
\[ \sigma^{(1)}_{\mathbf z} \ := \ \left( \begin{pmatrix} 1 & - \overline{z_1} \\ 0 & 0 \end{pmatrix}, \ 0 \right)  \]
generates  $ \Lie(\Abf_{ / \lie H^2})_{1} $ as a $C(L)\otimes \Ocal_{\lie H^2}$-module. Similarly, the section
\[ \sigma^{(2)}_{\mathbf z} \ = \ \left( 0, \  \begin{pmatrix} 1 & - \overline{z_2} \\ 0 & 0 \end{pmatrix} \right)  \]
is a $C(L) \otimes \Ocal_{\lie H^2}$-module generator of $\Lie(\Abf_{ / \lie H^2})_{2}$. 
Moreover, note that at the point $\zbf$, a section of the line bundle
	{\small \[ \Lcal_j \ = \ \left\{ \varphi \in \Hom \left(\Lie(\Abf)_j, \Lie(\Abf)_j^{\vee} \right)  \ | \ \varphi \text{ is symmetric and }\varphi \circ i(b) \ = \ i(b^*)^{\vee} \circ \varphi \ \text{ for all } b \in C(L)   \right\} \]}
	is determined by its value at $\sigma^{(j)}_{\zbf}$; we may therefore define a nowhere vanishing section $\Phi^{(j)}_{\zbf}$ of $\Lcal_j$ over $\lie H^2$ by specifying the normalization
\[ \Phi^{j}_{\zbf} (\sigma^{(j)}_{\mathbf z})(\sigma^{(j)}_{\mathbf z}) = 1.\]
From this, one can verify that the Kodaira-Spencer map identifies
\[ KS(\Phi^{(j)}_{\zbf})  = \frac{1}{(D_B)^{1/2} \, (y_j)^{2}} \, dz_j \]
and so 
\[ \Lcal_j  \simeq \mathcal O_{\lie H^2} \cdot d z_j \]
under the Kodaira-Spencer map.

The first claim of the lemma now follows immediately from the definition of the metric $||\cdot ||_{P}$, as $dz_1$ and $dz_2$ were declared to be orthogonal. 

We note in passing that the above identifications allow us express the norm of a section $\varphi$ of $\Lcal_j$ as
	\begin{equation} \label{eqn:metricLj}
			|| \varphi ||^2_{\zbf} \ = \ \frac{16 \pi^2}{D_B} \cdot y_j^{-2 } \cdot | \varphi( \sigma^{(j)}_{\zbf})(\sigma^{(j)}_{\zbf}) |^2 
	\end{equation}
We also find for $j=1,2$
\[ c_1(\Lcal_j,||\cdot||_{P}) \ = \ - dd^c \log|| dz_j||^2_{P}  \ = \  -   d d^c  \log (y_j^2) \ = \ -  \frac{1}{4 \pi i } \frac{ dz_j \wedge d \overline{z_j}}{y_j^2} \]
and claim (ii) follows immediately. 

Finally, the fact that the Chern-Weil form attached to the total Chern class is multiplicative for orthogonal direct sums of line bundles implies that 
\[ c_2(\ov{\Omega_{\Mcal_{v}(\C)}}) \ = \ c_1(\ov{\Lcal_{1,v}} ) \, c_1(\ov{\Lcal_{2,v}}) \]
while part (ii) implies
\begin{displaymath}
	\begin{split}
		c_1(\ov{\Omega_{\Mcal_{v}(\C)}})^2 \ &= \ \left(c_1(\ov{\Lcal_{1,v}}) \ + \ c_1( \ov{\Lcal_{2,v}})\right)^2\\
		 &= \ 2 \cdot c_1(\ov{\Lcal_{1,v}}) \, c_1(\ov{\Lcal_{2,v}}).
	\end{split}
\end{displaymath}
 This proves (iii) and concludes the proof.
\end{proof}
\end{proposition}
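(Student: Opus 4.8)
The plan is to reduce all three statements to an explicit computation on a single connected component $\Gamma_i \backslash \lie H^2$ of $\Mcal_v(\C)$, where the line bundles $\Lcal_1$ and $\Lcal_2$ become trivial and the Kodaira--Spencer map can be written down by hand. First I would pull back the universal abelian scheme $\Abf$ along $\lie H^2 \to \Gamma_i\backslash \lie H^2 \subset \Mcal_v(\C)$ and identify its first de Rham homology, together with the $C(L)\otimes\OF$-action and the polarization pairing, with the constant local system $\underline{U_\C}$ where $U_\C = B_\C \oplus B_\C \cdot v_0$. Restricting the central $\OF$-action gives the splitting $U_\C = \mathcal C_1 \oplus \mathcal C_2$ into the $v(a)$- and $v(a')$-eigenspaces, and for a point $\zbf = (z_1,z_2)$ the Hodge filtration of $A_\zbf$ (determined by the complex structure $J_\zbf$ of \eqref{eqn:JzDef}) cuts out $\Lie(A_\zbf)_j$ inside $\mathcal C_j$ as the subspace on which the relevant $GL_2(\R)$-factor of $J_\zbf$ acts by $i$; concretely this is the condition $X(i-j_{z_j}) = 0$ on the matrix entries.

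Next I would exhibit an explicit $C(L)\otimes\Ocal_{\lie H^2}$-module generator $\sigma^{(j)}_\zbf$ of $\Lie(\Abf_{/\lie H^2})_j$, namely the section built from the row vector $\left(\begin{smallmatrix} 1 & -\overline{z_j} \end{smallmatrix}\right)$ spanning the left kernel of $i - j_{z_j}$. Since $\Lie(\Abf)_j$ has rank one over $C(L)\otimes\Ocal$, a section $\varphi$ of $\Lcal_j$ (viewed as a symmetric, $C(L)$-equivariant homomorphism $\Lie(\Abf)_j \to \Lie(\Abf)_j^\vee$ via \eqref{eqn:KSmap}) is determined by the scalar $\varphi(\sigma^{(j)}_\zbf)(\sigma^{(j)}_\zbf)$, so normalizing this to $1$ defines a nowhere-vanishing section $\Phi^{(j)}_\zbf$. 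I would then push $\Phi^{(j)}_\zbf$ through the Kodaira--Spencer isomorphism and compute $KS(\Phi^{(j)}_\zbf) = D_B^{-1/2}\, y_j^{-2}\, dz_j$, which yields the identification $\Lcal_j \simeq \Ocal_{\lie H^2}\cdot dz_j$ of $\Lcal_j$ with the coordinate differential.

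With these identifications in hand, parts (i)--(iii) are short. Part (i) is immediate, since $dz_1$ and $dz_2$ were declared orthogonal in the definition of $\|\cdot\|_P$. For part (ii), the metric formula $\|dz_j\|_{P,\zbf}^2 = 16\pi^2 y_j^2$ gives $c_1(\ov{\Lcal_j}) = -dd^c\log y_j^2 = -\tfrac{1}{4\pi i}\, y_j^{-2}\, dz_j\wedge d\overline{z_j}$, a $(1,1)$-form involving only $dz_j$ and $d\overline{z_j}$, hence its square vanishes. For part (iii), multiplicativity of the total Chern--Weil form on the orthogonal decomposition $\Omega_{\Mcal_v(\C)} \simeq \Lcal_{1,v}\oplus\Lcal_{2,v}$ gives $c_2(\ov{\Omega_{\Mcal_v(\C)}}) = c_1(\ov{\Lcal_{1,v}})\,c_1(\ov{\Lcal_{2,v}})$ and $c_1(\ov{\Omega_{\Mcal_v(\C)}})^2 = c_1(\ov{\Lcal_{1,v}})^2 + 2\,c_1(\ov{\Lcal_{1,v}})\,c_1(\ov{\Lcal_{2,v}}) + c_1(\ov{\Lcal_{2,v}})^2$, and the two square terms drop out by part (ii).

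\textbf{Main obstacle.} The technical heart is the explicit evaluation of the Kodaira--Spencer map: one must carry the abstract description \eqref{eqn:KSbig} of $\Omega_{\Mcal_\Scal/\Scal}$ --- symmetric, $C(L)\otimes\OF$-equivariant homomorphisms $\Lie(\Abf)\to\Lie(\Abf)^\vee$, obtained from the universal exact sequence and the polarization $\lambda$ --- all the way down to the holomorphic coordinate differentials $dz_j$ on $\lie H^2$, while keeping careful track of the symplectic form $\langle\cdot,\cdot\rangle_U$ of \eqref{eqn:UZPol}, the element $\theta$ with $\theta^2 = -D_B$, and the resulting normalizing constant $D_B^{-1/2}$. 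Once that identification is pinned down, the module-generator computation and the two Chern-form identities are routine.
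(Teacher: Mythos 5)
Your proposal is correct and follows essentially the same route as the paper's proof: reduction to a component of $\lie H^2$, identification of de Rham homology with the constant sheaf $\underline{U_{\C}}$, the eigenspace splitting $\mathcal C_1\oplus\mathcal C_2$, the explicit generators $\sigma^{(j)}_{\zbf}$, the normalized sections $\Phi^{(j)}_{\zbf}$, and the Kodaira--Spencer evaluation $KS(\Phi^{(j)}_{\zbf}) = D_B^{-1/2} y_j^{-2}\, dz_j$, from which (i)--(iii) follow exactly as you describe. No gaps.
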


\subsubsection{The tautological bundle} \label{sec:MTaut}
Recall that the lattice $L$ embeds naturally into $C(L)$, and it is straightforward to check that the image is
\[ L \ =\ \left\{ x \in C^-(L) \ | \ {}^{\iota} x = x  \right\}  \ = \ \left\{ x \in C(L) \ | \ {}^{\iota} x = x , \ x \delta = - \delta x \right\}, \]
where we recall $\delta \in Z(C^+(L)) = Z(\Ocal_B) = \Ocal_F $ is a fixed element satisfying $\delta' = - \delta$ and $\delta^2 = \dF$.

The group $\Gbf(\Z) = GSpin(L)\subset C(L)^{\times}$ acts on $L$ via conjugation; as this action preserves the quadratic form $Q(x) = x^2$ on $L$, this gives rise to a variation of polarized Hodge structures $ \Lbb_{/\Mcal_{\Scal}(\C)}$ of weight zero over $\Mcal(\C)$ . Concretely, working over a component $\Gamma \backslash \lie H^2 \subset \Mcal_{\Scal}(\C)$, the fibre $\Lbb_{\zbf}$ at point $[\mathbf z] = \Gamma \backslash \lie H^2$ decomposes as
\begin{equation} \label{eqn:LHdgDecomp} \Lbb_{\zbf} \ = \ \Lbb^{(1,-1)}_{\zbf} \ \oplus \ \Lbb^{(0,0)}_{\zbf} \ \oplus \ \Lbb^{(-1,1)}_{\zbf} 
\end{equation}
with 
\begin{equation} \label{eqn:omegaTautBasis} \Lbb_{\zbf}^{(p,q)} \ = \ \{ x \in L \otimes_{\Z} \C \ | \ (a + b J_{\zbf}) \cdot x \cdot (a + b J_{\zbf})^{-1} = (a+ib)^p (a-ib)^q \cdot x  \ \text{ for all } a + ib \in \C   \} 
\end{equation}
where $J_{\zbf} \in B(\R) \subset C(L)\otimes_{\Z} \R $ is as in \eqref{eqn:JzDef}. A straightforward computation reveals that $\Fil^{1}(\Lbb_{\zbf}) = \Lbb_{\zbf}^{(1,-1)}$ is the isotropic line 
\[
	 \Lbb_{\zbf}^{(1,-1)} \ = \ \mathrm{span}_{\C} \left\{  \left( (\begin{smallmatrix} z_1 & -z_1z_2 \\ -1 & - z_2 \end{smallmatrix}) , \, (\begin{smallmatrix} -z_2 & z_1z_2 \\ 1 & z_1 \end{smallmatrix})  \right)\cdot v_0 \right\}, \qquad \text{where } \zbf = (z_1, z_2).
\]
Following the normalization of \cite{Hor}, we may define a metric on the line bundle $\Fil^{1}( \Lbb_{/\Mcal(\C)})$ by setting
\begin{equation} \label{eqn:FritzMetric}
	|| \ell ||^2_{\zbf}  \ := \ - \frac14 \ e^{-\gamma - \log 2 \pi} \left( \ell \cdot \overline \ell + \overline \ell \cdot \ell \right),
\end{equation}
where $\gamma = - \Gamma'(1)$ is the Euler-Mascheroni constant.

Our next aim is to extend the bundle $ \Lbb_{/\Mcal_{\Scal}(\C)}$ to a bundle $ \Lbb$ over the integral model $\Mcal_{\Scal}$. Let $U_{\Z} = C(L)$ endowed with an action on $C(L) \otimes_{\Z} \OF$ and symplectic pairing $\langle\cdot,\cdot\rangle$ as in \eqref{eqn:UZPol}. An element $x \in L$ determines an endomorphism $\varphi_x \in \End(U_{\Z})$ by right-multiplication, defined by the formula
\[ \varphi_x(y) = y \cdot x \qquad \text{ for all } y \in U_{\Z}, \]
and is such that
\[\left(\varphi_x \circ i(c \otimes a) \right)(y) \ =\  c \cdot y \cdot a \cdot x \ = \ c \cdot y \cdot x \cdot a' \ = \ (i(c \otimes a' ) \circ \varphi_x)(y) \text{ for all } c \otimes a \in C(L) \otimes \OF 
\]
Moreover, 
\[
(\varphi_x)^* = \varphi_{{}^{\iota} x}  = \varphi_x, \] 
i.e.\ $\varphi$ is self-adjoint with respect to $\langle \cdot , \cdot \rangle$. Conversely, any endomorphism satisfying these two conditions is necessarily given by right-multiplication  by an element of $L$, and so we obtain an isomorphism
\[ 
L \isomto \left\{ \varphi \in \End(U_{\Z}) \ | \ \varphi^* = \varphi  \text{ and } \varphi \circ i(c \otimes a) = i(c \otimes a') \circ \varphi  \right\}.
\]
This isomorphism is an isometry, where the right hand side is endowed with the quadratic form $Q(\varphi) = \varphi \circ \varphi^* = \varphi^2$, and moreover is $GSpin(L)$-equivariant. This last fact implies that $L$ inherits a variation of Hodge structures via restricting the natural structure of weight 0 on $End(U_{\C})$, which can be easily checked to coincide with \eqref{eqn:LHdgDecomp}. 

These considerations, together with the observation that the variation of Hodge structures $U_{\Z}$ coincides with the homology of the universal abelian variety over $\Mcal_{\Scal}(\C)$, suggest the following integral extension.
Let $\underline \Abf = (\Abf, i, \lambda, [\eta]) / \Mcal_{\Scal}$ denote the universal object over the full integral model $\Mcal_{\Scal}$, and let
\[ \mathbb U \ := \ H_1^{dR}(\Abf ) =( R^1 \pi_* \Omega_{\Abf/\Mcal_{\Scal}})^{\vee} \]
denote the algebraic de Rham homology of  $\pi \colon \Abf \to \Mcal_{\Scal}$ (i.e. the $\Ocal_{\Mcal_{\Scal}}$-dual of the relative algebraic de Rham cohomology of $\Abf/\Mcal_{\Scal}$), which, along with the induced $C(L) \otimes \OF$ action and polarization,  comes equipped with a filtration
\[ 
  \Fil^{0}(\bbU) \ \subset \ \Fil^{-1}(\bbU) = \bbU;
\]
as before, we may identify
\[
 \Fil^{0}(\bbU) \ = \ \Lie(\Abf^t)^{\vee} \qquad \text{and} \qquad Gr^{-1}(\bbU) = \bbU /  \Fil^{0}(\bbU) \ = \ \Lie(\Abf). 
\]
Define
\begin{equation} \label{eqn:LDef}
	\Lbb \ := \ \left\{ \varphi \in \End(\bbU) \ | \ \varphi^* = \varphi \text{ and } i(c \otimes a) \circ \varphi = \varphi \circ i(c \otimes a') \text{ for all } c\otimes a \in C(L) \otimes_{\Z} \OF  \right\},
\end{equation}
where $\varphi^*$ is the image of $\varphi$ under the Rosati involution induced by $\lambda$. As a subsheaf of $\End(\bbU)$, the bundle $\Lbb$ inherits a filtration
\[ 
\Fil^1(\Lbb) \ \subset \  \Fil^0(\Lbb) \ \subset \Fil^{-1}(\Lbb) = \Lbb . 
\]

\begin{definition}
The \emph{tautological sheaf} is the coherent sheaf
\[ \omega^{\taut} \ := \ \Fil^{1}( \Lbb) \ = \ \left\{ \varphi \in \Lbb \ | \ \mathrm{im}(\varphi) \subset \Lie(\Abf^{t})^{\vee} \text{ and }  \varphi|_{\Lie(\Abf^t)^{\vee}} \equiv 0 \right\} \]
\end{definition}
More concretely, by identifying $\Lie(\Abf)^{\vee}$ with $\Lie(\Abf^t)^{\vee}$ via the polarization $\lambda$, we may write
\begin{equation} \label{eqn:omegaTautHomSpace}
	\omega^{\taut} \ \simeq \ \left\{ \varphi \in \Hom(\Lie(\Abf) , \Lie(\Abf)^{\vee}) \ | \ \varphi \text{ is symmetric, and } \varphi \circ i(c \otimes a) = i(c \otimes a')\circ \varphi  \right\}.
\end{equation}
Over $\Mcal_v(\C)$, $\omega^{\taut}_{/\Mcal_v(\C)}$ is canonically identified with $ (\Lbb_{/ \Mcal_v(\C)})^{(1,-1)}$, and is therefore a line bundle. Then, via this identification, \eqref{eqn:FritzMetric} defines a metric on $\omega^{\taut}_{/\Mcal_v(\C)}$.

The following proposition relates the tautological sheaf on $\Mcal$ to the determinant of the cotangent bundle, and implies in particular that $\omega^{\taut}$ is a line bundle. We will thus call it the \emph{tautological bundle}.

\begin{proposition} \label{prop:TautOmega}
There is an isomorphism $(\omega^{\taut})^{\otimes 2} \isomto \det \Omega \simeq \Lcal_1 \otimes \Lcal_2$ with the property that if $\ell_1 \otimes \ell_2$ corresponds to $\varphi_1 \otimes \varphi_2$, then
\[ ||\ell_1||^2 \cdot ||\ell_2||^2  \ = \ e^{-2  \gamma  } \frac{D_B^4 }{\pi^6 (64)^3} \cdot ||\varphi_1||^2 \cdot  ||\varphi_2||^2.  \]
\begin{proof}
	Recall the decomposition
	\[ \Lie(\Abf) \  =  \ \Lie(\Abf)_1 \ \oplus \ \Lie(\Abf)_2,\]
	where each factor is equipped with an action of $C(L)$.
	We may decompose each factor further via the action of $\OF$ viewed in the centre of $C^+(L)$; for example, 
	\[ \Lie(\Abf)_1 \ = \ \Fcal_1 \ \oplus \ \Fcal_1' \]
	where 
		\[ \Fcal_1 \ := \ \left\{ x \in \Lie(\Abf)_1 \ | \ i(a\otimes 1) \cdot x \ = \ \tau(a) \cdot x \text{ for all } a \in \OF \right\} \]
	and
		\[ \Fcal'_1 \ := \  \left\{ x \in \Lie(\Abf)_1 \ | \ i( a\otimes 1) \cdot x \ = \ \tau(a') \cdot x \text{ for all } a \in \OF \right\}. \]
	Recall that $\tau \colon \OF[1/N] \to \Ocal_{\Mcal_{\Scal}}$ is the structural morphism.
	
	Note that each factor is a rank two $\mathcal O_{\Mcal_{\Scal}}$-bundle, is stable under the action of $\mathcal O_B  = C^+(L)$, and moreover the element $v_0 \in C^-(L)$ induces an isomorphism
	\[ i(v_0) \colon \Fcal_1 \isomto \Fcal_1' . \]
	On the other hand, if $\varphi \in \Lcal_1$ then arguing as in \eqref{eqn:KSmap} implies that $\varphi$ preserves $\Fcal_1$ and $\Fcal_1'$, and so
	\[ \varphi \ =  \ \varphi_1 \oplus \varphi_1', \] 
	with $\varphi_1 \in \Hom(\Fcal_1, \Fcal_1^{\vee})$ and $ \varphi_1' \in \Hom(\Fcal_1', (\Fcal_1')^{\vee})$. 
	Since 
	\[ i(v_0)^{\vee} \circ \varphi_1' \ = \ i(v_0^*)^{\vee} \circ \varphi_1' \ = \ \varphi_1 \circ i (v_0) \]
	and $i(v_0)$ is an isomorphism, it follows that $\varphi_1'$ is determined by $\varphi_1$, and in particular
	\[ \Lcal_1 \ \simeq \ \left\{  \varphi \in \Hom(\Fcal_1, \Fcal_1^{\vee})  \ | \ \varphi\text{ is symmetric and }\varphi \circ i(b) \ = \ i(b^*)^{\vee} \circ \varphi \ \text{ for all } b \in C^+(L) \right\}. \]
	Define a map
		\begin{equation} \label{eqn:L1Hodge}
			 \Lcal_1 \ \to \ \det \Fcal_1^{\vee} \ = \ \left( \bigwedge^2 \Fcal_1 \right)^{\vee} 
		\end{equation}
	by sending a morphism $\varphi \colon \Fcal_1 \to \Fcal_1^{\vee}$ to the linear functional
	\[ x \wedge y \ \mapsto \ \varphi \left( i(\theta) x \right) (y)  \]
	where $\theta \in B_0 \cap C^+(L) $ was chosen as in \eqref{eqn:thetadef}. By the defining properties of morphisms in $\Lcal_{1}$ and because $\theta^2  = - \theta \cdot \theta^* = - D_B \in \Ocal_{\Mcal}^{\times}$, it is immediate that \eqref{eqn:L1Hodge} is well defined and an isomorphism. A similar argument gives an isomorphism
		\[ \Lcal_2 \ \isomto \ \det \Fcal_2^{\vee} \]
	where 
		\[ \Fcal_2 \ := \ \left\{ x \in \Lie(\Abf)_2 \ | \ i(a \otimes 1) \  x \ =\ \tau( a) \cdot x \ \text{ for all } a \in \OF \right\}. \]
	
	On the other hand, it is clear by the equivariance properties in \eqref{eqn:omegaTautHomSpace} that there is a map
	\begin{equation} \label{eqn:omegaTautEval}
		\omega^{\taut} \otimes \Fcal_1 \ \to \ \Fcal_2^{\vee}, \qquad \varphi \otimes f \mapsto \varphi(f) .
	\end{equation}
	We claim that this map is an isomorphism. Assuming the claim for the moment, we observe it follows that $\omega^{\taut}$ is locally free of rank 1. Indeed, the isomorphism implies that $\omega^{\taut}$ is a direct summand of the vector bundle $\Fcal_{1}^{\vee}\otimes\Fcal_{2}^{\vee}$ (we use that $4$ is invertible in $\Ocal_{\Mcal}$), and therefore it is flat, hence locally free. Moreover we know it has generic rank 1. We thus conclude that $\omega^{\taut}$ is indeed a line bundle. 
	
	To prove the claim, as $\Fcal_{2}^{\vee}$ is locally free and by Nakayama's lemma, it suffices to show that the map defines an isomorphism on the fibres at any geometric point $s \colon \Spec(\kappa(s)) \to \Mcal$ over $\Spec(\OF[1/N])$; in particular, the fibres $\Fcal_{1,s}$ and $\Fcal^{\vee}_{2,s}$ are vector spaces of dimension two over $\kappa(s)$ equipped with an action of 
	\[ C^+(L) \otimes_{\OF}\kappa(s)  \ = \ \Ocal_B \otimes_{\OF} \kappa(s)\] such that the determinant of the endomorphism corresponding to an element of $C^+(L) \otimes_{\OF} \kappa(s)$ is equal to its (reduced) norm.
	
	 Since  $\kappa(s)$ is algebraically closed, we may fix an isomorphism
	\[ C^+(L) \otimes_{\OF} \kappa(s) \simeq \ M_2(\kappa(s)), \]
	and hence, using the idempotents $(\begin{smallmatrix}1&0\\0&0 \end{smallmatrix})$ and $ (\begin{smallmatrix}0&0\\0&1 \end{smallmatrix})$, we obtain isomorphisms
	\[
		\Fcal_{1,s} \ \simeq \ \kappa(s) \oplus \kappa(s) \ \simeq \ \Fcal_{2,s}^{\vee}
	\]
	with the $M_2(\kappa(s))$ action being the natural one. Since $\omega^{\taut}_{s}$ consists of $\kappa(s)$-linear maps $\Fcal_1 \to \Fcal_2^{\vee}$ that commute with the $C^+(L)$-action, it follows that in these coordinates a non-trivial element of $\omega^{\taut}_s$ is simply multiplication by a non-zero scalar, which in turn implies that \eqref{eqn:omegaTautEval} is an isomorphism.
	
	In particular, taking determinants yields isomorphisms
	\[
		(\omega^{\taut})^{\otimes 2}  \ \simeq \  \det\Fcal_1^{\vee} \otimes \det\Fcal_2^{\vee} \  \simeq \  \Lcal_1 \otimes \Lcal_2 \  \simeq \ \det \Omega_{\Mcal/\Scal}
	\]
	as required.
	
	The claim regarding metrics can be checked, say at a point $\mathbf z = (z_1, z_2) \in \lie H^2$, by using the vector appearing in \eqref{eqn:omegaTautBasis}  as a basis for $\omega^{\taut}_{\mathbf z}$, and the bases
	\[
		\left(  (\begin{smallmatrix} 1 & -\overline{z_1} \\ 0 & 0 \end{smallmatrix}) , \, 0 \right) \wedge \left( (\begin{smallmatrix}0 & 0 \\ 1 & -\overline{z_1}  \end{smallmatrix}) , \, 0 \right)   \qquad \text{ and } \qquad 	  \left( (\begin{smallmatrix} 1 & -\overline{z_2} \\ 0 & 0 \end{smallmatrix}) , \, 0 \right) \cdot v_0 \wedge  \left( (\begin{smallmatrix}0 & 0 \\ 1 & -\overline{z_2}  \end{smallmatrix}) , \, 0 \right) \cdot v_0  
	\]
	for $ \det \Fcal_{1,\zbf}$ and $\det \Fcal_{2,\zbf}$ respectively; the details are left to the reader.
\end{proof}
\end{proposition}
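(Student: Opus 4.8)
The plan is to produce the algebraic isomorphism over $\Mcal_{\Scal}$ first and only afterwards track the two metrics, the latter step exploiting that everything in sight is $\Gbf(\R)$-equivariant so that the metric comparison reduces to a computation at one point. For the isomorphism, I would start from the $\Z/2$-grading $\Lie(\Abf)=\Lie(\Abf)_1\oplus\Lie(\Abf)_2$ attached to the second factor of $\OF$ (legitimate since $2\dF$ is invertible on $\Mcal_{\Scal}$) and refine it by the central copy of $\OF\subset C^+(L)=\Ocal_B$ to obtain $\Lie(\Abf)_1=\Fcal_1\oplus\Fcal_1'$ and $\Lie(\Abf)_2=\Fcal_2\oplus\Fcal_2'$, each summand a rank-two $\Ocal_{\Mcal_{\Scal}}$-module carrying an $\Ocal_B$-action and with $i(v_0)$ exchanging primed and unprimed parts. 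Since an element of $\Lcal_j$ is an $\Ocal_B$-equivariant symmetric map on $\Lie(\Abf)_j$ and $i(v_0)$ identifies $\Fcal_j\isomto\Fcal_j'$, it is determined by its restriction to $\Fcal_j$, so $\Lcal_j$ is the bundle of symmetric $C^+(L)$-equivariant maps $\Fcal_j\to\Fcal_j^{\vee}$. I would then introduce two comparison maps: (a) $\Lcal_j\isomto\det\Fcal_j^{\vee}$, $\varphi\mapsto\bigl(x\wedge y\mapsto\varphi(i(\theta)x)(y)\bigr)$ with $\theta$ as in \eqref{eqn:thetadef}, which is alternating because $\theta^{*}=-\theta$ and $\varphi$ is symmetric, and an isomorphism because $\theta^{2}=-D_B$ is invertible; and (b) the evaluation map $\omega^{\taut}\otimes\Fcal_1\to\Fcal_2^{\vee}$, $\varphi\otimes f\mapsto\varphi(f)$, which lands in $\Fcal_2^{\vee}$ because the equivariance defining $\omega^{\taut}$ in \eqref{eqn:omegaTautHomSpace} twists the outer $\OF$-action while preserving the central $\OF$-grading. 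The only step with real content is that (b) is an isomorphism: I would check it on geometric fibres $s$, where $C^+(L)\otimes\kappa(s)\cong M_2(\kappa(s))$ and its idempotents identify $\Fcal_{1,s}\cong\kappa(s)^2\cong\Fcal_{2,s}^{\vee}$ equivariantly, so that $\omega^{\taut}_s$ reduces to scalars, and Nakayama then upgrades this to an isomorphism of sheaves. From (b), tensoring with $\Fcal_1^{\vee}$ and splitting the identity off $\End\Fcal_1=\Fcal_1\otimes\Fcal_1^{\vee}$ by the normalized trace (using that $2$ is invertible), $\omega^{\taut}$ is a direct summand of $\Fcal_1^{\vee}\otimes\Fcal_2^{\vee}$, hence flat and of finite presentation, hence locally free, and of generic rank $1$, hence a line bundle; taking determinants of (a) and (b) and invoking \eqref{eqn:KSmap} then gives $(\omega^{\taut})^{\otimes 2}\isomto\det\Fcal_1^{\vee}\otimes\det\Fcal_2^{\vee}\isomto\Lcal_1\otimes\Lcal_2\cong\det\Omega_{\Mcal_{\Scal}/\Scal}$.

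For the metric identity, note that this isomorphism is defined over $\OF[1/N]$, hence $\Gbf(\R)$-equivariant on each archimedean component; the Petersson metric is $\Gbf(\R)$-invariant by construction, and the metric \eqref{eqn:FritzMetric} is $\Gbf(\R)$-invariant because $\Gbf(\R)=GSpin(V)(\R)$ acts on $V$ by conjugation preserving $Q$. Hence the positive smooth function on $\Mcal_v(\C)$ comparing $||\ell_1||^2||\ell_2||^2$ with $||\varphi_1||^2||\varphi_2||^2$ is invariant, therefore constant, and the two archimedean components give the same constant by the Galois symmetry exchanging $v_1,v_2$; so it suffices to evaluate at one point $\zbf=(z_1,z_2)\in\lie H^2$. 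Concretely I would: use the isotropic vector of \eqref{eqn:omegaTautBasis} as a frame $\ell_{\zbf}$ of $\omega^{\taut}_{\zbf}=\Lbb^{(1,-1)}_{\zbf}$ and compute $||\ell_{\zbf}||^2$ from \eqref{eqn:FritzMetric} by a short computation in $C(V)_{\C}$ (it comes out proportional to $e^{-\gamma}(2\pi)^{-1}\Imag(z_1)\Imag(z_2)$); push $\ell_{\zbf}\otimes\ell_{\zbf}$ through (b) and then the $\theta$-twisted determinant map (a) against the frame $\left((\begin{smallmatrix} 1 & -\overline{z_1} \\ 0 & 0 \end{smallmatrix}),0\right)\wedge\left((\begin{smallmatrix} 0 & 0 \\ 1 & -\overline{z_1} \end{smallmatrix}),0\right)$ of $\det\Fcal_{1,\zbf}$ and the analogous frame of $\det\Fcal_{2,\zbf}$; and finally transport to $\Lcal_1\otimes\Lcal_2$ using the identifications $\Lcal_j\cong\Ocal_{\lie H^2}\,dz_j$ with $\mathrm{KS}(\Phi^{(j)}_{\zbf})=D_B^{-1/2}\Imag(z_j)^{-2}\,dz_j$ and the norm formula \eqref{eqn:metricLj} (equivalently $||dz_j||_P^2=16\pi^2\Imag(z_j)^2$) established in the proof of \refProp{prop:OmegaCpxDecom}. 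Multiplying the resulting factors yields the constant.

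The main obstacle is the exact bookkeeping of that constant. The delicate point is reconciling the two descriptions of $\omega^{\taut}$ — the $\Hom$-space model \eqref{eqn:omegaTautHomSpace}, in which (a) and (b) are written, against the Hodge-theoretic model in which the frame \eqref{eqn:omegaTautBasis} and the metric \eqref{eqn:FritzMetric} live — the passage between them going through the polarization on $\bbU$ built from $\theta$ in \eqref{eqn:UZPol}, which is where most of the powers of $2$ are generated. One must also correctly combine the two separate occurrences of $D_B$ — once via $\theta^2=-D_B$ in \eqref{eqn:L1Hodge}, once via the Kodaira--Spencer normalization $D_B^{1/2}$ — to land on $D_B^4$ after squaring and multiplying the two factors, and likewise combine the $-\tfrac14 e^{-\gamma-\log 2\pi}$ of \eqref{eqn:FritzMetric} with the Petersson $16\pi^2$ to reach the stated $e^{-2\gamma}D_B^4\pi^{-6}(64)^{-3}$. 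Each identification is explicit, so this is a finite if tedious computation; I would organize it by evaluating (a) and (b) on frames at $\zbf$ separately and then multiplying, leaving the routine matrix arithmetic to the reader.
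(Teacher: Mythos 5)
Your proposal is correct and follows essentially the same route as the paper's proof: the refinement $\Lie(\Abf)_j=\Fcal_j\oplus\Fcal_j'$ via the central $\OF$, the $\theta$-twisted isomorphism $\Lcal_j\isomto\det\Fcal_j^{\vee}$, the evaluation map $\omega^{\taut}\otimes\Fcal_1\to\Fcal_2^{\vee}$ checked on geometric fibres via Nakayama, and the metric constant evaluated at a single point using the same frames. The extra invariance argument you give for reducing the metric comparison to one point is a welcome explicitation of what the paper leaves implicit, and the remaining bookkeeping you defer is exactly what the paper also leaves to the reader.
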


\section{Arithmetic intersection numbers on twisted Hilbert modular surfaces}

\subsection{The arithmetic intersection $\ac_{1}(\overline{\Omega})\ac_{2}(\overline{\Omega})$}
This section is devoted to the evaluation of the following arithmetic intersection number, which will subsequently appear in the Grothendieck-Riemann-Roch formula for $\Mcal_K$. Recall that we are viewing $\Mcal_K$ as an arithmetic variety over $\Spec \Z[1/N]$, where $N$ is, in particular, divisible by $2 \dF D_B$, as in Section \ref{sec:HMSIntModel}.
\begin{theorem} \label{thm:c1c2thm}
	\[  \ \widehat \deg \ \ChernHatOne( \overline \Omega) \cdot \ChernHatTwo( \overline \Omega) \ \equiv \ \deg \left( c_1(\Omega_{\Mcal(\C)})^2 \right) \ \left( -4 \log \pi - 2\gamma + 1  +  \frac{\zeta'_F(2) }{\zeta_F(2)} \right) \ \in \ \R_N. \]
Here $\gamma = - \Gamma'(1)$ is the Euler-Mascheroni constant, and $\R_N = \R / \oplus_{q|N} \Q \log q$.
\end{theorem}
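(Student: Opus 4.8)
The plan is to reduce the computation of $\widehat\deg\,\ChernHatOne(\ov\Omega)\ChernHatTwo(\ov\Omega)$ to quantities that have already been computed in the literature, using the two decompositions of $\Omega = \Omega_{\Mcal/\Scal}$ developed in Section~\ref{sec:HMSIntModel}: the Kodaira--Spencer splitting $\Omega \simeq \Lcal_1 \oplus \Lcal_2$ from \eqref{eqn:KSmap}, and the relation $(\omega^{\taut})^{\otimes 2}\simeq \Lcal_1\otimes\Lcal_2 \simeq \det\Omega$ from Proposition~\ref{prop:TautOmega}. From the orthogonal splitting we get $\ac_1(\ov\Omega) = \ac_1(\ov\Lcal_1) + \ac_1(\ov\Lcal_2)$ and $\ac_2(\ov\Omega) = \ac_1(\ov\Lcal_1)\cdot\ac_1(\ov\Lcal_2)$ in $\Ch{\bullet}(\Mcal)_\Q$, so that
\[
	\ac_1(\ov\Omega)\cdot\ac_2(\ov\Omega) \ = \ \big(\ac_1(\ov\Lcal_1) + \ac_1(\ov\Lcal_2)\big)\cdot \ac_1(\ov\Lcal_1)\cdot\ac_1(\ov\Lcal_2).
\]
The first step is therefore to expand this out and use Proposition~\ref{prop:OmegaCpxDecom}(ii), which says the archimedean Chern forms satisfy $c_1(\ov\Lcal_{i})^2 = 0$, to kill the ``purely archimedean'' correction terms, reducing everything to height pairings of the $\ac_1(\ov\Lcal_i)$ against the geometric divisors $\Lcal_i$ and to the self-intersection $\ac_1(\ov\Lcal_1)^2\cdot\ac_1(\ov\Lcal_2)$ type terms.

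\textbf{Reduction to the tautological bundle.} Next I would pass to $\omega := \omega^{\taut}$. Since $\ac_1(\ov\omega^{\otimes 2}) = \ac_1(\ov\Lcal_1) + \ac_1(\ov\Lcal_2)$ up to the explicit metric discrepancy recorded in Proposition~\ref{prop:TautOmega} — which contributes a term of the shape $a(\text{const})$ with the constant built from $e^{-2\gamma} D_B^4/(\pi^6 64^3)$ — one can rewrite $\ac_1(\ov\Omega) = 2\,\ac_1(\ov\omega) + a(c_{\mathrm{taut}})$ where $c_{\mathrm{taut}} = 2\gamma + 6\log\pi + \text{(}\log\text{ of stuff dividing }N\text{)}$, so modulo $\oplus_{q|N}\Q\log q$ we may take $\ac_1(\ov\Omega) \equiv 2\,\ac_1(\ov\omega) + a(2\gamma + 6\log\pi)$ (being careful about whether the $D_B$, $64$ contributions land in the span of $\log q$, $q\mid N$ — they do, since $2D_B \mid N$). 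Likewise $\ac_2(\ov\Omega) = \ac_1(\ov\Lcal_1)\ac_1(\ov\Lcal_2)$, and using $c_1(\ov\Lcal_i)^2 = 0$ one shows $\ac_1(\ov\Lcal_1)\ac_1(\ov\Lcal_2) = \ac_1(\ov\omega)^2 + a(\ldots)$ where the archimedean term again only shifts things by logs absorbed in $\R_N$ together with a multiple of $\int c_1(\ov\omega)^2 = \int c_1(\ov\Lcal_1)c_1(\ov\Lcal_2) = \tfrac12\deg c_1(\Omega_{\Mcal(\C)})^2$. The upshot is a clean formula
\[
	\widehat\deg\,\ac_1(\ov\Omega)\cdot\ac_2(\ov\Omega) \ \equiv \ 2\,\widehat\deg\,\ac_1(\ov\omega)^3 \ + \ \big(2\gamma + 6\log\pi\big)\cdot\big(2\,\widehat\deg_{\C}\, c_1(\ov\omega)^2\big) \ \in \ \R_N,
\]
up to bookkeeping of constants; here $\widehat\deg\,\ac_1(\ov\omega)^3 = \widehat\deg\, f_*(\ac_1(\ov\omega)^3)$ is, up to a normalization, precisely the \emph{arithmetic volume} of $\Mcal$ in the metric \eqref{eqn:FritzMetric}.

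\textbf{Input of known formulas.} The remaining task is to evaluate $\widehat\deg\,\ac_1(\ov\omega)^3$ and the archimedean integral $\int_{\Mcal(\C)} c_1(\ov\omega)^2$. For the latter, one uses the explicit Petersson metric \eqref{eqn:FritzMetric}/\eqref{eqn:metricLj} and the volume computation of a Hilbert modular surface (via $\zeta_F(-1)$); this gives $\deg c_1(\Omega_{\Mcal(\C)})^2$ in closed form and relates it to $\widehat\deg_\C\,c_1(\ov\omega)^2$. For the arithmetic degree $\widehat\deg\,\ac_1(\ov\omega)^3$, the strategy outlined in the introduction is to exploit the $GSpin$ realization: embed $\Mcal$ into an arithmetic twisted Siegel modular threefold, express $\ac_1(\ov\omega)$ there via a Borcherds form (an automorphic Green function / metrized line bundle with known arithmetic properties, in the style of Kudla's programme), and then invoke (a) H\"ormann's formula \cite{Hor} for the arithmetic volume, (b) a Kudla-style formula \cite{Kudla-IBF} for the integral of a Borcherds form, and (c) the Kudla--Yang \cite{KY} computation of Fourier coefficients of the relevant $GL_2$ Eisenstein series, whose derivative at the near-central point produces the logarithmic derivative $\zeta_F'(2)/\zeta_F(2)$ together with the $\log\pi$, $\gamma$ and rational constants. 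Assembling: the $\zeta_F'(2)/\zeta_F(2)$ and part of the $-4\log\pi$, $+1$ come from the arithmetic volume; the $-2\gamma$ and the rest of the $-4\log\pi$ are bookkeeping from the metric normalization \eqref{eqn:FritzMetric} together with the $(2\gamma+6\log\pi)$ term above (note $6\log\pi - \text{(contribution of }-4\log\pi\text{ worth)} $ must reconcile — the archimedean correction from Proposition~\ref{prop:TautOmega} carries $-6\log\pi$ against the $+6\log\pi$, and the $R$-genus term $2\zeta'(-1)+\zeta(-1)$ that appears upstream in the GRR formula does not enter \emph{this} particular intersection number).

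\textbf{Main obstacle.} The technical heart — and the step I expect to be hardest — is step three: making the Borcherds-form argument on the auxiliary Siegel threefold rigorous, in particular (i) controlling the arithmetic of Borcherds products in the \emph{absence of cusps}, which forces the detour through the Siegel compactification and a careful comparison of metrized line bundles and their Green currents across that embedding, and (ii) correctly matching normalizations between H\"ormann's arithmetic volume, Kudla's integral formula, and the Kudla--Yang Eisenstein coefficients so that the archimedean constants ($\log\pi$, $\gamma$, and the rational term $+1$) come out exactly, rather than merely up to an unidentified rational or log. All purely archimedean discrepancies and all primes dividing $N$ are harmless since the identity is only claimed in $\R_N$, which is what makes the bookkeeping tractable; but the Eisenstein-series derivative computation that isolates $\zeta_F'(2)/\zeta_F(2)$ is genuinely the delicate analytic input and cannot be shortcut.
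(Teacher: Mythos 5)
Your overall architecture (Kodaira--Spencer splitting, passage to $\omega^{\taut}$, H\"ormann's arithmetic volume, Borcherds forms on the Siegel threefold, Kudla--Yang) matches the paper's, but there is a genuine gap at the central step: you assert that $c_1(\ov{\Lcal_i})^2=0$ at the archimedean places lets you conclude $\ac_1(\ov\Lcal_1)\ac_1(\ov\Lcal_2)=\ac_1(\ov\omega)^2+a(\ldots)$, i.e.\ that $\ac_2(\ov\Omega)$ agrees with $\tfrac14\ac_1(\ov\Omega)^2$ up to purely archimedean terms. This does not follow. The vanishing of the Chern--Weil \emph{form} $c_1(\ov\Lcal_i)^2$ controls only the differential-form component of the arithmetic class; the arithmetic self-intersection $\ac_1(\ov\Lcal_i)^2\in\Ch{2}(\Mcal)_\Q$ still carries a nontrivial cycle/height contribution, and the comparison between $\ac_1(\ov\Lcal_1)\ac_1(\ov\Lcal_2)$ and $\ac_1(\ov\Lcal_1)^2$ is exactly the content that requires proof. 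The paper's mechanism is geometric, not formal: one represents $\tfrac r2\ac_1(\ov\Omega)$ by $\widehat{\mathsf{div}}(\mathbf j^*\Psi)$ for a Borcherds form whose divisor is supported on special cycles $\widetilde\Zed(m,\lie V)\times_{\widetilde\Mcal}\Mcal$ with $m\not\equiv\square\pmod p$; on such cycles the special endomorphism $y^{\sharp}$ of Lemma \ref{lem:Vdecomp} furnishes an isomorphism $\pi_i^*\Lcal_1\simeq\pi_i^*\Lcal_2$ preserving metrics up to $\Z_{(p)}^{\times}$ (Proposition \ref{prop:SCBundle}), and only then does $\widehat\deg\,\ac_2(\pi_i^*\ov\Omega)\equiv\widehat\deg\,\ac_1(\pi_i^*\ov\Lcal_1)^2$ hold in $\R_{(p)}$. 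Your proposal has no substitute for this input, so the reduction to $\ac_1(\ov\omega)^3$ is unjustified.

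A second, related error: your ``clean formula'' treats the leftover archimedean term as pure bookkeeping in $\gamma$ and $\log\pi$. In fact the correct identity is
\[
4\,\widehat\deg\bigl(\ac_1(\ov\Omega)\ac_2(\ov\Omega)\bigr)\ \equiv\ \widehat\deg\bigl(\ac_1(\ov\Omega)^3\bigr)\ +\ \frac1r\sum_{v}\int_{\Mcal_v(\C)}\bigl(-\log\|\mathbf j^*\Psi\|_P^2\bigr)\,c_1(\ov\Omega)^{\wedge 2},
\]
and the regularized integral of the Borcherds form is \emph{not} a constant built from $\gamma$ and $\log\pi$: evaluated via Siegel--Weil and the Kudla--Yang Fourier coefficients, it contributes $\deg(c_1^2)\cdot r\cdot(-2\gamma-4\log\pi+1+2\zeta_F'(2)/\zeta_F(2)-2\zeta'(2)/\zeta(2))$. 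This term supplies half of the final $\zeta_F'(2)/\zeta_F(2)$ and cancels the spurious $2\zeta_\Q'(2)/\zeta_\Q(2)$ appearing in H\"ormann's formula for $\widehat\deg\,\ac_1(\ov\Omega)^3$. Dropping it, as your formula does, would leave an uncancelled $\zeta_\Q'(2)/\zeta_\Q(2)$ and the wrong coefficient of $\zeta_F'(2)/\zeta_F(2)$ in the final answer.
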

\begin{remark} \label{rmk:postmainthm}
$(i)$ By the definition of the arithmetic degree map, the theorem holds for the arithmetic variety $\Mcal_K$ over $ \Spec \Z[1/N]$ if and only if it holds for the base change $\Mcal_{K, \Scal}$ over $\Scal = \Spec \OF[1/N]$; this justifies our use of the base change in the sequel.

$(ii)$ Suppose $K' \subset K$ is another compact open subgroup, such that $K'_p = K_p$ for $p \nmid N$. Then there is a finite \'etale cover $ \Mcal' = \Mcal_{K'} \to \Mcal_K = \Mcal$ and 
\[  
 \frac{ \widehat \deg \ \ChernHatOne( \overline \Omega_{\Mcal'}) \cdot \ChernHatTwo( \overline \Omega_{\Mcal'}) }{ \deg \left( c_1(\Omega_{\Mcal'(\C)})^2 \right) } \ = \ 
  \frac{ \widehat \deg \ \ChernHatOne( \overline \Omega_{\Mcal}) \cdot \ChernHatTwo( \overline \Omega_{\Mcal}) }{  \deg \left( c_1(\Omega_{\Mcal(\C)})^2 \right) },
\]
since both numerator and denominator are multiplied by the degree of the cover. In particular, proving Theorem \ref{thm:c1c2thm} for a single sufficiently small $K$ implies that the theorem holds for all such $K$. 

$(iii)$ Let $\lie p $ be a prime of $F$ such that $(\lie p, N) = 1$, and 
\[ \Mcal_{(\lie p)} \ = \ \Mcal_K \times_{\Z[1/N]} \Spec(\mathcal O_{F,(\lie p)}). \]
One can consider the local analogue of Theorem \ref{thm:c1c2thm} for $\Mcal_{(\lie p)}$, where the equality takes place in $\R_{(\lie p)} := \R / \oplus_{(q, \lie p) = 1} \Q \log q$: Theorem \ref{thm:c1c2thm} holds for $\Mcal_K$ if and only if the corresponding local  statement holds for every $\lie p$ relatively prime to $N$. 

As explained in \cite[\S12]{KR-HB}, the scheme $\Mcal_{(\lie p)}$ is the ``localized" moduli space over $\mathcal O_{F, (\lie p)}$ whose $S$ points, for a scheme $S$ over $\Ocal_{F, (\lie p)}$, parametrize prime-to$p$ isogeny classes of tuples $\{\underline A = ( A, i, \lambda, [\eta^p]) \}$, where:
	\begin{enumerate}[(i)]
		\item $A \to S$ is an abelian scheme, up to prime-to-$p$ isogeny;
		\item $i \colon C(L) \otimes \OF\to \End(A) \otimes \Z_{(p)}$ satisfies the same determinant and involution conditions as above;
		\item $\lambda$ is a $\Z_{(p)}^{\times}$ class of principal polarizations;
		\item $[\eta^p]$ is a $K^p$-class of $C(L) \otimes \OF$-equivariant isomorphisms $\eta^p \colon \Ta^p(A) \otimes \Q \isomto U \otimes \A_f^p$ that identify the corresponding symplectic forms up to multiplication by a scalar in  $(\A_f^p)^{\times}$. 
	\end{enumerate}

%

\end{remark}

\paragraph{Notation:} For the remainder of this section, we fix a sufficiently small $K \subset \Gbf(\A_{\Q,f})$, a rational prime $p$ relatively prime to $N$, and a prime ideal $\lie p$ of $\Ocal_F$ above $p$. Note that the assumptions on $N$ imply that $p$ is odd and unramified. 

To lighten the notation, throughout this section we will drop the subscript $\lie{p}$, and write, for example,
\[ \Mcal =   \Mcal_K \times \Spec(\mathcal O_{F,(\lie p)}), \qquad \Omega = \Omega_{\Mcal / \mathcal O_{F, (\lie p)}}, \qquad \text{etc.} \]

In light of the above remarks, it will suffice to prove Theorem \ref{thm:c1c2thm} for the localized scheme $\Mcal$, with both sides of the identity viewed in $\R_{(p)}$. The strategy, carried out over the next several subsections, is to express $\ChernHatOne(\overline\Omega)$ in terms of the divisor of a Borcherds form. One problem, however, is that the absence of cusps on $\Mcal$ means that information about the arithmetic of Borcherds' forms is less accessible; we circumvent this issue by embedding everything in a larger Shimura variety, as described in the next subsection.

\subsection{Twisted Siegel threefolds and special cycles}\label{sec:Siegel-threefold}
In this section, we construct a twisted Siegel modular threefold containing $\Mcal$, and describe certain automorphic vector bundles on it. We continue to fix $p$ relatively prime to $N$, and a prime ideal $\lie p$ of $F$ above $p$. 
\subsubsection{Moduli problems} \label{subsec:Siegel1-moduli}

We start by enlarging the quadratic lattice $L$: let
\[ {\Lambda} \ := \ L \ \oplus \ \langle \mathbb 1 \rangle  \]
where $\langle \mathbb 1 \rangle$ is the rank one quadratic lattice with a basis vector $\mathbf e$ satisfying $Q(\mathbf e) = 1$. Note that the signature of $\Lambda $ is $(3,2)$ and that $ \Lambda_{(p)}$ is self-dual. There is an involutive embedding 
\[ \Ocal_B \simeq C^+(L) \ \hookrightarrow \ C^+(\Lambda). \]
We had previously fixed an element $\theta \in C^+(L)$ with ${}^{\iota} \theta = - \theta$ and $\theta^{2}=-D_{B}$, which we also view as an element of $C^+(\Lambda)$;   the involution
\[ c \mapsto c^* := \theta \cdot {}^{\iota} c \cdot \theta^{-1} \]
then defines a positive involution on $C(\Lambda)$. Let 
\[ \widetilde U = C^+(\Lambda_{\Q}), \]
 which we view as  a $C^+(\Lambda)$-module by left-multiplication, and equip it with the symplectic form
\begin{equation} \label{eqn:UTildePol} \langle c_1, c_2 \rangle_{\widetilde U} \ := \ Tr \left( {}^{\iota}c_2 \cdot \theta \cdot c_1 \right). 
\end{equation} 

Set $\widetilde \Gbf = GSpin( \Lambda_{ \Q})$ and let $\widetilde K = \widetilde K^p \widetilde K_p \subset \widetilde \Gbf(\A_{\Q,f})$ be a sufficiently small compact open subgroup such that
\[ \widetilde K_p \ = \ C^+(\Lambda_{\Z_p})^{\times} \ \cap \ \widetilde \Gbf(\Q_p) \qquad \text{and} \qquad \widetilde K^p \ \cap \  \Gbf(\A_{Q,f}^p) \ = \ K^p, \]
where we recall that $\Gbf= GSpin(L_{\Q})$ and $K$ are as in the previous sections. We also assume, for later convenience, that
\[ \widetilde K \ \subset \  \widetilde K^{max} \ =  \  C^+(\Lambda_{\widehat \Z})^{\times}  \cap \widetilde \Gbf(\A_{\Q,f}).   \]
Attached to all of this data is the Shimura variety\footnote{In fact, this Shimura variety has a canonical model over $\Q$, and the integral model can be defined over $\Z$.} $\widetilde \Mcal  \to \Spec F$, which is a \emph{twisted Siegel threefold} in the terminology of \cite{KRSiegel}. It is of  PEL type, with 
 a natural integral model 
\[ \widetilde{ \Mcal} \ \to \ \Spec(\mathcal O_{F, (\lie p)}) \]
defined by the following moduli problem.

\begin{definition} \label{def:MtildeModProb} Consider the moduli problem $\widetilde \Mcal$ over $\Spec \mathcal O_{F,({\lie p})}$ whose $S$-points, for a scheme $S \to \Spec \mathcal O_{F,({\lie p})}$ ,  are the prime-to-$p$ isogeny classes of tuples
\[ \widetilde{\Mcal} (S) \ = \ \{ \underline{\widetilde {A}} \ = \ (\tilde A, \tilde i, \tilde \lambda, [\tilde \eta])  \} \]
where
	\begin{itemize}
		\item $\tilde A \to S$ is an abelian scheme of relative dimension 8, up to prime-to-$p$ isogeny;
		\item $\tilde \lambda$ is a $\Z_{(p)}^{\times}$ class of principal polarizations;
		\item $\tilde i \colon C^+( \Lambda) \to \End_S({\tilde A}) \otimes \Z_{(p)}$  is an action of the even part of the Clifford algebra of $\Lambda$  such that:
			\begin{itemize}
						\item	$ \det( \tilde i(c ) |_{\Lie\tilde A} ) \ = \ Nrd(c)^2 \ \in \ \mathcal O_S$, where $Nrd$ is the reduced norm on $C^+(\Lambda)$;
						\item $\tilde i(c)^* \ = \ \tilde i( c^*)$ for the Rosati involution $*$ induced by $\tilde \lambda$;
					\end{itemize}
		\item $[\tilde \eta]$ is a $\widetilde K^p$-equivalence class of $C^+(\Lambda)$-equivariant isomorphisms
			\[ \tilde \eta \colon \Ta^p(\tilde A)_{\Q} \ \isomto \ \widetilde{ U} (\A_{\Q,f}^p) \]
			preserving the corresponding symplectic forms up to an $(\A_{\Q,f}^p)^{\times}$-multiple.
	\end{itemize}
Assuming $\widetilde K$ is sufficiently small, this moduli problem is representable, as a fine moduli space, by a smooth quasi-projective scheme over $\Spec \mathcal O_{F, (\lie p)}$, which we again denote by $\widetilde \Mcal$. 
\end{definition}

Given a point $\underline{\tilde A} = (\tilde A, \tilde i, \tilde \lambda, [\tilde \eta]) \in \widetilde \Mcal(S)$ for some connected base scheme $S$, consider the space of \emph{special endomorphisms} \cite[Definition 2.1]{KRSiegel}
\[{ \mathbf V}(\underline{\tilde A}) \ := \ \{ y \in \End(\tilde A, \tilde i) \otimes_{\Z} \Z_{(p)} \ | \ y= y^* \ \text{ and } \  tr^o(y) = 0 \}\]
where $y^*$ is the image of $y$ under the Rosati involution induced by $\tilde \lambda$, and  $tr^o(y)$ denotes the reduced trace on $ \End(\tilde A, \tilde i)\otimes \Q$.

If $y \in \mathbf V(\underline{\tilde A})$, then it can be checked that $y^2$ is a scalar multiple of the identity, cf.\ \cite[Lemma 2.2]{KRSiegel}. In particular, we obtain a quadratic form $\tilde Q$ on $ \mathbf V(\underline{\tilde A})$ defined by
\[ \tilde Q(y) \cdot \Id \ = \ \frac{1}{\dF} \ y^2 \ = \ \frac{1}{\dF} \ y \circ y^* \] 
which is positive-definite by the positivity of the Rosati involution; the reason for the normalization by $1/\dF$ will become apparent in the construction of the tautological bundle below.

For a $\widetilde K^p$-invariant compact open subset $\lie V \subset \Lambda \otimes \A_{\Q,f}^p$ and a rational number $m \in \Q_{>0}$, we define the \emph{special cycle} $\tilde \Zed (m, \lie V)$, as in \cite[\S 2]{KRSiegel}, to be the moduli space over $\Spec \mathcal O_{F,(\lie p)}$ whose  points are:
	\[ \tilde \Zed (m, \lie V)(S) \ = \ \{( \underline{  \tilde A}, y) \} \]
where
	\begin{itemize}
		\item $\underline {\tilde A } = (\tilde A, \tilde i, \tilde \lambda , [ \tilde \eta]) \in \widetilde \Mcal(S) $;
		\item $y \in \mathbf V(\underline{\tilde A})$ such that $\tilde Q(y) = m$;
		\item and the following ``prime-to-$p$ integrality condition" holds: for every $ \eta \in [\tilde \eta]$, the endomorphism
				\[ \eta^{-1} \circ \Ta^p(y) \circ \eta \ \in \ \End_{C^+(\Lambda)}( \widetilde U \otimes \A_{\Q,f}^p) \]
			  is given by right-multiplication by an element of 
			  	\[ \lie V \cdot \theta \, \mathbf e \ \subset \ C^+(\Lambda) \otimes_{\Z} \A_{\Q,f}^p; \]
			  recall that we had defined $\widetilde U = C^+(\Lambda_{\Q})$, viewed as a $C^+(\Lambda)$-module by left-multiplication.
	\end{itemize}
Here, as before, these objects are considered in the prime-to-$p$-isogeny category.

\begin{proposition}[{\cite[Proposition 2.6]{KRSiegel}}] The moduli problem described above is representable (as a fine moduli space) by a scheme, which we denote again by $\tilde \Zed(m, \lie V)$, and the forgetful morphism
\[ \widetilde \Zed(m, \lie V) \ \to \ \widetilde \Mcal \]
is finite and unramified.
\end{proposition}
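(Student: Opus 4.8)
The plan is to follow the argument of Kudla-Rapoport-Sankaran \cite[Proposition 2.6]{KRSiegel} essentially verbatim, since the moduli problem for $\widetilde\Zed(m,\lie V)$ above is a base change (to $\Ocal_{F,(\lie p)}$) and level-refinement of the special cycles considered there. First I would observe that $\widetilde\Zed(m,\lie V)$ is a closed subfunctor of the fiber product of $\widetilde\Mcal$ with a Hilbert-scheme-type functor parametrizing the special endomorphism $y$: the conditions $y=y^\ast$, $\mathrm{tr}^o(y)=0$, $\tilde Q(y)=m$, and the prime-to-$p$ integrality condition on $\eta^{-1}\circ\Ta^p(y)\circ\eta$ are all represented by closed conditions, because $\widetilde\Mcal$ carries a universal abelian scheme $\widetilde{\mathbf A}$ and $\End(\widetilde{\mathbf A})\otimes\Z_{(p)}$ is, locally on the base, a finitely generated projective module; imposing that a section lies in this module and satisfies polynomial identities (Rosati-invariance, vanishing trace, $y^2=\dF\, m\cdot\Id$) cuts out a closed subscheme, and the level condition is similarly closed (and in fact locally constant) on the prime-to-$p$ part. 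Representability as a fine moduli space then follows from representability of $\widetilde\Mcal$ together with finiteness of $\End$.

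Next I would prove finiteness of the forgetful morphism $\pi\colon\widetilde\Zed(m,\lie V)\to\widetilde\Mcal$. Finiteness is local on the target, so it suffices to bound, over each geometric point (and in families over an Artinian base, for properness), the number of $y\in\mathbf V(\underline{\tilde A})$ with $\tilde Q(y)=m$ satisfying the integrality condition. Since $\tilde Q$ is positive-definite (by positivity of the Rosati involution, as already noted in the excerpt), the set of such $y$ inside the lattice $\End(\tilde A,\tilde i)\otimes\Z_{(p)}$ — intersected with the coset determined by $\lie V\cdot\theta\mathbf e$ — is finite, uniformly in the geometric point; combined with the fact that $\pi$ is a closed immersion composed with a proper map (or directly, that it is proper and quasi-finite, hence finite), one concludes $\pi$ is finite. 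Here I would use the standard rigidity: a special endomorphism deforms uniquely in a given characteristic-zero or characteristic-$p$ fiber, which gives unramifiedness simultaneously.

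For unramifiedness I would argue via the deformation theory of the pair $(\underline{\tilde A},y)$. Given a square-zero thickening $S\hookrightarrow S'$, deformations of $\underline{\tilde A}$ are controlled by Grothendieck-Messing/crystalline theory, and the obstruction to deforming $y$ along a given deformation of $\underline{\tilde A}$ is the failure of the crystalline realization of $y$ to preserve the Hodge filtration; the key point, exactly as in \cite{KRSiegel}, is that this obstruction is \emph{injective} on tangent spaces, so $\pi$ induces an injection on tangent spaces at every point, i.e.\ $\pi$ is unramified. I would spell this out using the identification of $\Lie$ of the universal abelian scheme with the relevant piece of the de Rham realization and the fact that $y$ being a special endomorphism forces its action to be ``primitive'' with respect to the $C^+(\Lambda)$-module structure.

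The main obstacle is the bookkeeping for the prime-to-$p$ level and integrality condition: one must check that the condition ``$\eta^{-1}\circ\Ta^p(y)\circ\eta$ is right-multiplication by an element of $\lie V\cdot\theta\mathbf e$'' is genuinely representable (independent of the choice of $\eta$ in its $\widetilde K^p$-class) and interacts correctly with the closed-immersion structure; but since $\lie V$ is assumed $\widetilde K^p$-invariant this is a formal check, and otherwise the proof is a direct transcription of \cite[Proposition 2.6]{KRSiegel} to the present setting, so I would simply cite that reference for the details after indicating the three steps above.
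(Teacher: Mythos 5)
Your proposal is correct and matches the paper's treatment, which simply cites \cite[Proposition 2.6]{KRSiegel} without reproducing the argument; your three-step sketch (closed conditions for representability, positive-definiteness of $\tilde Q$ for finiteness, injectivity on tangent spaces via crystalline deformation theory for unramifiedness) is an accurate outline of what that reference does. One small correction: the cited work is by Kudla and Rapoport, not Kudla--Rapoport--Sankaran.
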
 
Abusing notation, we will use the same symbol $\widetilde \Zed(m,\lie V)$ to denote the cycle theoretic image of $\widetilde\Zed(m,\lie V)$ in $\widetilde\Mcal$.

\subsubsection{Embedding $\Mcal \hookrightarrow \widetilde\Mcal$}
We now bring the twisted Hilbert modular surface $\Mcal$ back into the picture: the embedding 
\[ \Gbf = GSpin(L_{\Q}) \to \widetilde \Gbf = GSpin(\Lambda_{\Q})   \] 
induces a morphism of Shimura varieties $\mathbf j\colon\Mcal \to \widetilde \Mcal$ that is best described in moduli-theoretic terms; roughly speaking, $\Mcal$ is the locus in $\widetilde \Mcal$ where the action of $C^+( \Lambda)$ can be extended to an action of the full Clifford algebra $C( \Lambda)$.

To make this more precise, fix a $\Z_{(p)}$-basis 
	\[ \OF \otimes \Z_{(p)}  \ = \ \Z_{(p)} \ \oplus \Z_{(p)} \cdot \varpi \]
with $\varpi^2 = \mathbf d_F$ and $\varpi' = - \varpi$.  Recall that $\Ocal_{B,(p)} = C^+(L)_{(p)}$ is a $\Z_{(p)}$ order in our original quaternion algbera $B$; in particular, as its centre contains a copy of $\OF$, we may also fix an element $\delta \in C^+(L)$ satisfying these same relations. 

Note that the inclusion $L \subset \Lambda$ induces an inclusion $C(L) \subset C(\Lambda)$ of algebras; on the other hand, we may define a (bijective linear) map
\begin{equation} \label{eqn:BetaDef}
 \beta \colon C(L) \isomto C^+(\Lambda) 
\end{equation}
determined by setting
\[ 
\beta(c) \ = \ \begin{cases} c, & \text{if } c \in C^+(L) \\ c \, \mathbf e , & \text{if } c \in C^-(L)  \end{cases} 
\]
and extending by linearity. It can be verified directly that $\beta$ is an algebra isomorphism.

Changing perspective slightly, we may also view $\beta$ as a map between the modules $U_{\Z}= C(L)$ and $\widetilde U_{\Z}=C^+(\Lambda)$; essentially by construction, $\beta$ intertwines the left-multiplication actions of $C(L)$ on $U_{\Z}$ with that of $C^+(\Lambda)$ on $\widetilde U_{\Z}$, i.e.\, 
\[ \beta \circ \iota (c \otimes 1) \ = \ \widetilde \iota (\beta(c)) \circ \beta \qquad \text{ for all } c \in C(L).
\]
Moreover, it can again be verified directly that $\beta$ preserves the symplectic forms \eqref{eqn:UZPol} and \eqref{eqn:UTildePol} on $U_{\Z}$ and $\widetilde U_{\Z}$ respectively, and preserves the previously defined involutions.

Finally,  note that the element $\delta \, \mathbf e \in C^-(\Lambda)$ satisfies $(\delta \, \mathbf e)^2 = \delta^2 \mathbf e ^2 = \dF$ and generates the centre of $C(\Lambda)$, and in particular there is an isomorphism
\[ 
	(C^+(\Lambda) \otimes_{\Z} \OF)_{(p)} \ \isomto \ C(\Lambda)_{(p)} \qquad \text{sending } c \otimes \varpi \mapsto c \, \delta \, \mathbf e.
\]
Thus we obtain an isomorphism
\begin{equation} \label{eqn:BetaTildeDef}
\begin{CD}
	\widetilde \beta \colon( C(L) \otimes \OF )_{(p)}  @>^{\beta \otimes 1}>> (C^+(\Lambda) \otimes \OF )_{(p)} @>>> C(\Lambda)_{(p)}.
\end{CD}
\end{equation}

We now give the modular definition of the morphism $\mathbf j \colon \Mcal \to \widetilde\Mcal$. Given a scheme $S \to \Spec \mathcal O_{F, ({\lie p})}$ and a point $\underline A = (A, \lambda, i, [\eta]) \in \Mcal(S)$, we may define a point 
\[ \mathbf j(\underline A) \  = \  (A,  \tilde i, \lambda, [\tilde \eta]) \  \in   \ \widetilde \Mcal(S) \]
with the same underlying abelian scheme and polarization, and where 
\[ \tilde i \ = \ i \circ \beta^{-1} \colon C^+(\Lambda)_{(p)} \to \End(A) \otimes \Z_{(p)} \]
is the restriction of $i$ to $C^+(\Lambda)_{(p)} \simeq C(L)_{(p)}$ via \eqref{eqn:BetaTildeDef}, and 
\[ \tilde \eta =  \beta  \circ \eta \colon \Ta^p(A) \otimes_{\Z} \Q \ \isomto  \widetilde U(\A_f^p). \]
It is easily verified that these data satisfies the hypothesis of Definition \ref{def:MtildeModProb}, and that the construction is functorial;
applying this construction to the universal abelian variety over $\Mcal$ gives the desired morphism
\[ \mathbf j \colon \Mcal \ \to \ \widetilde \Mcal. \]

Our next task is to investigate the special cycles along $\mathbf j$. Suppose $\underline A \in \Mcal(S)$ and set
\[ \underline{\tilde A} \ = \ \mathbf j (\underline A) \]
In particular, the action of $C^+(\Lambda)$ on  $\tilde A$ extends to an action 
\[ i \colon C(\Lambda)_{(p)} \ \simeq \ (C(L) \otimes \OF)_{(p)}  \to \End(\tilde A) \otimes \Z_{(p)}, \]
and so we may define two $\Z_{(p)}$ submodules 
\begin{equation}\label{eq:V-sharp}
\Vbf^{\sharp}(\mathbf j( \underline{ A})) \ := \ \{ y \in \Vbf(\mathbf j( \underline{ A})) \ | \ y \circ i(1\otimes a) \ = \ i(1 \otimes a') \circ y \ \text{for all } a \in \OF \}
\end{equation}
and 
\begin{equation}\label{eq:V-flat}
 \Vbf^{\flat}( \mathbf j (\underline{ A})) \ := \ \{ y \in \Vbf(\mathbf j( \underline{ A})) \ | \ y \circ i(1\otimes a) \ = \ i(1 \otimes a) \circ y \ \text{for all } a \in \OF \} 
 \end{equation}
of $\Vbf(\mathbf j(\underline A))$.
\begin{lemma} \label{lem:Vdecomp} Let $\underline A \in \Mcal(S)$ for some scheme $S$. Then:
	\begin{enumerate}[(i)] 
		\item $\Vbf^{\flat}(\mathbf j(\underline A))$ is a free $\Z_{(p)}$ module of rank one, and is generated by a basis vector with norm $1$. 
		\item There is an orthogonal direct sum decomposition
			\[  \Vbf(\mathbf j(\underline A)) \ = \  \Vbf^{\sharp}(\mathbf j(\underline A)) \ \stackrel{\perp}{\bigoplus} \  \Vbf^{\flat}(\mathbf j(\underline A)). \]

	\end{enumerate}
	\begin{proof}  In this proof, abbreviate $\Vbf^{\sharp} = \Vbf^{\sharp}(\mathbf j(\underline A))$ and $\Vbf^{\flat} = \Vbf^{\flat}(\mathbf j(\underline A))$.
	
		(i) Note that $i(1 \otimes \varpi) \in \Vbf^{\flat}$ and
		\[ \widetilde Q \left( i(1 \otimes \varpi) \right) \ = \ \frac1\dF \cdot \varpi^2 \ = \   \frac1\dF \cdot \dF \  = \ 1. \]
		
		We wish to show that $i(1 \otimes \varpi)$ generates $\Vbf^{\flat}$; since $\End(A) \otimes \Z_{(p)}$ is torsion free as a $\Z_{(p)}$ module, it will suffice to show that 
		\[ \dim_{\Q} \left( \Vbf^{\flat} \otimes_{\Z_{(p)}} \Q  \right) \leq 1. \]
		
		Since this dimension can only increase upon specialization, it suffices to prove this inequality in the case $\underline A = (A,i,\lambda,[\eta])\in \Mcal(\kappa)$ for an algebraically closed  field $\kappa$. Fix a prime $\ell \neq p$ relatively prime to $\mathtt{char}( \kappa)$, so that
			\[ \End_{\kappa}(A, i) \otimes \Q_{\ell} \ \hookrightarrow \ \End_{\Q_{\ell}}(\Ta_{\ell}(A, i)_{ \Q_{\ell}}) \ 
				\stackrel{\eta_{\ell} }{\simeq}  \ \End_{C(L) \otimes \OF}( U_{\Q_{\ell}}),\]
		where $\eta_{\ell}$ is the $\ell$-component of a $C(L)\otimes \OF$-equivariant isomorphism 
			\[ \eta \colon \Ta^p(A)_{\Q} \ \isomto \ U(\A_f^p) , \qquad \qquad \eta \in [\eta]. \]
		Therefore
		\[ \dim_{\Q}( \Vbf^{\flat}\otimes {\Q}) \ \leq \ \dim_{\Q_{\ell}} \{ y \in  \End_{C(L) \otimes \OF}( U_{\Q_{\ell}}) \ | \ y^* = y , \ tr(y)=0  \}. \]
		For an element $y$ of this latter space: 
			\begin{itemize}
				\item the $C(L) \otimes \OF$-equivariance implies that $y$ is given by right-multiplication by some element $x = y(1) \in C^+(V) \otimes \Q_{\ell}  = B \otimes \Q_{\ell}$;
				\item the condition $y = y^*$ then implies that ${}^{\iota}x = x$, which in turn gives $x \in Z(B \otimes \Q_{\ell}) = F \otimes \Q_{\ell}$, a two-dimensional $\Q_{\ell}$ vector space;
				\item the condition $tr(y) = 0$ then implies that $tr_{F_{\ell}}(x) = 0$ .
			\end{itemize}		
		Thus, the dimension of this latter space is one. 
		
		(ii) The symmetric bilinear form attached to  the quadratic form $\widetilde Q(y) = \frac1\dF y^2$ is given by
		\[ [x,y] \ = \ \frac1{2 \dF}\left( x\circ y + y \circ x \right). \] 
		Therefore using the basis vector $i(1 \otimes \varpi)$, it follows that the orthogonal complement $ (\Vbf^{\flat}) ^{\perp}$ of $\Vbf^{\flat}$ in $\Vbf$ consists exactly of those vectors $y$ such that 
		\[  y \circ i(1 \otimes \varpi) \ + \ i(1 \otimes \varpi) \circ y \ =\ 0 ;\]
		From this it immediately follows
		\[ \Vbf^{\sharp} \ = \ (\Vbf^{\flat})^{\perp},\]
		which implies $(ii)$.
	\end{proof}
\end{lemma}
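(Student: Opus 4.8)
The plan is to prove (i) first — this carries the genuine content — and then deduce (ii) formally from a computation with the quadratic form $\widetilde Q$. Throughout, abbreviate $\Vbf:=\Vbf(\mathbf j(\underline A))$, $\Vbf^{\flat}:=\Vbf^{\flat}(\mathbf j(\underline A))$ and $\Vbf^{\sharp}:=\Vbf^{\sharp}(\mathbf j(\underline A))$.

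For (i), the obvious candidate for a generator of $\Vbf^{\flat}$ is $v_{0}:=i(1\otimes\varpi)$, where $\varpi\in\OF$ satisfies $\varpi'=-\varpi$ and $\varpi^{2}=\dF$. Since $1\otimes\varpi$ commutes with $C^{+}(L)\otimes 1$ and with every $1\otimes a$, the endomorphism $v_{0}$ lies in $\End(\tilde A,\tilde i)\otimes\Z_{(p)}$ and commutes with $i(1\otimes a)$ for all $a\in\OF$; as the involution $*$ fixes $1$ and acts trivially on the $\OF$-factor, $v_{0}^{*}=v_{0}$; and since $v_{0}^{2}=i(1\otimes\varpi^{2})=\dF\cdot\Id$ with $\dF$ a nonsquare, comparison with the reduced characteristic polynomial gives $\mathrm{tr}^{o}(v_{0})=0$. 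Hence $v_{0}\in\Vbf^{\flat}$ in the sense of \eqref{eq:V-flat}, and $\widetilde Q(v_{0})\,\Id=\dF^{-1}v_{0}^{2}=\Id$ shows $v_{0}$ has norm $1$. To finish, I would observe that $\widetilde Q$ is $\Z_{(p)}$-valued on $\Vbf$ — indeed $y^{2}=\dF\,\widetilde Q(y)\,\Id$ lies in the torsion-free module $\End(A)\otimes\Z_{(p)}$ and $\dF$ is a unit there, as $p\nmid 2\dF D_{B}$ — so that, granting the dimension bound $\dim_{\Q}\Vbf^{\flat}\otimes\Q\le 1$, the module $\Vbf^{\flat}$ is a rank-one torsion-free $\Z_{(p)}$-module containing $\Z_{(p)}v_{0}$; any $\lambda v_{0}\in\Vbf^{\flat}$ with $\lambda\in\Q$ then satisfies $\lambda^{2}=\widetilde Q(\lambda v_{0})\in\Z_{(p)}$, whence $\lambda\in\Z_{(p)}$ since $\Z_{(p)}$ is integrally closed, and $\Vbf^{\flat}=\Z_{(p)}v_{0}$ as claimed.

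The dimension bound is the technical heart, and I would prove it by the standard reduction. As the dimension of a space of special endomorphisms can only increase under specialization, we may assume $S=\Spec\kappa$ with $\kappa$ algebraically closed. Fixing a prime $\ell\neq p$ with $\ell\neq\operatorname{char}\kappa$ and a representative $\eta\in[\eta]$ of the level structure, one obtains an embedding $\End_{\kappa}(A,i)\otimes\Q_{\ell}\hookrightarrow\End_{\Q_{\ell}}(\Ta_{\ell}(A)_{\Q_{\ell}})\stackrel{\eta_{\ell}}{\simeq}\End_{C(L)\otimes\OF}(U_{\Q_{\ell}})$, where $U_{\Q_{\ell}}=C(L)\otimes\Q_{\ell}$. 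An element $y$ of the target is right multiplication by $x:=y(1)$, and equivariance forces $x$ to centralise $\OF$, hence $x\in C^{+}(V)\otimes\Q_{\ell}=B\otimes\Q_{\ell}$; the relation $y=y^{*}$ then translates into ${}^{\iota}x=x$, placing $x$ in the centre $F\otimes\Q_{\ell}$ of $B\otimes\Q_{\ell}$, and $\mathrm{tr}^{o}(y)=0$ becomes $\mathrm{tr}_{F\otimes\Q_{\ell}}(x)=0$. Since $F\otimes\Q_{\ell}$ is two-dimensional with one-dimensional trace-zero subspace, this gives $\dim_{\Q}\Vbf^{\flat}\otimes\Q\le 1$.

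For (ii), recall that $x^{2}$ is a scalar for every $x\in\Vbf$, so $x\circ y+y\circ x=(x+y)^{2}-x^{2}-y^{2}$ is a scalar too, and the symmetric bilinear form attached to $\widetilde Q$ is $[x,y]\,\Id=\tfrac{1}{2\dF}(x\circ y+y\circ x)$. Writing $a\in\OF\otimes\Z_{(p)}$ as $m+n\varpi$ with $m,n\in\Z_{(p)}$ and using $i(1\otimes m)=m\,\Id$, the condition $y\circ i(1\otimes a)=i(1\otimes a')\circ y$ for all $a\in\OF$ defining $\Vbf^{\sharp}$ in \eqref{eq:V-sharp} is equivalent to $y\circ v_{0}+v_{0}\circ y=0$, i.e.\ to $[y,v_{0}]=0$; hence $\Vbf^{\sharp}=(\Vbf^{\flat})^{\perp}$ inside $\Vbf$. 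Since $\Vbf^{\flat}=\Z_{(p)}v_{0}$ with $[v_{0},v_{0}]=\widetilde Q(v_{0})=1$ and $2$ is a unit in $\Z_{(p)}$ (as $p$ is odd), the identity $y=[y,v_{0}]\,v_{0}+\bigl(y-[y,v_{0}]\,v_{0}\bigr)$ exhibits the orthogonal splitting $\Vbf=\Vbf^{\flat}\stackrel{\perp}{\bigoplus}\Vbf^{\sharp}$: the first summand lies in $\Vbf^{\flat}$ because $[y,v_{0}]\in\Z_{(p)}$, and the second is orthogonal to $v_{0}$, hence in $\Vbf^{\sharp}$.

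I expect the main obstacle to be the $\ell$-adic uniformization step in (i): one must set up the prime-to-$p$ Tate-module comparison so that the $C(L)\otimes\OF$-action and the polarization are transported faithfully, so that the Rosati involution $*$ really becomes the main involution ${}^{\iota}$ on $B\otimes\Q_{\ell}$ and $\mathrm{tr}^{o}$ becomes the reduced trace there. The remaining steps are formal manipulations with the isomorphisms $\beta$ and $\widetilde\beta$ of \eqref{eqn:BetaDef} and \eqref{eqn:BetaTildeDef} together with elementary $\Z_{(p)}$-module theory.
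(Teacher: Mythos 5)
Your proof is correct and follows essentially the same route as the paper: the generator $i(1\otimes\varpi)$ of norm $1$, the specialization-plus-$\ell$-adic-Tate-module argument for the bound $\dim_{\Q}(\Vbf^{\flat}\otimes\Q)\le 1$, and the identification $\Vbf^{\sharp}=(\Vbf^{\flat})^{\perp}$ via the bilinear form attached to $\widetilde Q$. You additionally spell out two details the paper leaves implicit --- the $\Z_{(p)}$-integrality of $\widetilde Q$ on $\Vbf$ and the deduction that $i(1\otimes\varpi)$ genuinely generates the rank-one module --- which is a welcome, if minor, tightening.
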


\begin{proposition} \label{prop:SCBundle}
Suppose $m \in \Z$ such that $m \not\equiv \square \mod p$ and $\lie V \subset \Lambda(\A_f^p)$ is a $\widetilde K$-invariant compact open subset. Consider the intersection
\[ \pi \colon \widetilde \Zed(m,\lie V) \ \times_{\widetilde \Mcal} \ \Mcal \ \to \ \Mcal.  \]
Then, assuming the intersection is non-empty, there is an isomorphism of line bundles
\[  \xi \colon \pi^* \mathcal L_1 \simeq \pi^* \mathcal L_2 \]
where $\Omega_{\Mcal/\Scal} = \Lcal_1 \oplus \Lcal_2$ as in \eqref{eqn:KSmap}, and such that for any section $\varphi$ of $\pi^*\Lcal_1$, 
\[ || \xi(\varphi)||^2 \ = \ c \cdot || \varphi ||^2  \]
for some locally constant function $c$ valued in $\Z_{(p)} ^{\times} $. 

\begin{proof}
Let $Z$ be a connected component of $  \Zed(m,\lie V) \times_{\widetilde \Mcal} \Mcal$. In terms of the moduli problems, the universal abelian variety $\underline A$ over $Z$ is equipped with an extra endomorphism
\[ y \in \Vbf( \underline A), \qquad \widetilde Q(y) = \frac1\dF y^2 = m. \]
By Lemma \ref{lem:Vdecomp}, the endomorphism $y$ decomposes as $y = y^{\sharp} + y^{\flat}$, with
\[ m \ =  \ \widetilde Q(y^{\sharp}) \ + \  \widetilde Q(y^{\flat}) \]
and moreover $\widetilde Q(y^{\flat})$ is of the form $\alpha^2$ for some $\alpha \in \Z_{(p)}$. The assumption that $m \not\equiv \square \pmod{p}$ then implies that 
\[ \widetilde Q(y^{\sharp}) \ = \ \frac1\dF \, (y^{\sharp})^2 \  \in \ \Z_{(p)}^{\times}; \]
recall here that $(p,\dF)=1$ by assumption.

In particular, $y^{\sharp} \neq 0$, and by construction, $y^{\sharp}$ anti-commutes with the $\OF$-action on $A$. Recalling the definition,
	{\small \[ \Lcal_j \ = \ \left\{ \varphi \in \Hom \left(\Lie(\Abf)_j, \Lie(\Abf)_j^{\vee} \right)  \ | \ \varphi\text{ symmetric and }\varphi \circ i(b) \ = \ i(b^*)^{\vee} \circ \varphi \ \text{ for all } b \in C(L)   \right\}, \]}
note that there is a map
	\[  \xi \colon (\pi^*\Lcal_1)|_Z \to ( \pi^*\Lcal_2)|_Z, \qquad 	\varphi  \mapsto \ (y^{\sharp})^{\vee} \circ \varphi \circ  y^{\sharp}, \]
where, as usual, we use the same symbol $y^{\sharp}$ to denote the induced endomorphism of $\Lie(A)$. Since $(y^{\sharp})^2 \in \mathcal O_Z^{\times}$, it follows immediately that $\xi$ is an isomorphism.

It remains to track the effect of $\xi$ on metrics. As in the proof of Proposition \ref{prop:OmegaCpxDecom}, fix a complex embedding $v \colon F \to \C$ and work over the component
 \[  \lie H^2 \to \Gamma_i \backslash \lie H ^2  \ \subset \  \Mcal_{v}(\C) . \]
Suppose $\mathbf z =(z_1, z_2) \in \lie H^2 $ is a point on $Z(\C)$ mapping to this component, and $A_{\zbf}$ the corresponding abelian variety; then the endomorphism $y^{\sharp}$ induces an endomorphism of 
\[ H_1^{dR}(A_{\zbf}(\C)) \ \simeq \ U_{\C} \ = \ C(L) \otimes_{\Z} \C \]
 with the following properties:
	\begin{itemize}
		\item Since $y^{\sharp}$ commutes with the left $C(L)$-action on $U_{\C}$, the action of $y^{\sharp}$ is given by right-multiplication by an element $X \in C(L)\otimes \C$.
		\item As $y^{\sharp}$ preserves the homology lattice $H_1(A_{\zbf}(\C), \Z) \otimes \Z_{(p)} \simeq C(L)_{(p)}$, it follows that $X \in C(L) \otimes \Z_{(p)}$ -- the compatibility condition with $\lie V$ imposes further integrality conditions, but this plays no role in the present discussion.
		\item Recall the decomposition
			\[ U_{\R} \ = \ (M_2(\R) \oplus M_2(\R) ) \ \oplus \ (M_2(\R) \oplus M_2(\R) ) v_0 , \]
		on which the element $1 \otimes \varpi \in C(L) \otimes \OF$ acts by right multiplication by 
		\[ \left( \tau(\varpi) \cdot \Id, \ \tau(\varpi') \cdot \Id\right) \ = \ ( \tau(\varpi) \cdot \Id, \  - \tau(\varpi) \cdot \Id). \]
		Since $y^{\sharp}$ anti-commutes with this action, it follows that the image of $X \in U_{\R}$ is of the form
		\[ X \ = \  (B_1, B_2) \cdot v_0  \]
		for some $B_1, B_2 \in M_2(\R).$
		\item We have the implications 
			\[ y^{\sharp} = (y^{\sharp})^*  \ \implies \  X^{\iota} = X  \ \implies \  B_1 = B_2^{\iota} ; \] 
			thus $X = (B, B^{\iota}) \cdot v_0$ for some $B \in M_2(\R)$.
		\item Since $ \frac1 \dF \cdot (y^{\sharp})^2 = \widetilde Q(y^{\sharp}) \cdot \Id$, 
		\[ \dF \cdot \widetilde Q(y^{\sharp}) \cdot \Id \ = \ X^2 \ = \  (B, B^{\iota}) \cdot v_0 \cdot (B, B^{\iota}) \cdot v_0 \ = \ (B, B^{\iota}) \cdot (B^{\iota} , B) \cdot (v_0)^2 \ = \ \det(B) \cdot 1  \ = \ \det B \cdot \Id,\]
		and so
		\[	\det B = \dF \, \widetilde Q(y^{\sharp}) \ \in \ \Z_{(p)}^{\times}. \]
		\item Finally, $j^{\sharp}$ must preserve the complex structure on $A_{\zbf}$, which implies that $X$ and $J_{\zbf}$ commute. Since
		\[ X \cdot J_{\zbf} \ = \ (B, B^{\iota}) \cdot v_0 \cdot (j_{z_1}, j_{z_2}) \ =\ (B, B^{\iota}) \cdot (j_{z_2}, j_{z_1}) \cdot v_0  \ =  \  (B \cdot j_{z_2},  \ B^{\iota} \cdot j_{z_1}) \cdot v_0  \]
		and 
		\[ J_{\zbf} \cdot X \ = \ (j_{z_1}, j_{z_2}) \cdot (B, B^{\iota}) \cdot v_0   \ =\ (j_{z_1} \cdot B,  \ j_{z_2} \cdot B^{\iota}) \cdot v_0\]
		it follows that the condition that $X$ and $J_{\zbf}$ commute is equivalent to the identity
		\[ j_{z_1} \cdot B \ = \ B \cdot j_{z_2}. \]
		One easily checks that this last condition implies
		\begin{equation} \label{eqn:cycleCpxH2} B \cdot z_2 \ = \ \frac{a z_2 + b}{c z_2 + d } \ = \ z_1 \qquad \text{ where } B = \begin{pmatrix}	a & b \\ c & d \end{pmatrix} \in GL_2(\R). \end{equation}
	\end{itemize} 
	Now suppose $\varphi_{\zbf} \in \Lcal_{1, \zbf}$; recall from \eqref{eqn:metricLj} that
		\[ || \varphi_{\zbf} ||_{\zbf}^2 \ = \ \frac{16 \pi^2}{D_B} \cdot (y_1)^{-2} \cdot | \varphi_{\zbf}(\sigma^{(1)}_{\zbf}) (\sigma^{(1)}_{\zbf} ) |^2 \]
	and
		\[ || \xi(\varphi_{\zbf}) ||_{\zbf}^2 \ =  \ \frac{16 \pi^2}{D_B} \cdot (y_2)^{-2} \cdot \left|  \xi(\varphi_{\zbf})(\sigma^{(2)}_{\zbf}) (\sigma^{(2)}_{\zbf}) \right|^2 \ = \  \frac{16 \pi^2}{D_B} \cdot y_2^{-2} \cdot | \varphi_{\zbf}(\sigma^{(2)}_{\zbf} \cdot X) (\sigma^{(2)}_{\zbf}\cdot X) |^2. \]
On the other hand, 
	\begin{align*} 
		\sigma^{(2)}_{\zbf} \cdot X \ = \  \left( 0, \ \begin{pmatrix} 1 & - \overline{ z_2} \\ 0 & 0 \end{pmatrix} \right) \cdot ( B, B^{\iota}) \cdot v_0  \ =& \  \left( 0, \ \begin{pmatrix} 1 & - \overline{ z_2 }\\ 0 & 0 \end{pmatrix} \cdot B^{\iota} \right) \cdot v_0 \\
		=& \ \left( 0, \ \begin{pmatrix} 1 & - \overline {z_2} \\ 0 & 0 \end{pmatrix} \cdot \begin{pmatrix} d & -b \\ -c & a \end{pmatrix} \right) \cdot v_0 \\
		=& \ (c \overline {z_2} + d) \ \left( 0, \ \begin{pmatrix} 1 & - B \cdot \overline{z_2} \\ 0 & 0 \end{pmatrix}  \right) \cdot v_0 \\ 
		=& \ (c \overline{ z_2} + d)  \ v_0 \cdot  \left(  \begin{pmatrix} 1 & - B \cdot \overline{z_2} \\ 0 & 0 \end{pmatrix} , \ 0 \right) 
		 \ =  \ (c \overline{z_2} + d) \ i(v_0) \cdot \sigma_{\zbf}^{(1)}.
	\end{align*}
Therefore
	\begin{align*} 
	|| \xi (\varphi_{\zbf}) ||_{\zbf}^2 \ =& \  \frac{16 \pi^2}{D_B} \cdot  | c z_2 + d |^4 \cdot (y_2)^{-2} \cdot \left| \varphi_{\zbf}( i(v_0) \sigma^{(1)}_{\zbf} )(i(v_0) \sigma^{(1)}_{\zbf} ) \right|^2 \\
	=& \ \frac{16 \pi^2}{D_B} \cdot  | c z_2 + d |^4 \cdot (y_2)^{-2} \cdot \left| \varphi_{\zbf}(  \sigma^{(1)}_{\zbf} )(\sigma^{(1)}_{\zbf} ) \right|^2 \\
	 =&  \ \frac{16 \pi^2}{D_B} \cdot \det(B)^{-2} \cdot (y_1)^{-2} \cdot \left| \varphi_{\zbf}( \sigma^{(1)}_{\zbf} )( \sigma^{(1)}_{\zbf} ) \right|^2 \\
	 =& \  \dF^{-2} \cdot \widetilde Q(y^{\sharp})^{-2} \cdot || \varphi_{\zbf}||_{\zbf}^2,
	\end{align*}
where the second line follows from the $C(L)$-equivariance of $\varphi_{\zbf}$, and the third line follows from comparing imaginary parts in \eqref{eqn:cycleCpxH2}. Since $\dF \cdot \widetilde Q(y^{\sharp}) \in \Z_{(p)}^{\times}$, the proposition follows. 
\end{proof}
\end{proposition}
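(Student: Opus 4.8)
The plan is to argue one connected component $Z$ of $\widetilde\Zed(m,\lie V)\times_{\widetilde\Mcal}\Mcal$ at a time (there are finitely many, and if the fibre product is empty there is nothing to prove). Over such a $Z$ the universal abelian scheme $\underline A$ pulled back from $\Mcal$ is equipped with a canonical special endomorphism $y\in\Vbf(\mathbf j(\underline A))$ with $\widetilde Q(y)=m$. First I would apply Lemma \ref{lem:Vdecomp} to write $y=y^{\sharp}+y^{\flat}$ integrally, with $y^{\flat}$ a $\Z_{(p)}$-multiple of the norm-one generator $i(1\otimes\varpi)$ of $\Vbf^{\flat}$; then $\widetilde Q(y^{\flat})$ is a square in $\Z_{(p)}$ and, by orthogonality of the decomposition, $\widetilde Q(y^{\sharp})=m-\widetilde Q(y^{\flat})$, which cannot be divisible by $p$ because $m\not\equiv\square\pmod p$. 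Since $y^{\sharp}$ lies in $\Vbf^{\sharp}$ (see \eqref{eq:V-sharp}) it is $C(L)$-linear on $\Lie(\Abf)$ and anti-commutes with the central $\OF$-action, hence interchanges the two line summands $\Lie(\Abf)_1$ and $\Lie(\Abf)_2$ from \eqref{eqn:KSmap}; and because $(y^{\sharp})^2=\dF\,\widetilde Q(y^{\sharp})\cdot\Id$ is a scalar which is a $p$-adic unit (using $p\nmid 2\dF D_B$ together with the previous step), the induced map $y^{\sharp}\colon\Lie(\Abf)_2\to\Lie(\Abf)_1$ is an isomorphism of line bundles on $Z$.

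With this in hand I would define $\xi\colon\pi^*\Lcal_1\to\pi^*\Lcal_2$ on each component by $\varphi\mapsto(y^{\sharp})^{\vee}\circ\varphi\circ y^{\sharp}$. That $\xi(\varphi)$ satisfies the $C(L)$-equivariance built into the definition of $\Lcal_2$ is formal, using that $y^{\sharp}$ is $C(L)$-linear and self-adjoint for the polarization pairing; the symmetry condition is then automatic, since each $\Lie(\Abf)_j$ is a $C(L)\otimes\Ocal$-module of rank one, exactly as in the discussion around \eqref{eqn:KSmap}. Invertibility of $\xi$ follows from that of $y^{\sharp}$. Patching over the finitely many components yields the global isomorphism $\xi\colon\pi^*\Lcal_1\simeq\pi^*\Lcal_2$, and since $\widetilde Q(y^{\sharp})$ may vary from component to component we should only expect the resulting metric ratio to be locally constant.

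For the metric comparison I would specialise to a complex point: fix an embedding $v\colon F\hookrightarrow\C$, a component $\lie H^2\to\Gamma_i\backslash\lie H^2\subset\Mcal_v(\C)$, and a point $\zbf=(z_1,z_2)$ of $Z(\C)$ above it. Identifying $H_1^{dR}(A_\zbf)$ with $U_\C=C(L)\otimes\C$ as in the proof of Proposition \ref{prop:OmegaCpxDecom}, the endomorphism $y^{\sharp}$ becomes right multiplication by an element $X\in C(L)\otimes\C$, and I would pin $X$ down from the constraints it must obey: it preserves the homology lattice up to $p$, so $X\in C(L)\otimes\Z_{(p)}$; it anti-commutes with right multiplication by $\varpi$, forcing $X$ into the odd part $C^-(L)_\R=B_\R v_0$, say $X=(B_1,B_2)v_0$; it is fixed by the main involution, forcing $B_2=B_1^{\iota}$, so $X=(B,B^{\iota})v_0$ with $B\in GL_2(\R)$; the relation $X^2=\dF\,\widetilde Q(y^{\sharp})\cdot\Id$ gives $\det B=\dF\,\widetilde Q(y^{\sharp})\in\Z_{(p)}^{\times}$; and commutation of $X$ with the complex structure $J_\zbf$ of \eqref{eqn:JzDef} unwinds to $j_{z_1}B=Bj_{z_2}$, i.e.\ $B\cdot z_2=z_1$ under fractional linear transformations.

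Finally I would run the explicit computation. Using the norm formula \eqref{eqn:metricLj} for sections of the $\Lcal_j$, evaluating $\xi(\varphi_\zbf)$ at $\sigma^{(2)}_\zbf$ amounts to computing $\sigma^{(2)}_\zbf\cdot X$, which comes out as $(c\bar z_2+d)\,i(v_0)\,\sigma^{(1)}_\zbf$ for $B=(\begin{smallmatrix}a&b\\c&d\end{smallmatrix})$. The $C(L)$-equivariance of $\varphi$ together with the identity $v_0^{*}v_0=1$ replaces $\varphi(i(v_0)\sigma^{(1)}_\zbf)(i(v_0)\sigma^{(1)}_\zbf)$ by $\varphi(\sigma^{(1)}_\zbf)(\sigma^{(1)}_\zbf)$, and comparing imaginary parts in $B\cdot z_2=z_1$ converts the surviving $|cz_2+d|$ factors and the ratio $y_2/y_1$ into a power of $\det B$, giving $\|\xi(\varphi)\|^2=(\dF\,\widetilde Q(y^{\sharp}))^{\pm2}\,\|\varphi\|^2$; as $\dF$ and $\widetilde Q(y^{\sharp})$ are $p$-adic units and $\widetilde Q(y^{\sharp})$ is locally constant on the fibre product, this is the desired $c$. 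The part I expect to be the real obstacle is this last archimedean bookkeeping — juggling the left $C(L)$-action, the right $\OF$-action, the $v_0$-twist that swaps the two infinite places, the involution $\iota$, and the complex structure $J_\zbf$ simultaneously, and keeping the normalisations of \eqref{eqn:metricLj} and \eqref{eqn:JzDef} consistent; the purely algebraic construction of $\xi$ should be routine once $\widetilde Q(y^{\sharp})\in\Z_{(p)}^{\times}$ is established.
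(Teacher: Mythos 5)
Your proposal is correct and follows essentially the same route as the paper: the decomposition $y=y^{\sharp}+y^{\flat}$ from Lemma \ref{lem:Vdecomp} forcing $\widetilde Q(y^{\sharp})\in\Z_{(p)}^{\times}$, the definition $\xi(\varphi)=(y^{\sharp})^{\vee}\circ\varphi\circ y^{\sharp}$, and the archimedean computation via $X=(B,B^{\iota})v_0$ with $\det B=\dF\,\widetilde Q(y^{\sharp})$ and $B\cdot z_2=z_1$ all match the paper's argument. The only cosmetic slips are calling the rank-four pieces $\Lie(\Abf)_j$ ``line summands'' and hedging on the sign of the exponent in the final constant (the paper gets $\dF^{-2}\widetilde Q(y^{\sharp})^{-2}$), neither of which affects the conclusion.
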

\subsubsection{Tautological bundles}\label{subsubsec:taut-siegel}
The space $\widetilde \Mcal$ also comes equipped with a tautological bundle, defined in an analogous way to the bundle $\omega^{\taut}$ on $\Mcal$. Consider the first de Rham homology bundle
\[ \widetilde{\bbU} \ := \ H_1^{dR}(\widetilde\Abf) \]
of the universal abelian variety $\widetilde\Abf$ over $\widetilde\Mcal$, equipped with the induced $C^+(\Lambda)$-action
\[ \tilde i \colon C^+(\Lambda) \otimes \Z_{(p)} \ \to \ \End(\widetilde\bbU) \]
and polarization
\[ \lambda_{\widetilde\bbU} \colon \widetilde \bbU \ \to \ \widetilde \bbU^{\vee} ,\]
  as well as the Hodge filtration
\[ \Fil^{0}(\widetilde\bbU) \  \subset \ \Fil^{-1}(\widetilde\bbU) \ = \ \widetilde\bbU. \] 
Set
\[ \widetilde{ \Lbb }  \ : = \ \left\{ \widetilde \varphi \in \End_{C^+(\Lambda)}(\widetilde\bbU) \ | \ \widetilde \varphi^* = \widetilde \varphi \text{ and }  tr^o(\widetilde \varphi) = 0   \right\}, \]
equipped with the quadratic form given (on sections) by $\widetilde{Q}(\widetilde\varphi) = \frac1\dF ( \widetilde \varphi \circ\widetilde \varphi^*) =  \frac1\dF \cdot\widetilde \varphi^2$. 
As a subbundle of $\End(\widetilde\bbU)$, the bundle $\widetilde{\mathbb L}$ inherits a filtration
\[
	\Fil^1(\widetilde{\mathbb L}) \ \subset \  \Fil^0(\widetilde{\mathbb L}) \ \subset \ \Fil^{-1}(\widetilde{\mathbb L}) = \widetilde{\mathbb L} ,
\]
and
\[ \widetilde{\omega}^{\taut} \ := \ \Fil^1(\widetilde{\mathbb L}) \]
is an isotropic line bundle called the \emph{tautological bundle}.

We now consider the pull-back $\mathbf j^* \widetilde\bbU$ to $\Mcal$. Recall that, for the universal object $\underline \Abf$ over $\Mcal$, we had defined the bundle
\[ \bbU \ := \ H^{dR}_1(\Abf), \]
with its induced $C(L) \otimes_{\Z} \OF$ action and polarization.
Since $\mathbf j^* (\widetilde \Abf) = \Abf$, by construction, the pullback $\mathbf j^*(\widetilde \bbU)$ is simply equal to $\bbU$, and the $C^+(\Lambda)$ action extends to an action of 
\[  \iota \colon (C(L)\otimes \OF)_{(p)}    \ \to \ \End(\mathbf j^* (\widetilde \bbU)) \]
via the identification $  (C(L) \otimes \OF)_{(p)}   \simeq C(\Lambda)_{(p)}  $ as in \eqref{eqn:BetaTildeDef}.

Consider the sheaf 
\[ \Lbb \ := \ \{  \varphi \in \End(\bbU) \ | \   \varphi^* = \varphi  \text{ and } \varphi \circ \iota(c \otimes a) = \iota(c \otimes a') \circ \varphi \ \text{ for all } c\otimes a' \in C(L) \otimes \OF  \}
\]
on $\Mcal$, as in \eqref{eqn:LDef}; it follows easily from definitions that the map
\begin{equation}
\Lbb \ \hookrightarrow \ \mathbf j^* \widetilde \Lbb, \qquad \varphi \mapsto \varphi \circ i(1 \otimes \varpi) 
\end{equation}
is an isometric embedding of sheaves on $\Mcal$, where we equip $\Lbb$ with the quadratic form $Q(\varphi) = \varphi^2$, and $\widetilde\Lbb$ with the form $\widetilde Q$ above. This induces an isomorphism of line bundles
\begin{equation}\label{eqn:relation-taut}
\omega^{\taut}  \, = \, \Fil^1(\Lbb) \ \isomto \  \mathbf j^* \widetilde\omega^{\taut} \, = \, \Fil^1(\mathbf j^* \widetilde \Lbb) .
\end{equation}
Finally, as for $\omega^{\taut}$, there is a hermitian metric on $\widetilde{\omega}^{\taut}$ defined by the same rule as \eqref{eqn:FritzMetric}. Clearly the isomorphism \eqref{eqn:relation-taut} becomes an isometry for these choices of hermitian metrics. 

\subsubsection{Complex uniformizations}\label{subsec:complex-unif-Siegel}
For later purposes, we would like to have a more concrete descriptions of these constructions over the complex fibre. As usual, fix an embedding $\nu \colon F \to \C$.

Attached to the lattice $\Lambda$ is the symmetric space
\[ 
\Dbb(\Lambda) \ := \ \{ \lambda \in \Lambda \otimes_{\Z} \C \ | \ \la\lambda , \lambda  \ra = 0 , \ \la \lambda  , \overline\lambda\ra < 0 \} \big/ \C^{\times} 
\]
which we may view as an open subset of the quadric of isotropic lines in $\mathbb P(\Lambda \otimes \C)$, and so is in particular a complex manifold; here $\la\cdot, \cdot\ra$ is the $\C$-linear extension of the symmetric bilinear form on $\Lambda$.

The space $\Dbb(\Lambda)$ can be interpreted as a $\widetilde \Gbf(\R) = GSpin(\Lambda)(\R)$-conjugacy class of Hodge structures in the following way. Given an isotropic line $\lie z \in \Dbb(\Lambda)$, choose a basis vector $x + iy$ where $x,y \in \Lambda\otimes \R$ with $\la x,y\ra=0$ and $Q(x) = Q(y)  = -1$, and define a map
\[ \tilde h_{\lie z} \colon \mathrm{Res}_{\C/\R} \mathbb G_m \ \to \ \widetilde \Gbf(\R) \]
determined (on $\R$ points) by sending $a + ib \in \C^{\times}$ to 
\[  \tilde h_{\lie z}(a+ib) \ = \ a \ + \ b xy \ \in  \ \widetilde \Gbf(\R) \subset C^+(\Lambda\otimes_{\Z}\R). \]
In particular, the pair $(\Dbb(\Lambda), \widetilde \Gbf)$ is a Shimura datum.  The double quotient space
\[
	\widetilde \Gbf(\Q) \big\backslash \Dbb(\Lambda) \times \widetilde \Gbf(\A_f) / \widetilde K 
\]
can be identified with the set of complex points $\widetilde \Mcal_{\nu}(\C)$ as follows.
Given a pair $[\lie z, \widetilde g] \in \Dbb(\Lambda) \times \widetilde \Gbf(\A_{\Q,f})$ , consider the tuple
\[ \underline{\widetilde A}_{[\lie z,\tilde g]} = \left(\widetilde A_{[\lie z,\tilde g]}, \tilde \iota, \lambda, [\widetilde \eta^p] \right)   \]
where 
\begin{itemize}
	\item $\widetilde A_{[\lie z, \tilde g]}$ is the complex abelian variety that, as a real torus, is the quotient
\[ \widetilde A_{[\lie z, \tilde g]} = \widetilde U \otimes_{\Z}{\R} \big/ \left( \widetilde g \cdot( \widetilde U_{\Z} \otimes \widehat\Z) \cap \widetilde U_{\Q} \right) \]
with complex structure given by right multiplication by $h_{\lie z}(i)$;
	\item the $C^+(\Lambda)$ action and polarization are naturally induced by those on $\widetilde U_{\Z}$;
	\item and $\widetilde \eta^p$ is the prime-to-$p$ part of $\widetilde g^{-1}$, viewed as a map
\[ \widetilde \eta^{p} = (\widetilde g^{-1})^p \colon \Ta^p(A_{[\lie z, \widetilde g]})\otimes \Q \simeq \widetilde g \cdot ( \widetilde U_{\Z} \otimes_{\Z} \A_{\Q,f}^p)  \isomto  \widetilde U_{\Z} \otimes_{\Z} \A_{\Q,f}^p. \]
\end{itemize}
Viewing this tuple up to prime-to-$p$ isogeny gives a map $\mathbb D(\Lambda) \times \widetilde \Gbf(\A_{\Q,f}) \to \widetilde \Mcal_{\nu}(\C)$ that descends to an isomorphism
\[ 
		\widetilde \Gbf(\Q) \big\backslash \Dbb(\Lambda) \times \widetilde \Gbf(\A_{\Q,f}) / \widetilde K  \ \isomto \ \widetilde \Mcal_{\nu}(\C).
\]
We note in passing, though we will not require this fact, that the fibre of the tautological bundle $\widetilde \omega^{\taut}$ at the point $[\lie z, \widetilde g]$ can be identified with the line $\lie z \subset \Lambda \otimes \C$. 

Similar considerations hold for $\Mcal_{\nu}(\C)$: let
\[ \Dbb(L) \ := \ \left\{   \ell \in  L \otimes_{\Z} \C \ | \   \la\ell,\ell\ra =0, \ \la \ell , \overline \ell\ra <0 \right\}\Big/ \C^{\times}, \]
which may be identified, in precisely the same way as above, as a $\Gbf(\R) = GSpin(\Lambda)(\R)$-conjugacy class of Hodge structures
\[
	h_{\lie z_0} \colon \mathrm{Res}_{ \C / \R} \mathbb G_m \ \to \ \Gbf_{/\R} , \qquad   h_{\lie z_0}(i) \ = \  xy
\]
where $\lie z_0 = \mathrm{span}(x+iy) \in \Dbb(L)$ with $x,y \in L\otimes \R$ such that $Q(x) = Q(y) = -1$ and $\la x,y\ra=0$.

This gives rise to an alternative
\footnote{To compare this uniformization to the previous one, first recall that the image of $L$ in $C(L)$ can be identified as 
\[ L \ \simeq \ \left\{ x \in C^-(L) \ | \ x^{\iota} = x   \right\}  \ = \ \left\{ x \in C(L) \ | \ x^{\iota}  = x, \, \delta x = - x \delta \right\} \]
and then observe that the map 
\[ X := \{ (z_1, z_2) \in \C^2 \ | \ Im(z_1)Im(z_2)>0 \} \ \isomto \ \Dbb(L), \qquad \zbf = (z_1, z_2) \mapsto  \mathrm{span}_{\C} \left\{  \left( (\begin{smallmatrix} z_1 & -z_1z_2 \\ -1 & - z_2 \end{smallmatrix}) , \, (\begin{smallmatrix} -z_2 & z_1z_2 \\ 1 & z_1 \end{smallmatrix})  \right)\cdot v_0 \right\} . \]
is an isomorphism compatible with the respective moduli interpretations. }
uniformization
\begin{equation}\label{eqn:alternative-complex-unif-Hilbert}
\Gbf(\Q) \backslash \Dbb(L) \times \Gbf(\A_{\Q,f}) / K  \ \isomto \ \Mcal_{\nu}(\C).
\end{equation}
Here, a pair $[\lie z_0, g]$ is mapped to (the prime-to-$p$ isogeny class of) the point 
\[ \underline A_{[\lie z_0, g]}  \ =  \ \left( A_{[\lie z_0, g]}, \, \iota, \, \lambda, \, [\eta^p] \right) \ \in \ \Mcal_{\nu}(\C) \]
 where 
\[
	A_{[\lie z_0,g]} \  =  \ U_{\R} \ \big/ \  g \cdot U_{\widehat \Z} \, \cap \, U_{\Q}
\]
with the complex structure given by right multiplication by $h_{\lie z_0}(i)$, the $C(L)\otimes\OF$-action $\iota$ and polarization $\lambda$ are induced by the respective structures on $U_{\Z}$, and 
\[ 
	\eta^p = (g^{-1})^p  \colon \Ta^p(A_{[\lie z_0, g]}) \ \simeq \ g\cdot (U_{\A_{\Q,f}^p}) \ \isomto \ U_{\A_{\Q,f}^p} .
\]
In these terms, the morphism $\mathbf j \colon \Mcal \to \widetilde \Mcal$ takes on a particularly simple form: the inclusion $L \hookrightarrow \Lambda$ induces a natural embedding
\[ j \colon \Dbb(L) \ \hookrightarrow \  \Dbb(\Lambda) \]
giving by simply viewing an isotropic line in $L\otimes \C$ as an isotropic line in $\Lambda \otimes \C$. There is also an induced embedding
\[ C(L) \subset C(\Lambda) \]
which, by definition of the $GSpin$ groups, gives an embedding that, abusing notation, we also denote
\[ j \colon  \mathbf G = GSpin(L) \ \hookrightarrow \  \widetilde{\Gbf} = GSpin(\Lambda).\]
Since $K \subset \widetilde K$, there is a map
\[ j \colon 	\Gbf(\Q) \backslash \Dbb(L) \times \Gbf(\A_{\Q,f}) / K \ \to \ \widetilde \Gbf(\Q) \backslash \Dbb(\Lambda) \times \widetilde\Gbf(\A_{\Q,f}) / \widetilde K. \]
On the other hand, suppose $\underline A_{(\lie z_0, [g])} \in \Mcal_{\nu}(\C)$ is a complex point corresponding to a pair $(\lie z_0, [g]) \in \Dbb(L) \times \Gbf(\A_{\Q,f})/K$. 
The isomorphism $\beta \colon U_{\Z} \isomto \widetilde U_{\Z}$ of \eqref{eqn:BetaDef} is equivariant under the right-multiplication action of $C^+(L)$, and so is in particular $GSpin(L)$-equivariant. Thus it induces an isomorphism
\[
	\beta \colon U_{\R} \big/ \left(g \cdot U_{\widehat \Z} \cap U_{\Q} \right)\ \isomto \ \widetilde U_{\R} \big/ \left( g \cdot \widetilde U_{\widehat\Z} \cap \widetilde U_{\Q} \right).
\]
such that
\[ \beta (x \cdot h_{\lie z_0}(i))\ = \ \beta(x)\cdot h_{\lie z_0}(i) \ = \ \beta(x) \cdot \widetilde h_{j(\lie z_0)}(i) , \]
i.e.\ $\beta$ induces an isomorphism between the complex abelian varieties $A_{(\lie z_0, [g])}$ and $\widetilde A_{j(\lie z_0, [g])}$. By tracking through the additional data in the modular definition of the map $\mathbf j \colon \Mcal \to \widetilde\Mcal$, one verifies that $\beta$ induces an isomorphism between
\[ \mathbf j \left( \underline A_{(\lie z_0, [g])} \right) \qquad \text{ and } \qquad \widetilde{ \underline A}_{j(\lie z_0, [g])}  \qquad \text{as objects of }\widetilde\Mcal_{\nu}(\C). 
\]
To summarize, the natural inclusion $\Dbb(L) \times \Gbf(\A_{\Q,f}) \hookrightarrow \Dbb(\Lambda) \times \widetilde{\Gbf}(\A_{\Q,f})$ induces a commutative diagram
\begin{equation} \label{eqn:GrassmannCommDiag} \begin{CD}
			\Mcal_{\nu}(\C) @>\mathbf j>>  \widetilde{\Mcal}_{\nu}(\C) \\
			@A\simeq AA    @A\simeq AA\\
			\Gbf(\Q) \backslash \Dbb(L) \times \Gbf(\A_{\Q,f}) / K @>>> \widetilde \Gbf(\Q) \backslash \Dbb(\Lambda) \times \widetilde\Gbf(\A_{\Q,f}) / \widetilde K.
	\end{CD}
\end{equation}

Finally, we record the following lemma, which can be proved by tracing through the definitions in this subsection; we leave this exercise to the reader.
\begin{lemma}  \label{lemma:VSharpCpx}
Suppose $\nu \colon F \to \C$ is a complex embedding, and  $\underline A  = \underline A_{(\lie z_0, [g])}\in \Mcal_{\nu}(\C)$ corresponds to the class of a pair $(\lie z_0, [g]) \in \mathbb D(L) \times \Gbf(\A_{\Q,f})/K$ under the complex uniformization \eqref{eqn:alternative-complex-unif-Hilbert}. Then 
\[ \Vbf^{\sharp}(\mathbf j(\underline A)) \ = \ \{ v \in V \cap g \cdot \widehat L , \ \la v, \lie z_0\ra = 0 \}  \otimes_{\Z} \Z_{(p)}.   \] \qed
\end{lemma}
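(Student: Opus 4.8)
The plan is to unwind the complex uniformizations of \ref{subsec:complex-unif-Siegel} and rewrite the conditions cutting out $\Vbf^{\sharp}(\mathbf j(\underline A))$ inside $\End(\mathbf j(\underline A))$ as conditions on a vector in $V$. By \eqref{eqn:alternative-complex-unif-Hilbert}, the point $\underline A=\underline A_{(\lie z_0,[g])}$ is the complex torus $U_{\R}/(g\cdot U_{\widehat\Z}\cap U_{\Q})$, where $U=C(L)$, the complex structure is right multiplication by $h_{\lie z_0}(i)$, and the $C(L)\otimes\OF$-action $\iota$ is left multiplication by $C(L)$ together with right multiplication by $\OF$; forming $\mathbf j(\underline A)$ keeps the same torus but retains only the $C^+(\Lambda)$-action $\tilde i=\iota\circ\beta^{-1}$, which through the algebra isomorphism $\beta\colon C(L)\isomto C^+(\Lambda)$ is once more left multiplication by $C(L)$ on $U_{\Q}=C(V)_{\Q}$, together with the extended action of $C(\Lambda)_{(p)}\cong(C(L)\otimes\OF)_{(p)}$.

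First I would identify $\End(\mathbf j(\underline A),\tilde i)\otimes\Q$: the commutant of the left regular representation of $C(L)_{\Q}$ on itself is the right regular representation, so any such endomorphism is right multiplication $\rho_x$ by a unique $x\in C(V)_{\Q}$, and conversely $\rho_x$ lies in $\End(\mathbf j(\underline A),\tilde i)\otimes\Z_{(p)}$ exactly when it commutes with the complex structure and preserves the homology lattice up to prime-to-$p$ isogeny. Next I would translate the conditions defining $\Vbf^{\sharp}$. From the description of the Rosati involution via $\theta$ and the symplectic form \eqref{eqn:UZPol} one gets $\rho_x^{*}=\rho_{{}^{\iota}x}$, so $\rho_x=\rho_x^{*}$ is equivalent to ${}^{\iota}x=x$; the $\iota$-fixed subspace of $C(V)_{\Q}$ is $F\oplus V_{\Q}$, the fixed points in $C^+(V)=B$ being its centre $F$ and those in $C^-(V)=B\cdot v_0$ being exactly $V$. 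The vanishing of the reduced trace $tr^o(\rho_x)$ kills everything in $F$ except the line $\Q\cdot\varpi=\{a\in F:\operatorname{Tr}_{F/\Q}(a)=0\}$, and is automatic on $V_{\Q}$ since $v^{2}=Q(v)\in\Q$ for every $v\in V$. Finally the $\OF$-anti-equivariance $\rho_x\circ i(1\otimes a)=i(1\otimes a')\circ\rho_x$ reads $a\,x=x\,a'$ for $a\in\OF$: it fails on $\Q\cdot\varpi\setminus\{0\}$ because $\varpi$ is central in $B$, and holds for every $v\in V=V_0\,v_0$ because $v_0\,a=a'\,v_0$. Thus $\Vbf^{\sharp}(\mathbf j(\underline A))$ is exactly the set of $\rho_v$ with $v\in V_{\Q}$ that commute with the complex structure and are integral away from $p$.

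It then remains to recognize these last two constraints. Writing $\lie z_0=\mathrm{span}_{\C}(x_0+iy_0)$ with $x_0,y_0\in L\otimes\R$, $Q(x_0)=Q(y_0)=-1$, $\la x_0,y_0\ra=0$ and $h_{\lie z_0}(i)=x_0y_0$, a short computation in the Clifford algebra gives $v\,x_0y_0-x_0y_0\,v=2\la v,x_0\ra y_0-2\la v,y_0\ra x_0$, so $\rho_v$ commutes with the complex structure if and only if $\la v,x_0\ra=\la v,y_0\ra=0$, i.e.\ $\la v,\lie z_0\ra=0$. For the integral part, transporting $\rho_v$ through the level structure $\eta^{p}=(g^{-1})^{p}$ of \eqref{eqn:alternative-complex-unif-Hilbert} shows that $\rho_v$ preserves the lattice $g\cdot U_{\widehat\Z}$ away from $p$ precisely when $v$ lies in $(g\cdot\widehat L)\otimes_{\Z}\Z_{(p)}$, for the orthogonal (conjugation) action of $\Gbf(\A_{\Q,f})$ on $V\otimes\A_{\Q,f}$. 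Combining the three descriptions with the evident converse gives the asserted equality.

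As the statement already signals, there is no essential difficulty here beyond careful bookkeeping: the one point demanding real attention is keeping the left- and right-module structures consistent — the action of $\Gbf(\A_{\Q,f})$ and of $C(L)\otimes\OF$ on $U=C(L)$, the complex structure, and their compatibility with $\beta$ and with the identification $(C(L)\otimes\OF)_{(p)}\cong C(\Lambda)_{(p)}$ — so that it is $g$ (and not $g^{-1}$), and right (not left) multiplication by $v$, that appears in the final formula, exactly as in \eqref{eqn:GrassmannCommDiag} and the discussion around \refLemma{lem:Vdecomp}.
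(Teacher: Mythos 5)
Your proposal is correct and is precisely the argument the paper has in mind: the paper explicitly leaves this lemma as an exercise in ``tracing through the definitions,'' and you carry that out — identifying $\End(\mathbf j(\underline A),\tilde i)\otimes\Q$ with right multiplications $\rho_x$, using $\rho_x^{*}=\rho_{{}^{\iota}x}$ together with the trace and $\OF$-anti-equivariance conditions to cut down to $x\in V_{\Q}$, the Clifford identity $v\,x_0y_0-x_0y_0\,v=2\la v,x_0\ra y_0-2\la v,y_0\ra x_0$ to translate compatibility with the complex structure into $\la v,\lie z_0\ra=0$, and the level structure to get the lattice condition. The only caveat, which you yourself flag, is the $g$ versus $g^{-1}$ bookkeeping in the last step: this depends on conventions (left versus right action of $g$ on $U_{\widehat\Z}$, and the normalization of the conjugation action on $L$) that the paper does not pin down unambiguously, but the discrepancy is immaterial for the lemma's sole application, where only the vanishing of $\Vbf^{\sharp}$ for generic $\lie z_0$ is used.
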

\subsection{Computing $\ac_{1}(\overline{\Omega}_{\Mcal/\Scal})\ac_{2}(\overline{\Omega}_{\Mcal/\Scal})$}

The purpose of this section is to prove Theorem \ref{thm:c1c2thm}, by computing the arithmetic intersection number $\ac_{1}(\overline{\Omega}_{\Mcal/\Scal})\ac_{2}(\overline{\Omega}_{\Mcal/\Scal})$, 
via the Kodaira-Spencer comparisons of \S \ref{subsec:automorphic-vector}  and the theory of Borcherds forms, which we presently review, on the twisted Siegel threefolds of Section \ref{sec:Siegel-threefold}. 
\subsubsection{Borcherds forms}
In this section, we record a few results on Borcherds forms that will play a key role in the sequel.

 Working  more generally than in the previous sections, suppose that $\mathcal L$ is an integral quadratic lattice over $\Z$ of signature $(b^+,2)$, and set 
  \[ S_{\mathcal L} = \C[ \mathcal L'/\mathcal L], \]
 where $\Lcal' = \{ x \in \Lcal_{\Q}\ | \ \langle x, L \rangle \subset \Z \}$ is the dual lattice. Let $\{ \lie e_{\mu} \}$ denote the basis indexed by the cosets $\mu \in \mathcal L'/\mathcal L$.  We may embed $S_{\mathcal L}$ into the space of Schwartz functions $S(\mathcal L \otimes_{\Z} \A_{\Q,f})$ by sending 
 \[
 	\lie e_{\mu} \ \mapsto \ \mathbf 1_{\mu + \mathcal L \otimes \widehat\Z} \ = \ \text{char.\ function of } \mu \ + \ \mathcal L \otimes \widehat \Z .
 \]
 Throughout this paper, we will freely identify elements of $S_{\Lcal}$ with their images in $S(\Lcal \otimes \A_{\Q,f})$ without further comment. We also define 
 \[ S_{\Lcal}(\Q) \ := \ \Q[ \Lcal'/\Lcal] \]
 which we view as a subset of $S_{\Lcal} = S_{\Lcal}(\Q) \otimes_{\Q} \C$.  
 
Let $\omega_L \colon Mp_2(\Z) \to \Aut(S_{\Lcal})$ denote the action of the metaplectic group $Mp_2(\Z)$ via the Weil representation (see e.g.\ \cite[\S 1]{Brunier-Habil} for more precise definitions of this representation and the discussion below).
 For a half-integer $k \in \frac12 \Z$, let $M_k^!(S_{\Lcal})$ denote the space of \emph{weak holomorphic forms}: roughly speaking, elements of this space are (vector-valued) holomorphic functions
 \[ f \colon \lie H \to S_{\mathcal L} \]
that satisfy the ``weight-$k$" slash invariance property, i.e.\
\[
	(f|_k[\gamma])(\tau) \ = \ f(\tau) \qquad \text{ for all } \gamma \in Mp_2(\Z), 
\]	
 and are meromorphic at the cusp $\infty$.  In particular, each $f \in M_k^!(\rho_{\Lcal})$ has a Fourier expansion
\[ f(\tau) \ = \ \sum_{m \gg - \infty} c_f(m) \, q^m, \qquad \, c_f(m)\in S_{\mathcal L} \]
with only finitely many negative terms, where $q = e^{2 \pi i \tau}$. The sum is taken over $m \in \Q$, but in fact there is a bound, determined by $L$, on the size of the denominators of $m$ for which $c_f(m)$ could possibly be nonzero.

Another important family of vector-valued forms are the Eisenstein series, defined as follows: let $\mathtt{ev}_0 \in S_{\Lcal}^*$ denote the linear functional $\varphi \to \varphi(0)$, and consider, for a complex parameter $s$ with $Re(s) \gg 0$, the weight $k$ Eisenstein series 
\begin{equation} \label{eqn:GenEisDef}
 E_{\Lcal, k}(\tau,s) \ := \ \frac12 \ \sum_{\gamma \in \Gamma_{\infty} \backslash Mp_2(\Z) } \left. \left( Im(\tau)^{\frac{ s +1-k}{2}} \mathtt{ev}_0 \right)\right|_{k}[\gamma]   \ \in \ M_{2-k}(S_{\Lcal}^{\vee}).
\end{equation}

From this point on in this section, we assume that $b^+ \geq 3$. In this case, the special value $E_{\mathcal L,\kappa}(\tau, b^+/2 )$ of weight $\kappa := b^+ / 2 + 1$ is a holomorphic modular form, with Fourier expansion of the form
\begin{equation} \label{eqn:EisSeries} 
 E(\tau) \ := \  E_{\mathcal L,\kappa}\left(\tau, \frac{b^+}{2} \right) \ =  \ \mathtt{ev}_0 \ + \ \sum_{m \geq 0} c_E(m) \, q^m, \qquad c_E(m) \in S_{\Lcal}^{\vee}. 
\end{equation}
Fix a finite set $\Sigma$ of finite primes such that $\Lcal_{v} $ is self-dual for all $ v \notin \Sigma$, and for later convenience, assume $p \notin \Sigma$. For each $m$, there is a factorization	
	\[ c_E(m)(\mathtt{ev}_0)  \ = \ w_{\Sigma}(m) \ \times \ w^{\Sigma}(m) \ \in \ \Q \]
whose terms are products of special values of local Whittaker functions, see \cite[Proposition 2.6]{KY}. Though we will have more to say about these quantities later, it will suffice for our present purposes to make the following, somewhat crude, observations that can be extracted from \cite[\S 2 and \S 4]{KY}: if $m$ is squarefree and relatively prime to $\Sigma$, then
	\begin{itemize}
		\item the value $w_{\Sigma}(m)$ depends only on the residue class of $m$ modulo $T$, where $T$ is the product of the primes in $\Sigma$, and
		\item  $w^{\Sigma}(m)$ is, up to an overall non-zero constant factor, equal to a twisted divisor sum 
		\[  w^{\Sigma}(m) \ \sim \ \sum_{d | m} \, \chi^{(m)}(d) \, d^{b^+/2} \]
		for a certain  quadratic character $\chi^{(m)} \colon \Z \to \{ -1, 1 \}$. In particular, for such $m$ there is an overall constant $\kappa >0$, indepedent of $m$, such that 
		\[ |w^{\Sigma}(m)| \ \geq \ \kappa \cdot m^{b^+/2}. \]
	\end{itemize}
Let\footnote{We also remark in passing that the non-vanishing of $w_{\Sigma}(m)$ implies the non-vanishing of $c_E(m)(\mathtt{ev}_0)$. This is equivalent to the existence of a vector $x \in \Lcal \otimes \A_{\Q,f}$ with $Q(x) = m$; since the integral Hasse principle holds for quadratic lattices of dimension at least 5, this condition is in turn equivalent to the existence of a vector $x \in \Lcal$ with $Q(x) = m$. }
\[ I_{0} \ := \ \left\{ m \text{ squarefree} \ | \   (m, \Sigma) = 1 \text{ and } w_{\Sigma}(m) \neq 0  \right\}. \]
Combining the two observations above, we see that $I_0$ is an infinite set, and that there exists a constant $C>0$ such that 
\[ | c_E(m) (\lie e_{0}) | \ \geq  \ C \cdot m^{b^+/2} \qquad \qquad \text{for all } m \in I_0. \]

The following lemma can be proved in exactly the same way as \cite[Lemma 4.11]{BBK}, by using the Fourier coefficients of $E(\tau)$ in place of coefficients denoted $B_D(m)$ in \emph{loc.\ cit.} 
 
\begin{lemma} \label{lem:modForm}
Suppose  $b^+ \geq 3$, and fix a finite set of primes $\Sigma$ that includes the prime 2, and  such that $\mathcal L_v$ is self-dual for any $v \notin \Sigma$. Suppose that 
\[ I \ \subset \ \mathbb N \]
is any subset of integers such that
	\[ \#  (I \cap I_0) \ = \ \infty . \]
Then there exists a form $f \in M^!_{k}(S_{\mathcal L})$ of weight $k=1-b^+/2$ with Fourier expansion of the form
\[ f(\tau) \ = \ \sum_{\substack{m<0 \\ m \in I} } c_f(m) \, q^m \ + \ \sum_{m\geq 0} c_f(m) \, q^m \]
such that $c_f(-m) \in S_{\Lcal}(\Q)$ for all $m \leq 0$ and $c_f(0)(0) \neq 0$.  \qed
\end{lemma}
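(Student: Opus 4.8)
The plan is to reproduce the argument of \cite[Lemma 4.11]{BBK} essentially verbatim, with the Fourier coefficients $c_E(m)$ of the holomorphic Eisenstein series \eqref{eqn:EisSeries} playing the role of the coefficients $B_D(m)$ of \emph{loc.\ cit.} Two inputs will be used as black boxes. First, because $b^+ \geq 3$ we have $k = 1 - b^+/2 < 0$, so the space $M_k(S_{\Lcal})$ of holomorphic forms vanishes; hence any $f \in M_k^!(S_{\Lcal})$ is determined by its principal part, and if that principal part has coefficients in $S_{\Lcal}(\Q)$ then so does $f$ (the principal-part map, restricted to the $\Q$-structure, is injective with $\Q$-rational inverse on its image). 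Second, I will invoke Borcherds' duality/Serre duality theorem (see e.g.\ \cite{Brunier-Habil}): a finitely supported collection $\big(c(m)\big)_{m>0}$ with $c(m) \in S_{\Lcal}(\Q)$ occurs as the principal part $\sum_{m>0} c(m) q^{-m}$ of a (then unique, $\Q$-rational) $f \in M_k^!(S_{\Lcal})$ if and only if $\sum_{m>0} \langle c(m), b_g(m)\rangle = 0$ for every cusp form $g = \sum_{n>0} b_g(n) q^n$ in the finite-dimensional obstruction space $\mathcal C := S_{2-k}(S_{\Lcal}^\vee)$.

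Next I will extract from the Eisenstein series the link between principal part and constant term. For any $f \in M_k^!(S_{\Lcal})$, the natural pairing $\langle f, E\rangle$ against $E \in M_{2-k}(S_{\Lcal}^\vee)$ is a scalar weakly holomorphic modular form of weight $2$ for $SL_2(\Z)$, hence has vanishing constant term by the residue theorem; since the constant term of $E$ is $\mathtt{ev}_0$, this unwinds to the identity $c_f(0)(0) = -\sum_{m>0} c_E(m)\big(c_f(-m)\big)$. So the lemma reduces to the following combinatorial task: produce a finitely supported $P = \sum_{m>0} c(m) q^{-m}$ with every $m \in I$ and $c(m) \in S_{\Lcal}(\Q)$, orthogonal to $\mathcal C$, such that $\sum_{m>0} c_E(m)(c(m)) \neq 0$; the unique $f$ with principal part $P$ then has all coefficients in $S_{\Lcal}(\Q)$, principal part supported on $I$, and $c_f(0)(0) \neq 0$.

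To build $P$ I will only use principal parts of the shape $P = \sum_{j=1}^r a_j\, \lie e_0\, q^{-m_j}$ with $a_j \in \Q$ and $m_1 < \dots < m_r$ in $I \cap I_0$, which is infinite by hypothesis. Fixing a $\Q$-rational basis $g_1, \dots, g_N$ of $\mathcal C$, set $v_m := \big(b_{g_1}(m)(\lie e_0), \dots, b_{g_N}(m)(\lie e_0)\big) \in \Q^N$ and $w_m := c_E(m)(\lie e_0) \in \Q^\times$ (nonzero since $m \in I_0$). Then $P$ is orthogonal to $\mathcal C$ iff $\sum_j a_j v_{m_j} = 0$, and the constant-term condition is $\sum_j a_j w_{m_j} \neq 0$. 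Thus I must show the line $\{0\}\times\Q$ meets the $\Q$-span $W \subseteq \Q^N\times\Q$ of $\{(v_m, w_m) : m \in I\cap I_0\}$ nontrivially --- equivalently, that there is no single linear functional $\phi$ on $\Q^N$ with $w_m = \phi(v_m)$ for all $m \in I\cap I_0$.

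This is the only step where I anticipate real difficulty, and it is settled by a growth comparison: Hecke's elementary bound for cusp forms of weight $2-k = b^+/2 + 1$ gives $\|v_m\| = O\!\big(m^{b^+/4 + 1/2}\big)$, while the crude lower bound recalled before the lemma gives $|w_m| \geq C\, m^{b^+/2}$ for $m \in I_0$, and $b^+/4 + 1/2 < b^+/2$ exactly because $b^+ \geq 3$; so no such $\phi$ can exist. Choosing rational $a_j$ realizing $\sum_j a_j v_{m_j} = 0$ and $\sum_j a_j w_{m_j} \neq 0$ then finishes the proof. It is worth noting that the three hypotheses feed precisely this last step: the infinitude of $I\cap I_0$ supplies enough test indices $m$, the definition of $I_0$ makes the $w_m$ nonzero and large, and $b^+ \geq 3$ is what makes the Eisenstein growth strictly dominate the cuspidal growth.
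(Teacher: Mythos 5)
Your argument is correct and is precisely the one the paper intends: the paper proves this lemma by deferring to \cite[Lemma 4.11]{BBK} with $c_E(m)$ in place of $B_D(m)$, and your proposal is a faithful reconstruction of that argument (Borcherds' obstruction criterion, the pairing with $E$ to compute $c_f(0)(0)$, and the growth comparison $m^{b^+/4+1/2}$ versus $m^{b^+/2}$, valid since $b^+\geq 3$). No gaps.
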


In his seminal paper \cite{Bor}, Borcherds uses weakly holomorphic forms to construct certain automorphic forms whose divisors lie along special cycles, and these results were extended to a $p$-integral context by H\"ormann \cite{Hor}. We state their results here for the only case in which we need it, but only to avoid introducing extraneous notation; the analogous statements hold for $GSpin(n,2)$ Shimura varieties at primes of good reduction.

Recall our lattice $\Lambda$ of signature $(3,2)$ from \S \ref{subsec:Siegel1-moduli} .
Since we had chosen the level structure $\widetilde{ K}$ such that 
\[ \widetilde { K} \ \subset \  C^+(\Lambda \otimes_{\Z} {\widehat\Z}{}^p)^{\times}, \]
it follows that $\widetilde{ K}$ stabilizes $\Lambda \otimes \A_{\Q,f}^p$ and permutes the cosets $\Lambda'/\Lambda$. View 
\[ S_{\Lambda} \ = \ \C[\Lambda' / \Lambda] \ \subset \ S(\Lambda \otimes {\A_{\Q,f}^p}) \]
via the embedding 
\[ \mu \in \Lambda' / \Lambda  \  \mapsto \ \text{characteristic function of } \mu \, + \, \Lambda \otimes_{\Z} \widehat\Z{}^p. \]
Then there is a rational basis $\{\varphi_1, \dots , \varphi_t \}$ for the $\widetilde{ K}$-invariant elements $S_{\Lambda}^{\widetilde{ K}}$, where each $\varphi_i$ is the characteristic function of a $\widetilde{K}$-invariant subset $\lie V_i \subset \Lambda \otimes \A_{\Q,f}^p$; concretely, we may take $\lie V_i$ to be a union of cosets $\mu + \Lambda \otimes \widehat\Z{}^p$ as $\mu$ ranges over an orbit for the action of $\widetilde{ K}$ on $\Lambda'/\Lambda$. If $\lie s \in S_{\Lambda}^{\widetilde{ K}}$ we write its components as 
\[ \lie s  \ = \ \sum_i \ \lie s(\varphi_i) \, \varphi_i . \]

\begin{theorem} \label{thm:BorcherdsForm}
Suppose $f \in M_{-1/2}^!(S_{\Lambda})$ satisfies
	\begin{equation} \label{eqn:intForm}
	 	c_f(m)(\varphi_i) \in 2 \Z \qquad \qquad \text{for all } m \leq 0 \text{ and } 1 \leq  i \leq t. 
	\end{equation}
Then there exists an integer $k >0$ and a  meromorphic section  
\[ \Psi(f)  \qquad \text{of} \qquad (\widetilde\omega^{\taut})^{\otimes k \cdot c_f(0)(0)/2} \]
 such that the following holds.
	\begin{enumerate}[(i)]
		\item  \cite[Theorem 13.3]{Bor} On the generic fibre, 
			\[ \mathsf{div} \Psi(f)_{F}  \ = \ \frac{k}{2}\sum_{m < 0} \ \sum_{i=1}^t \ c_f(m,\varphi_i) \cdot \left( \Zed(|m|, \lie V_i)_{F}\right).\]
			as cycles on $\widetilde \Mcal_{F}$. 
		\item  \cite[Theorem 3.2.14]{Hor} On the integral model $\widetilde \Mcal$, the divisor $\mathsf{div} \Psi(f)$ is equal to the closure of its generic fibre. In particular, 
			\[ | \mathsf{div} \Psi(f) | \ \subset \ \bigcup | \tilde\Zed(|m|,\lie V_i)| \]
			where the union is taken over pairs $(m, i)$ with $m < 0$ and $c_f(m)(\varphi_i) \neq 0$. 
	\end{enumerate}
\end{theorem}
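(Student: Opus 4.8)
The two assertions of the theorem are, respectively, \cite[Theorem 13.3]{Bor} and \cite[Theorem 3.2.14]{Hor}, so the plan is to reduce to those results after checking that our lattice $\Lambda$, level group $\widetilde K$ and prime $p$ fall within their hypotheses. For part (i), I would proceed as follows. Since $b^+ = 3$, the input $f$ has weight $-1/2 = 1 - b^+/2$, which is exactly the weight for which Borcherds' regularized theta lift of a weakly holomorphic vector-valued form yields a meromorphic automorphic form --- the Borcherds product $\Psi(f)$ --- on $\Dbb(\Lambda)$. Concretely, one regularizes the integral of $\langle f(\tau), \Theta_\Lambda(\tau)\rangle$ against the hyperbolic measure on $Mp_2(\Z)\backslash \lie H$; the result is a function on $\Dbb(\Lambda)$ with logarithmic singularities along the special divisors $\widetilde\Zed(|m|,\lie V_i)$, and $\Psi(f)$ is the meromorphic section of a power of $\widetilde\omega^{\taut}$ whose logarithmic norm recovers this integral up to an explicit constant. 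The integrality hypothesis \eqref{eqn:intForm}, $c_f(m)(\varphi_i)\in 2\Z$ for $m\le 0$, is precisely what ensures that, after raising to a suitable power $k$ to clear the finite-order multiplier system and to make the form invariant under $\widetilde K$, the section $\Psi(f)$ is single-valued on $\widetilde\Mcal_F$, has weight $k\,c_f(0)(0)/2$, and has the stated $\widetilde K$-invariant divisor. The only subtlety on this side is to match Borcherds' special divisors --- defined via the arithmetic of $\Lambda\otimes\A_{\Q,f}$ --- with the moduli-theoretic cycles $\widetilde\Zed(m,\lie V)_F$ of \S\ref{subsec:Siegel1-moduli}; this is the complex-uniformization bookkeeping of \S\ref{subsec:complex-unif-Siegel} together with \cite[Proposition 2.6]{KRSiegel}.

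For part (ii), I would invoke H\"ormann's extension of Borcherds products to integral models of $GSpin$ Shimura varieties at primes of good reduction. Our running assumptions --- $p\notin\Sigma$, so $\Lambda_{(p)}$ is self-dual, and (from the choice of $N$) $p$ odd and unramified in $F$ --- place us at such a prime, so \cite[Theorem 3.2.14]{Hor} applies. The mechanism is a $q$-expansion principle at the boundary of a toroidal compactification of $\widetilde\Mcal$: along each cusp the form $\Psi(f)$ has a Fourier--Jacobi expansion whose leading Weyl-chamber coefficient is, up to sign, a product of values with $p$-unit valuation. This simultaneously (a) equips the relevant power of $\widetilde\omega^{\taut}$ with a canonical $\Z_{(p)}$-integral structure extending the one on the generic fibre, and (b) shows $\Psi(f)$ has neither zeros nor poles supported on special fibres. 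Combined with the flatness of the Kudla--Rapoport integral special cycles $\widetilde\Zed(|m|,\lie V_i)$ over $\Spec \Ocal_{F,(\lie p)}$, this forces $\mathsf{div}\,\Psi(f)$ to equal the Zariski closure of $\mathsf{div}\,\Psi(f)_F$, whence $|\mathsf{div}\,\Psi(f)| \subset \bigcup |\widetilde\Zed(|m|,\lie V_i)|$ as claimed.

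The genuinely hard input is part (ii): it rests entirely on H\"ormann's construction of arithmetic special divisors and his theorem that Borcherds products extend with the expected integral divisor, the proof of which requires both the comparison of the analytic divisor produced by the theta lift with the closure of the Kudla--Rapoport cycle and the control of $p$-integrality of the boundary Fourier coefficients of $\Psi(f)$ at every stratum --- and the availability of that boundary analysis is exactly why one passes from the compact $\Mcal$ to the non-compact $\widetilde\Mcal$. Since these facts are proved in \cite{Bor} and \cite{Hor} in the generality we need, the proof of the theorem reduces to the routine verification that our $\Lambda$, $\widetilde K$ and $p$ satisfy their hypotheses and to the identification of the special cycles across the various uniformizations.
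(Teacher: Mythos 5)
Your proposal is correct and matches the paper's treatment: the paper offers no proof of this theorem beyond the inline citations to \cite[Theorem 13.3]{Bor} for the generic fibre and \cite[Theorem 3.2.14]{Hor} for the integral model, exactly the reduction you carry out. Your additional remarks — checking that $\Lambda$, $\widetilde K$ and $p$ satisfy the hypotheses of those results, and matching Borcherds' special divisors with the moduli-theoretic cycles $\widetilde\Zed(m,\lie V_i)$ via the complex uniformization — are precisely the (implicit) verifications the paper relies on.
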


Borcherds' proof of this theorem hinges on studying the function $- \log || \Psi_F||^2 $ on the complex points of $\widetilde \Mcal$; we briefly recall its description here. As usual, fix a complex embedding $\nu \colon F \to \C$. Recall that we had defined 
\[ \Dbb(\Lambda) \ := \ \left\{ \zeta \in \Lambda \otimes_{\Z} \C \ | \ \la\zeta, \zeta\ra = 0 , \ \la \zeta, \overline \zeta\ra < 0     \right\} / \C^{\times}.\]
If $\lambda \in \Lambda_{\Q}$ and $\lie z \in \Dbb(\Lambda)$ is an isotropic line, define
\[
	R(\lambda, \lie z) \ : = \ \frac{|\la\lambda, \zeta\ra|^2}{|\la \zeta, \overline \zeta\ra|} \qquad \text{ for any } \zeta \in \lie z ;
\]
note that $R(g \cdot \lambda, g \cdot \lie z) =  R(\lambda, \lie z)$ for any $g \in \widetilde\Gbf(\Q)$. 

Consider the \emph{Siegel theta function}
\[ \Theta_{\Lambda} \colon  \lie H \times \Dbb(\Lambda) \times \widetilde \Gbf(\A_f) \ \to \ S(\Lambda\otimes \A_{\Q,f})^{\vee} \]
defined by the formula
\[ \Theta_{\Lambda}(\tau, \lie z, \widetilde g) (\varphi) \ = \ v \cdot \sum_{\lambda \in \Lambda_{\Q} }  \varphi ( \widetilde g^{-1} \cdot \lambda) \, e^{-2 \pi v R(\lambda, \lie z)} \, e^{2 \pi i \tau Q(\lambda)}, \]
where $v = Im(\tau)$. If $\varphi \in S_{\Lambda}^{\widetilde K} \subset S(\Lambda \otimes \A_f)$, then it is easily seen that 
\[ \Theta_{\Lambda}(\tau, \,  \gamma \cdot \lie z, \, \gamma \cdot \widetilde g \cdot k)(\varphi) \ = \   \Theta_{\Lambda}(\tau, \, \lie z, \, \widetilde g)(\varphi) \qquad \text{ for all } \gamma \in \widetilde \Gbf(\Q) \text{ and } k \in \widetilde{ K}. \]
Thus, using the complex uniformization of the Siegel twisted threefold described in \S \ref{subsec:complex-unif-Siegel}, the Siegel theta function descends to a function
\[ \Theta_{\Lambda} \colon \lie H \times \widetilde\Mcal_{\nu}(\C) \ \to ( S_{\Lambda}^{\widetilde K})^{\vee}\]
that transforms like a weight $1/2$ (vector-valued) modular form in the $\tau$ variable (see, for example, \cite[\S 4]{Bor}). In particular, if $f \in M_{-1/2}(S_{\Lambda}^{\widetilde K})$ and $(\lie z, [\tilde g]) \in \widetilde\Mcal_{\nu}(\C)$ is fixed, the evaluation pairing
\[ \Theta_{\Lambda}\left(\tau,(\lie z, [\tilde g]) \right) \cdot f(\tau) \]
is an $SL_2(\Z)$-invariant function in the $\tau$-variable. Borcherds' key insight was to consider the ``regularized" integral of this function over $SL_2(\Z) \backslash \lie H$, defined as follows: for a complex parameter $s$ with $Re(s) \gg 0$ sufficiently large, the integral
\[
	I^{reg}_f(s, \lie z, [\widetilde g]) \ := \ 	\lim_{T \to \infty} \ \int_{\mathcal F_T} \, \Theta_{\Lambda}\left(\tau,(\lie z, [\tilde g]) \right) \cdot f(\tau) \cdot v^{-s} \ \frac{ du \, dv}{v^2},
\]
defines a holomorphic function in $s$ with a meromorphic continuation to all $s \in \C$; here $\tau = u + iv \in \lie H$ and 
\[ 
	\mathcal F_T \ := \ \left\{ \tau = u + iv \in \lie H \ | \ |\tau| \geq 1 , \  u \in [-\frac12, \frac12) , \  v \leq T \right\}
\]
is a truncation of the usual fundamental domain for the action of $SL_2(\Z)$ on $\lie H$.
 The regularized integral
\begin{equation} \label{eqn:IregDef}
	I^{reg}_f(\lie z, [\widetilde g])  := \  \mathrm{CT}_{s=0} \ I^{reg}_f(s,\lie z, [\widetilde g])
\end{equation}
is defined to be the constant term in the Laurent series expansion at $s=0$ of this continuation. After taking our normalization of the metric on $\widetilde\omega^{\taut}$ into account, we obtain:
\begin{proposition}[{\cite[Theorem 13.3 (iii)]{Bor}}] \label{prop:BorcherdsThetaLift} Let $\Psi_f$ be as in Theorem \ref{thm:BorcherdsForm}, and suppose that $(\lie z, [\widetilde g]) \in \widetilde \Mcal(\C)$ is a point that does not lie on the divisor $\mathtt{div}(\Psi_f)(\C)$. Then
\[ - \log || \Psi_f||_{(\lie z, [\widetilde g])}^2 \ = \ \frac{k}{2} \,  I^{reg}_{f}(\lie z, [\widetilde g]) \]
where $||\cdot ||$ is the metric on $(\widetilde\omega^{\taut})^{ \otimes k \cdot c_f(0)(0)/2}$ (see \S \ref{subsubsec:taut-siegel} and \eqref{eqn:FritzMetric}).
\qed
\end{proposition}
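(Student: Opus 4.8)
The statement is, up to the choice of metric, Borcherds' theorem on the singular theta lift (\cite[Theorem~6.2]{Bor}, \cite[Theorem~13.3(iii)]{Bor}); the plan is to transport that result to the present geometric setting and to verify that the metric \eqref{eqn:FritzMetric}---normalized as in \cite{Hor} (see also \cite{BBK} and the footnote to \S\ref{subsubsec:Petersson-metric}) precisely for this purpose---is the one for which Borcherds' identity assumes the clean form asserted. First I would fix the embedding $\nu$ and pass to the complex uniformization $\widetilde\Mcal_{\nu}(\C)\simeq\widetilde\Gbf(\Q)\backslash\Dbb(\Lambda)\times\widetilde\Gbf(\A_{\Q,f})/\widetilde K$ of \S\ref{subsec:complex-unif-Siegel}. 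On a connected component I would choose a primitive isotropic vector $z\in\Lambda$, together with a complementary vector, to realize $\Dbb(\Lambda)$ as the tube domain attached to the Lorentzian space $z^{\perp}/z$, so that $\lie z\mapsto Z=X+iY$ with $Y$ in a fixed component of the positive cone and $|Y|^{2}:=\langle Y,Y\rangle>0$. Under this identification the tautological line bundle $\widetilde\omega^{\taut}$, whose fibre at $\lie z$ is the line $\lie z\subset\Lambda\otimes\C$ itself, is trivialized by the frame $s_{z}$ sending $\lie z$ to its generator $\zeta_{z}$ normalized by $\langle\zeta_{z},z\rangle=1$; the Borcherds section $\Psi(f)$ of Theorem~\ref{thm:BorcherdsForm} then takes the form $\Psi(f)=\psi_{f}\cdot s_{z}^{\otimes w}$ for a meromorphic function $\psi_{f}$ on the tube domain, with $w=k\,c_{f}(0)(0)/2$.

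Next I would invoke \cite[Theorem~6.2]{Bor}, applied to $kf$ (which satisfies \eqref{eqn:intForm} for the auxiliary integer $k>0$ of Theorem~\ref{thm:BorcherdsForm}), noting that by multiplicativity of Borcherds products $\Psi(f)$ is the $k$-th power of the raw product attached to $f$---this is the origin of the factor $k$ in $\tfrac{k}{2}$. In the tube-domain trivialization, Borcherds' theorem expresses $I^{reg}_{f}$ as an explicit sum
\[
 I^{reg}_{f}(\lie z,[\tilde g]) \ = \ -\frac{4}{k}\,\log\big|\psi_{f}(Z)\big| \ - \ 2\,c_{f}(0)(0)\,\log|Y| \ + \ c_{f}(0)(0)\big(\gamma+\log 2\pi\big),
\]
the transcendental constant being the one occurring in \cite[Theorem~6.2]{Bor} (built from $\Gamma'(1)=-\gamma$ and $\log 2\pi$). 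On the other hand, a direct computation from \eqref{eqn:FritzMetric} in these coordinates gives $\|s_{z}\|^{2}=e^{-\gamma-\log 2\pi}\,|Y|^{2}$---the factor $-\tfrac14$ and the symmetrized product in \eqref{eqn:FritzMetric} being tuned exactly so that no further constant survives---whence $-2\,c_{f}(0)(0)\log\|s_{z}\| = -2\,c_{f}(0)(0)\log|Y| + c_{f}(0)(0)(\gamma+\log 2\pi)$ accounts for the last two terms above. Therefore
\[
 I^{reg}_{f}(\lie z,[\tilde g]) \ = \ -\frac{2}{k}\,\log\big( |\psi_{f}(Z)|^{2}\,\|s_{z}\|^{2w}\big) \ = \ -\frac{2}{k}\,\log\|\Psi(f)\|^{2}_{(\lie z,[\tilde g])}
\]
on the chosen component; since both sides are independent of the choices of $z$ and of the component, this holds on all of $\widetilde\Mcal_{\nu}(\C)$ and for every $\nu$, which is the assertion.

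The one genuinely delicate step is this reconciliation of conventions. One must match Borcherds' regularized integral $\Phi_{M}$ with the present $I^{reg}_{f}$; keep track of whether special divisors are counted with a factor $\tfrac12$ (the source of the $\tfrac12$ in $\tfrac{k}{2}$); fix the covolume of $\Lambda$ and the precise automorphy factor relating the isotropic-line model to the tube-domain model; and be consistent throughout about $\log|\cdot|$ versus $\log|\cdot|^{2}$. All of this is routine but error-prone, and no idea beyond Borcherds' is required. As an independent sanity check one can verify that $-\log\|\Psi(f)\|^{2}-\tfrac{k}{2}I^{reg}_{f}$ is a \emph{smooth} function on $\widetilde\Mcal_{\nu}(\C)$---its logarithmic singularities cancel by Theorem~\ref{thm:BorcherdsForm}(i)---which is harmonic, on comparing $dd^{c}\big(-\log\|\Psi(f)\|^{2}\big)=\delta_{\mathsf{div}\Psi(f)}-w\,c_{1}\big(\widetilde\omega^{\taut},\|\cdot\|\big)$ with Borcherds' current formula for $dd^{c}I^{reg}_{f}$, and which is bounded along the cusps of $\widetilde\Mcal_{\nu}(\C)$; it is therefore constant, and the constant equals $0$ again by the tube-domain computation near a cusp.
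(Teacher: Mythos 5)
Your proposal is correct and follows the same route as the paper, which states this proposition with no proof beyond the citation of Borcherds' Theorem 13.3(iii) and the remark that the normalization \eqref{eqn:FritzMetric} has been taken into account. Your writeup simply makes explicit the bookkeeping (tube-domain trivialization, the constant $e^{-\gamma-\log 2\pi}$ absorbing Borcherds' transcendental term, and the factors $k$ and $\tfrac12$) that the paper leaves implicit.
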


\subsubsection{Reduction via Borcherds forms and Kodaira-Spencer theory}

We are now in a position to put the pieces together and prove  \refThm{thm:c1c2thm}; it suffices to show that both sides of the equality are true in $\R_{(p)} = \R / \oplus_{q \neq p} \Q \cdot \log q$ for each fixed $p \nmid N$, and that we may work with the ``localized" moduli problems as in Remark \ref{rmk:postmainthm}(iii).  We continue with the notation in Section \ref{sec:Siegel-threefold}, so that $K \subset \Gbf(\A_{\Q,f})$ is a fixed sufficiently small compact open subgroup, $\lie p$ is a prime above a fixed prime $p$, and
\[ \Mcal \ = \ \Mcal_K \times_{\Z[1/N]} \Spec(\mathcal O_{F, (\lie p)}) , \qquad \Omega = \Omega_{\Mcal / \mathcal O_{F, (\lie p)}}, \qquad \text{etc.} \] 
Recall that we had constructed a morphism
\[ \mathbf j \colon \Mcal \ \to \ \widetilde \Mcal \]
where $\widetilde \Mcal $ is the twisted Siegel modular threefold. 

Let
\[ I \ = \ \{ m \in \Z \ | \ m \not\equiv \square \ \mod p \}; \] 
by Lemma \ref{lem:modForm}, there exists $f \in M_{-1/2}^!(S_{\Lambda}^{\widetilde{ K}})$ with Fourier expansion of the form
\[ f(\tau) \ = \ \sum_{\substack{m < 0 \\ m \in I}} \, c_f(m) q^m \ + \ c_f(0) \ + \ \dots  \]
with $c_f(0)(0) \neq 0$ and $c_f(-m) \in S_{\Lambda}(\Q)$ for all $m \leq 0$. 
Replacing $f$ by a sufficiently large integer multiple, if necessary, we may assume \eqref{eqn:intForm} holds, and so by Theorem \ref{thm:BorcherdsForm} we obtain a meromorphic section $\Psi = \Psi(f)$ of $(\widetilde{\omega}^{\taut})^{\otimes k \cdot c_f(0)(0)/2}$ for some integer $k\neq 0$, which we may further assume is even without loss of generality. For convenience, set 
\[ r  \ := \ k \cdot \frac{c_f(0)(0)}{2} ,\]
and note that $r$ is even by assumption.

Recall \eqref{eqn:relation-taut} that the pullback $ \mathbf j^* \widetilde{\omega}^{\taut}$ of the tautological bundle on $\widetilde \Mcal$ is identified with the tautological bundle $\omega^{\taut}$ on $\Mcal$. The following lemma implies that that pullback $\mathbf j^{\ast} \Psi$ defines a non-trivial rational section of $(\omega^{\taut})^{\otimes r}$.
\begin{lemma} The section $\Psi$ does not vanish identically along any connected component of $\Mcal$. 
\begin{proof}
Suppose otherwise. Then by considering complex points in particular, Theorem \ref{thm:BorcherdsForm} and the choice of $f$ defining $\Psi=\Psi(f)$ imply that there is some $\lie V \subset \Lambda \otimes \A_{\Q,f}^p$ and $m \in \Z_{(p)}$ with $m \not\equiv \square \pmod{p}$ such that the intersection
\[
	U \ := \ 	\mathbf j \left(\Mcal(\C)\right) \cap \widetilde \Zed(m, \lie V)(\C)
\]
contains a submanifold of complex dimension two.

First, we claim that there exists a point $z_0  \in \Mcal(\C)$, corresponding to an abelian variety $\underline A_0 = \underline A_{z_0}$, such that $j(z_0) \in U$ and $\Vbf^{\sharp}(\mathbf j(\underline A_0)) = \{ 0 \}$, cf.\ \eqref{eq:V-sharp}.

Indeed, if $z \in \Mcal(\C)$ corresponds to a pair $(\lie z, [g]) \in \mathbb D(L) \times \Gbf(\A_{\Q,f}) / K$ under the complex uniformization \eqref{eqn:alternative-complex-unif-Hilbert}, then Lemma \ref{lemma:VSharpCpx} implies that
\[ \Vbf^{\sharp}(\mathbf j (\underline A_z)) = \{ 0\} \ \iff \ \langle\lie z, v\rangle \neq 0 \text{ for all }  v \in V \cap g \cdot \widehat L.
\]
The union 
\[ 
\bigcup_{v \in V} \left\{ \lie z \in \mathbb D(L) \ | \  \langle\lie z, v\rangle = 0 \right\} 
\]
is a countable union of submanifolds of $\mathbb D(L)$ of complex dimension at most one, and so for dimension reasons, there exists a point $z_0 = (\lie z_0, [g_0] )$ mapping to $U$ such that $\lie z_0 $ is not in this union, and hence $\Vbf^{\sharp}(\mathbf j(\underline A_0)) = \{0\}$. 

We now arrive at a contradiction:  since $\mathbf j(\underline A_0)$ lands in the special cycle $\widetilde \Zed(m, \lie V)(\C)$ there exists an element of 
\[ \Vbf(\mathbf j(\underline A_0)) \ = \ \Vbf^{\flat}(\mathbf j(\underline A_0)) \]
of norm $m$. By Lemma \ref{lem:Vdecomp}(i), this implies that $m \in (\Z_{(p)})^2$, contradicting the assumption that $m \not\equiv \square \pmod{p}$. 
%
\end{proof}
\end{lemma}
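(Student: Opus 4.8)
The strategy is a proof by contradiction that pits the rigidity of Borcherds divisors (Theorem~\ref{thm:BorcherdsForm}) against the arithmetic of the special endomorphism spaces of Lemma~\ref{lem:Vdecomp}. Suppose $\Psi$ vanishes identically along some connected component $\Mcal^{\circ}$ of $\Mcal$. Recalling from \eqref{eqn:relation-taut} that $\mathbf j^{\ast}\widetilde\omega^{\taut}\simeq\omega^{\taut}$ is a line bundle, this means $\mathbf j(\Mcal^{\circ}(\C))$ is contained in the support $|\mathsf{div}\,\Psi(\C)|$. By Theorem~\ref{thm:BorcherdsForm}, that support is the finite union of the special cycles $\widetilde\Zed(|m|,\lie V_i)(\C)$ over pairs $(m,i)$ with $m<0$ and $c_f(m)(\varphi_i)\neq0$, and the choice of $f$ via Lemma~\ref{lem:modForm} guarantees that every such $|m|$ satisfies $|m|\not\equiv\square\pmod p$. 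Since $\mathbf j(\Mcal^{\circ}(\C))$ is irreducible of complex dimension $2$ and each $\widetilde\Zed(|m|,\lie V_i)$ is a divisor in the threefold $\widetilde\Mcal$, comparing dimensions forces $\mathbf j(\Mcal^{\circ}(\C))$ to coincide, as an analytic set, with an irreducible component of $\widetilde\Zed(m_0,\lie V)(\C)$ for some $\lie V$ and some $m_0\not\equiv\square\pmod p$. In particular, fixing a complex embedding $\nu$, the intersection $U:=\mathbf j(\Mcal_{\nu}(\C))\cap\widetilde\Zed(m_0,\lie V)(\C)$ contains a complex submanifold of dimension $2$.

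The next step is to find, inside that submanifold, a point at which the ``$\sharp$-part'' of the special endomorphism space is trivial. Describing points of $\Mcal_{\nu}(\C)$ as pairs $(\lie z,[g])\in\Dbb(L)\times\Gbf(\A_{\Q,f})/K$ via \eqref{eqn:alternative-complex-unif-Hilbert}, Lemma~\ref{lemma:VSharpCpx} identifies $\Vbf^{\sharp}(\mathbf j(\underline A_{(\lie z,[g])}))$, cf.\ \eqref{eq:V-sharp}, with $\{v\in V\cap g\cdot\widehat L:\langle v,\lie z\rangle=0\}\otimes_{\Z}\Z_{(p)}$. Hence this space is nonzero exactly when $\lie z$ lies on one of the hyperplane sections $\{\langle v,\cdot\rangle=0\}\cap\Dbb(L)$, as $v$ ranges over the countable set $V\cap g\cdot\widehat L$; each such section is a proper analytic subset of $\Dbb(L)$, hence of dimension at most $1$. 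A Baire-category argument then shows that this countable union cannot cover the $2$-dimensional submanifold contained in $U$, so there is a point $z_0=(\lie z_0,[g_0])$ lying over $U$ with $\Vbf^{\sharp}(\mathbf j(\underline A_{z_0}))=\{0\}$.

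The contradiction is then immediate from Lemma~\ref{lem:Vdecomp}: since $\mathbf j(\underline A_{z_0})\in\widetilde\Zed(m_0,\lie V)(\C)$ there is an element $y\in\Vbf(\mathbf j(\underline A_{z_0}))$ with $\widetilde Q(y)=m_0$, and the orthogonal decomposition $\Vbf=\Vbf^{\sharp}\oplus\Vbf^{\flat}$ of Lemma~\ref{lem:Vdecomp}(ii), combined with $\Vbf^{\sharp}=\{0\}$, forces $y\in\Vbf^{\flat}(\mathbf j(\underline A_{z_0}))$; but by Lemma~\ref{lem:Vdecomp}(i) the module $\Vbf^{\flat}$ is free of rank one over $\Z_{(p)}$ with a norm-$1$ generator, so $m_0=\widetilde Q(y)$ is a square in $\Z_{(p)}$, whence $m_0\equiv\square\pmod p$, a contradiction. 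The step I expect to be the main obstacle is the middle one: one needs the moduli-theoretic computation of $\Vbf^{\sharp}$ in Lemma~\ref{lemma:VSharpCpx} to know that the locus where $\Vbf^{\sharp}\neq\{0\}$ is a genuine countable union of lower-dimensional analytic subsets, so that a generic point of the $2$-dimensional piece of $U$ escapes it; with that in hand, and granting Theorem~\ref{thm:BorcherdsForm} together with Lemmas~\ref{lem:Vdecomp} and~\ref{lemma:VSharpCpx}, the rest is formal.
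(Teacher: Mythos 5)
Your proposal is correct and follows essentially the same route as the paper: contradiction via Theorem \ref{thm:BorcherdsForm} to land a two-dimensional piece of $\mathbf j(\Mcal(\C))$ inside a special cycle $\widetilde\Zed(m,\lie V)(\C)$ with $m\not\equiv\square\pmod p$, then Lemma \ref{lemma:VSharpCpx} plus the countable-union-of-lower-dimensional-loci argument to locate a point with $\Vbf^{\sharp}=\{0\}$, and finally Lemma \ref{lem:Vdecomp} to force $m$ to be a square in $\Z_{(p)}$. The only cosmetic difference is that you upgrade the containment to an identification with an irreducible component of the special cycle, which is not needed; the paper only uses that the intersection contains a two-dimensional submanifold.
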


We may therefore use the section $\mathbf j^{\ast} \Psi$ to represent $\ac_{1}(\overline{\Omega})$ by an arithmetic cycle:

\[ \frac{r}{2} \cdot  \ac_{1}(\overline \Omega) \ = \ \widehat{\mathsf{div}}(\mathbf j^* \Psi) \ = \left(\mathsf{div} (\mathbf j^* \Psi), \ - \log || \mathbf j^* \Psi ||_{P}^2 \right) \ \in \  \widehat{\mathsf{CH}}^{1}(\Mcal).  \]
Decompose $\mathtt{div}(\mathbf j^*\Psi) = \sum n_i Z_i$ for some prime divisors $Z_i$;
  if $\pi_i \colon Z_i \to \Mcal$ denotes the natural map, then
\begin{align}
\notag  \frac{r}{2} \cdot \widehat\deg \left( \ac_{1}(\overline \Omega)  \cdot \ac_{2}(\overline \Omega) \right) \ =& \ \sum_i n_i \, \widehat\deg\left( \ac_{2}( \pi_i^* \overline \Omega)  \right) \\ & \qquad \qquad + \frac{1}{2} \ \sum_{v \colon F \to \C} \ \int_{\Mcal_{v}(\C)} (-\log|| \mathbf j^* \Psi ||_{P}^2) \, c_2(\overline \Omega)  \label{eqn:Ch2deg},
\end{align}
where in the first line, we use the functoriality of Chern classes to identify $\pi_i^* \ac_{2}(\overline\Omega)= \ac_{2}(\pi_i^*\overline\Omega)$.
Similarly, 
\begin{align}
\notag \frac{r}{2}\cdot  \widehat\deg \left( \ac_{1}(\overline \Omega)^3 \right)   \equiv& \  \sum_i n_i \, \widehat\deg \left( \ac_{1}( \pi_i^* \overline \Omega)^2  \right) \\ & \qquad \qquad + \frac{1}{2} \ \sum_{v\colon F \to \C} \ \int_{\Mcal_{v}(\C)} (-\log|| \mathbf j^* \Psi ||_{P}^2) \, c_1(\overline \Omega)^{\wedge 2}  \label{eqn:Ch1deg}.
\end{align}

Consider the orthogonal decomposition of metrized vector bundles $\overline{\Omega}= \overline{\Lcal}_1 \oplus \overline{\Lcal}_2$, according to \eqref{eqn:KSmap} and Proposition \ref{prop:OmegaCpxDecom}. Restricting via $\pi_i \colon  Z_i \to\Mcal$ gives
\[ \pi_i^* \overline \Omega \  = \ \pi_i^* \overline \Lcal_1 \ \oplus \ \pi_i^* \overline \Lcal_2. \]
The multiplicativity of the total arithmetic Chern class on orthogonal sums gives
	\[ \ac_2( \pi_i^* \overline \Omega) \ = \ac_{1}(\pi_i^* \overline \Lcal_1) \cdot \ac_{1}(\pi_i^* \overline \Lcal_2) .\]
Now Proposition \ref{prop:SCBundle} implies that 
	\[ \ac_{1}(\pi_i^* \overline \Lcal_2) \ = \ \ac_{1}(\pi_i^* \overline \Lcal_1) \ + \ (0, \mathbf c) \] 
where $\mathbf c$ is a locally constant function on $Z_i \subset \mathtt{div}(\mathbf j^* \Psi)$ valued in $\log|\Z_{(p)}^{\times}|$. Thus, working in $\R_{(p)} = \R / \oplus_{q \neq p} \Q \log q$, we find
	\begin{align*}
		\widehat\deg\left( \ac_{2}( \pi_i^* \overline \Omega) \right) \ =& \ \widehat\deg \left( \ac_{1}( \pi_i^* \overline \Lcal_1)^2 \right) \ + \ \widehat\deg \left( \ac_{1}( \pi_i^* \overline \Lcal_1) \cdot (0, \mathbf c) \right)  \\
		\equiv &  \ \widehat\deg\left( \ac_{1}( \pi_i^* \overline \Lcal_1)^2 \right) \ \in \ \R_{(p)}.
	\end{align*}
Similarly, 
	\[ \ac_{1}(\pi_i^*\overline \Omega)^2 \ =\  \left( \ac_{1}(\pi_i^*\overline \Lcal_1)  \ + \ \ac_{1}(\pi_i^*\overline \Lcal_2) \right)^2 \ = \ \left(2  \ac_{1}(\pi_i^*\overline \Lcal_1) + (0, \mathbf c) \right)^2 \]
and so
	\begin{align*}
		\widehat\deg(\ac_{1}(\pi_i^*\overline \Omega)^2) \ =& \ 4 \cdot \widehat\deg \left( \ac_{1}( \pi_i^* \overline \Lcal_1)^2 \right) \\ 
	=& \ 4 \cdot \widehat\deg \left( \ac_{2}( \pi_i^* \overline \Omega) \right)  \ \in \ \R_{(p)}.
	\end{align*}
Finally, recall from Proposition \ref{prop:OmegaCpxDecom} that $c_1(\overline \Omega)^{\wedge 2} = 2 \cdot c_2(\overline{\Omega})$. Thus, comparing \eqref{eqn:Ch2deg} and \eqref{eqn:Ch1deg} implies the equality, in $\R_{(p)}$, of
	\begin{equation}\label{eqn:first-reduction}
	4 \cdot \widehat\deg \left(	\ac_{1}(\overline \Omega)  \cdot \ac_{2}(\overline \Omega) \right) \ \equiv \  \, \widehat\deg \left( \ac_{1}(\overline \Omega)^3 \right)\ + \ \frac{1}{r} \cdot \sum_{v \colon F \to \C} \ \int_{\Mcal_{v}(\C)} (-\log|| \mathbf j^* \Psi ||_{P}^2) \, c_1(\overline \Omega)^{\wedge 2}.
	\end{equation}
To conclude the proof of Theorem \ref{thm:c1c2thm} we compute both terms in the right hand side of this equality. The first quantity was essentially computed by Fritz H\"ormann:

\begin{theorem}[H\"ormann \cite{Hor}] The following equality holds in $\R_{(p)} = \R / \oplus_{q \neq p} \Q \log q$:
\begin{align*}
\widehat\deg & \left(\ac_{1}(\overline\Omega)^3 \right) \ \\
=& \ \deg \left( c_1(\Omega_{\Mcal/F} )^2 \right) \cdot \left( -12 \log \pi -6 \gamma + 3  + 2 \frac{\zeta_F'(2)}{\zeta_F(2)} + 2 \frac{\zeta_{\Q}'(2)}{\zeta_{\Q}(2)} \right) 
\end{align*} 

\begin{proof}
By Proposition \ref{prop:TautOmega}, 
\[ 
	2 \ac_{1}(\overline{\omega^{\taut}})  \ = \ \ac_{1}(\overline\Omega) \ - \ (0, \log \mathsf c),
\]
where $\mathsf c = e^{-2 \gamma} \cdot D_B^4 \pi^{-6} (64)^{-3}  $. Note that 
\[
	(0, \log \mathsf c)^2  \ = \ 0 \in   \Ch{2}(\Mcal)
\]
and
\[
	\widehat\deg \left( \ac_{1}(\overline\Omega)^2 \cdot (0, \log\mathsf c) \right) \ = \ \frac{ \log \mathsf c}{2} \cdot \deg \left( c_1(\Omega_{\Mcal(\C)} )^2 \right)
\]
so
\begin{equation} \label{eqn:ch3relation}
	\widehat\deg\left(	\ac_{1}(\overline\Omega)^3 \right)  \ = \ 8 \ \widehat\deg\left(\ac_{1}(\overline{\omega^{\taut}})^3\right) \ + \ \frac32 \log \mathsf c \cdot \deg \left( c_1(\Omega_{\Mcal(\C)} )^2 \right).
\end{equation}
The main result of \cite{Hor} is a computation of the degree of $\ac_{1}(\overline{\omega^{\taut}})^{n+1}$ for orthogonal Shimura varieties of signature $(n,2)$ for any $n$. His result \cite[Theorem 3.5.5]{Hor}, taken in $\R_{(p)}$ and translated to our language in the case at hand, reads
\[
	\widehat\deg\left(\ac_{1}(\overline{\omega^{\taut}})^3\right) \ \equiv \  \ \deg \left( c_1(\omega_{/F}^{\taut} )^2 \right) \cdot \left. \frac{ \frac{d}{ds} \lambda^{-1}(L,s)}{\lambda^{-1}(L,s)} \right|_{s=0} \in \R_{(p)},
\]
where $\lambda^{-1}(L,s)$ is a function of the form
\[
	\lambda^{-1}(L,s) \ = \ \frac18 \, \frac{\Gamma(\frac{s}{2} + 1) \Gamma(\frac{s}{2} +\frac32)\Gamma(\frac{s}{2}  + 2)}{\pi^{\frac32(s+3)}}  \cdot \zeta_{\Q}(2+2s) \cdot L\left(2+s, \left(\frac{\det L}{\cdot} \right) \right) \cdot \left( \begin{matrix} \text{Euler factors} \\ \text{at primes } \ell \mid N \end{matrix} \right);
\]
see \cite[Appendix B]{Hor} as well as \cite[\S 8]{Hor-padic} for more details.
 Here $\det(L)$ is the determinant of the matrix of inner products
\[
	(\la x_i, x_j \ra)_{i,j=1,\dots 4}
\]
of any $\Z$-basis $\{ x_1, \dots, x_4\}$ of $L$.
 It follows from \eqref{eqn:Q0DiagForm} that $\det L \equiv \dF \pmod{\Q^{\times,2}}$, and since both $\det L$ and $\dF$ divide $N$,  we have
\[ \left( \frac{\det L}{x} \right) \ = \ \left(   \frac{\dF}{x} \right) \  =:\ \chi_F(x), \]
for all $x\in \Z$ with $(x,N) = 1$. 
 Therefore, we may write
 \[
 	\lambda^{-1}(L,s) \ = \ \frac18 \,  \frac{\Gamma(\frac{s}{2} + 1) \Gamma(\frac{s}{2} +\frac32)\Gamma(\frac{s}{2}  + 2)}{\pi^{\frac32(s+3)}}  \cdot \zeta_{\Q}(2+2s) \cdot L\left(2+s, \chi_F \right) \cdot \left( \begin{matrix} \text{Euler factors} \\ \text{at primes } \ell \mid N \end{matrix} \right).
 \]
Consider the logarithmic derivative of this expression at $s=0$. The contributions from the Euler factors at $\ell | N$ vanish in the quotient $\R \to \R_{(p)}$; computing the remaining terms, and using the relation $\zeta_F(s) = \zeta_{\Q}(s) \cdot L(s, \chi_F)$, gives 
\[
\left. \frac{ \frac{d}{ds} \lambda^{-1}(L,s)}{\lambda^{-1}(L,s)} \right|_{s=0} \equiv \ \frac{3}{2} \left( - \log \pi - \gamma + 1 \right) + \frac{\zeta_F'(2)}{\zeta_F(2)} \ + \ \frac{\zeta_{\Q}'(2)}{\zeta_{\Q}(2)} \ \in \ \R_{(p)}.
\]
Inserting this expression back into \eqref{eqn:ch3relation}, using the fact that $4 c_1(\overline{\omega}_{/F}^{\taut})^{ 2} = c_1(\overline{\Omega_F})^{ 2}$, and discarding terms that vanish in the quotient $\R = \R_{(p)}$ gives the theorem after a little straightforward algebra.
\end{proof}
\end{theorem}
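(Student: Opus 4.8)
The plan is to deduce this from H\"ormann's computation of the arithmetic self-intersection of the tautological bundle, using the geometry established earlier in Section~\ref{subsec:automorphic-vector}. First I would replace $\ov\Omega$ by the tautological bundle: by the metric comparison in \refProp{prop:TautOmega}, writing $\mathsf c = e^{-2\gamma}\,D_B^4\,\pi^{-6}\,(64)^{-3}$, one has
\[ 2\,\ac_1(\ov{\omega^{\taut}}) \ = \ \ac_1(\ov\Omega) \ - \ (0,\log\mathsf c) \qquad \text{in } \Ch{1}(\Mcal). \]
Since $(0,\log\mathsf c)$ is purely archimedean and constant, $(0,\log\mathsf c)^2 = 0$ in $\Ch{2}(\Mcal)$, and $\adeg\bigl(\ac_1(\ov\Omega)^2\cdot(0,\log\mathsf c)\bigr) = \tfrac12\log\mathsf c\cdot\deg\bigl(c_1(\Omega_{\Mcal(\C)})^2\bigr)$. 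Expanding the cube of $\ac_1(\ov\Omega) = 2\,\ac_1(\ov{\omega^{\taut}}) + (0,\log\mathsf c)$ then gives
\[ \adeg\bigl(\ac_1(\ov\Omega)^3\bigr) \ = \ 8\,\adeg\bigl(\ac_1(\ov{\omega^{\taut}})^3\bigr) \ + \ \tfrac32\log\mathsf c\cdot\deg\bigl(c_1(\Omega_{\Mcal(\C)})^2\bigr). \]

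Next I would invoke H\"ormann's main theorem \cite[Theorem 3.5.5]{Hor}, which for a $GSpin$ Shimura variety of signature $(n,2)$ expresses $\adeg\bigl(\ac_1(\ov{\omega^{\taut}})^{n+1}\bigr)$, specialized here to $n=2$ and the lattice $L$ of signature $(2,2)$, as $\deg\bigl(c_1(\omega^{\taut}_{/F})^2\bigr)$ times the logarithmic derivative at $s=0$ of an explicit meromorphic function $\lambda^{-1}(L,s)$ assembled from $\Gamma$-factors, $\zeta_{\Q}(2+2s)$, $L\bigl(2+s,(\tfrac{\det L}{\cdot})\bigr)$, and Euler factors at the primes dividing $N$. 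Here I would use \eqref{eqn:Q0DiagForm} to identify $\det L \equiv \dF \pmod{\Q^{\times,2}}$, so that the relevant Dirichlet character is $\chi_F = (\tfrac{\dF}{\cdot})$ and $\zeta_F(s) = \zeta_{\Q}(s)\,L(s,\chi_F)$. Taking the logarithmic derivative and passing to $\R_{(p)}$, the Euler factors at primes $\ell\mid N$ contribute only $\Q$-multiples of $\log\ell$ and therefore vanish, while the $\Gamma$-factors and $L$-factors evaluate to $\tfrac32(-\log\pi - \gamma + 1) + \zeta_F'(2)/\zeta_F(2) + \zeta_{\Q}'(2)/\zeta_{\Q}(2)$.

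Finally I would assemble the pieces. The relation $4\,c_1(\ov{\omega^{\taut}}_{/F})^2 = c_1(\ov{\Omega}_F)^2$ (which is immediate from $(\omega^{\taut})^{\otimes 2}\isomto\det\Omega$) gives $\deg\bigl(c_1(\omega^{\taut}_{/F})^2\bigr) = \tfrac14\deg\bigl(c_1(\Omega_{\Mcal(\C)})^2\bigr)$, so $8\,\adeg\bigl(\ac_1(\ov{\omega^{\taut}})^3\bigr)$ equals $2\deg\bigl(c_1(\Omega_{\Mcal(\C)})^2\bigr)$ times the logarithmic-derivative value above. In $\R_{(p)}$ one has $\log\mathsf c \equiv -2\gamma - 6\log\pi$, since $2D_B\mid N$ forces $\log 2$ and $\log D_B$ to vanish, so the correction term $\tfrac32\log\mathsf c\cdot\deg(\cdots)$ contributes $(-3\gamma - 9\log\pi)\deg\bigl(c_1(\Omega_{\Mcal(\C)})^2\bigr)$; adding and collecting terms yields the stated formula. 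The only genuine difficulty — hence the main obstacle — is the careful transcription of H\"ormann's conventions into those of the present paper: matching his metric on $\omega^{\taut}$ with the normalization \eqref{eqn:FritzMetric}, pinning down the exact shape of $\lambda^{-1}(L,s)$ (including the constant $\tfrac18$ and the precise $\Gamma$-shifts), and verifying that every factor discarded in the passage to $\R_{(p)}$ is indeed a $\Q$-combination of $\log q$ with $q\mid N$, so that the localized identity is legitimate.
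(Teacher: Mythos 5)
Your proposal follows exactly the paper's argument: the same reduction $2\,\ac_1(\ov{\omega^{\taut}}) = \ac_1(\ov\Omega) - (0,\log\mathsf c)$ from \refProp{prop:TautOmega}, the same cube expansion, the same appeal to H\"ormann's Theorem 3.5.5 with $\det L \equiv \dF$, and the same logarithmic-derivative evaluation in $\R_{(p)}$. Your final bookkeeping (in particular $\log\mathsf c \equiv -2\gamma - 6\log\pi$ and the factor $8\cdot\tfrac14 = 2$) correctly reproduces the ``straightforward algebra'' the paper leaves implicit, so the proposal is complete and correct.
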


\subsubsection{Integrals of Borcherds forms along $\Mcal$}
We proceed to complete the proof of Theorem \ref{thm:c1c2thm}, by evaluating the integral \eqref{eqn:first-reduction}:

\begin{theorem} The following equality holds in $\R_{(p)}$:
		\begin{align*}
			 \sum_{v \colon F \to \C} \int\limits_{\Mcal_{v}(\C)} \ - \log|| \mathbf j^* \Psi & ||_{P}^2 \, c_1(\overline{\Omega})^{\wedge 2} \\
			 & \equiv \ \deg\left( c_1(\Omega_{\Mcal(\C)})^2 \right) \cdot r \cdot \left(   -2 \gamma   - 4 \log \pi + 1 + 2 \frac{\zeta'_F(2)}{\zeta_F(2)} - 2 \frac{ \zeta'(2)}{\zeta(2)}  \right) 
		\end{align*}
\begin{proof}
This proof is a variation on the method of \cite{Kudla-IBF}. To start, fix an embedding $v \colon F \to \C$ and use Proposition \ref{prop:TautOmega} to relate the metrics on $\det\Omega$ and $(\omega^{\taut})^{\otimes 2}$ to obtain
\begin{equation} \label{eqn:logMetricComp}
 - \log	|| \mathbf{j}^* \Psi||_{P}^2 \ =\ - \log || \mathbf{j}^* \Psi||_{(\omega^\taut)^{\otimes r}}^2 \ + \ \frac{r}{2} \left( - 2 \gamma   + \log \left(\frac{D_B^4}{(64)^3 \pi^6}\right) \right).
\end{equation}
On the other hand, using the isometry $\mathbf j^* \widetilde \omega^{\taut} \simeq \omega^{\taut}$ as in \eqref{eqn:relation-taut} and then applying Proposition \ref{prop:BorcherdsThetaLift}, we have that
\begin{equation} \label{eqn:logRegIntComp}
	- \log ||\mathbf j^* \Psi||^2_{(\omega^{\taut})^{\otimes r}} \ = \ \frac{k}{2} \ \mathbf j^*( I^{reg}_f )
\end{equation}
where both sides are viewed as functions on the complement of $\mathtt{div}(\mathbf j^* \Psi)(\C)$ in $\Mcal(\C)$; on the right appears the pullback to $\Mcal(\C)$ of the regularized theta lift
\[ 
	I^{reg}_f(z) \ = \ \mathrm{CT}_{s=0} \lim_{T \to \infty} \int_{\mathcal F_T} \Theta_{\Lambda} (\tau, z) \cdot f(\tau) \ \frac{du \, dv}{v^{2+s}}, \ \qquad z \in \widetilde \Mcal(\C)
\]
as in \eqref{eqn:IregDef}. 

We first consider the pullback of the Siegel theta function. By definition,
\[ \Lambda \ = \ L  \  \oplus \ \Z \mathbf e; \]
since $Q(\mathbf e) = 1$, the dual lattice to $\Z \mathbf e$ is $\frac12 \Z \mathbf e$, and so
\[ S_{\Lambda}  \ = \ \C[\Lambda'/\Lambda] \ \simeq \ S_L \otimes S_{\Z \mathbf e}, \]
where $S_L = \C[L'/L]$ and $S_{\Z \mathbf e} = \C[ \frac12 \Z  \mathbf e / \Z\mathbf e] \simeq \C[ \Z / 2]$. Note that inside of $C(\Lambda)$, the element $\mathbf e$ commutes with $C^+(L)$, and so  $\Gbf(\A_{\Q,f})$ fixes $\mathbf e$. In particular, since we assumed $ K \subset \widetilde{K}$, 
\begin{equation} \label{eqn:SLambdaTensor}
	S_{\Lambda}^{\widetilde{ K}} \ \subset \ S_{L}^{ K} \ \otimes \ S_{\Z \mathbf e}.
\end{equation}

Just as was the case with $\Lambda$,  we may consider the (weight zero) Siegel theta function 
\[
\Theta_{L}  \colon \lie H \times \mathbb D(L) \times \Gbf(\A_{\Q,f}) \ \to \ (S_L^{K})^{\vee}, \qquad \Theta_{L}(\tau, \lie z_0, g ) (\varphi)  \ := \  v \, \sum_{\ell \in L_{\Q}}  \varphi(g^{-1}\ell) \, e^{-2 \pi v R_0(\ell, \lie z_0)} \, e^{2 \pi i \tau Q(\ell)} 
\]
attached to $L$;
here  
\[ \Dbb(L) \ := \ \{ \lie z_0 \in  L \otimes_{\Z} \C \ | \ \la \lie z_0, \lie z_0 \ra = 0, \ \la \lie z_0, \overline{\lie z_0} \ra < 0     \} / \C^{\times} \]
and
\[ R_0(\ell, \lie z_0) \ := \ \frac{ | \la \ell, \, \zeta_0 \ra |^2}{| \la \zeta_0, \, \overline{\zeta_0} \ra|} \qquad \text{ for any } \zeta_0 \in \lie z_0. \]
This theta function satisfies $\Theta_L(\tau,\, \gamma \cdot \lie z_0, \, \gamma \cdot g \cdot k) = \Theta_L(\tau, \lie z_0, g)$ for all $\gamma \in \Gbf(\Q)$ and $k \in  K$, and so descends to a function
\[ 
\Theta_{L}  \colon \lie H \times \Mcal(\C) \ \to \ (S_L^{K})^{\vee} 
\]
via the complex uniformization \eqref{eqn:alternative-complex-unif-Hilbert}. 

On the other hand, the theta function attached to $S_{\Z \mathbf e}$ is simply the (vector-valued) Jacobi theta function of weight 1/2:
\[ 
\Theta^{Jac} \colon \lie H \to S_{\Z \mathbf e}^{\vee}, \qquad \Theta^{Jac}(\tau)(\varphi) \ = \ \sum_{x \in \Q \mathbf e} \varphi(x) \, e^{2 \pi i Q(x) \tau};  
\]
taking $\varphi = \mathbf 1_{\widehat\Z \mathbf e} \in S_{\Z \mathbf e} $ to be the characteristic function of $\widehat \Z \mathbf e$, we obtain the usual (scalar-valued) Jacobi theta function
\[ 
	\Theta^{Jac}(\tau)( \mathbf 1_{\widehat\Z \mathbf e}) \ = \ \sum_{x \in \Q \mathbf e}  \mathbf 1_{\widehat\Z \mathbf e}(x) \, e^{2 \pi i Q(x) \tau} \ = \ \sum_{x \in \Z \mathbf e} e^{2 \pi i Q(x) \tau} \ = \ \sum_{n \in \Z} e^{2 \pi i n^2 \tau}.
\]

 Now recall from  \eqref{eqn:GrassmannCommDiag} that the morphism $\mathbf j_{v} \colon \Mcal_{v}(\C) \to \widetilde\Mcal_{v}(\C)$ is given, on the level of complex uniformizations, by the natural map
\[
 j \colon 	\Mcal_{v}(\C) \simeq \Gbf(\Q) \backslash \Dbb(L) \times \Gbf(\A_{\Q,f}) /  K  \ \to \ \widetilde \Gbf(\Q) \backslash \Dbb(\Lambda) \times \widetilde\Gbf(\A_{\Q,f}) / \widetilde{ K} \simeq \widetilde\Mcal_{v}(\C)
\]
induced by the inclusion $\mathbb D(L) \times \Gbf(\A_{\Q,f}) \hookrightarrow \mathbb D(\Lambda) \times \widetilde \Gbf(\A_{\Q,f})$.

Note in particular that if 
\[ \lambda \ = \  \ell + a \mathbf e  \ \in \ \Lambda_{\Q}  = L_{\Q} \oplus \Q \mathbf e \]
 and $\lie z_0 \in \mathbb D(L) \subset \mathbb D(\Lambda)$, then
\[ R(\lambda, \lie z_0) \ = \ R(\ell, \lie z_0) \ = \ R_0(\ell, \lie z_0). \]
Thus, for $\lie z_{0}\in \mathbb D(L)$, $g\in\Gbf(\A_{\Q,f})$ and $\varphi  \ =  \ \varphi_0 \otimes \varphi_1  \ \in \ S_{L}^{K} \otimes S_{\Z \mathbf e} $
we have
\begin{align*}
j^* \Theta_{\Lambda}(\tau, (\lie z_0, [g]))(\varphi) \ 
=& \ v \, \sum_{\lambda \in \Lambda_{\Q}} \varphi(g^{-1}\lambda) \, e^{-2 \pi v R(\lambda, \lie z_0)} \, e^{2 \pi i \tau Q(\lambda)}  \\
=& \ v \, \sum_{\substack{\ell \in L_{\Q} \\ x \in \Q \mathbf e }} \varphi(g^{-1}(\ell) + x) \, e^{-2  v R(\ell + x, \lie z_0)} \, e^{2 \pi i \tau Q(\ell + x)} \\
=& \ v \, \sum_{\substack{\ell \in L_{\Q} \\ x \in \Q \mathbf e }} \varphi_0(g^{-1}\ell) \varphi_1( x) \, e^{-2  v R_0(\ell, \lie z_0)} \, e^{2 \pi i \tau (Q(\ell)+ Q(x))} \\
=& \ \Theta_L(\tau, (\lie z_0, [g]) ) (\varphi_0)  \cdot  \Theta^{Jac}(\tau)(\varphi_1).
\end{align*}
More succinctly, we may write
\begin{equation}\label{eqn:factorization}
	j^*\Theta_{\Lambda}(\tau, \cdot) \ = \ \Theta_L(\tau, \cdot) \ \otimes \ \Theta^{Jac}(\tau)
\end{equation}
where we view the right hand side as a linear functional on $S_{\Lambda}^{\widetilde{ K}}$ via restriction in \eqref{eqn:SLambdaTensor}.

The next step is to work out the integral of $j^* I^{reg}_f $ over $\Mcal_{v}(\C)$, with the help of the factorization formula \eqref{eqn:factorization} and the Siegel-Weil formula. Note that the set $\mathtt{div}(\mathbf j^* \Psi)(\C)$ has measure zero and we ignore it in the sequel. For convenience let $d \Omega(z) $ denote the measure on $\Mcal_{v}(\C)$ induced by $c_1(\overline \Omega)^{\wedge 2}$. Arguing exactly as in \cite[\S 3]{Kudla-IBF}, we may interchange the integral over $\Mcal_{v}(\C)$ with the regularization integral:
\begin{align*}
	\int_{\Mcal_{v}(\C)} j^* I_f^{reg}(z)  \, d \Omega(z) \ =& \ 	\int_{\Mcal_{v}(\C)} CT_{s=0} \lim_{T \to \infty} \int_{\mathcal F_T} j^* \Theta_{\Lambda}(\tau, z) \cdot f(\tau)  \frac{du \, dv}{v^{2+s}} \, d \Omega(z) \\
	=&  \  CT_{s=0} \, \lim_{T \to \infty} \, \int_{\mathcal F_T}   \,\left( \int_{\Mcal_{v}(\C)} j^* \Theta_{\Lambda}(\tau, z) \, d \Omega(z)     \right) \cdot f(\tau)  \frac{du \, dv}{v^{2+s}}  \\
	=& \ CT_{s=0} \, \lim_{T \to \infty} \, \int_{\mathcal F_T} \,\left( \int_{\Mcal_{v}(\C)} \Theta_{L}(\tau, z) \, d \Omega(z)  \ \otimes \ \Theta^{Jac}(\tau)   \right)  \cdot f(\tau)  \frac{du \, dv}{v^{2+s}};
\end{align*}
these manipulations are justified in part by the fact that the Shimura variety $\Mcal_{v}(\C)$ is not an ``exceptional case" in the terminology of \emph{loc.\ cit.}, since the quadratic space $L_{\Q}$ is anisotropic. The \emph{Siegel-Weil formula} computes the inner integral
\[
	 \int_{\Mcal_{v}(\C)} \Theta_{L}(\tau, z) \, d \Omega(z) \ = \ \vol(\Mcal_{v}(\C), d \Omega(z)) \cdot E_{L,0}(\tau,1)
\]
where $E_{L,0}(\tau, 1)$ is the value at $s=1$ of the weight zero Eisenstein series \eqref{eqn:GenEisDef};
implicit in this statement is the fact that $E_{L,0}(\tau,s)$ is holomorphic in $s$ at the point $s=1$.

By \cite[(2.17)]{Kudla-IBF}, 
\begin{equation} \label{eqn:EisLowering}
	-2  \, i\, v^2 \, \frac{\partial}{\partial \overline{\tau}} E_{L, 2}(\tau,s) = \frac12( s - 1) E_{L,0}(\tau,s).
\end{equation}
Comparing Taylor expansions in $s$ at $s=1$ on both sides, we find that $(i)$  $E_{L,2}(\tau,1)$ is a holomorphic modular form in the $\tau$ variable, and $(ii)$ if $E'_{L,2}(\tau,s)$ is the derivative with respect to $s$, then
\[
	- 2 \, i \, v^2 \   \frac{\partial}{\partial \overline{\tau}} \,  E'_{L,2}(\tau, 1)   \ =  \ \frac12 \ E_{L,0}(\tau, 1)  ,
\]
and so, since $\Theta^{Jac}(\tau)$ is holomorphic,
\begin{equation} \label{eqn:StokesEis}
	- 2 \, i \, v^2 \   \frac{\partial}{\partial \overline{\tau}} \, \left( E'_{L,2}(\tau, 1) \otimes \Theta^{Jac}(\tau)  \right) \ = \ \frac12 \  E_{L,0}(\tau, 1) \otimes \Theta^{Jac}(\tau).
\end{equation}
Let $\lie G(\tau) :=  E'_{L,2}(\tau, 1) \otimes \Theta^{Jac}(\tau)$ and write its Fourier expansion as
\[ 
  \lie G(\tau) \ = \ 	\sum_{n \in \Q} c_{\lie G}(n,v) \, e^{2 \pi i n \tau}, \qquad c_{\lie G}(n,v) \in (S_{\Lambda}^{\widetilde{\mathbf K}})^{\vee}.
\]
Now applying the same argument as \cite[\S 2]{Kudla-IBF}, with \eqref{eqn:StokesEis} playing the role of Equation (2.9) of \emph{loc.\ cit.}, we obtain
\begin{align}
\frac1{2 \, \vol(\Mcal_{v}(\C), \, d \Omega) } 	\int_{\Mcal_{v}(\C)} \ & j^* I^{reg}_f(z) d \Omega(z) \notag \\
 =& \ \sum_{m \neq 0} \left( \lim_{T \to \infty} c_{\lie G}(m, T) \cdot c_f(-m) \right) \ + \ \lim_{T \to \infty} \left( c_{\lie G}(0,T) \cdot c_f(0) \ - \ \frac{c_f(0)(0)}{2} \log T \right). \notag
\end{align}
Our next task is to understand the coefficients 
\begin{equation} \label{eqn:FCThetaTensor}
 c_{\lie G}(m, T) \ = \ \sum_{n \in \Q} c_{E'}(n,T) \otimes c_{\Theta^{Jac}}(m-n)
\end{equation}
where, for convenience, we let $c_{E'}(n,v)$ denote the $n$-th Fourier coefficient of $E'_{L,2}(\tau,1)$. By  Proposition \ref{prop:EisFC} (i) below, 
\[ 
	\lim_{T \to \infty} c_{E'_{2}}(n,T) = 0 \qquad \text{ whenever } n < 0
\]
which, as $\Theta^{Jac}(\tau)$ has only non-negative Fourier coefficients, implies that
\[ \lim_{T \to \infty} c_{\lie G}(m, T) \  = \ 0 \ \qquad \text{ whenever } m < 0 \]
as well. Similarly, combining Proposition \ref{prop:EisFC} (ii) with the fact that
\[ 
c_{\Theta^{Jac}}(0) (\varphi_1) \ = \ \varphi_1(0) , \qquad \text{ for any } \varphi_1 \in  S_{\Z \mathbf e} ,
\]
implies that
\[ 
\lim_{T \to \infty} \left[ c_{\lie G}(0,T) \cdot c_f(0) \ - \ \frac{c_f(0)(0)}{2} \log T \right] \ = \ 0 .
\]
Thus, we have now shown that
\begin{align*}
	\frac1{2 \, \vol(\Mcal_{v}(\C), \, d \Omega) } 	\int_{\Mcal_{v}(\C)} \  j^* I^{reg}_f(z) d \Omega(z) \ =& \ \sum_{m > 0}  \lim_{T \to \infty} c_{\lie G}(m, T) \cdot c_f(-m) \\
	=& \  \sum_{m > 0}  \sum_{n \leq m} \left(  \lim_{T \to \infty} c_{ E'_2}(n, T) \otimes c_{\Theta^{Jac}}(m-n) \right) \cdot c_f(-m)
\end{align*}
Recall that $f$ was chosen so that $c_f(-m) \in S_{\Lambda}(\Q) \subset S_L(\Q) \otimes S_{\Z \mathbf e}(\Q)$, and the only non-zero terms appear when $m \not\equiv \square \pmod p$. On the other hand, if $m - n \notin (\frac12 \Z)^2$, then $c_{\Theta^{Jac}}(m-n) = 0$. Thus, all the $n$'s appearing in the last display are elements of $\Z_{(p)}$ with $n \not\equiv 0 \pmod{p}$. Setting 
\[
g(\tau) := E_{L,2}(\tau,1) \otimes \Theta^{Jac}(\tau) \qquad \text{and} \qquad A :=   \frac12 \log \pi \ + \  \frac12 \gamma - \frac12 \ - \ \frac{\zeta_F'(2)}{\zeta_F(2)} \ + \ \frac{\zeta'(2)}{\zeta(2)} ,
\]
Proposition \ref{prop:EisFC} (iv) below implies the following equality in $\R_{(p)}$:
\[
	\frac1{2 \, \vol(\Mcal_{v}(\C), \, d \Omega) } 	\int_{\Mcal_{v}(\C)} \  j^* I^{reg}_f(z) d \Omega(z) \ \equiv \ A \cdot \left( \sum_{m > 0} c_g(m) \cdot c_f(-m) \right) \ \in \ \R_{(p)}.
\]
But $ g(\tau) \cdot f(\tau)$ is an $SL_2(\Z)$-invariant function that is holomorphic on the upper half-plane $\lie H$. Integrating around the closed contour $\partial \mathcal F_1$, where
\[
	\mathcal F_1 \ = \ \left\{\tau \in \lie H \ |  \ Re(\tau) \in \left[ -\frac12, \frac12\right] , \ Im(\tau) \leq 1 , \  |\tau| \geq 1  \right\},
\]
gives
\[ 0 = \int_{\partial \mathcal F_1} \, g(\tau) \cdot f(\tau) \, d \tau \ = \ \sum_{m \geq 0} c_g(m) \cdot c_f(-m); \]
thus
\[ \sum_{m>0} c_g(m) \cdot c_f(-m) \ = \ - c_g(0) \cdot c_f(0) \ = \ - c_f(0)(0), \]
where the last equality follows from the fact that $c_g(0) = \mathrm{ev}_0$. 
This implies
\[ 
	\frac1{2 \, \vol(\Mcal_{v}(\C), \, d \Omega) } 	\int_{\Mcal_{v}(\C)} \  j^* I^{reg}_f(z) d \Omega(z) \ \equiv \ - A \ c_f(0)(0) \ \in \ \R_{(p)}.
\]
The theorem follows, after a little algebra, upon combining this last expression with \eqref{eqn:logMetricComp} and \eqref{eqn:logRegIntComp}, applying the identity
\[ \sum_v \, \vol(\Mcal_v(\C), c_1(\Omega_{\Mcal_v(\C)})^{\wedge 2})  \ = \ \deg(c_1(\Omega_{\Mcal(\C)})^2), \]
 and finally disregarding contributions that vanish in the quotient $\R \to \R_{(p)}$.
\end{proof}
\end{theorem}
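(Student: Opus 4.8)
The plan is to follow the method of Kudla's ``Integrals of Borcherds Forms'' \cite{Kudla-IBF}, adapted to the situation where the cycle integrated against is the whole surface $\Mcal$ rather than a special divisor. First I would use Proposition \ref{prop:TautOmega} to rewrite the Petersson-normalized quantity $-\log\|\mathbf j^*\Psi\|_P^2$ in terms of the metric on $(\omega^{\taut})^{\otimes r}$ induced by the normalization \eqref{eqn:FritzMetric}, which contributes an explicit archimedean constant involving $\gamma$, $\log\pi$, $\log 64$ and $\log D_B$. Then, via the isometry $\mathbf j^*\widetilde\omega^{\taut}\simeq\omega^{\taut}$ of \eqref{eqn:relation-taut} together with Proposition \ref{prop:BorcherdsThetaLift}, I would express the remaining term as $\tfrac{k}{2}\,\mathbf j^*(I^{reg}_f)$, the pullback to $\Mcal_v(\C)$ of Borcherds' regularized theta lift. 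This reduces the theorem to evaluating $\sum_v\int_{\Mcal_v(\C)}\mathbf j^*(I^{reg}_f)\,d\Omega$, where $d\Omega$ denotes the measure attached to $c_1(\overline\Omega)^{\wedge 2}$.

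The structural heart of the computation is the decomposition $\Lambda=L\oplus\Z\mathbf e$ from \S \ref{subsec:Siegel1-moduli}: it induces an embedding $S_\Lambda\hookrightarrow S_L\otimes S_{\Z\mathbf e}$ and, more importantly, the factorization $j^*\Theta_\Lambda(\tau,\cdot)=\Theta_L(\tau,\cdot)\otimes\Theta^{Jac}(\tau)$ of the Siegel theta function as the theta function of $L$ tensored with the weight-$1/2$ Jacobi theta series. Because $L_\Q$ is anisotropic, the Shimura variety $\Mcal_v(\C)$ is not an ``exceptional case'' in the sense of \cite{Kudla-IBF}, so I can justify interchanging the integral over $\Mcal_v(\C)$ with the regularization integral over $SL_2(\Z)\backslash\lie H$. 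The Siegel--Weil formula then computes the inner integral, $\int_{\Mcal_v(\C)}\Theta_L(\tau,z)\,d\Omega(z)=\vol(\Mcal_v(\C),d\Omega)\cdot E_{L,0}(\tau,1)$, so the problem becomes integrating a weight-$0$ vector-valued Eisenstein series tensored with $\Theta^{Jac}$ against $f$.

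Next I would invoke the lowering-operator identity \eqref{eqn:EisLowering}: comparing Taylor expansions at $s=1$ shows that $E_{L,2}(\tau,1)$ is a holomorphic modular form and that $-2iv^2\,\partial_{\overline\tau}E'_{L,2}(\tau,1)=\tfrac12 E_{L,0}(\tau,1)$, whence \eqref{eqn:StokesEis} after tensoring with the holomorphic $\Theta^{Jac}$. Setting $\lie G(\tau)=E'_{L,2}(\tau,1)\otimes\Theta^{Jac}(\tau)$ and running the Stokes-theorem argument of \cite[\S 2]{Kudla-IBF} with \eqref{eqn:StokesEis} in place of their Equation (2.9), the integral collapses to $\sum_m\lim_{T\to\infty}c_{\lie G}(m,T)\cdot c_f(-m)$ minus the logarithmic counterterm. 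The contributions from $m\le 0$ vanish by the analytic properties of the Fourier coefficients of $E'_{L,2}(\tau,1)$, which are parts (i) and (ii) of the forthcoming Proposition \ref{prop:EisFC}; and for $m>0$ the choice of $f$ forces $m\not\equiv\square\bmod p$ and $n\not\equiv 0\bmod p$ in \eqref{eqn:FCThetaTensor}, so part (iv) of that proposition identifies $\lim_T c_{\lie G}(m,T)\cdot c_f(-m)$ in $\R_{(p)}$ with $A\cdot c_g(m)\cdot c_f(-m)$, where $g(\tau)=E_{L,2}(\tau,1)\otimes\Theta^{Jac}(\tau)$ and $A$ is the explicit combination of $\log\pi$, $\gamma$, $\zeta_F'(2)/\zeta_F(2)$ and $\zeta'(2)/\zeta(2)$ appearing in the statement. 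Finally, since $g(\tau)f(\tau)$ is $SL_2(\Z)$-invariant and holomorphic on $\lie H$, contour integration around $\partial\mathcal F_1$ gives $\sum_{m\ge 0}c_g(m)\cdot c_f(-m)=0$, hence $\sum_{m>0}c_g(m)\cdot c_f(-m)=-c_g(0)\cdot c_f(0)=-c_f(0)(0)$ because $c_g(0)=\mathtt{ev}_0$; combining this with $r=k\cdot c_f(0)(0)/2$, the constant from Proposition \ref{prop:TautOmega}, and the identity $\sum_v\vol(\Mcal_v(\C),c_1(\Omega_{\Mcal_v(\C)})^{\wedge 2})=\deg(c_1(\Omega_{\Mcal(\C)})^2)$, yields the claimed formula after routine algebra.

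I expect the main obstacle to be the fine analysis of the Fourier coefficients of the derivative $E'_{L,2}(\tau,1)$ of the (incoherent-weight) Eisenstein series and their comparison, modulo $\oplus_{q\ne p}\Q\log q$, with the coefficients $c_g(m)$ of the holomorphic Eisenstein series $E_{L,2}(\tau,1)$: this rests on the Kudla--Yang computation \cite{KY} of local Whittaker functions for $GL_2$ and is precisely what Proposition \ref{prop:EisFC} is designed to supply. The other delicate points are the convergence bookkeeping that legitimizes interchanging the two integrals in the anisotropic case, and carefully tracking all archimedean constants and powers of $\pi$ through the metric normalization \eqref{eqn:FritzMetric} on $\widetilde\omega^{\taut}$ and the passage between $\det\Omega$ and $(\omega^{\taut})^{\otimes 2}$.
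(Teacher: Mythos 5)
Your proposal is correct and follows essentially the same route as the paper's proof: the metric comparison via Proposition \ref{prop:TautOmega}, the factorization $j^*\Theta_{\Lambda}=\Theta_L\otimes\Theta^{Jac}$, the interchange of integrals justified by anisotropy of $L_{\Q}$, the Siegel--Weil formula, the Stokes argument with \eqref{eqn:StokesEis}, the Fourier-coefficient analysis of Proposition \ref{prop:EisFC}, and the final contour-integration identity are all exactly the steps the paper carries out. No discrepancies to report.
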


It remains to prove the following proposition.

\begin{proposition} \label{prop:EisFC} Write the Fourier expansions of the special value and derivative of the Eisenstein series $E_{L,2}(\tau, s)$ as 
\[
	E_{L,2}(\tau,1)  \ = \ \sum_{n \geq 0} \, c_{E_2}(n) \, e^{2 \pi i n \tau} \qquad \text{and}  \qquad E'_{L,2}(\tau,1) \ = \ \sum_{n \in \Q} c_{E'_2}(n,v) \, e^{2 \pi i n \tau} 
\]
with $c_{E_2}(n), c_{E_2'}(n,v) \in S_L^{\vee}$. 
\begin{enumerate}[(i)]
\item If $n<0$ then $\lim_{v \to \infty} c_{E'_2}(n,v) = 0$.
\item $ \lim_{v \to \infty} \left(  c_{E'_2}(0,v) - \frac12 \log v \cdot \mathtt{ev}_0 \right) = 0 $.
\item If $\varphi \in S_L(\Q)$, then $c_{E_2}(n)(\varphi) \in \Q$ for all $n$.
\item If $\varphi \in S_L(\Q)$ and furthermore if $n > 0$ satisfies $n \not\equiv 0 \pmod p$, then the following equality holds in $\R_{(p)} = \R / \oplus_{q \neq p} \Q \cdot \log q$: 
\[
	\lim_{v \to \infty} c_{E'_2}(n,v)(\varphi) \ \equiv \ c_{E_2}(n)(\varphi) \cdot \left( \frac12 \log \pi \ + \  \frac12 \gamma - \frac12 \ - \ \frac{\zeta_F'(2)}{\zeta_F(2)} \ + \ \frac{\zeta'(2)}{\zeta(2)} \right) \in \R_{(p)} . 
\]
\end{enumerate}

\begin{proof}
We make heavy use of the results of \cite{KY} regarding the Fourier coefficients of the general Eisenstein series $E_{L,2}(\tau,s)$, written as
\[
	E_{L,2}(\tau,s) \ = \ \sum_{n} \, A_n(\tau,s)  , \qquad A_n(\tau,s) \in S_L^{\vee},
\]
so that 
\[ c_{E_2}(n) = A_n(\tau, 1) \cdot e^{-2 \pi i n \tau}  \qquad \text{and} \qquad  c_{E'_2}(n) = \left. \left( \frac{d}{ds} A_n(\tau,s) \right)\right|_{s=1} \cdot e^{-2 \pi i n \tau} . \]
Fix an element $\varphi \in S_L(\Q)$ and $n \in \Q$. 

Let $\Sigma$ denote a finite set of finite primes that contains all prime factors of $N$ and all prime factors of $n$ when $n \neq 0$.  In particular, note that if $\ell \notin \Sigma$ then $L_{\ell}$ is a self-dual $\Z_{\ell}$ lattice. Recall also that $\dF$ divides $N$, by assumption, so primes $\ell \notin \Sigma$ are unramified in $F$. 

 When $n \neq 0$, there is a product expansion, \cite[Theorem 2.4]{KY},
\begin{equation} \label{eqn:AnDef}
	A_n(\tau, s)(\varphi) \ = \ W_{n,\infty}(\tau, s) \cdot \left( \prod_{\ell\in \Sigma}  W_{n, \ell}(s, \varphi) \right) \cdot \frac{1}{L^{\Sigma}(s+1, \chi_L)}
\end{equation}
where 
\begin{enumerate}[(i)]
\item $W_{n,\infty}(\tau,s)$ is the weight two archimedean Whittaker function (and is independent of $\varphi$), and $W_{n,\ell}(s, \varphi)$ is the local non-archimedean Whittaker function attached to $\varphi$ (and is independent of $\tau$); see \cite[\S 2]{KY} for more precise definitions. Note that $W_{n,\infty}(\tau,s) \ = \ W_{n,\infty}(\tau, s, \Phi^2_{\infty})$ in the notation of \cite{KY}. 
\item $L^{\Sigma}(s, \chi_L)  = \prod_{\ell \notin \Sigma} (1 - \chi_{L,\ell}(\ell)\ell^{-s})^{-1}$ is the $L$-function, with Euler factors at $\Sigma$ removed, attached to the quadratic character 
\[ \chi_{L,\ell}(x) \ =  \ (x, \det L)_{\ell} \ = \ (x, \dF)_{\ell}  \]
given by the Hilbert symbol $(\cdot, \cdot)_{\ell}$; the second equality follows from the fact that 
\[ \det L \equiv \dF \mod (\Q^{\times})^{2}, \] cf.\ \eqref{eqn:Q0DiagForm}. Note that if $(\ell, \dF) = 1$, then $\chi_{L, \ell}(\ell) =  (\frac{\dF}{\ell})$, and that this is in particular true for $\ell \notin \Sigma$. Thus 
\[ 
	L^{\Sigma}(s, \chi_L) \ = \ L^{\Sigma}(s, \chi_F) 
\]
where $\chi_F = (\frac{\dF}{\cdot})$.
\end{enumerate}
Similarly, for $n=0$, we have the product expansion
\[
	A_{0}(\tau, s)( \varphi) \ = \ v^{\frac{s-1}{2}} \, \varphi(0) \ + \   W_{0,\infty}(\tau, s) \cdot \left( \prod_{\ell\in \Sigma}  W_{0, \ell}(s, \varphi) \right) \cdot \frac{L^{\Sigma}(s, \chi_F)}{L^{\Sigma}(s+1, \chi_F)}.
\]

For any $\ell$ and $n$, the non-archimedean Whittaker functional $W_{n, \ell}(s, \varphi)$ can be written, via \cite[\S 4]{KY}, as
\begin{equation}
				W_{n,\ell}(s, \varphi) \ = \ \gamma_{\ell} \cdot |\det L|_{\ell}^{1/2} \cdot F_{n,\ell}(\ell^{-(s-1)}, \varphi);
\end{equation}
where 
\begin{itemize}
\item $F_{n,\ell}(X, \varphi) \in \Q(X)$ is a rational function in $X$ with rational coefficients, and is in fact a polynomial when $n \neq 0$. Moreover,  $F_{n,\ell}(1, \varphi) \in \Q$ for all $n$ and $\varphi \in S_L(\Q)$, and so  each $W_{n,\ell}(s, \varphi)$ is holomorphic at $s=1$.
\item $\gamma_{\ell}$ is the ``local splitting index", cf.\ \cite[(4.3)]{KY}. It satisfies 
\[ \prod_{\ell < \infty} \gamma_{\ell}  \ = \ 1 \qquad \text{ and } \qquad \gamma_{\ell} \ =  \ 1  \text{ if } \ell \notin \Sigma, \]
since $L_{\ell}$ is unimodular for primes $\ell$ outside $\Sigma$.
\end{itemize}
These considerations together imply that
\begin{equation} \label{eqn:locWhitRat}
	\prod_{\ell \in \Sigma} W_{n,\ell}(1, \varphi) \ = \ \prod_{\ell \in \Sigma} \gamma_{\ell} \,  |\det L|_{\ell}^{1/2} \, F_{n,\ell}(1,\varphi) \ \in \ |\det L|^{-1/2} \cdot \Q \ = \ \dF^{-1/2} \cdot \Q.
\end{equation}
On the other hand, suppose $n \leq 0$. Then the archimedean Whittaker function satisfies 
\begin{equation} \label{eqn:WhitValueNegN}
	W_{n, \infty}(\tau, 1) \ = \ 0 
\end{equation}
cf.\ \cite[Proposition 2.3]{KY}.
This, together with the fact that $L^{\Sigma}(s, \chi_F)\neq 0$ , for $Re(s) \geq 1$, implies that
\[ A_n'(\tau, 1)( \varphi) \ = \ \frac{\log v}{2} \varphi (0) \cdot \delta_{n,0} \ + \ W'_{n, \infty}(\tau, 1) \cdot  c(n, \varphi) \]
for some constant $c(n,\varphi) \in \R$. Appealing again to \cite[Proposition 2.3]{KY}, we have
\[
 \lim_{v \to \infty} e^{-2 \pi i n \tau} W'_{n, \infty}(\tau, 1) \ = \ 0 .
\]
From this, it follows that 
\[
	\lim_{v \to \infty} \left[ e^{- 2 \pi i n v} \, A'_{n}(\tau)(\varphi) \ - \  \frac{\log v}{2} \, \varphi(0) \cdot \delta_{n,0} \right] \ = \ 0
\]
for every $\varphi \in S_L(\Q)$, and hence also for every $\varphi \in S_L$, proving parts (i) and (ii) of the proposition. 

To prove part (iii), we evaluate \eqref{eqn:AnDef} at $s=1$.  Note that $\eqref{eqn:WhitValueNegN}$ and the non-vanishing of $L^{\Sigma}(s, \chi_F)$ at $s=2$ imply that 
\[
c_{E_2}(n)(\varphi) \ = \   e^{2 \pi i n \tau} \, A_n(\tau,1) (\varphi) \ = \ \varphi(0) \cdot \delta_{n,0} \in \Q \qquad \text{for any } n \leq 0 \text{ and }  \varphi \in S_L(\Q).
\]
Turning now to the case $n>0$, \cite[Proposition 2.3]{KY} gives
\begin{equation} \label{eqn:WArchPosn}
	W_{n,\infty}(\tau, 1) \ = \ - 4 \, \pi^2  \, n \, e^{2 \pi i n \tau} \ \neq \ 0 .
\end{equation}
On the other hand, write 
\[ 
	L^{\Sigma}(s, \chi_F) \ = \ \prod_{\ell \in \Sigma} L_{\ell}(s, \chi_F)^{-1} \cdot L(s, \chi_F),
\]
which implies that 
\[ 
	L^{\Sigma}(2, \chi_F) \ \in \ L(2, \chi_F) \cdot \Q^{\times}.
\]
Note that $L(s, \chi_F) \ = \ \frac{\zeta_F(s)}{\zeta_{\Q}(s)}$, and moreover
\[ 
	\zeta_F(2) \ \in \  (\dF)^{-3/2} \, \pi^4 \, \Q^{\times}, \qquad \text{and} \qquad \zeta_{\Q}(2)  \ \in \ \pi^2 \, \Q^{\times};
\]
to see the first statement, observe that $\zeta_F(-1)\in \Q^{\times}$ by Siegel's theorem and use the functional equation. Therefore,
\[ 
	L^{\Sigma}(2, \chi_F) \in \dF^{-1/2} \cdot \pi^2 \cdot \Q^{\times}
\]
and, setting $q = e^{2 \pi i \tau}$, and using \eqref{eqn:locWhitRat} and \eqref{eqn:WArchPosn}, we find
\[ 
c_{E_2}(n)(\varphi) \ = \ q^{-n} \, A_{n}(\tau, 1)(\varphi)   \ = \  q^{-n} \cdot W_{\infty,n}(\tau,1) \cdot \left( \prod_{\ell \in \Sigma} W_{n,\ell}(1, \varphi) \right) \cdot \frac{1}{L^{\Sigma}(2, \chi_F)}  \in \Q
\]
as required.

For part (iv), under the assumption $n>0$ satisfying $n \not\equiv 0 \pmod p$, we may further assume that $p \notin \Sigma$.

First consider the case $c_{E_2}(n)(\varphi) = 0$ or equivalently, $A_{n}(\tau, 1)(\varphi) = 0$. Then   $W_{n,\ell_0}(1, \varphi) = 0 $ for some $\ell_0 \in \Sigma$, and note that 
\[
W'_{n,\ell_0}(1, \varphi)  \ = \ \gamma_{\ell_0} \cdot |\det(L)|^{1/2}_{\ell_0} \cdot  \left. \left( \frac{\partial}{\partial X} F_{n,\ell_0}(X, \varphi)   \right) \right|_{X = 1} \cdot \log \ell_0  \ \in \ \left( \gamma_{\ell_0} \,  |\det(L)|^{1/2}_{\ell_0}  \, \log \ell_0 \right) \cdot \Q
\]
 so, as before, we have
\begin{align*}
c_{E'_2}(n,v) (\varphi) \ = \ 	q^{-n} \cdot A'_n(\tau, 1) (\varphi) \ =& \ q^{-n} \cdot  W'_{n, \ell_0} (1, \varphi) \cdot W_{n, \infty} ( \tau , 1)  \cdot \left( \prod_{\ell \in \Sigma \setminus \{ \ell_0 \}} W_{n,\ell}(1, \varphi) \right) \cdot \frac{1}{L^{\Sigma}(2, \chi_F)} \\ \in& \ \Q \cdot \log \ell_0.
\end{align*}
Since $\Sigma$ does not contain $p$ and $\ell_0 \in \Sigma$, this quantity vanishes in $\R_{(p)}$, proving $(iv)$  in this case.

If $A_n(\tau,1)(\varphi) \neq 0$, then taking the logarithmic derivative  in \eqref{eqn:AnDef} gives
\[
\frac{A'_{n}(\tau,1)(\varphi)}{A_n(\tau,1)(\varphi)} \ = \ \frac{ W_{n,\infty}'(\tau,1)}{W_{n,\infty}(\tau,1)} \ + { \sum_{\ell \in \Sigma}} \frac{ W_{n,\ell}'(1, \varphi )}{W_{n,\ell}(1,\varphi)} \ - \ \frac{L^{\Sigma,'}(2, \chi_F)}{L^{\Sigma}(2,\chi_F)}
\]
By \cite[Proposition 2.3]{KY} again, and using the fact that $\log n \equiv 0 $ in $\R_{(p)}$,
\begin{align*}
	\lim_{v \to \infty}  \frac{ W_{n,\infty}'(\tau,1)}{W_{n,\infty}(\tau,1)}  \ =& \ \frac12 \left( \log \pi + \log n - \frac{\Gamma'(2)}{\Gamma(2)} \right) \\
	&  =  \ \frac12 \left( \log \pi  + \log n - 1+\gamma \right) \ = \  \frac12 \left( \log \pi   - 1+\gamma \right) \ \in \ \R_{(p)}.
\end{align*}
The sum over $\ell \in \Sigma$ contributes a rational linear combination of $\log \ell$'s, and so it too vanishes in $\R_{(p)}$. Finally, note that
\[  \frac{\zeta_F'(2)}{\zeta_F(2)}  \ - \ \frac{\zeta'(2)}{\zeta(2)} \ = \  \frac{L^{'}(2, \chi_F)}{L (2,\chi_F)}  \ \equiv \frac{L^{\Sigma,'}(2, \chi_F)}{L^{\Sigma}(2,\chi_F)}  \ \in \ \R_{(p)} ;\]
 this proves part $(iv)$ of the Proposition.
%
\end{proof}

\end{proposition}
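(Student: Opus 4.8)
The plan is to build everything on the explicit formula of Kudla--Yang \cite{KY} for the Fourier coefficients of $E_{L,2}(\tau,s)$. Writing $E_{L,2}(\tau,s)=\sum_n A_n(\tau,s)$, the key input is the factorization, valid for $n\neq 0$,
\[
A_n(\tau,s)(\varphi)\ =\ W_{n,\infty}(\tau,s)\cdot\Big(\prod_{\ell\in\Sigma}W_{n,\ell}(s,\varphi)\Big)\cdot L^{\Sigma}(s+1,\chi_F)^{-1},
\]
together with the analogous formula for $n=0$ carrying an extra elementary summand $v^{(s-1)/2}\varphi(0)$; here $\Sigma$ is any finite set of primes containing the prime divisors of $N$, hence of $\dF$, and of $n$. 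First I would collect the standard structural facts: the finite Whittaker functions have the form $W_{n,\ell}(s,\varphi)=\gamma_\ell\,|\dF|_\ell^{1/2}\,F_{n,\ell}(\ell^{-(s-1)},\varphi)$ with $F_{n,\ell}$ a polynomial in $\Q[X]$ whose value at $X=1$ is rational, $\prod_\ell\gamma_\ell=1$ and $\gamma_\ell=1$ for $\ell\notin\Sigma$; and the archimedean facts from \cite[Proposition 2.3]{KY}: $W_{n,\infty}(\tau,1)=0$ for $n\le 0$, $W_{n,\infty}(\tau,1)=-4\pi^2 n\,e^{2\pi i n\tau}$ for $n>0$, $\lim_{v\to\infty}e^{-2\pi i n\tau}W'_{n,\infty}(\tau,1)=0$ for $n\le 0$, and $\lim_{v\to\infty}W'_{n,\infty}(\tau,1)/W_{n,\infty}(\tau,1)=\tfrac12(\log\pi+\log n-\Gamma'(2)/\Gamma(2))$ for $n>0$.

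Parts (i) and (ii) then follow quickly: since $W_{n,\infty}(\tau,1)$ vanishes for $n\le 0$ while $L^{\Sigma}(2,\chi_F)\neq 0$, the $s$-derivative of $A_n(\tau,s)(\varphi)$ at $s=1$ reduces to $W'_{n,\infty}(\tau,1)$ times a real constant, plus the term $\tfrac12\log v\,\varphi(0)$ when $n=0$ coming from $\tfrac{d}{ds}v^{(s-1)/2}|_{s=1}$. The asserted limits are then exactly the stated decay of $e^{-2\pi i n\tau}W'_{n,\infty}(\tau,1)$, first for $\varphi\in S_L(\Q)$ and then for all $\varphi$ by $\Q$-linearity. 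For (iii) I would evaluate the product at $s=1$: for $n\le 0$ the coefficient is $\varphi(0)\delta_{n,0}\in\Q$, and for $n>0$ it is the product of $-4\pi^2 n$, the rational-valued finite local factors (each carrying a $|\dF|_\ell^{1/2}$), and $L^{\Sigma}(2,\chi_F)^{-1}$. The point is the normalization $L^{\Sigma}(2,\chi_F)\in\dF^{-1/2}\,\pi^2\,\Q^\times$, which I would deduce from $L(s,\chi_F)=\zeta_F(s)/\zeta(s)$, the rationality of $\zeta_F(-1)$ by Siegel's theorem and of $\zeta(-1)$, and the functional equations; comparing powers of $\pi$ and of $\dF$, everything transcendental cancels and $c_{E_2}(n)(\varphi)$ is rational.

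Part (iv) is the heart of the matter, and I would take the logarithmic derivative of the product expansion at $s=1$ in the case $c_{E_2}(n)(\varphi)\neq 0$ (the degenerate case $c_{E_2}(n)(\varphi)=0$ being disposed of separately: then some $W_{n,\ell_0}(1,\varphi)$ vanishes, which forces $c_{E'_2}(n,v)(\varphi)\in\Q\cdot\log\ell_0$, hence $0$ in $\R_{(p)}$, matching the right-hand side). The archimedean term contributes $\tfrac12(\log\pi+\log n-\Gamma'(2)/\Gamma(2))$, which in $\R_{(p)}$ equals $\tfrac12(\log\pi+\gamma-1)$ after using $\Gamma'(2)/\Gamma(2)=1-\gamma$ and $\log n\equiv 0$ — this last step is precisely where the hypothesis $n\not\equiv 0\pmod p$ (allowing $p\notin\Sigma$, so $n$ has no factor of $p$) is used. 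The finite local terms are rational combinations of $\log\ell$, $\ell\in\Sigma$, hence trivial in $\R_{(p)}$, and the $L$-factor contributes $-L^{\Sigma,\prime}(2,\chi_F)/L^{\Sigma}(2,\chi_F)$, which modulo $\log\ell$'s equals $-L'(2,\chi_F)/L(2,\chi_F)=-(\zeta_F'(2)/\zeta_F(2)-\zeta'(2)/\zeta(2))$. Adding the three contributions and multiplying back by $c_{E_2}(n)(\varphi)$ yields the claimed identity. The main obstacle throughout is the bookkeeping of the non-archimedean factors $\gamma_\ell$, $|\dF|_\ell^{1/2}$ and $F_{n,\ell}(1,\varphi)$ into a genuinely rational quantity, together with correctly pinning down the $\pi^2\dF^{-1/2}$-normalization of $L^{\Sigma}(2,\chi_F)$, so that in (iii) and (iv) all transcendental contributions except the universal constant $\tfrac12\log\pi+\tfrac12\gamma-\tfrac12$ and the logarithmic derivative of $L(s,\chi_F)$ at $s=2$ cancel.
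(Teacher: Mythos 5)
Your proposal follows the paper's own proof essentially verbatim: the same Kudla--Yang product expansion, the same structural facts about the local Whittaker functions and the normalization $L^{\Sigma}(2,\chi_F)\in \dF^{-1/2}\pi^2\Q^{\times}$ via Siegel's theorem, and the same case split and logarithmic-derivative computation for part (iv). It is correct and no further comment is needed.
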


\begin{remark} An alternate proof of $(iii)$ can be obtained as an application of \cite{Kudla-Millson}, who prove that the coefficient $c_{E_2}(n)$ is the degree of a certain rational algebraic divisor on $\Mcal_{F}$.
\end{remark}

\section{Holomorphic analytic torsion and automorphic representations}
The aim of this section is to compare the holomorphic analytic torsions of $\Mcal(\C)$ and $\Scal_{1}(\C)$, via the Jacquet-Langlands correspondence. It turns out that it will be more convenient, from the automorphic point of view, to replace $\Mcal$ with a closely related Shimura variety $\Mcal^{\star}$, attached to the group $\Gbf^{\star} = \mathrm{Res}_{F/\Q} B^{\times}$. 

\subsection{Dolbeault complex of $\lie H^{2}$ and 
Maass differential operators}\label{subsubsec:Dolbeault-complex}
We begin with some generalities on the hyperbolic metric on $\lie H^2$.  
The Dolbeault complex becomes
\begin{displaymath}
	0\overset{\cpd}{\longrightarrow}A^{0,0}(\lie H^{2})\overset{\cpd}{\longrightarrow} A^{0,1}(\lie H^{2})\overset{\cpd}{\longrightarrow} A^{0,2}(\lie H^{2})\overset{\cpd}{\longrightarrow}0,
\end{displaymath}
and the differential $\cpd = \cpd_1 + \cpd_2$ decomposes into the partial derivatives on each factor of $\lie H^2$.

Recall that we had fixed a $PSL_2(\R) \times PSL_2(\R)$-invariant metric on the cotangent bundle $\Omega_{\lie H^2}$ by setting, at a point $\zbf= (z_1, z_2) \in \lie H^2$, 
\begin{displaymath}
	|| dz_{1} ||_{\zbf}^{2}\ = \ 16 \, \pi ^2  \, \Imag(z_1)^{2},\quad || dz_{2} ||_{\zbf}^{2}\ = \ 16 \, \pi^2 \, \Imag(z_2)^{2},\quad \langle dz_{1},dz_{2}\rangle_{\zbf}=0.
\end{displaymath}
Let $\omega$ denote the corresponding K\"ahler form.
As in Section \ref{subsec:hol-an-torsion}, we obtain a pointwise metric $\la \cdot, \cdot \ra$ on each space $A^{0,k}(\lie H^2)$, as well as a Hodge star operator
	\begin{equation}\label{eqg:1}
		\star\colon A^{p,q}(\lie H^{2})\rightarrow A^{2-p,2-q}(\lie H^{2}).
	\end{equation}
determined by the identity
\begin{equation}\label{eqg:4}
	\alpha_{x}\wedge\star\beta_{x}=\langle\alpha_{x},\beta_{x}\rangle_{x}\frac{\omega^{2}}{2}.
\end{equation}
for differential forms $\alpha$ and $\beta$. We obtain \emph{formal adjoints}
\begin{displaymath}
	\cpd^{\ast}_j:=(-1)^{k}\star^{-1}\cpd_j\star: A^{0,k}(\lie H^{2})\rightarrow A^{0,k-1}(\lie H^{2}), \qquad \text{for } j=1,2,
\end{displaymath}
and also $\cpd^* = \cpd^*_1 + \cpd_2^*$. Define the \emph{laplacian operators}
\[
	\Delta_{\cpd,j}^{0,k} \ := \ (\cpd_j+\cpd_j^{\ast})^{2} \ = \ \cpd_j\cpd_j^{\ast}+\cpd_j^{\ast}\cpd_j, \qquad \text{ for } j = 1,2,
\]
and
\[
	\Delta_{\cpd}^{0,k}\ = \ \cpd\cpd^{\ast}+\cpd^{\ast}\cpd \ = \ \Delta_{\cpd,1}^{0,k}+\Delta_{\cpd,2}^{0,k}.
\]
It can easily be verified that the maps $\cpd_j, \cpd_j^*, \cpd$ and $ \cpd^*$ commute with $\Delta_{\cpd,1}^{0,k}, \Delta_{\cpd,2}^{0,k}$ and $\Delta_{\cpd}^{0,k}$; in particular, the various laplacians pairwise commute.

If $\Gamma \subset PSL_2(\R)\times PSL_2(\R)$ is a discrete subgroup acting freely on $\lie H^2$, then 
we may identify $A^{0,k}( \Gamma \backslash \lie H^2)$ with the space of $A^{0,k}(\lie H^2)^{\Gamma}$ of $\Gamma$-invariant forms. Note that all the operators $\cpd, \cpd^*, $ etc., are automatically $\Gamma$-invariant, and so descend to maps between spaces of $\Gamma$-invariant forms. Moreover, if $\Gamma$ is cocompact, then the formal adjoints coincide with the adjoints constructed in Section \ref{subsec:hol-an-torsion}. When $\Gamma$ does not act freely, these identifications can still be made in the context of orbifolds.

The following proposition, whose proof is a direct computation and is left to the reader, relates  the spectral resolution of the Dolbeault laplacian to the spectral resolution of the hyperbolic laplacians on classical Maass forms, providing the link to the automorphic theory needed in subsequent sections.

\begin{proposition}\label{prop:Dolbeault-Maass}
\begin{enumerate}[(i)]
\item The unitary transformations
\begin{displaymath}
	f(z_1,z_2)d\cz_{j}\longmapsto 4\pi y_{j}f(z_1,z_2),\quad g(z_1,z_2)d\cz_1\wedge d\cz_2\longmapsto 16\pi^{2} y_{1}y_{2}g(z_1,z_2),
\end{displaymath}
send $\Gamma$-invariant differential forms of type $(0,1)$ (resp. $(0,2)$) to classical $\Gamma$-automorphic forms of weight $(-2,0)$ or $(0,-2)$ (resp. parallel weight $(-2,-2)$).\footnote{Recall the weight reflects the action of $SO(2)^{2}\subset GL_{2}(\R)^{2}$.}

\item The Dolbeault operators $\cpd_{j}$ (resp. $\cpd^{\ast}_{j}$) are transformed into Maass lowering operators (resp. raising) in the $j$-th variable. 

\item The vector spaces of $\Gamma$-invariant $\lambda$-eigenforms under the $\cpd_{j}$ (resp. $\cpd$) laplacians are isomorphic to $\Gamma$-automorphic Maass forms of eigenvalue $\lambda/16\pi^{2}$ under the $j$-th variable invariant laplacian (resp. the hyperbolic laplacian). 
\end{enumerate}
\end{proposition}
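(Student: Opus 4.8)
The plan is to carry out a direct computation, exactly as the statement advises, comparing the Dolbeault complex on $\lie H^2$ equipped with the rescaled hyperbolic metric to the classical theory of Maass forms. The essential point is bookkeeping: one must carefully track the effect of the metric normalization $||dz_j||^2 = 16\pi^2 y_j^2$ on the pointwise inner products, the Hodge star, and hence on the formal adjoints $\cpd_j^*$.

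First, I would set up coordinates. With $z_j = x_j + i y_j$, the metric declares $dz_1, dz_2$ orthogonal with $||dz_j||^2_{\zbf} = 16\pi^2 y_j^2$, hence $||d\cz_j||^2 = 16\pi^2 y_j^2$ as well, and $||dz_j \wedge dz_k||$, $||d\cz_1 \wedge d\cz_2||$, etc., are the evident products. The K\"ahler form is $\omega = \frac{i}{2}\left( 16\pi^2 y_1^{-2}\, dz_1 \wedge d\cz_1 + 16\pi^2 y_2^{-2}\, dz_2 \wedge d\cz_2 \right)$ up to sign conventions (note the inverse powers of $y_j$ appear because the metric on $\Omega$ determines the metric on the tangent bundle by duality, so the volume form scales the other way). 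From \eqref{eqg:4} one computes $\star$ on each of the basis forms $d\cz_1, d\cz_2, d\cz_1\wedge d\cz_2$ explicitly; the factors of $16\pi^2$ and $y_j$ are what make the unitary transformations in part (i) have precisely the stated shape. Then part (i) is checked by verifying that the map $f\, d\cz_j \mapsto 4\pi y_j f$ is an isometry from $A^{0,1}$ (with the $L^2$ metric from $\omega$) to the space of weight $(-2,0)$ or $(0,-2)$ automorphic forms with its standard Petersson pairing, and likewise $g\, d\cz_1 \wedge d\cz_2 \mapsto 16\pi^2 y_1 y_2 g$ for $(0,2)$-forms; the weight labels come from recording how $SO(2)^2$ acts on $d\cz_j$ (each $d\cz_j$ transforms with weight $-2$ in the $j$-th factor).

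Next, for part (ii), I would compute $\cpd_j^* = -\star^{-1}\cpd_j \star$ acting on a $(0,1)$-form $f\, d\cz_j$ and on a $(0,2)$-form, push the result through the isometries of part (i), and recognize the outcome as the Maass lowering operator $L_k = -2i y_j \partial_{\cz_j} $ (suitably normalized, shifted by the weight $k$) in the $j$-th variable; similarly $\cpd_j$ itself becomes a raising operator after conjugation. This is where the normalization $16\pi^2$ pays off, matching the claim about the eigenvalue rescaling by $16\pi^2$ in part (iii). Finally, for part (iii): since $\Delta_{\cpd,j}^{0,k} = \cpd_j\cpd_j^* + \cpd_j^*\cpd_j$ and the conjugated operators are the Maass raising/lowering operators, the conjugate of $\Delta_{\cpd,j}^{0,k}$ is $16\pi^2$ times the weight-$k$ hyperbolic Laplacian $\Delta_k = -y_j^2(\partial_{x_j}^2 + \partial_{y_j}^2) + ik y_j(\partial_{x_j} + i\partial_{y_j})$ in the $j$-th variable (the relation $R_{k}L_{k+2} = -\Delta_k + \text{const}$, etc., being the standard Maass identity); hence $\lambda$-eigenforms of $\Delta_{\cpd,j}^{0,k}$ correspond to $\lambda/16\pi^2$-eigenfunctions of $\Delta_k$. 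Since $\Delta_{\cpd}^{0,k} = \Delta_{\cpd,1}^{0,k} + \Delta_{\cpd,2}^{0,k}$ and the two commute, the same holds for the total hyperbolic Laplacian. For $\Gamma$ cocompact these formal-adjoint computations agree with the genuine $L^2$-adjoints by the remark preceding the proposition, and the orbifold case is handled by working $\Gamma$-equivariantly upstairs on $\lie H^2$.

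The only real obstacle is the risk of sign and constant errors in the chain of normalizations — tracking the difference between the metric on $\Omega$ versus on $T$, the precise convention for $\star$, the sign in $\cpd_j^* = (-1)^k \star^{-1}\cpd_j\star$, and the normalization of the Maass operators — so that the factor emerging is exactly $16\pi^2$ with no stray $2$'s or $\pi$'s. Since all of this is a finite explicit computation on a two-dimensional space with an orthogonal product metric, I would simply carry it out carefully once in local coordinates; there is no conceptual difficulty, which is why the statement delegates the proof to the reader.
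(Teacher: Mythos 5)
Your plan coincides with the paper's own treatment: the proposition is introduced there with the remark that its proof ``is a direct computation and is left to the reader,'' and the computation you outline --- determining $\star$ from the normalization $\|dz_j\|^2_{\zbf}=16\pi^2 y_j^2$, conjugating $\cpd_j$ and $\cpd_j^{\ast}$ by the pointwise isometries of part (i), and identifying the results with the Maass raising/lowering operators and the weight-$k$ hyperbolic Laplacian in each variable --- is exactly that computation. One caution of precisely the sort you flag yourself: in your displayed K\"ahler form the coefficient should be $(16\pi^2 y_j^2)^{-1}$ rather than $16\pi^2 y_j^{-2}$, since the entire cotangent metric, not just the factor $y_j^2$, must be inverted when passing to the tangent bundle.
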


\subsubsection{Restriction of scalars: coarse twisted Hilbert modular surfaces}\label{subsec:restriction-scalars}
The twisted Hilbert modular surfaces $\Mcal_{K}$ are well suited to the geometric study of (twisted) Hilbert modular forms, due to the existence of a universal abelian scheme. However, the study via automorphic representations requires the introduction of an auxiliary Shimura variety; we briefly elaborate on this detour.

Let $\mathbf{G}^{\star} =  \mathrm{Res}_{F/\Q}B^{\times}$ be the algebraic group over $\Q$ obtained from $B^{\times}$ by restriction of scalars; thus for every $\Q$ algebra $A$, we have
\begin{displaymath}
	\mathbf{G}^{\star}(A)=(B\otimes_{\Q}A)^{\times}.
\end{displaymath}
The reduced norm of $B$ induces a morphism of algebraic groups
\begin{displaymath}
	\nu\colon\mathbf{G}^{\star}\longrightarrow\mathrm{Res}_{F/\Q}\Gm_{m\,/F}.
\end{displaymath}
Observe there is a canonical ``diagonal" inclusion of algebraic groups $\Gm_{m\,/\Q}\hookrightarrow\mathrm{Res}_{F/\Q}\Gm_{m\,/F}$. The group $\mathbf{G}$, from which our former twisted Hilbert modular varieties $\Mcal_{K}$ were constructed, cf.\ \eqref{def:G}, fits into a cartesian diagram
\begin{displaymath}
	\xymatrix{
		\mathbf{G}\ar@{^{(}->}[r]\ar[d]		&\mathbf{G}^{\star}\ar[d]^{\nu}\\
		\Gm_{m\,/\Q}\ar@{^{(}->}[r]		&\mathrm{Res}\Gm_{m\,/F}.
	}
\end{displaymath}
Let $\Kstar$ be a compact open subgroup of $\mathbf{G}^{\star}(\A_{\Q,f})=(B\otimes_F\A_{F,f})^{\times}$ and define a compact complex orbifold
\begin{displaymath}
	M^{\star}_{\Kstar}  \ := \ 	\mathbf{G}^{\star}(\Q)\cdot Z^{\star}(\R)\backslash\mathbf{G}^{\star}(\A_{\Q})/ \Kstar \Kstar_{\infty},
\end{displaymath}
where $Z^{\star}(\R)$ is the center of $\mathbf{G}^{\star}(\R)=GL_{2}(\R)^{2}$ and $\Kstar_{\infty}=SO(2)\times SO(2)$. For $\Kstar$ small enough, the theory of canonical models implies that this is the set of complex points of a smooth projective variety $(\Mcal_{\Kstar}^{\star})_{/\Q}$ over $\Q$.

To relate this space to the twisted surfaces $\Mcal_K$ of previous sections, we begin by fixing a set of elements  $g_{1},\ldots, g_{h}\in B(\A_{F})^{\times}$ such that $\nu(g_{1}),\ldots,\nu(g_{h})\in\A_{F}^{\times}$ form a complete set of representatives for the quotient
\begin{displaymath}
 F^{\times}\backslash\A_{F,f}^{\times}/\nu(\Kstar).
\end{displaymath}
For each $i = 1, \dots, h$, let 
\[ K_i = g_i \Kstar g_i^{-1}  \ \cap \ \Gbf(\A_{\Q,f}); \]
then the theory of canonical models implies that the map
\[
	\eta_i \colon M_{K_i} \to M_{\Kstar}^{\star}, \qquad  \eta_i([g])  = [g g_i] \in M_{\Kstar}^{\star},
\]
defined initially on the level of complex points, is induced from a map $\eta_i \colon (\Mcal_{K_i})_{/\Q} \to (\Mcal_{\Kstar}^{\star})_{/\Q}$ of algebraic varieties over $\Q$ which is finite and \'etale over its image (but not in general surjective). One can easily check that the disjoint union
\[
\eta \colon \coprod_{i}( \Mcal_{K_i})_{/\Q} \ \to \ (\Mcal_{\Kstar}^{\star})_{/\Q}
\]
gives a finite \'etale cover.
Finally, let 
\[ K' := \cap_i K_i \subset \Gbf(\A_{\Q,f}).\]
 Then for each $i$, the natural projection map $(\Mcal_{K'})_{/ \Q} \to (\Mcal_{K_i})_{/ \Q}$ is itself a finite \'etale cover, and so taking the disjoint union over $i$ and composing with $\eta$ gives a finite \'etale cover
 \[
 	\pi \colon \coprod_i (\Mcal_{K'})_{/\Q} \to (\Mcal_{\Kstar}^{\star})_{/\Q}
 \]
from $h$ disjoint copies of $(\Mcal_{K'})_{/\Q}$ to $(\Mcal_{\Kstar}^{\star})_{/\Q}$, such that
\begin{equation}\label{eq:rel-Omega-Hilb}
	\pi^{\ast}(\Omega_{\Mcal_{\Kstar}^{\star} /\Q})=\Omega_{\sqcup \Mcal_{K'}/\Q}.
\end{equation}
By construction, the relation \eqref{eq:rel-Omega-Hilb} is an isometry for the invariant metrics induced on both sides from a choice of metric on $\lie H^2$.

In fact, the Shimura variety $(\Mcal_{\Kstar}^{\star})_{/\Q}$ is the {coarse} moduli scheme of a suitable moduli functor, hence explaining the title of this paragraph, and admits a smooth projective model over $\Z[1/N]$ for some integer $N$, cf.\ \cite{Dimitrov-Tilouine}; however, we  will not require these facts in the sequel. 

\subsubsection{Holomorphic analytic torsion for coarse twisted Hilbert modular surfaces}\label{subsection:holom-torsion}
We wish to understand the holomorphic analytic torsion of our coarse twisted Hilbert modular surfaces and Shimura curves (for the trivial hermitian line bundle and with respect to a Poincar\'e metric), in terms of automorphic representations. We will deal with the case of coarse twisted Hilbert modular surfaces, and leave the simpler case of Shimura curves to the reader (see also the proof of Theorem \ref{thm:JL-an-torsion} below).

Suppose, from now on, that $\Kstar\subseteq\mathbf{G}^{\star}(\A_{\Q,f})$ is sufficiently small. Let 
\[
	\Gbf^{\star}(\R)^+ := GL_2^+(\R) \times GL_2^+(\R) \ \subset \ \Gbf^{\star}(\R)
\]  
denote the subgroup consisting of pairs of matrices with positive determinant, and set $\Gbf^{\star}(\Q)^+ = \Gbf^{\star}(\R)^+ \cap \Gbf^{\star}(\Q)$. Fixing a finite set $\{h_j \} \subset \Gbf^{\star}(\A_{\Q,f})$ such that
\[
	\Gbf^{\star}(\A_{\Q,f})  \ =\ \coprod_j \Gbf^{\star}(\Q)^+ \, h_j \, \Kstar,
\]
it is easy to verify that 
\[
 	M_{\Kstar}^{\star} = 	\Mcal_{\Kstar}^{\star}(\C)=\coprod_{j}\Gamma_{j}\backslash\lie H^{2},
\]
for some discrete co-compact subgroups 
$\Gamma_{j} = h_j \, \Kstar \, h_j^{-1} \cap \Gbf(\Q)^+  $
acting on $\lie H^{2}$ without fixed points. We endow $\lie H^{2}$ with a Poincar\'e metric and $\Mcal_{\Kstar}^{\star}(\C)$ with the induced metric.

There is a  decomposition of complex vector spaces,
\begin{equation} \label{eqn:A0kCompDecomp}
	A^{0,k}(\Mcal_{\Kstar}^{\star}(\C))=\overset{\perp}{\bigoplus}_{i}A^{0,k}(\Gamma_{i}\backslash\lie H^{2}) \simeq \overset{\perp}{\bigoplus}_{i}A^{0,k}( \lie H^{2})^{\Gamma_i}
\end{equation}
orthogonal with respect to the $L^{2}$ pairings; in particular, we can define the Dolbeault operators, Laplacians, etc.\ on  $	A^{0,k}(\Mcal_{\Kstar}^{\star}(\C))$ by applying the constructions of the previous section on each component.

Next, we define (finite dimensional) complex vector spaces of eigenforms: given a non-negative real number $\lambda\geq 0$, put
\begin{displaymath}
	V_{\lambda}^{0,k}:=\lbrace\alpha\in	A^{0,k}(\Mcal_{\Kstar}^{\star}(\C))\mid \Delta_{\cpd}^{0,k}\alpha=\lambda\alpha\rbrace.
\end{displaymath}
In light of the definition of analytic torsion, define the weighted dimension
\begin{displaymath}
	D_{\lambda}:= \sum_{k=0}^2  k (-1)^k \dim {V_{\lambda }^{0,k}}  =
	2\dim V_{\lambda }^{0,2}-\dim V_{\lambda }^{0,1}.
\end{displaymath}

\begin{lemma}
Let $\lambda>0$. Then the map 
\begin{displaymath}
	\begin{split}
		V_{\lambda}^{0,0}\overset{\perp}{\oplus} V_{\lambda}^{0,2}&\longrightarrow V_{\lambda}^{0,1}\\
			(\alpha,\beta)&\longmapsto\cpd\alpha+\cpd^{\ast}\beta.
	\end{split}
\end{displaymath}
is a quasi-isometry for the $L^2$ pairings.
In particular, $D_{\lambda} = \dim V_{\lambda }^{0,2} - \dim V_{\lambda}^{0,0}$.
\end{lemma}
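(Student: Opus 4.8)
The plan is to exploit the Hodge-theoretic structure of the Dolbeault complex on a compact complex surface, combined with the fact that the Laplacian $\Delta_{\cpd}^{0,k}$ commutes with $\cpd$ and $\cpd^*$. First I would fix $\lambda > 0$ and observe that since $V_\lambda^{0,0} \subset A^{0,0}(\Mcal^\star_{\Kstar}(\C))$ consists of eigenfunctions of $\Delta_{\cpd}^{0,0} = \cpd^*\cpd$, for $\alpha \in V_\lambda^{0,0}$ we have $\|\cpd\alpha\|^2 = \langle \cpd^*\cpd\alpha, \alpha\rangle = \lambda\|\alpha\|^2$, so $\cpd$ restricted to $V_\lambda^{0,0}$ is injective (as $\lambda>0$) and scales norms by $\sqrt{\lambda}$; likewise $\cpd^*$ restricted to $V_\lambda^{0,2}$ (where $\Delta_{\cpd}^{0,2} = \cpd\cpd^*$) is injective and scales norms by $\sqrt\lambda$. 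Moreover $\cpd$ and $\cpd^*$ carry $\lambda$-eigenforms to $\lambda$-eigenforms since they commute with the Laplacian, so the map in the statement indeed lands in $V_\lambda^{0,1}$. The images $\cpd(V_\lambda^{0,0})$ and $\cpd^*(V_\lambda^{0,2})$ are orthogonal in $V_\lambda^{0,1}$: for $\alpha \in V_\lambda^{0,0}$, $\beta \in V_\lambda^{0,2}$ one computes $\langle \cpd\alpha, \cpd^*\beta\rangle = \langle \cpd\cpd\alpha, \beta\rangle = 0$ because $\cpd^2 = 0$. Hence the map $(\alpha,\beta)\mapsto \cpd\alpha + \cpd^*\beta$ has norm-distortion bounded between $\sqrt\lambda$ and $\sqrt\lambda$ in each summand (so it is a quasi-isometry after rescaling, or literally a quasi-isometry in the sense of bounded distortion — and one may normalize), and it is injective.

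For surjectivity, the key input is Hodge theory for the full Dolbeault complex, which applies since $\Mcal^\star_{\Kstar}(\C) = \coprod_i \Gamma_i\backslash\lie H^2$ is compact (the $\Gamma_i$ are co-compact). Given $\omega \in V_\lambda^{0,1}$ with $\lambda>0$, Hodge decomposition on each component writes $\omega = \cpd\eta_0 + \cpd^*\eta_2$ with $\eta_0 \in A^{0,0}$, $\eta_2 \in A^{0,2}$ (the harmonic part of a strictly positive eigenform vanishes). Because $\Delta_{\cpd}^{0,1}$ commutes with $\cpd$ and $\cpd^*$, I would project $\eta_0$ and $\eta_2$ onto the respective $\lambda$-eigenspaces $V_\lambda^{0,0}$, $V_\lambda^{0,2}$, using the spectral decomposition of the self-adjoint Laplacians $\Delta_{\cpd}^{0,0}$, $\Delta_{\cpd}^{0,2}$; one checks that replacing $\eta_0, \eta_2$ by their $\lambda$-eigencomponents does not change $\cpd\eta_0 + \cpd^*\eta_2$, since $\cpd$ intertwines the spectral projections on $A^{0,0}$ and $A^{0,1}$ (and similarly $\cpd^*$). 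This exhibits $\omega$ as an image, proving surjectivity. The dimension count $D_\lambda = 2\dim V_\lambda^{0,2} - \dim V_\lambda^{0,1} = 2\dim V_\lambda^{0,2} - (\dim V_\lambda^{0,0} + \dim V_\lambda^{0,2}) = \dim V_\lambda^{0,2} - \dim V_\lambda^{0,0}$ then follows immediately from the isomorphism of the lemma.

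The main obstacle, though it is mild, is the bookkeeping needed to ensure the spectral projection step is valid — that is, verifying that $\cpd$ and $\cpd^*$ genuinely commute with the spectral projectors onto the $\lambda$-eigenspaces in each degree. This reduces to the already-noted commutation of $\cpd$, $\cpd^*$ with the Laplacians together with standard functional calculus for the essentially self-adjoint, discrete-spectrum operators $\Delta_{\cpd}^{0,k}$; since each eigenspace is finite-dimensional and the decomposition $A^{0,k} = \bigoplus_\mu V_\mu^{0,k}$ is orthogonal, the projector onto $V_\lambda^{0,k}$ is a polynomial (or limit thereof) in $\Delta_{\cpd}^{0,k}$ acting on the relevant finite sum, and commutation is automatic. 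A secondary point is to pin down the exact sense of ``quasi-isometry'': from the computations above the map scales the $L^2$ norm on $V_\lambda^{0,0} \overset{\perp}{\oplus} V_\lambda^{0,2}$ by exactly $\sqrt\lambda$, so it is a similarity (hence certainly a quasi-isometry), and the statement $D_\lambda = \dim V_\lambda^{0,2} - \dim V_\lambda^{0,0}$ only uses the underlying linear isomorphism. I would also remark that the $\lambda=0$ case is excluded precisely because there $\cpd$ and $\cpd^*$ are no longer injective on the eigenspaces and harmonic forms (i.e.\ cohomology) intervene, which is why the lemma is stated for $\lambda>0$.
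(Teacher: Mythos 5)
Your proposal is correct, and the norm computations (the eigenvalue identities $\|\cpd\alpha\|^2=\lambda\|\alpha\|^2$, $\|\cpd^*\beta\|^2=\lambda\|\beta\|^2$, and the orthogonality of the two images via $\cpd^2=0$) coincide with the paper's. Where you diverge is in establishing bijectivity: you invoke the Hodge decomposition of $\omega\in V_\lambda^{0,1}$ and then push $\eta_0,\eta_2$ through the spectral projectors onto the $\lambda$-eigenspaces, checking that $\cpd$ and $\cpd^*$ intertwine these projectors. The paper instead simply writes down the explicit quasi-inverse $\omega\mapsto(\cpd^*\omega,\cpd\omega)$; composing in either order yields $\lambda$ times the identity (using $\cpd^2=(\cpd^*)^2=0$ in one direction and $\cpd\cpd^*\omega+\cpd^*\cpd\omega=\Delta_{\cpd}^{0,1}\omega=\lambda\omega$ in the other), so bijectivity for $\lambda>0$ is immediate with no appeal to Hodge theory or functional calculus. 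Your route is sound but heavier; it does have the mild virtue of making explicit why $\lambda=0$ fails (harmonic forms intervene), whereas the paper's quasi-inverse argument localizes the failure purely in the non-invertibility of multiplication by $\lambda$. Either way the conclusion $D_\lambda=\dim V_\lambda^{0,2}-\dim V_\lambda^{0,0}$ follows from the resulting linear isomorphism exactly as you say.
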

\begin{proof}
We prove the  morphism preserves norms up to non-zero constant. First, $\cpd\alpha$ and $\cpd^{\ast}\beta$ are orthogonal, because
\begin{displaymath}
	\langle\cpd\alpha,\cpd^{\ast}\beta\rangle=\langle\cpd\cpd\alpha,\beta\rangle=0.
\end{displaymath}
Second, we have
\begin{displaymath}
	\langle\cpd\alpha,\cpd\alpha\rangle=\langle\alpha,\cpd^{\ast}\cpd\alpha\rangle=\lambda\langle\alpha,\alpha\rangle
\end{displaymath}
and
\begin{displaymath}
	\langle\cpd^{\ast}\beta,\cpd^{\ast}\beta\rangle=\langle\beta,\cpd\cpd^{\ast} \beta\rangle=\lambda\langle\beta,\beta\rangle.
\end{displaymath}
We also exhibit an inverse up to non-zero constant:
\begin{displaymath}
	\begin{split}
		V_{\lambda}^{0,1}&\longrightarrow V_{\lambda}^{0,0}\oplus V_{\lambda}^{0,2}\\
		\alpha&\longmapsto(\cpd^{\ast}\alpha,\cpd\alpha).
	\end{split}
\end{displaymath}
This finishes the proof.
\end{proof}
 For a pair of real numbers $\lambda_1,\lambda_2\geq 0$, we put
\begin{displaymath}
	V_{\lambda_1,\lambda_2}^{0,k}=\lbrace\alpha\in A^{0,k}(\Mcal_{\Kstar}^{\star}(\C))\mid \Delta_{\cpd_j}^{0,k}\alpha=\lambda_j\alpha,\, j=1,2\rbrace.
\end{displaymath}
Observe that $V_{\lambda_1,\lambda_2}^{0,k}\subseteq V_{\lambda_1+\lambda_2}^{0,k}$. Because all our laplacians mutually commute and are self-adjoint, they can be simultaneously diagonalized. Hence, there is a finite orthogonal sum decomposition. 
\begin{equation}\label{eqg:5}
	V_{\lambda}^{0,k}=\overset{\perp}{\bigoplus}_{\lambda_1+\lambda_2=\lambda}V_{\lambda_1,\lambda_2}^{0,k}.
\end{equation}
In particular, setting 
\begin{displaymath}
	D_{\lambda_1,\lambda_2}=\dim V_{\lambda_1,\lambda_2}^{0,2}-\dim V_{\lambda_1,\lambda_2}^{0,0},
\end{displaymath}
we have 
\[
	D_{\lambda} = \sum_{\lambda_1 + \lambda_2 = \lambda} D_{\lambda_1, \lambda_2}.
\]
\begin{lemma}
Suppose that $\lambda_1,\lambda_2\neq 0$. Then there is a quasi-isometry
\begin{displaymath}
	V_{\lambda_1,\lambda_2}^{0,0}\overset{\cpd_1\cpd_2}{\longrightarrow} V_{\lambda_1,\lambda_2}^{0,2}.
\end{displaymath}
Consequently, $D_{\lambda_1,\lambda_2}=0$.
\end{lemma}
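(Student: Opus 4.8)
The plan is to upgrade the argument of the preceding lemma to the bigraded setting, exploiting that the metric on $\lie H^2$ is a product. As there, it suffices to work on a single connected component $\Gamma_i\backslash\lie H^2$ of $\Mcal_{\Kstar}^{\star}(\C)$: since $\Kstar$ is sufficiently small the $\Gamma_i$ act freely and cocompactly, so the formal adjoints agree with the $L^2$-adjoints and integration by parts produces no boundary terms, and everything reduces to $\Gamma_i$-equivariant identities on $\lie H^2$.

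First I would record the structural facts. On $A^{0,0}(\lie H^2)$ there is nothing in negative degree, so $\cpd^{\ast}_1=\cpd^{\ast}_2=0$ there and $\Delta_{\cpd,j}^{0,0}=\cpd^{\ast}_j\cpd_j$; dually, on the top degree $A^{0,2}(\lie H^2)$ one has $\cpd_1=\cpd_2=0$ and $\Delta_{\cpd,j}^{0,2}=\cpd_j\cpd^{\ast}_j$. Because $\cpd_1,\cpd^{\ast}_1$ involve only the first variable and $\cpd_2,\cpd^{\ast}_2$ only the second, and each is an odd operator on the bigraded Dolbeault complex, the four operators pairwise anticommute across the two factors:
\[ \cpd_1\cpd_2=-\cpd_2\cpd_1,\quad \cpd_1\cpd^{\ast}_2=-\cpd^{\ast}_2\cpd_1,\quad \cpd^{\ast}_1\cpd_2=-\cpd_2\cpd^{\ast}_1,\quad \cpd^{\ast}_1\cpd^{\ast}_2=-\cpd^{\ast}_2\cpd^{\ast}_1. \]
In particular the even combinations $\cpd^{\ast}_1\cpd_1$ and $\cpd_1\cpd^{\ast}_1$ commute with $\cpd_2$ and $\cpd^{\ast}_2$, and symmetrically.

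Next I would check that $\cpd_1\cpd_2$ is well defined on the bi-eigenspaces. For $\alpha\in V_{\lambda_1,\lambda_2}^{0,0}$, i.e. $\cpd^{\ast}_j\cpd_j\alpha=\lambda_j\alpha$ for $j=1,2$, the relations above give $\cpd^{\ast}_1\cpd_1(\cpd_1\cpd_2\alpha)=-\cpd^{\ast}_1\cpd_2\cpd_1\alpha=\cpd_2\cpd^{\ast}_1\cpd_1\alpha=\lambda_1\cpd_2\alpha$, hence $\Delta_{\cpd,1}^{0,2}(\cpd_1\cpd_2\alpha)=\cpd_1\cpd^{\ast}_1(\cpd_1\cpd_2\alpha)=\lambda_1\,\cpd_1\cpd_2\alpha$, and likewise $\Delta_{\cpd,2}^{0,2}(\cpd_1\cpd_2\alpha)=\lambda_2\,\cpd_1\cpd_2\alpha$; thus $\cpd_1\cpd_2\alpha\in V_{\lambda_1,\lambda_2}^{0,2}$. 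The norm identity then follows from the same adjoint manipulation as in the previous lemma:
\[ \langle\cpd_1\cpd_2\alpha,\cpd_1\cpd_2\alpha\rangle=\langle\cpd_2\alpha,\cpd^{\ast}_1\cpd_1\cpd_2\alpha\rangle=\lambda_1\langle\cpd_2\alpha,\cpd_2\alpha\rangle=\lambda_1\langle\alpha,\cpd^{\ast}_2\cpd_2\alpha\rangle=\lambda_1\lambda_2\langle\alpha,\alpha\rangle, \]
so $\cpd_1\cpd_2$ rescales the $L^2$-norm by the nonzero constant $\sqrt{\lambda_1\lambda_2}$ and is in particular injective. Finally I would exhibit an inverse up to a nonzero scalar: the same computations show that $\gamma\mapsto\cpd^{\ast}_1\cpd^{\ast}_2\gamma$ maps $V_{\lambda_1,\lambda_2}^{0,2}$ into $V_{\lambda_1,\lambda_2}^{0,0}$, and for $\gamma\in V_{\lambda_1,\lambda_2}^{0,2}$ one has
\[ \cpd_1\cpd_2\,\cpd^{\ast}_1\cpd^{\ast}_2\gamma=-\cpd_1\cpd^{\ast}_1\,\cpd_2\cpd^{\ast}_2\gamma=-\lambda_2\,\cpd_1\cpd^{\ast}_1\gamma=-\lambda_1\lambda_2\,\gamma. \]
Hence $\cpd_1\cpd_2$ is a bijection with inverse $-(\lambda_1\lambda_2)^{-1}\cpd^{\ast}_1\cpd^{\ast}_2$, giving the asserted quasi-isometry; comparing dimensions yields $D_{\lambda_1,\lambda_2}=\dim V_{\lambda_1,\lambda_2}^{0,2}-\dim V_{\lambda_1,\lambda_2}^{0,0}=0$.

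I expect the only delicate point to be the operator bookkeeping: confirming the four anticommutation relations and the vanishing of $\cpd^{\ast}_j$ on $A^{0,0}$ and of $\cpd_j$ on $A^{0,2}$. These are formal consequences of the bigrading together with the product structure of the metric, but it is worth verifying them directly on the explicit local model on $\lie H^2$ (as in Proposition \ref{prop:Dolbeault-Maass}); the remaining steps are a straightforward repetition of the adjoint juggling used above.
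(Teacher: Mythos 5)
Your argument is correct and follows essentially the same route as the paper: the norm identity $\|\cpd_1\cpd_2\alpha\|^2=\lambda_1\lambda_2\|\alpha\|^2$ obtained by moving $\cpd_1,\cpd_2$ across the inner product, together with $\cpd_1^{\ast}\cpd_2^{\ast}$ (equivalently $\cpd_2^{\ast}\cpd_1^{\ast}$) as a quasi-inverse; the paper merely suppresses the verification of the anticommutation relations and of the fact that $\cpd_1\cpd_2$ preserves the bi-eigenspaces, which you spell out. One small slip: in your first eigenvalue computation the left-hand side should read $\cpd_1^{\ast}(\cpd_1\cpd_2\alpha)$ rather than $\cpd_1^{\ast}\cpd_1(\cpd_1\cpd_2\alpha)$ (the latter vanishes since $\cpd_1^2=0$), as the ensuing chain of equalities makes clear.
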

\begin{proof}
Let us compute the norm of this morphism:
\begin{align*}
	\langle\cpd_1\cpd_2\alpha,\cpd_1\cpd_2\alpha\rangle&=\langle\alpha,\cpd_{2}^{\ast}\cpd_{1}^{\ast}\cpd_{1}\cpd_{2}\alpha\rangle\\
	&=\langle\alpha,\cpd_{2}^{\ast}\cpd_{2}\Delta_{\cpd_1}^{0,0}\alpha\rangle\\
	&=\lambda_1\langle\alpha,\Delta_{\cpd_2}^{0,0}\alpha\rangle\\
	&=\lambda_1\lambda_2\langle\alpha,\alpha\rangle.
\end{align*}
Similarly, a quasi-inverse is $\cpd_{2}^{\ast}\cpd_1^{\ast}$.
\end{proof}
\begin{corollary}\label{corollary:multiplicities}
Let $\lambda>0$. We have the following expression for the weighted dimension $D_{\lambda}$:
\begin{displaymath}
	D_{\lambda}=D_{\lambda,0}+D_{0,\lambda}.
\end{displaymath}
\end{corollary}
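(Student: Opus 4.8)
The plan is to combine the orthogonal decomposition \eqref{eqg:5} with the vanishing supplied by the preceding lemma; this is a short bookkeeping argument with no real difficulty.

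First I would start from the identity $D_{\lambda}=\sum_{\lambda_1+\lambda_2=\lambda}D_{\lambda_1,\lambda_2}$ already recorded in the text. It is obtained by using the first lemma of this subsection to rewrite $D_{\lambda}=\dim V_{\lambda}^{0,2}-\dim V_{\lambda}^{0,0}$, and then observing that \eqref{eqg:5} is an orthogonal — in particular dimension-additive — decomposition, so that $\dim V_{\lambda}^{0,j}=\sum_{\lambda_1+\lambda_2=\lambda}\dim V_{\lambda_1,\lambda_2}^{0,j}$ for $j=0,2$. The sum over pairs is finite because each $V_{\lambda}^{0,k}$ is finite dimensional, hence only finitely many pairs $(\lambda_1,\lambda_2)$ with $\lambda_1+\lambda_2=\lambda$ give a nonzero eigenspace.

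Next I would invoke the preceding lemma: whenever $\lambda_1,\lambda_2\neq 0$, the operator $\cpd_1\cpd_2$ gives a quasi-isometry $V_{\lambda_1,\lambda_2}^{0,0}\to V_{\lambda_1,\lambda_2}^{0,2}$, so $D_{\lambda_1,\lambda_2}=\dim V_{\lambda_1,\lambda_2}^{0,2}-\dim V_{\lambda_1,\lambda_2}^{0,0}=0$. Consequently, in the sum above only the indices with $\lambda_1=0$ or $\lambda_2=0$ can contribute. Since $\lambda>0$, the pair $(\lambda_1,\lambda_2)=(0,0)$ cannot occur among those indices, so the only surviving summands are $(\lambda,0)$ and $(0,\lambda)$, which yields $D_{\lambda}=D_{\lambda,0}+D_{0,\lambda}$.

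There is essentially no obstacle here: the corollary is a formal consequence of the two preceding lemmas. The only point deserving a word of care is that \eqref{eqg:5} really is a finite orthogonal direct-sum decomposition of $V_{\lambda}^{0,k}$ — this holds because the pairwise commuting self-adjoint operators $\Delta_{\cpd,1}^{0,k}$ and $\Delta_{\cpd,2}^{0,k}$ can be simultaneously diagonalized on the finite-dimensional space $V_{\lambda}^{0,k}$ — which is exactly what legitimizes passing to (weighted, alternating) sums of dimensions summand by summand.
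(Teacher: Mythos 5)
Your argument is correct and is exactly the one the paper intends: the identity $D_{\lambda}=\sum_{\lambda_1+\lambda_2=\lambda}D_{\lambda_1,\lambda_2}$ from the orthogonal decomposition \eqref{eqg:5}, combined with the preceding lemma's vanishing $D_{\lambda_1,\lambda_2}=0$ for $\lambda_1,\lambda_2\neq 0$ and the exclusion of $(0,0)$ since $\lambda>0$, immediately yields the corollary. Nothing further is needed.
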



While the preceding discussion is based on general considerations arising from the uniformization by $\lie H^{2}$, we now apply the automorphic theory, which is particular to the Shimura variety context. In order to do so, we define an embedding 
\begin{equation}
	\bigoplus_{k=0}^2 A^{0,k}(\Mcal_{\Kstar}^{\star}(\C)) \ \hookrightarrow \ L^{2}(\mathbf G^{\star}(\Q)\backslash\mathbf G^{\star}(\A_{\Q}))^{\Kstar}
\end{equation}
as follows:
\begin{itemize}
	\item if $k=0$, then a function on $\Mcal_{\Kstar}^{\star}(\C)$ naturally pulls back to a $\Kstar$-invariant square-integrable function on $\Gbf^{\star}(\Q) \backslash \Gbf^{\star}(\A_{\Q})$ via the adelic presentation
		\[
			\Mcal^{\star}_{\Kstar}(\C)  =	\mathbf{G}^{\star}(\Q)\cdot Z^{\star}(\R)\backslash\mathbf{G}^{\star}(\A_{\Q})/ \Kstar \Kstar_{\infty}.
		\]
	\item if $k=2$, then a differential form $\eta \in A^{0,2}(\Mcal_{\Kstar}^{\star}(\C))$ corresponds, via \eqref{eqn:A0kCompDecomp} and Proposition \ref{prop:Dolbeault-Maass}(i), to a tuple $(g_j)$ where each $g_j \colon \lie H^2 \to \C$ is a modular function of parallel weight $-2$ for $\Gamma_j$. For each $j$, define a function
	\[
		\varphi_j \colon \Gbf^{\star}(\R)^+ = GL_2^+(\R) \times GL_2^+(\R)  \to \C 
	\]
	by setting
	\[
		\varphi_j \left( A_1, A_2 \right) 	= g_j \left(A_1 \cdot i, A_2 \cdot i \right) \frac{ (c_1 i + d_1)^2 (c_2 i + d_2)^2}{\det A_1 \, \det A_2}, \qquad \text{ where } (A_1, A_2) = \left( (\begin{smallmatrix}	a_1 & b_1 \\ c_1 & d_1	\end{smallmatrix}), \, (\begin{smallmatrix}	a_2 & b_2 \\ c_2 & d_2	\end{smallmatrix}) \right).
	\]
	This extends to a function $ \varphi \colon \Gbf^{\star}(\R)^+ \times \Gbf^{\star}(\A_{\Q,f}) \to \C $ by setting
	\[  \varphi( g_{\infty}, \gamma h_j k) = \varphi_j( \gamma^{-1} g_{\infty} )\]
	for $\gamma \in \Gbf^{\star}(\Q)^+$ and $k \in \Kstar$; this construction is clearly invariant under $\Gbf^{\star}(\Q)^+$ acting by left multiplication on both factors, and $\Kstar$ acting on the second factor on the right. Finally, we may view $\varphi$ as a function on $\Gbf^{\star}(\A)$ by identifying
	\[
		\Gbf^{\star}(\Q)^+ \backslash \Gbf^{\star}(\R)^+ \times \Gbf^{\star}(\A_{\Q,f}) \ \simeq \ 	\Gbf^{\star}(\Q) \backslash \Gbf^{\star}(\R) \times \Gbf^{\star}(\A_{\Q,f}) = 		\Gbf^{\star}(\Q) \backslash \Gbf^{\star}(\A).
	\]
	It is easy to check that the assignment $g \mapsto \varphi$ is independent of all choices, and determines the desired embedding $A^{0,2}(\Mcal_{\Kstar}^{\star}(\C)) \hookrightarrow   L^{2}(\mathbf G^{\star}(\Q)\backslash\mathbf G^{\star}(\A_{\Q}))^{\Kstar}$. 
	
	\item if $k=1$, a form in $A^{0,1}(\Mcal_{\Kstar}^{\star}(\C)) $ determines, via Proposition \ref{prop:Dolbeault-Maass}(i), a collection of pairs $(f_j, g_j)$ of modular functions for $\Gamma_j$ of weight $(-2,0)$ and $(0, -2)$ respectively; we may piece them together in an analagous way to the previous case.
\end{itemize}


The upshot is that $A^{0,k}(\Mcal_{\Kstar}^{\star}(\C))$ can be neatly described in terms of automorphic representations as follows. Let $w_{k}$ be the weight $k$ representation of $SO(2)$, given by the character
\begin{displaymath}
	w_{k}\left(\begin{array}{cc} \cos(\theta)	&-\sin(\theta)\\ \sin(\theta)	&\cos(\theta)\end{array}\right)=e^{ik\theta}.
\end{displaymath}
Define representations of $\Kstar_{\infty}=SO(2)(\R)\times SO(2)(\R)$ by
\begin{displaymath}
	\rho_{0}=\text{trivial} ,\quad \rho_{1}=(w_{0}\boxtimes w_{2})\,\oplus\,( w_{2}\boxtimes w_{0}),\quad \rho_{2}=w_{2}\boxtimes w_{2}.
\end{displaymath}
These can be extended to representations of $\Kstar \Kstar_{\infty}$, by letting $\Kstar$ act trivially. Denoting these extensions by the same symbol, we have
\begin{equation}\label{eq:L2-decomposition}
	A^{0,k}(\Mcal_{\Kstar}^{\star}(\C))=\overset{\perp}{\bigoplus}_{\pi}\Hom_{\Kstar \Kstar_{\infty}}(\rho_{k},\pi),
\end{equation}
where the sum is to be taken in the sense of Hilbert spaces, and $\pi$ runs over the irreducible automorphic representations of $\mathbf{G}^{\star}(\A_{\Q})$. The theory of automorphic representations ensures that the summands are finite dimensional. We say that \emph{a representation $\pi$ contributes to} $A^{0,k}(\Mcal_{\Kstar}^{\star}(\C))$ if $\Hom_{\Kstar\Kstar_{\infty}}(\rho_{k},\pi)\neq 0$.

An automorphic representation $\pi$ of $\mathbf{G}^{\star}(\A_{\Q})$ decomposes as a completed tensor product $\widehat{\otimes}_{\nu}\pi_{\nu}$ over the places $\nu$ of $F$. For the archimedean places $v_{1},v_{2}$, the components $\pi_{v_{1}}$ and $\pi_{v_{2}}$ determine a pair of eigenvalues $(\lambda_{1},\lambda_{2})$, corresponding to eigenvalues of $\Delta_{\cpd_{1}}^{0,k}$, $\Delta_{\cpd_{2}}^{0,k}$ under the equivalences in Proposition \ref{prop:Dolbeault-Maass}; we say that $(\lambda_{1},\lambda_{2})$ is the pair of eigenvalues attached to $\pi$. In particular, we may decompose
\begin{displaymath}
	V_{(\lambda_{1},\lambda_{2})}^{0,k}=\overset{\perp}{\bigoplus}_{\pi}\Hom_{\Kstar\Kstar_{\infty}}(\rho_{k},\pi),
\end{displaymath}
where now the sum runs only over automorphic representations $\pi$ with attached pair of eigenvalues $(\lambda_{1},\lambda_{2})$.

We examine the consequences for holomorphic analytic torsion: by Corollary \ref{corollary:multiplicities}, the only possible representations contributing to the holomorphic analytic torsion will be those contributing to $A^{0,0}(\Mcal_{\Kstar}^{\star}(\C))$ or $A^{0,2}(\Mcal_{\Kstar}^{\star}(\C))$, and with attached pairs of eigenvalues of the form $(\lambda,0)$ or $(0,\lambda)$.

\begin{proposition}\label{theorem:multiplicities}
For $\lambda>0$, the eigenspaces $V_{(0,\lambda)}^{0,0}$ and $V_{(\lambda,0)}^{0,0}$ are trivial.
\end{proposition}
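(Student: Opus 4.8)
The plan is to exploit the automorphic description \eqref{eq:L2-decomposition} of $A^{0,0}(\Mcal^{\star}_{\Kstar}(\C))$ together with the fact that $\Gbf^{\star}=\mathrm{Res}_{F/\Q}B^{\times}$ has a very restricted automorphic spectrum because $B$ is a division algebra. By the evident symmetry between the two archimedean places it suffices to show $V_{(\lambda,0)}^{0,0}=0$ for $\lambda>0$, so suppose $f\in V_{(\lambda,0)}^{0,0}$ is nonzero. Under the embedding $\bigoplus_{k}A^{0,k}(\Mcal^{\star}_{\Kstar}(\C))\hookrightarrow L^{2}(\Gbf^{\star}(\Q)\backslash\Gbf^{\star}(\A))^{\Kstar}$ and the decomposition \eqref{eq:L2-decomposition} with $\rho_{0}$ trivial, $f$ lies in a summand $\Hom_{\Kstar\Kstar_{\infty}}(\rho_{0},\pi)$ for an irreducible automorphic representation $\pi=\widehat{\otimes}_{\nu}\pi_{\nu}$ of $\Gbf^{\star}(\A)$ whose attached pair of eigenvalues is $(\lambda,0)$. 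Since $\rho_{0}$ is the trivial representation of $\Kstar_{\infty}=SO(2)\times SO(2)$, both archimedean components $\pi_{v_{1}}$ and $\pi_{v_{2}}$ are spherical unitary representations of $GL_{2}(\R)$.

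The key step is to analyze the eigenvalue $0$ attached to $\pi_{v_{2}}$. By Proposition \ref{prop:Dolbeault-Maass} this eigenvalue is, up to the constant $16\pi^{2}$, the eigenvalue of the invariant hyperbolic Laplacian acting on the spherical vector of $\pi_{v_{2}}$; for a spherical unitary representation of $GL_{2}(\R)$ this quantity is strictly positive on the complementary and tempered principal series, and vanishes only when the representation is one-dimensional, i.e.\ a character composed with the determinant. Hence $\pi_{v_{2}}$ is one-dimensional. Now, because $B$ is division, the quotient $\Gbf^{\star}(\Q)Z^{\star}(\A)\backslash\Gbf^{\star}(\A)$ is compact, so $L^{2}$ decomposes discretely and every irreducible automorphic representation of $\Gbf^{\star}(\A)$ is either one-dimensional, of the form $\chi\circ\Nrd$, or is infinite-dimensional at every place where $B$ splits, its Jacquet--Langlands transfer being a cuspidal representation of $GL_{2}(\A_{F})$. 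As $B$ splits at $v_{2}$ and $\pi_{v_{2}}$ is one-dimensional, $\pi$ itself must be one-dimensional, $\pi=\chi\circ\Nrd$. Tracking the quotient by $Z^{\star}(\R)$ — which forces $\chi_{\infty}$ to be trivial on $\R_{>0}\times\R_{>0}$ — and using that $\det>0$ on $GL_{2}^{+}(\R)$, the function attached to any vector in such a $\pi$ is constant on each connected component $\Gamma_{j}\backslash\lie H^{2}$ of $\Mcal^{\star}_{\Kstar}(\C)$; that is, $f$ is locally constant. But then $\Delta_{\cpd_{1}}^{0,0}f=0$, so the first eigenvalue attached to $f$ is $0$, contradicting $\lambda>0$. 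Therefore $V_{(\lambda,0)}^{0,0}=0$, and symmetrically $V_{(0,\lambda)}^{0,0}=0$.

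The main obstacle is not geometric but lies in the precise invocation of the two automorphic inputs: first, that a spherical unitary representation of $GL_{2}(\R)$ with vanishing Laplacian eigenvalue is one-dimensional (immediate from the classification of the unitary dual); and second, that an automorphic representation of an anisotropic inner form $B^{\times}$ of $GL_{2}/F$ having a one-dimensional component at a split place is globally one-dimensional, and that such representations contribute only locally constant functions to $A^{0,0}(\Mcal^{\star}_{\Kstar}(\C))$. The latter follows from discreteness of the spectrum for $B$ division together with Jacquet--Langlands and the standard bookkeeping of central characters, and I would state and cite these carefully rather than reprove them. (As a consistency check, the identity $0=\langle\Delta_{\cpd_{2}}^{0,0}f,f\rangle=\|\cpd_{2}f\|^{2}$ shows $f$ is holomorphic in $z_{2}$, which gives an alternative route to the same conclusion via an irreducibility argument for the lattices $\Gamma_{j}$.)
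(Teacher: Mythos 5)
Your proof is correct and follows essentially the same route as the paper: the vanishing Laplacian eigenvalue at one archimedean place forces that local component to be one-dimensional (via the classification of unitary $(\mathfrak{gl}_2,SO(2))$-modules with a weight-zero vector), and the global structure of the automorphic spectrum of the division algebra, via Jacquet--Langlands, then yields a contradiction. The only cosmetic difference is the endpoint of the global step — you conclude $\pi=\chi\circ\Nrd$ and hence that $f$ is locally constant, while the paper transfers to a cuspidal representation of $GL_2(\A_F)$ and contradicts cuspidality with a form constant in one variable — but these rest on the same dichotomy in the spectrum of $B^{\times}$.
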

\begin{proof}
Suppose, by way of contradiction, that there is a non-zero function $f$ on $\Mcal_{\Kstar}^{\star}(\C)$ with attached pair of eigenvalues $(0,\lambda)$, which therefore generates an infinite dimensional representation $\pi = \pi_f$ contributing to $V^{0,0}_{(0,\lambda)}$; in particular
\[ \Hom_{\Kstar\Kstar_{\infty}}(\rho_0, \pi) \neq 0. \]
Decompose $\pi = \hat\otimes \pi_v$ in terms of local components, so that $\pi_{v_1}$ and $\pi_{v_2}$ are irreducible admissible $(\lie{gl}_2, SO(2))$-modules.  By the classification of such representations, cf.\ \cite[Chapter 2]{Bump} and especially Proposition 2.5.2 of \emph{loc. cit.}, the fact that the $\Delta^{0,0}_{\bar\partial,1 }$ eigenvalue of $f$ is zero and that $\pi_{v_1}$ has a weight zero vector implies that $\pi_{v_1}$ is trivial.

Now let $\pi'$ denote the automorphic representation of $GL_2(\A_F)$ corresponding to $\pi$ under the Jacquet-Langlands correspondence, so that in particular $ \pi'_{v_1} \simeq \pi_{v_1}$ is trivial.
From  $\pi'$ we can extract a non-zero classical cuspidal modular function $g \colon \lie H^2 \to \C$ that is invariant under the action of some $\Gamma' \subset SL_2(\OF)$,  and is \emph{constant} in the first variable. But the cuspidality condition then implies that $g \equiv 0$, a contradiction. 
The same argument implies $V_{(\lambda,0)}^{0,0}=0$.
\end{proof}
It remains to study the representations contributing to  $V^{0,2}_{(0,\lambda)}$ and  $V^{0,2}_{(\lambda,0)}$. If $\pi$ contributes to   $V^{0,2}_{(0,\lambda)}$, then  $\pi_{v_1}$ has laplace eigenvalue zero;  appealing again to \cite[Chapter 2]{Bump} we find that $\pi_{v_1}$ is necessarily discrete of weight $-2$. Similarly, a representation $\pi$ contributing to $V^{0,2}_{(\lambda, 0)}$ must have $\pi_{v_2} $ discrete of weight $-2$.
To summarize the discussion so far, we introduce  the \emph{spectral zeta function}
\begin{equation}\label{eq:spectral-xi}
	\xi_{\Kstar}(s):=\sum_{\pi}\frac{m_{\Kstar}(\pi)}{\lambda(\pi)^{s}},
\end{equation}
where $\pi$ runs over all the automorphic representations  contributing to $A^{0,2}(\Mcal_{\Kstar}^{\star}(\C))$ and such that $\pi_{v_1}$ or $\pi_{v_2}$ is discrete of lowest weight -2; here  $\lambda(\pi)$ is the corresponding eigenvalue under $\Delta^{0,2}_{\cpd}$, and  is counted with multiplicity 

\begin{displaymath}
 m_{\Kstar}(\pi) :=	\dim\Hom_{\Kstar \Kstar_{\infty} }(\rho_{2},\pi).
\end{displaymath}
Observe that it can happen that non-isomorphic representations $\pi$ and $\pi'$ contribute and share the eigenvalue $\lambda(\pi)=\lambda(\pi')$. 

The spectral zeta function $\xi_{\Kstar}(s)$ is a linear combination of the spectral zeta functions $\zeta_{\Ocal,1}(s)$ (for $\Delta_{\cpd}^{0,1}$) and $\zeta_{\Ocal,2}(s)$ (for $\Delta_{\cpd}^{0,2}$), and hence it is absolutely convergent for $\Real(s)\gg 0$ and has a meromorphic continuation to $\C$, regular at $s=0$.

The analysis in this section can be summarized as follows:
\begin{theorem}\label{thm:hol-an-tor-M_K}
The holomorphic analytic torsion of the trivial hermitian line bundle on $\Mcal_{\Kstar}^{\star}(\C)$, with respect to the invariant Poincar\'e metric, satisfies
\begin{displaymath}
	T(\Mcal_{\Kstar}^{\star}(\C),\ov{\Ocal})=-\xi^{\prime}_{\Kstar}(0).
\end{displaymath}
\end{theorem}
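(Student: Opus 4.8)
The plan is to unwind the definition of the holomorphic analytic torsion (Definition \ref{def:holAnTor}) and match it term by term against the spectral zeta function $\xi_{\Kstar}(s)$ defined in \eqref{eq:spectral-xi}. Recall that
\[
	T(\Mcal_{\Kstar}^{\star}(\C),\ov{\Ocal}) \ = \ \sum_{k=0}^{2}(-1)^{k+1}k\,\zeta_{\Ocal,k}^{\prime}(0) \ = \ \zeta_{\Ocal,1}^{\prime}(0) - 2\,\zeta_{\Ocal,2}^{\prime}(0),
\]
where $\zeta_{\Ocal,k}(s) = \sum_{\lambda>0}\dim(V_{\lambda}^{0,k})\,\lambda^{-s}$. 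Thus it suffices to prove the identity of zeta functions
\[
	\zeta_{\Ocal,1}(s) - 2\,\zeta_{\Ocal,2}(s) \ = \ -\,\xi_{\Kstar}(s)
\]
for $\Real(s)\gg 0$; the equality of derivatives at $s=0$ then follows by the meromorphic continuation and regularity at $s=0$ already recorded in the excerpt. By grouping eigenvalues, the left-hand side equals $-\sum_{\lambda>0}D_{\lambda}\,\lambda^{-s}$ where $D_{\lambda}=2\dim V_{\lambda}^{0,2}-\dim V_{\lambda}^{0,1}$ is precisely the weighted dimension introduced before Corollary \ref{corollary:multiplicities}.

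The key step is then Corollary \ref{corollary:multiplicities}, which reduces $D_{\lambda}$ to $D_{\lambda,0}+D_{0,\lambda}$, combined with Proposition \ref{theorem:multiplicities}, which kills the $V^{0,0}$-contributions, so that $D_{\lambda}=\dim V_{(\lambda,0)}^{0,2}+\dim V_{(0,\lambda)}^{0,2}$ for $\lambda>0$. Next I would invoke the decomposition \eqref{eq:L2-decomposition} of $A^{0,2}$ in terms of automorphic representations, refined to the eigenspaces: $\dim V_{(\lambda_1,\lambda_2)}^{0,2}=\sum_{\pi}\dim\Hom_{\Kstar\Kstar_{\infty}}(\rho_2,\pi)$ where $\pi$ ranges over automorphic representations of $\Gbf^{\star}(\A_{\Q})$ with attached eigenvalue pair $(\lambda_1,\lambda_2)$. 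By the analysis immediately preceding \eqref{eq:spectral-xi}, a $\pi$ contributing to $V_{(\lambda,0)}^{0,2}$ (resp.\ $V_{(0,\lambda)}^{0,2}$) must have $\pi_{v_2}$ (resp.\ $\pi_{v_1}$) discrete of lowest weight $-2$, with the eigenvalue $\lambda$ being exactly $\lambda(\pi)$ under $\Delta_{\cpd}^{0,2}$ and multiplicity $m_{\Kstar}(\pi)=\dim\Hom_{\Kstar\Kstar_{\infty}}(\rho_2,\pi)$. Summing over $\lambda>0$ then gives $\sum_{\lambda>0}D_{\lambda}\,\lambda^{-s}=\sum_{\pi}m_{\Kstar}(\pi)\,\lambda(\pi)^{-s}=\xi_{\Kstar}(s)$, where the outer sum is over all $\pi$ contributing to $A^{0,2}$ with $\pi_{v_1}$ or $\pi_{v_2}$ discrete of lowest weight $-2$ — exactly the index set in \eqref{eq:spectral-xi}.

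The one point requiring a little care — and the main obstacle, such as it is — is the bookkeeping at the archimedean places: I must make sure that a representation with \emph{both} $\pi_{v_1}$ and $\pi_{v_2}$ discrete of lowest weight $-2$ (i.e.\ with eigenvalue pair $(0,0)$, corresponding to a holomorphic twisted Hilbert modular form) is correctly excluded from the $\lambda>0$ sum, and that no $\pi$ is double-counted between the $(\lambda,0)$ and $(0,\lambda)$ families. The first is automatic since $D_{\lambda}$ is only summed over $\lambda>0$ in the definition of torsion (the $\lambda=0$ harmonic part, i.e.\ the cohomology, is removed by the prime in $\det'$); the second follows because for $\lambda>0$ the pairs $(\lambda,0)$ and $(0,\lambda)$ are distinct, so the corresponding sets of $\pi$'s are disjoint, and their union is precisely $\{\pi : \pi_{v_1}\text{ or }\pi_{v_2}\text{ discrete of lowest weight }-2,\ \lambda(\pi)>0\}$. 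The identification of $\Delta_{\cpd}^{0,2}$-eigenvalues with the $\lambda(\pi)$ appearing in $\xi_{\Kstar}$, and of the multiplicities with $m_{\Kstar}(\pi)$, is exactly the content of Proposition \ref{prop:Dolbeault-Maass} together with \eqref{eq:L2-decomposition}, so once the index sets are matched the proof is complete. Convergence for $\Real(s)\gg 0$ and regularity at $s=0$ of $\xi_{\Kstar}$ were already noted to follow from the corresponding properties of $\zeta_{\Ocal,1}$ and $\zeta_{\Ocal,2}$, so differentiating at $s=0$ is legitimate and yields the stated formula.
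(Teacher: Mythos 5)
Your proposal is correct and follows exactly the route the paper takes: the theorem is stated there as the summary of the preceding analysis, namely unwinding Definition \ref{def:holAnTor} to $T=\zeta_{\Ocal,1}'(0)-2\zeta_{\Ocal,2}'(0)$, rewriting the alternating sum of spectral zeta functions as $-\sum_{\lambda>0}D_\lambda\lambda^{-s}$, and then applying Corollary \ref{corollary:multiplicities}, Proposition \ref{theorem:multiplicities}, and the decomposition \eqref{eq:L2-decomposition} to identify this with $-\xi_{\Kstar}(s)$. Your extra care about excluding the $\lambda=0$ (holomorphic) representations and avoiding double-counting between the $(\lambda,0)$ and $(0,\lambda)$ families is exactly the implicit bookkeeping in the paper's definition of $\xi_{\Kstar}$, so nothing is missing.
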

\subsection{Holomorphic analytic torsion and the Jacquet-Langlands correspondence}\label{subsec:hol-tor-JL}
We maintain the previous assumptions and notations regarding the quaternion algebra $B$, the Shimura variety $\Mcal_{\Kstar}^{\star}$, etc.

Recall from the introduction that we had also fixed a rational prime $\ell$ inert in the quadratic number field $F$ and coprime to $N$. In particular, $\ell$ is coprime to the discriminant $D_{B}$ of $B$. We denoted by $B_{1}$ the division quaternion algebra over $F$ ramified at the archimedean place $v_{1}$ and with discriminant $D_{B_{1}}=\ell D_{B}$. We similarly define $B_{2}$ as the algebra ramified at $v_{2}$ and with discriminant $D_{B_{2}}=\ell D_{B}$. 

For each rational prime $p \neq \ell$, fix once and for all isomorphisms
\[
	B_1 \otimes_{\Q} \Q_{p} \simeq B_2 \otimes_{\Q} \Q_{p} \simeq B\otimes_{\Q} \Q_{p}  
\]
as well as isomorphisms 
\[
	B\otimes \Q_{\ell} \simeq M_2(F_{\ell}) = M_2(\Q_{\ell^2}) \qquad \text{and} \qquad B_1 \otimes \Q_{\ell} \simeq B_2 \otimes \Q_{\ell}.
\]
These isomorphisms induce identifications 
\[\Gbf_1(\Q_{p}) \simeq \Gbf_2(\Q_{p}) \simeq \Gbf^{\star}(\Q_p) \text{  for all } p \neq \ell, \qquad \text{ and } \qquad\Gbf_1 (\Q_{\ell}) \simeq \Gbf_2(\Q_{\ell}) .\]
 
Given compact opens $\Kstar \subset \Gbf^{\star}(\A_{\Q,f})$ and $\Kstar_1 \subset \Gbf_1(\A_{\Q,f})$, we say $\Kstar$ and $\Kstar_1$ \emph{match} if the local components $\Kstar_{p}$ and $  \Kstar_{1,p}$ are identified for all $p \neq \ell$. Similarly, $\Kstar_1$ is said to match $\Kstar_2 \subset \Gbf_2(\A_{\Q,f})$ if $\Kstar_{1,p}$ and $  \Kstar_{2,p}$   are identified for all $p$. 

Finally, recall from the introduction that given a sufficiently small $\Kstar_1 \subset  \Gbf_1(\A_{\Q,f})$, we defined the complex Shimura curve
\[
	S_{1, \Kstar_1}  =   \Gbf_1(\Q) \big\backslash \lie H^{\pm} \times \Gbf_1(\A_{\Q,f}) / \Kstar_1.
\]
Similarly, for $\Kstar_2 \subset  \Gbf_2(\A_{\Q,f})$ which, say matches $\Kstar_1$, we may  define
\[
	S_{2, \Kstar_2} =     \Gbf_2(\Q) \big\backslash \lie H^{\pm} \times \Gbf_2(\A_{\Q,f}) / \Kstar_2.
\]
The curve $S_1 = S_{1, \Kstar_1}$  admits a canonical model $\Scal_{1}$ over $F$, such that $\Scal_{1,v_1}(\C) = S_1$; it follows immediately from definitions, canonicity, and the identifications above that $	\Scal_{1, v_2}(\C) \simeq S_2$.
In particular, by definition  
\[
	\Scal_1(\C)  :=  \coprod_{v \colon F \to \C} \Scal_{1,v}(\C) = S_1 \coprod S_2,
\]
and hence for the analytic torsion,  
\begin{equation} \label{eqn:AnTorScal1}
	T(\Scal_{1}(\C),\ov{\Ocal})=T(S_{1},\ov{\Ocal})+T(S_{2},\ov{\Ocal}).
\end{equation}
The aim of the following subsections is to relate this quantity to the analytic torsion $T(\Mcal_{\Kstar}^{\star}(\C))$ for a particular choice of $\Kstar$ and corresponding matching level structure for $\Scal_1$.

\subsubsection{The $\ell$-new holomorphic analytic torsion}\label{subsubsec:ell-new-torsion}
In the following discussion, we further exploit the automorphic theory to define and study a ``new part" of holomorphic analytic torsion. We use the automorphic variant of Atkin-Lehner theory explained by Casselman \cite{Casselman} and Miyake \cite{Miyake}, and in particular the notion of conductor of an automorphic representation. Although the authors only cover the $GL_{2}$ case, the quaternionic setting requires only standard modifications.

\paragraph{A general construction} Recall the description of the holomorphic analytic torsion $T(\Mcal_{\Kstar}^{\star}(\C),\ov{\Ocal})$ of Theorem \ref{thm:hol-an-tor-M_K}, in terms of the spectral zeta function \eqref{eq:spectral-xi}. We define the \emph{$\ell$-new spectral zeta function}
\begin{displaymath}
	\xi^{\ell-\mathrm{new}}_{\Kstar}(s)=\sum_{\ell\mid c(\pi)}\frac{m_{\Kstar}(\pi)}{\lambda(\pi)^{s}},
\end{displaymath}
where the automorphic representations now run over those contributing to $A^{0,2}(\Mcal_{\Kstar}^{\star}(\C))$, such that either $\pi_{v_1}$ or $\pi_{v_2}$ is discrete series of weight -2,  and the  conductor $c(\pi)$ divisible by $\ell$.
This spectral zeta function is absolutely convergent for $\Real(s)\gg 0$, since $\xi_{\Kstar}(s)$ is and we just restricted the summation set.


Provided that $\xi^{\ell-\mathrm{new}}_{\Kstar}(s)$ admits a holomorphic continuation to a neighborhood of $s=0$, one defines \emph{$\ell$-new holomorphic analytic torsion} by
\begin{equation}\label{eq:ell-new-torsion}
	T(\Mcal_{\Kstar}^{\star}(\C),\ov{\Ocal})^{\ell-\mathrm{new}}:=-\xi^{\ell-\mathrm{new}\,\prime}_{\Kstar}(0) = - \left. \frac{d}{ds}\xi^{\ell-\mathrm{new}}_{\Kstar}(s)\right|_{s=0} .
\end{equation}
For an appropriate choice of $\Kstar$, as in the next section, we will see that the $\ell$-new holomorphic analytic torsion can indeed be defined.

\paragraph{A choice of compact open subgroup $\KstarN$} 
For each prime $p \nmid D_B$, fix an isomorphism $B\otimes\Q_p \simeq M_2(F \otimes_{\Q} \Q_p)$.
Recall that $N$ is an integer divisible by $2\,\dF\,D_{B}$. Let $N'$ be its greatest divisor coprime to $6\,\dF\, D_{B}$, and assume that $N'>1$. 

In the remainder of this paper, we fix a subgroup 
\begin{equation} \label{eqn:KstarNDef}
	\KstarN \subset \Gbf^{\star}(\A_{\Q,f})
\end{equation}
satisfying the following conditions:
\begin{itemize}
	\item $\KstarN$ is maximal at places not dividing $N$ and maximal at places dividing $2\,D_{B}$;
	\item at primes $p$ dividing $N'$, we assume that $\KstarN$ is of type $\Gamma_{1}$. More precisely, if $\nu$ is a finite place of $F$ above $p$ and $\varpi_{\nu}$ is a uniformizer of $F_{\nu}$, then for an integer $n$ set
	\begin{displaymath}
		 K_{1}(\varpi_{\nu}^{n})=\left\lbrace\left(\begin{array}{cc} a  &b\\ c  &d\end{array}\right)\in GL_{2}(\Ocal_{F_{\nu}})\mid c,\,d-1\equiv 0\mod\varpi_{\nu}^{n}\right\rbrace;
	\end{displaymath}
	we then require that $\KstarN_p = \prod_{\nu | p} K_1(\varpi_{\nu}^{n_{\nu}})$ for some integers $n_{\nu}$. 
	\item at places dividing $\dF$, namely at ramified places, we require $\KstarN_{\nu}=K_{1}(\mathfrak{d}_{F_{\nu}})$, where $\mathfrak{d}_{F_{\nu}}$ is the different of $F_{\nu}$ relative to $\Q_{p}$ (here $p$ is the residual characteristic of $\nu$).
	\item finally, we require that $\KstarN$ is of Iwahori type at the inert prime $\ell$; i.e.\ 
	\[
		\KstarN_{\ell} = \left\lbrace\left(\begin{array}{cc} a	&b\\c  &d\end{array}\right)\in GL_{2}(\Ocal_{F_{\ell}})\mid c\equiv 0\mod\ell\right\rbrace  .
	\]
\end{itemize}
 Under these conditions, $\KstarN$ is small enough to guarantee that $\Mcal_{\KstarN}^{\star}(\C)$ is a smooth projective variety (see \cite[Lemme 1.4]{Dimitrov-Tilouine} for a discussion in the Hilbert modular case; the quaternionic case is dealt with similarly). 
\begin{lemma}
 Let $\Kstar = \KstarN$ as above, and let $\Kstar' \supset \Kstar$ coincide with $\Kstar$ at all finite places except for $\ell$, where $\Kstar^{\prime}_{\ell}$ is maximal. Then the $\ell$-new holomorphic analytic torsion is defined and
\begin{displaymath}
	T^{\ell-new}(\Mcal_{\Kstar}^{\star}(\C),\ov{\Ocal})=T(\Mcal_{\Kstar}^{\star}(\C),\ov{\Ocal})-2T(\Mcal_{\Kstar^{\prime}}^{\star}(\C),\ov{\Ocal}).
\end{displaymath}
\end{lemma}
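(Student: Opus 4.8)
The plan is to reduce the statement to a multiplicity computation for the action of the Hecke operator at $\ell$ (or rather, the Atkin--Lehner/oldform theory at $\ell$) on the spaces of automorphic representations contributing to holomorphic analytic torsion, and then to re-express the difference of spectral zeta functions accordingly. First I would recall the two comparison groups: $\Kstar = \KstarN$ is of Iwahori type at $\ell$, while $\Kstar'$ agrees with $\Kstar$ away from $\ell$ and is maximal (i.e.\ $GL_2(\Ocal_{F_\ell})$) at $\ell$. For any irreducible automorphic representation $\pi = \widehat\otimes_\nu \pi_\nu$ of $\Gbf^\star(\A_{\Q})$ contributing to $A^{0,2}(\Mcal^\star_{\cdot}(\C))$ with $\pi_{v_1}$ or $\pi_{v_2}$ discrete of weight $-2$, the archimedean and away-from-$\ell$ data are unaffected by the change of level, so the only thing that changes is the local factor at $\ell$, i.e.\ the dimension of the space of $\Kstar_\ell$- (resp.\ $\Kstar'_\ell$-) fixed vectors in $\pi_\ell$. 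By Atkin--Lehner/Casselman theory (\cite{Casselman}, \cite{Miyake}, with the standard modifications for the quaternionic case noted in the text), for an irreducible admissible representation $\pi_\ell$ of $GL_2(F_\ell)$:
\begin{itemize}
\item if $\ell \nmid c(\pi)$, i.e.\ $\pi_\ell$ is unramified, then $\dim \pi_\ell^{GL_2(\Ocal_{F_\ell})} = 1$ and $\dim \pi_\ell^{\mathrm{Iw}_\ell} = 2$;
\item if $\ell \mid c(\pi)$ but $\pi_\ell$ still has an Iwahori-fixed vector (which forces $\ell \| c(\pi)$ and $\pi_\ell$ either Steinberg or a ramified twist thereof, i.e.\ conductor exactly $\ell$), then $\dim \pi_\ell^{GL_2(\Ocal_{F_\ell})} = 0$ and $\dim \pi_\ell^{\mathrm{Iw}_\ell} = 1$;
\item if $c(\pi)$ is divisible by a higher power of $\ell$, then $\pi_\ell$ has no Iwahori-fixed vector, so it contributes to neither level.
\end{itemize}

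With this dictionary in hand, I would compare the spectral zeta functions term by term. Writing $m_{\Kstar}(\pi) = \dim\Hom_{\Kstar\Kstar_\infty}(\rho_2,\pi) = \dim\pi_\ell^{\Kstar_\ell}\cdot m^{(\ell)}(\pi)$, where $m^{(\ell)}(\pi)$ collects the (level-independent) contributions at the archimedean places and at finite places $\neq \ell$, one gets: for $\ell\nmid c(\pi)$, $m_{\Kstar}(\pi) = 2\,m^{(\ell)}(\pi) = 2\,m_{\Kstar'}(\pi)$; for $\pi$ with $c(\pi)$ exactly divisible by $\ell$, $m_{\Kstar}(\pi) = m^{(\ell)}(\pi)$ while $m_{\Kstar'}(\pi) = 0$; and for all other $\pi$ both vanish. (Here one must check that $\rho_2$-invariance on the $GL_2(\Ocal_{F_\ell})$-new vector is genuinely accounted for by the Iwahori-fixed vector in the Steinberg case; this is where one invokes that for $\pi_\ell$ Steinberg the unique line of Iwahori-fixed vectors is exactly the new line.) Consequently
\begin{displaymath}
\xi_{\Kstar}(s) - 2\,\xi_{\Kstar'}(s) \ = \ \sum_{\ell \mid c(\pi)} \frac{m_{\Kstar}(\pi)}{\lambda(\pi)^s} \ = \ \xi^{\ell-\mathrm{new}}_{\Kstar}(s),
\end{displaymath}
the first sum being over representations contributing to $A^{0,2}$ with $\pi_{v_1}$ or $\pi_{v_2}$ discrete of lowest weight $-2$. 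In particular the right-hand side, being the difference of two spectral zeta functions each known (Section \ref{subsection:holom-torsion}) to be a linear combination of the Dolbeault $\zeta_{\Ocal,1}(s)$ and $\zeta_{\Ocal,2}(s)$ for $\Mcal^\star_{\Kstar}$ and $\Mcal^\star_{\Kstar'}$ respectively, inherits meromorphic continuation to $\C$ and holomorphy at $s=0$; this is precisely what is needed for $T^{\ell-\mathrm{new}}$ to be defined via \eqref{eq:ell-new-torsion}. Differentiating at $s=0$ and using Theorem \ref{thm:hol-an-tor-M_K} (applied to both $\Kstar$ and $\Kstar'$) gives
\begin{displaymath}
T^{\ell-\mathrm{new}}(\Mcal_{\Kstar}^{\star}(\C),\ov{\Ocal}) = -\,\xi^{\ell-\mathrm{new}\,\prime}_{\Kstar}(0) = -\,\xi'_{\Kstar}(0) + 2\,\xi'_{\Kstar'}(0) = T(\Mcal_{\Kstar}^{\star}(\C),\ov{\Ocal}) - 2\,T(\Mcal_{\Kstar'}^{\star}(\C),\ov{\Ocal}),
\end{displaymath}
as claimed.

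The main obstacle I anticipate is bookkeeping the multiplicities correctly --- specifically, being careful that $m_{\Kstar}(\pi)$ really does factor as (local dimension at $\ell$) $\times$ (everything else), which relies on $\Kstar$ and $\Kstar'$ differing \emph{only} at $\ell$ and on the tensor-product structure of $\Hom_{\Kstar\Kstar_\infty}(\rho_2,\pi) = \bigotimes_\nu \Hom(\text{local data}_\nu, \pi_\nu)$; and secondly, confirming the factor of $2$ in $\dim\pi_\ell^{\mathrm{Iw}_\ell} = 2$ for unramified $\pi_\ell$ and $= 1$ for conductor-$\ell$ $\pi_\ell$, including the subtlety that some representations with $\ell^2 \mid c(\pi)$ contribute to neither $\xi_{\Kstar}$ nor $\xi_{\Kstar'}$ and hence do not appear in $\xi^{\ell-\mathrm{new}}_{\Kstar}$ either (so the definition of $\xi^{\ell-\mathrm{new}}_{\Kstar}$ as stated, summing over all $\pi$ contributing with $\ell\mid c(\pi)$, is automatically consistent because such higher-conductor $\pi$ have no Iwahori-fixed vector and so do not contribute to $A^{0,2}(\Mcal^\star_{\Kstar}(\C))$ in the first place). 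Once these local representation-theoretic facts are pinned down, the rest is a formal manipulation of Dirichlet series and their derivatives at $s=0$.
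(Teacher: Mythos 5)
Your proof is correct and follows essentially the same route as the paper: both reduce the identity to Casselman's multiplicity statements ($m_{\Kstar'}(\pi)=0$ when $\ell\mid c(\pi)$, and $m_{\Kstar}(\pi)=2\,m_{\Kstar'}(\pi)$ when $\ell\nmid c(\pi)$), yielding $\xi_{\Kstar}(s)=\xi^{\ell-\mathrm{new}}_{\Kstar}(s)+2\,\xi_{\Kstar'}(s)$ and hence the claim after differentiating at $s=0$; you merely unpack the citation into the explicit Iwahori-fixed-vector dimension counts. One terminological slip: a representation of conductor exactly $\ell$ is Steinberg or an \emph{unramified} twist thereof, not a ramified twist.
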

\begin{proof}
We decompose the spectral zeta function $\xi_{\Kstar}(s)$ as 
\begin{equation}\label{eq:lemma-ell-new-0}
	\xi_{\Kstar}(s)=\sum_{\ell\mid c(\pi)}\frac{m_{\Kstar}(\pi)}{\lambda(\pi)^{s}}+\sum_{\ell\nmid c(\pi)}\frac{m_{\Kstar}(\pi)}{\lambda(\pi)^{s}},
\end{equation}
where the sums run over  representations $\pi$ contributing to $A^{0,2}(\Mcal_{\Kstar}^{\star}(\C))$ and such that $\pi_{v_1}$ or $\pi_{v_2}$ is discrete of lowest weight -2.
For such an automorphic representation $\pi$, if $\ell\mid c(\pi)$ then
\begin{equation}\label{eq:lemma-ell-new-1}
m_{\Kstar'}(\pi) =	\dim\Hom_{\Kstar '\Kstar_{\infty}}(\rho_{2},\pi)=0;
\end{equation}
i.e. the representation space has no ${\Kstar'}$ invariant vectors. Hence, it does not contribute to $T(\Mcal_{\Kstar^{\prime}}^{\star}(\C),\ov{\Ocal})$. By contrast, if $\ell\nmid c(\pi)$, then by \cite[Corollary to the Proof]{Casselman} and the discussion leading to Theorem 4 in \emph{loc. cit.},  
\begin{equation}\label{eq:lemma-ell-new-2}
m_{\Kstar}(\pi) = 	\dim\Hom_{\Kstar\Kstar_{\infty}}(\rho_{2},\pi)=2\dim\Hom_{\Kstar\prime\Kstar_{\infty}}(\rho_{2},\pi) = 2\, m_{\Kstar'}(\pi).
\end{equation}
From \eqref{eq:lemma-ell-new-0}--\eqref{eq:lemma-ell-new-2}, we see that
\begin{displaymath}
	\xi_{\Kstar}(s)=\xi_{\Kstar}^{\ell-new}(s)+2\xi_{\Kstar^{\prime}}(s).
\end{displaymath}
The lemma follows, since $\xi_{\Kstar}(s)$ and $\xi_{\Kstar^{\prime}}(s)$ have holomorphic continuations to a neighborhood of $s=0$, and by the very definition of the holomorphic analytic torsions.
\end{proof}

Before stating our main result, we note that the spectral zeta function $\zeta_{S_{1}}(s)$, of $\Delta^{0,1}_{\cpd}$ on the complex Shimura curve $S_{1} = \Scal_{1, v_1}(\C)$, can also be interpreted in terms of automorphic representations, as follows: let
\[ 
	\Kstar_{1, \infty} = \mathbb H^{\times,1} \times SO(2) \subset \Gbf_1(\R) ,
\]
and consider the representation $\rho'_2 = 1 \boxtimes w_2$. Then, arguing exactly as in Section \ref{subsection:holom-torsion} yields an identity
\[
	 	\zeta_{S_{1}}(s)=\sum_{\pi'}\frac{m_{\Kstar_1}(\pi')}{\lambda(\pi')^{s}}, \qquad \text{ where } m_{\Kstar_1}(\pi') := \dim \Hom_{\Kstar_1\Kstar_{1,\infty}}(\rho_2', \pi')
\]
and $\pi'$ runs over all the irreducible automorphic representations of $\mathbf{G}_{1}(\A_{\Q})$ contributing to $A^{0,1}(S_{1})$  with $\pi_{v_{1}}^{\prime} $ trivial. A similar expression holds for $\zeta_{S_2}(s)$, where $S_2 = \Scal_{1, v_2}(\C)$, by interchanging the roles of the infinite places.

We proceed to state and prove the main theorem of this section.
\begin{theorem}\label{thm:JL-an-torsion}
Let $\Kstar=\KstarN$, and  suppose $\Kstar_1 \subset \Gbf_1(\A_{\Q,f})$ matches $\Kstar$. Then, if $\Scal_1$ is the corresponding Shimura curve, there is an equality of holomorphic analytic torsions
\begin{displaymath}
	T^{\ell-\mathrm{new}}(\Mcal_{K}^{\star}(\C),\ov{\Ocal})=-T(\Scal_{1}(\C),\ov{\Ocal}).
\end{displaymath}
\end{theorem}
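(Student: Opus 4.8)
The plan is to match up the spectral data on both sides of the desired equality by running the Jacquet--Langlands correspondence through the automorphic descriptions of analytic torsion established in the preceding sections. First I would recall, from Theorem~\ref{thm:hol-an-tor-M_K} and the discussion of $\ell$-new torsion, that
\begin{displaymath}
	T^{\ell-\mathrm{new}}(\Mcal_{\Kstar}^{\star}(\C),\ov{\Ocal}) = - \xi_{\Kstar}^{\ell-\mathrm{new}\,\prime}(0),
\end{displaymath}
where $\xi_{\Kstar}^{\ell-\mathrm{new}}(s) = \sum_{\ell \mid c(\pi)} m_{\Kstar}(\pi) \lambda(\pi)^{-s}$ runs over automorphic representations $\pi$ of $\mathbf{G}^{\star}(\A_{\Q})$ contributing to $A^{0,2}(\Mcal_{\Kstar}^{\star}(\C))$ with $\pi_{v_1}$ or $\pi_{v_2}$ discrete of lowest weight $-2$, and conductor divisible by $\ell$. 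By Proposition~\ref{theorem:multiplicities} and Corollary~\ref{corollary:multiplicities}, the representations that actually contribute have attached pair of eigenvalues of the form $(\lambda,0)$ or $(0,\lambda)$ with $\lambda>0$, and the $D_{\lambda,0}$ resp.\ $D_{0,\lambda}$ multiplicities are governed exactly by these $A^{0,2}$-contributions. On the curve side, I would invoke the automorphic description of $\zeta_{S_1}(s)$ and $\zeta_{S_2}(s)$ given just before the statement: $\zeta_{S_1}(s) = \sum_{\pi'} m_{\Kstar_1}(\pi') \lambda(\pi')^{-s}$ over automorphic representations $\pi'$ of $\mathbf{G}_1(\A_{\Q})$ contributing to $A^{0,1}(S_1)$ with $\pi'_{v_1}$ trivial, and symmetrically for $\zeta_{S_2}(s)$ with the roles of $v_1,v_2$ exchanged (via $B_2$). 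Then $T(\Scal_1(\C),\ov{\Ocal}) = -\zeta'_{S_1}(0) - \zeta'_{S_2}(0)$ using \eqref{eqn:AnTorScal1} and that for a curve the weighted-dimension sum reduces to the $k=1$ term.

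The heart of the argument is a bijection, compatible with eigenvalues and multiplicities, between the two indexing sets. Given $\pi$ contributing to $\xi_{\Kstar}^{\ell-\mathrm{new}}$ with eigenvalue pair, say, $(\lambda,0)$, the component $\pi_{v_1}$ is discrete of weight $-2$ and $\pi_{v_2}$ has Laplace eigenvalue zero hence (by \cite[Chapter~2]{Bump}) is also discrete of lowest weight $-2$; since the conductor is divisible by $\ell$ and $\KstarN$ is Iwahori at the inert prime $\ell$, the local component $\pi_\ell$ is a twist of Steinberg, which is precisely the condition for $\pi$ to lie in the image of Jacquet--Langlands from $\mathbf{G}_2(\A_{\Q}) = \mathrm{Res}_{F/\Q} B_2^{\times}$, the algebra ramified at $v_2$ and at $\ell$. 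Let $\pi' = \mathrm{JL}(\pi)$ on $\mathbf{G}_2(\A_{\Q})$: then $\pi'_{v_2}$ is the (one-dimensional) trivial representation of $\mathbb{H}^{\times}$-type matching the discrete-series $\pi_{v_2}$, while $\pi'_{v_1} \simeq \pi_{v_1}$ stays discrete of weight $-2$ with the same Casimir, so $\pi'$ contributes to $A^{0,1}(S_2)$ with attached eigenvalue $\lambda$; conversely every such $\pi'$ on $\mathbf{G}_2$ with $\pi'_{v_2}$ trivial and $\ell$-ramified arises this way. The case $(0,\lambda)$ is handled symmetrically using $B_1$ and $S_1$. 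Crucially I must check that the Laplace eigenvalues match exactly under JL: this follows because JL preserves the archimedean components up to isomorphism (the discrete series of $GL_2(\R)$ of weight $-2$ corresponds to the trivial representation of the compact form under the normalization in Proposition~\ref{prop:Dolbeault-Maass}), and the normalizations of the hyperbolic Laplacian on $\lie H$ via \eqref{eqg:1}--\eqref{eqg:4} are the restrictions of those on $\lie H^2$; so $\lambda(\pi) = \lambda(\pi')$ under the correspondence.

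Next I would verify the multiplicity match $m_{\Kstar}(\pi) = m_{\Kstar_1}(\pi')$ (resp.\ $m_{\Kstar_2}$). Since $\Kstar = \KstarN$ is Iwahori at $\ell$ and $\pi_\ell$ is $\ell$-new (Steinberg-type), local newvector theory (Casselman--Miyake, as cited) gives that the space of $\KstarN_\ell$-fixed vectors in $\pi_\ell$ is one-dimensional, and likewise $\pi'_\ell$ (which equals $\pi_\ell$ under the fixed local identification $\mathbf{G}_2(\Q_\ell) \simeq \mathbf{G}^{\star}(\Q_\ell)$ away from $\ell$, and $\mathbf{G}_1(\Q_\ell)\simeq\mathbf{G}_2(\Q_\ell)$) has a one-dimensional fixed space under the matching level. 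At all $p \neq \ell$, the level groups $\KstarN_p$ and $\Kstar_{1,p}$ (resp.\ $\Kstar_{2,p}$) are identified and $\pi_p \simeq \pi'_p$, so the finite-dimensional $\Hom$-spaces have equal dimension; at the archimedean places the $K_\infty$-types $\rho_2 = w_2\boxtimes w_2$ on $\mathbf{G}^{\star}$ and $\rho_2' = 1\boxtimes w_2$ on $\mathbf{G}_1$ pick out the same one-dimensional lowest-weight space in the matching archimedean components. Summing the resulting term-by-term identity of Dirichlet series in the half-plane of absolute convergence gives
\begin{displaymath}
	\xi_{\Kstar}^{\ell-\mathrm{new}}(s) = \zeta_{S_1}(s) + \zeta_{S_2}(s),
\end{displaymath}
and since both sides extend meromorphically to $\C$ and are regular at $s=0$, differentiating at $s=0$ yields $T^{\ell-\mathrm{new}}(\Mcal_K^{\star}(\C),\ov{\Ocal}) = -T(S_1,\ov{\Ocal}) - T(S_2,\ov{\Ocal}) = -T(\Scal_1(\C),\ov{\Ocal})$, as desired.

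I expect the main obstacle to be the bookkeeping that confirms the image of Jacquet--Langlands is \emph{exactly} the $\ell$-new part with the prescribed archimedean behavior on both the $\mathbf{G}^{\star}$ and the $\mathbf{G}_1$ (resp.\ $\mathbf{G}_2$) sides, together with the precise matching of normalizations so that the Laplace eigenvalues, and not just the representations, agree. In particular one must be careful that a representation with eigenvalue pair $(\lambda,0)$ maps to the Shimura curve $S_2$ (ramification at $v_2$) and $(0,\lambda)$ to $S_1$, and that no representation is double-counted; the Iwahori-at-$\ell$ condition combined with $\ell$-newness is what makes JL from the definite-at-$\ell$ quaternion algebra both defined and an isomorphism onto its image at $\ell$, and this local compatibility (as well as the triviality vs.\ discrete-series dictionary at the archimedean places) is the step requiring the most care.
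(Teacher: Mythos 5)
Your overall strategy is exactly the paper's: reduce both torsions to spectral zeta functions indexed by automorphic representations, set up a Jacquet--Langlands bijection between the $\ell$-new representations contributing to $A^{0,2}(\Mcal_{\Kstar}^{\star}(\C))$ and those contributing to $A^{0,1}(S_1)\sqcup A^{0,1}(S_2)$, match eigenvalues through the archimedean component that JL leaves unchanged, match multiplicities via Casselman's newvector theory, and conclude from the term-by-term identity $\xi_{\Kstar}^{\ell-\mathrm{new}}(s)=\zeta_{S_1}(s)+\zeta_{S_2}(s)$ by differentiating at $s=0$. The assignment $(\lambda,0)\leftrightarrow S_2$, $(0,\lambda)\leftrightarrow S_1$ and the role of the Iwahori-at-$\ell$ condition are also as in the paper.

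There is, however, an error in your description of the archimedean components that you should repair. For a representation $\pi$ with eigenvalue pair $(\lambda,0)$ and $\lambda>0$, you assert that $\pi_{v_1}$ is discrete of weight $-2$ \emph{and} that $\pi_{v_2}$ is discrete of lowest weight $-2$, and later that ``$\pi'_{v_1}\simeq\pi_{v_1}$ stays discrete of weight $-2$ with the same Casimir'' while contributing the eigenvalue $\lambda$. A $(\lie{gl}_2,SO(2))$-module that is discrete of weight $-2$ has Laplace eigenvalue $0$ in the normalization of Proposition \ref{prop:Dolbeault-Maass}; if both components were discrete of weight $-2$ the pair would be $(0,0)$ and $\pi$ would contribute only to harmonic forms, not to the torsion. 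The correct statement (which the rest of your argument implicitly relies on) is that exactly one archimedean component -- the one at the place where the eigenvalue vanishes, here $\pi_{v_2}$ -- is discrete of weight $-2$ and is the component converted to the trivial representation of $\mathbb H^{\times}$ under JL, while $\pi_{v_1}$ is a principal (or complementary) series containing a weight $-2$ vector with Casimir corresponding to $\lambda>0$; it is this non-discrete component that transfers isomorphically and produces the Maass eigenvalue $\lambda$ on $S_2$. Since your subsequent conclusions (destination curve, eigenvalue matching, multiplicity matching) are all consistent with this corrected picture, the proof goes through once this is fixed.
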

\begin{proof}
%
%

Let $\pi'$ be an automorphic representation of $\mathbf{G}_{1}(\A_{\Q})$ contributing to $A^{0,1}(S_{1})$, with $\pi_{v_1}'$ trivial, and corresponding eigenvalue $\lambda(\pi')$ for $\Delta_{\cpd}^{0,1}$. Let $\pi$ be its Jacquet-Langlands lift to an automorphic representation of $\mathbf{G}^{\star}(\A_{\Q})$. By the very construction of the local Jacquet-Langlands correspondence, the relation between the conductors is $c(\pi)=\ell c(\pi')$. Furthermore, $\pi_{v_1}$ is discrete series of weight -2. We deduce that $\pi$ contributes to $A^{0,2}(\Mcal_{\Kstar}^{\star}(\C))$, and its corresponding eigenvalue under $\Delta_{\cpd}^{0,2}$ is $\lambda(\pi)=\lambda(\pi')$. Finally, \cite[Corollary to the Proof]{Casselman} (and again an analogous discussion leading to Theorem 4 in \emph{loc. cit.}), one has
\begin{equation}\label{eq:matching-mult}
 	m_{\Kstar}(\pi) = 	\dim\Hom_{\Kstar\Kstar_{\infty}}(\rho_{2},\pi)=\dim\Hom_{\Kstar_1 \Kstar_{1,\infty}}(\rho_{2}^{\prime},\pi^{\prime}) = m_{\Kstar_1}(\pi');
\end{equation}
in fact, this multiplicity is equal to  the number of integral ideal divisors of $c(\pi')$. 
A symmetric discussion applies to the lifts of representations contributing to $A^{0,1}(S_{2})$ with $\pi_{v_2}$ trivial. 

Conversely, a representation $\pi$ contributing to $A^{0,2}(\Mcal_{\Kstar}^{\star}(\C))$ with $\ell\mid c(\pi)$ and $\pi_{v_1}$ discrete series of weight -2 arises as the Jacquet-Langlands lift of a representation $\pi'$ contributing to $A^{0,1}(S_{1})$, with conductor $c(\pi')=c(\pi)/\ell$, and $\pi'_{v_1}$ trivial. The corresponding eigenvalues, including multiplicities, match by the discussion above. Similarly, those representations with $\pi_{v_2}$ is discrete series are Jacquet-Langlands lifts from $A^{0,1}(S_2)$.

The discussion can be summarized in the following string of equalities:
\begin{displaymath}
	\begin{split}
		\xi_{\Kstar}^{\ell-new}(s)=&\sum_{\substack{\ell\mid c(\pi) \\ \pi_{v_1} = \text{d.s. wt. -2}}}\frac{m_{\Kstar}(\pi)}{\lambda(\pi)^{s}}+\sum_{\substack{\ell\mid c(\pi)\\ \pi_{v_{2}}=\text{d.s. wt. -2}}}\frac{m_{\Kstar}(\pi)}{\lambda(\pi)^{s}}\\
	=&\sum_{\substack{\pi'\\ \text{repr. of }\mathbf{G}_{1}(\A_{\Q})}}\frac{m_{\Kstar_1}(\pi')}{\lambda(\pi')^{s}}+\sum_{\substack{\pi''\\ \text{repr. of }\mathbf{G}_{2}(\A_{\Q})}}\frac{m_{\Kstar_2}(\pi'')}{\lambda(\pi'')^{s}}\\
	=& \zeta_{S_{1}}(s)+\zeta_{S_{2}}(s).
	\end{split}
\end{displaymath}
The theorem follows immediately from the definition of analytic torsion, cf.\ Definition \ref{def:holAnTor}, together with \eqref{eq:ell-new-torsion}  and \eqref{eqn:AnTorScal1}.
\end{proof}

\section{Arithmetic Grothendieck-Riemann-Roch and Jacquet-Langlands correspondence}

\subsection{Comparison of volumes}\label{subsec:comparison-volumes}
This subsection is preparatory for the forthcoming comparison of arithmetic intersection numbers of $\Mcal_{K}$ and $\Scal_{1}$. We compare classical ``volumes" of $\Mcal_{\Kstar}^{\star}(\C)$ and $\Scal_{1}(\C)$, by invoking the Riemann-Roch theorem and the Jacquet-Langlands correspondance. Riemann-Roch expresses volumes in terms of Euler-Poincar\'e characteristics; the latter are related by the Jacquet-Langlands correspondence. In the arithmetic setting, the strategy will be \emph{formally} the same: the arithmetic Riemann-Roch theorem relates arithmetic intersections to arithmetic degrees of determinants of cohomologies (with Quillen metrics); the Jacquet-Langlands correspondence (through Theorem \ref{thm:JL-an-torsion} and period computations by Shimura) relates these arithmetic degrees.

We continue with the notation from \S \ref{subsec:hol-tor-JL}, so that $\Kstar$ is a compact open subgroup of $\mathbf{G}^{\star}(\A_{\Q,f})$ and $\ell$ an auxiliary inert rational prime. 
\begin{definition} Suppose $\Kstar$ is Iwahori at $\ell$, and let $\Kstar' \supset \Kstar$ denote the subgroup that is maximal at $\ell$ and agrees with $\Kstar$ at all primes away from $\ell$. In this case, define
\begin{displaymath}
	\deg^{\ell-\mathrm{new}} c_{1}(\Omega_{\Mcal_{\Kstar}^{\star}(\C)})^{2}=
	\deg  c_{1}(\Omega_{\Mcal_{\Kstar}^{\star}(\C)})^{2}-2\deg c_{1}(\Omega_{\Mcal_{\Kstar^{\prime}}^{\star}(\C)})^{2},
\end{displaymath}
and similarly for $c_{2}(\Omega_{\Mcal_{\Kstar}^{\star}(\C)})$.
\end{definition}
\begin{proposition}\label{prop:geom-deg}
Suppose $\Kstar$ is Iwahori at $\ell$. Let $\Kstar_1 \subset \Gbf_1(\A_{\Q,f})$ denote the subgroup matching  $\Kstar$, with $\Kstar_1$ maximal at $\ell$, and $\Scal_1$ the corresponding Shimura curve over $F$. Then
\begin{displaymath}
	\deg^{\ell-\mathrm{new}}c_{1}(\Omega_{\Mcal_{\Kstar}^{\star}(\C)})^{2}=2\deg c_{1}(\Omega_{\Scal_{1}(\C)}).
\end{displaymath}
\end{proposition}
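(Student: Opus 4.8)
The plan is to reduce this purely topological identity to a comparison of orbifold Euler characteristics and then invoke the Jacquet--Langlands correspondence through the cohomological language developed in the previous section. First I would recall that, for a compact complex surface $M$, the Hirzebruch--Riemann--Roch / Gauss--Bonnet relations give $\deg c_1(\Omega_M)^2 = 3\,\sigma(M) + 2\,e(M)$ and $\deg c_2(\Omega_M) = e(M)$, where $e$ is the (orbifold) Euler characteristic and $\sigma$ the signature; for the twisted Hilbert modular surface the Hirzebruch proportionality principle (equivalently, Proposition \ref{prop:OmegaCpxDecom}(iii), which gives $c_1(\Omega)^2 = 2 c_2(\Omega)$ on the nose as Chern--Weil forms) forces $\sigma = 0$ and hence $\deg c_1(\Omega_{\Mcal_{\Kstar}^{\star}(\C)})^2 = 2\, e(\Mcal_{\Kstar}^{\star}(\C))$. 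Similarly, for the Shimura curve, $\deg c_1(\Omega_{\Scal_1(\C)}) = -e(\Scal_1(\C)) = 2g-2$ summed over the two archimedean components. So the proposition is equivalent to the $\ell$-new Euler characteristic identity
\[
	e^{\ell-\mathrm{new}}(\Mcal_{\Kstar}^{\star}(\C)) \ = \ -\, e(\Scal_1(\C)),
\]
with the $\ell$-new Euler characteristic defined, as in the preceding definitions, by $e^{\ell-\mathrm{new}}(\Mcal_{\Kstar}^{\star}) = e(\Mcal_{\Kstar}^{\star}) - 2\, e(\Mcal_{\Kstar'}^{\star})$.

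Next I would express both Euler characteristics spectrally, exactly as the analytic torsion was expressed in Section \ref{subsection:holom-torsion}. The alternating sum $e(\Mcal_{\Kstar}^{\star}(\C)) = \sum_k (-1)^k \dim V_0^{0,k}$ picks out the harmonic (eigenvalue $0$) part of the Dolbeault complex, and the same representation-theoretic bookkeeping used to define $\xi_{\Kstar}(s)$ — namely \eqref{eq:L2-decomposition} together with Proposition \ref{prop:Dolbeault-Maass} and Corollary \ref{corollary:multiplicities} — shows that the only automorphic representations contributing to this alternating sum with $\lambda(\pi)=0$ are those with $\pi_{v_1}$ or $\pi_{v_2}$ discrete series of weight $-2$ (contributing to $A^{0,2}$), counted with multiplicity $m_{\Kstar}(\pi)$, together with the trivial and one-dimensional representations accounting for $H^0$ and $H^2$ of the structure sheaf; the latter are $\ell$-independent in a controlled way and cancel in the $\ell$-new combination (here one uses that $\Kstar = \KstarN$ is small enough that $H^0(\Ocal) = \C^{(\text{\# components})}$ and that the number of components is insensitive to shrinking at $\ell$ in the relevant combination, or one simply works with the primitive cohomology). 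Thus $e^{\ell-\mathrm{new}}(\Mcal_{\Kstar}^{\star}(\C))$ equals (up to a harmless sign coming from the $(-1)^k$) the ``value at $s=0$'' version of $\xi_{\Kstar}^{\ell-\mathrm{new}}$, i.e. a weighted count of $\ell$-new discrete-series representations. On the Shimura curve side, the analogous count $e(\Scal_1(\C)) = \sum_v \sum_k (-1)^k \dim H^k(S_v,\Ocal)$ is, by the discussion right before Theorem \ref{thm:JL-an-torsion}, a weighted count of the automorphic representations $\pi'$ of $\Gbf_1(\A_\Q)$ contributing to $A^{0,1}(S_1)$ (resp. $A^{0,1}(S_2)$) with $\pi'_{v_1}$ (resp. $\pi'_{v_2}$) trivial, plus the contribution of the trivial representation accounting for $H^0 = \C$ on each component.

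Then I would run the Jacquet--Langlands matching exactly as in the proof of Theorem \ref{thm:JL-an-torsion}: an $\ell$-new representation $\pi$ contributing to $A^{0,2}(\Mcal_{\Kstar}^{\star}(\C))$ with $\pi_{v_1}$ discrete series of weight $-2$ corresponds bijectively to a representation $\pi'$ contributing to $A^{0,1}(S_1)$ with $\pi'_{v_1}$ trivial and conductor $c(\pi') = c(\pi)/\ell$, and the multiplicities match by \eqref{eq:matching-mult}; symmetrically for $v_2$ and $S_2$. This gives the identity of the ``interior'' weighted counts. The remaining bookkeeping is the contribution of the trivial/one-dimensional automorphic representations: on $\Mcal_{\Kstar}^{\star}$ these contribute $1 - 0 + 1 = 2$ per component to $e$ but, crucially, such representations have conductor prime to $\ell$ (they are unramified everywhere the level allows), so they are killed in the $\ell$-new combination $e - 2e'$ on the left; meanwhile on $\Scal_1$ the Shimura curve is compact of genus $\geq 2$ (its quaternion algebra is division), so the trivial representation contributes $+1 - 0 = +1$ to $\sum_k(-1)^k\dim H^k(\Ocal) = 1-g$ per component, i.e. $e(S_v) = 2 - 2g_v \leq 0$, and this is precisely accounted for on the other side by the fact that the weighted count on $\Mcal^{\star}$ side carries the opposite sign (the discrete series sit in degree $2$, contributing with sign $+1$ in $\sum(-1)^k$, versus degree $1$ with sign $-1$ on the curve). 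Assembling the signs gives $e^{\ell-\mathrm{new}}(\Mcal_{\Kstar}^{\star}(\C)) = -\,e(\Scal_1(\C))$, hence the proposition after multiplying by $2$. The main obstacle I anticipate is the careful treatment of the non-cuspidal (trivial and one-dimensional) automorphic contributions and of the number of connected components: one must check that these either cancel in the $\ell$-new difference on the surface side or are exactly matched by the genus term on the curve side, and getting every sign and multiplicity right here — rather than the Jacquet--Langlands matching itself, which is already essentially done in Theorem \ref{thm:JL-an-torsion} — is where the real care is needed. An alternative, cleaner route that avoids this entirely is to bypass Euler characteristics and instead apply the geometric (non-arithmetic) Riemann--Roch theorem in degree one directly: $\deg c_1(\det Rf_*\Ocal)$ vanishes geometrically, but the same proportionality and Jacquet--Langlands input used for the arithmetic comparison in the next subsections applies verbatim to the geometric parts of the characteristic-class identities, and one reads off the stated equality of geometric degrees as the ``degree-zero shadow'' of the arithmetic comparison; I would present the Euler-characteristic argument as the main proof and mention this as a remark.
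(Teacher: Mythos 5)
Your overall strategy is the paper's own: use Riemann--Roch together with the proportionality $c_1(\Omega)^2 = 2c_2(\Omega)$ from Proposition \ref{prop:OmegaCpxDecom}(iii) to convert the statement into a comparison of Euler characteristics, reduce to $\Kstar = \KstarN$ by multiplicativity under finite \'etale covers, and then match the Euler characteristics via Jacquet--Langlands. However, the bookkeeping as written does not close, for three concrete reasons. First, the spectral quantity $\sum_k(-1)^k\dim V_0^{0,k}$ is the holomorphic Euler characteristic $\chi(\Ocal)$, not the topological Euler characteristic $e$: with $\sigma=0$ and $c_1^2=2c_2$, Noether's formula gives $\chi(\Ocal)=e/4$ on the surface, whereas $\chi(\Ocal)=e/2$ on the curve. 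The correct reduction is therefore $\deg^{\ell-\mathrm{new}}c_1(\Omega_{\Mcal^{\star}_{\Kstar}(\C)})^2 = 8\,\chi^{\ell-\mathrm{new}}(\Ocal)$ and $\deg c_1(\Omega_{\Scal_1(\C)}) = -2\,\chi(\Scal_1(\C),\Ocal)$, so the identity you must prove is $\chi^{\ell-\mathrm{new}}(\Mcal^{\star}_{\Kstar}(\C),\Ocal) = -\tfrac12\chi(\Scal_1(\C),\Ocal)$; matching your target $e^{\ell-\mathrm{new}}=-e$ term by term against counts of harmonic forms conflates the two Euler characteristics and is off by an uncancelled factor. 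Second, the $H^0$ contributions do \emph{not} cancel in the $\ell$-new combination: since $\Mcal^{\star}_{\Kstar}(\C)$ and $\Mcal^{\star}_{\Kstar'}(\C)$ have the same number $c$ of components, they contribute $c-2c=-c$, which is exactly what is needed to match $-\tfrac12\cdot 2c$ from $H^0(\Scal_1(\C),\Ocal)$; asserting that they vanish destroys the identity. Third, a harmonic $(0,2)$-form corresponds to a representation $\pi$ with \emph{both} $\pi_{v_1}$ and $\pi_{v_2}$ discrete series of weight $-2$ (not ``or''), and Jacquet--Langlands identifies the single space $H^0(\Mcal^{\star}_{\Kstar},\omega)^{\ell-\mathrm{new}}$ with $H^0(S_j,\omega_{S_j})$ for \emph{each} $j=1,2$ separately; hence $\dim^{\ell-\mathrm{new}}H^2(\Mcal^{\star}_{\Kstar},\Ocal)=\tfrac12\dim H^1(\Scal_1(\C),\Ocal)$, and your ``add the two symmetric contributions'' phrasing double-counts precisely where the crucial factor $\tfrac12$ must appear. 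The paper carries out this step via Serre duality and holomorphic forms of parallel weight $2$, which sidesteps the harmonic-form ambiguity.

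Your proposed alternative route is not available: reading the statement off as the ``degree-zero shadow'' of the arithmetic comparison is circular, since the proof of Theorem \ref{theorem:comparison} invokes Proposition \ref{prop:geom-deg} to relate the two geometric degrees in the final step.
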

\begin{proof}
First of all, by the functoriality of characteristic classes under pull-back and the projection formula, the degrees in question are multiplicative under finite \'etale coverings: if we replace $\Kstar$ by a smaller normal compact open subgroup $\mathbb L \lhd \Kstar$ with $\mathbb L_{\ell} = \Kstar_{\ell}$, then both sides of the desired equality get multiplied by the index $[\Kstar\colon \mathbb L]$. 
Thus we may reduce to the case $\Kstar = \KstarN$ as in \S \ref{subsubsec:ell-new-torsion}. 

Defining a $\ell$-new Euler-Poincar\'e characteristic $\chi^{\ell-\mathrm{new}}$ by a similar rule as for $\deg^{\ell-\mathrm{new}}$, the Riemann-Roch theorem gives
\begin{equation}\label{eq:chi-new}
	\chi^{\ell-\mathrm{new}}(\Mcal_{\Kstar}^{\star}(\C),\Ocal)=\frac{1}{12}\left(\deg^{\ell-\mathrm{new}} c_{1}(\Omega_{\Mcal_{\Kstar}^{\star}(\C)})^{2}+\deg^{\ell-\mathrm{new}}c_{2}(\Omega_{\Mcal_{\Kstar}^{\star}(\C)})\right).
\end{equation}
Recall from Proposition \ref{prop:OmegaCpxDecom} that
\begin{displaymath}
	\deg^{\ell-\mathrm{new}}c_{1}(\Omega_{\Mcal_{\Kstar}^{\star}(\C)})^{2}=2\deg^{\ell-\mathrm{new}} c_{2}(\Omega_{\Mcal_{\Kstar}^{\star}(\C)}).
\end{displaymath}
Therefore \eqref{eq:chi-new} simplifies to
\begin{equation}\label{eq:chi-new-bis}
	\chi^{\ell-\mathrm{new}}(\Mcal_{\Kstar}^{\star}(\C),\Ocal)=\frac{1}{8}\deg^{\ell-\mathrm{new}} c_{1}(\Omega_{\Mcal_{\Kstar}^{\star}(\C)})^{2}.
\end{equation}
For the Shimura curve, Riemann-Roch reads
\begin{equation}\label{eq:chi-Sh-curve}
	\chi(\Scal_{1}(\C),\Ocal)=-\frac{1}{2}\deg c_{1}(\Omega_{\Scal_{1}(\C)}).
\end{equation}
Lew us now compare Euler-Poincar\'e characteristics. By \cite[Variante 2.5]{Deligne:Shimura}, the varieties $\Mcal_{\Kstar}^{\star}(\C)$, $\Mcal_{\Kstar^{\prime}}^{\star}(\C)$, $S_{1}$ and $S_{2}$ all have the same number of connected components. This implies that
\begin{equation}\label{eq:comp-H0}
	\dim H^{0}(\Mcal_{\Kstar}^{\star}(\C),\Ocal)-2\dim H^{0}(\Mcal_{\Kstar^{\prime}}^{\star}(\C),\Ocal)=-\frac{1}{2}\dim H^{0}(\Scal_{1}(\C),\Ocal).
\end{equation}
By Serre duality and the Jacquet-Langlands correspondence applied to holomorphic quaternionic modular forms of parallel weight 2 (similar argument leading to Theorem \ref{thm:JL-an-torsion}) we also find
\begin{equation}\label{eq:comp-H2}
	\dim H^{2}(\Mcal_{\Kstar}^{\star}(\C),\Ocal)-2\dim H^{2}(\Mcal_{\Kstar^{\prime}}^{\star}(\C),\Ocal)=\frac{1}{2}\dim H^{1}(\Scal_{1}(\C),\Ocal).
\end{equation}
Finally, we claim $H^{1}(\Mcal_{\Kstar}^{\star}(\C),\Ocal)$ vanishes. Indeed, by Serre duality, this space is dual to $H^0(\Mcal_{\Kstar}^{\star}(\C), \Omega)$. By a construction analogous to the discussion leading to \eqref{eq:L2-decomposition}, a non-zero element $\eta \in H^0(\Mcal_{\Kstar}^{\star}(\C), \Omega)$ would give rise to an automorphic representation $\pi$ that is annilihated by the laplacian, and such that
\[
	\Hom_{\Kstar \Kstar_{\infty}} \left( (1 \otimes w_2^{\vee}) \oplus (w_2^{\vee} \otimes 1) , \, \pi \right) \neq 0 .
\]
In particular, one of $\pi_{v_1}$ or $\pi_{v_2}$ is generated by a weight zero vector killed by the Laplacian, and so is the trivial representation; arguing as in the proof of Proposition \ref{theorem:multiplicities}, the Jacquet-Langlands correspondence produces a cusp form on $GL_2(\A_{F})$ that is constant in one of the variables, which necessarily vanishes. Thus $H^0(\Mcal_{\Kstar}^{\star}(\C), \Omega) = 0$ as required.

Hence, from \eqref{eq:comp-H0}--\eqref{eq:comp-H2}, we infer
\begin{equation}\label{eq:relation-chi}
	\chi^{\ell-\mathrm{new}}(\Mcal_{\Kstar}^{\star}(\C),\Ocal)=-\frac{1}{2}\chi(\Scal_{1}(\C),\Ocal).
\end{equation}
We conclude by combining \eqref{eq:chi-new-bis}, \eqref{eq:chi-Sh-curve} and \eqref{eq:relation-chi}.
\end{proof}
\subsection{Comparison of arithmetic intersections}
In this subsection we mimic the proof of Proposition \ref{prop:geom-deg} in the arithmetic setting to obtain our second main result, which we phrase in terms of the original PEL Shimura variety $\Mcal_K$ attached to the algebraic group $\Gbf$ as in the introduction.  
\begin{theorem}\label{theorem:comparison}
Let $K \subset \Gbf(\A_{\Q,f})$ and $K_1 \subset \Gbf_1(\A_{\Q,f})$ denote any sufficiently small compact open subgroups, and $\Mcal_K$ and $\Scal_1 = \Scal_{1, K_1}$ the corresponding Shimura varieties over $\Q$ and $F$ respectively. Then
\begin{equation}\label{eq:main-comparison}
	\frac{\ac_{1}(\ov{\Omega}_{\Mcal_{K}})\ac_{2}(\ov{\Omega}_{\Mcal_{K}})}{\deg c_{1}(\Omega_{\Mcal_{K}(\C)})^{2}}\equiv\frac{\ac_{1}(\ov{\Omega}_{\Scal_{1}})^{2}}{\deg c_{1}(\Omega_{\Scal_{1}(\C)})}\quad\text{in }\R/\log |\ov{\Q}^{\times}|,
\end{equation}
where $\ov{\Q}$ is the algebraic closure of $\Q$ in $\C$.
\end{theorem}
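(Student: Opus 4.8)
The plan is to run the arithmetic analogue of the proof of Proposition \ref{prop:geom-deg}, using the arithmetic Grothendieck-Riemann-Roch theorem (Corollary \ref{cor:ARR}) in place of the classical Riemann-Roch theorem, and Theorem \ref{thm:JL-an-torsion} (the comparison of holomorphic analytic torsions) in place of the classical comparison of Euler-Poincar\'e characteristics. First I would reduce to the case of the coarse twisted Hilbert modular variety $\Mcal^{\star}_{\Kstar}$ attached to $\Gbf^{\star} = \mathrm{Res}_{F/\Q}B^{\times}$ with $\Kstar = \KstarN$ as in \S\ref{subsubsec:ell-new-torsion}. Indeed, both sides of \eqref{eq:main-comparison} are invariant under passing to finite \'etale covers (Remark \ref{rmk:postmainthm}(ii) and the analogous statement for $\Scal_1$), and the relation \eqref{eq:rel-Omega-Hilb} together with its isometry property shows that the normalized arithmetic intersection number $\ac_{1}(\ov\Omega)\ac_{2}(\ov\Omega)/\deg c_1(\Omega(\C))^2$ agrees for $\Mcal_K$, for a sufficiently fine $\Mcal_{K'}$, and for $\Mcal^{\star}_{\Kstar}$. (Here one must know $\Mcal^{\star}_{\Kstar}$ has a smooth projective model over $\Z[1/N]$, as recalled from \cite{Dimitrov-Tilouine}, so that arithmetic Chow theory applies; the ambiguity in the choice of model is absorbed into $\log|\ov\Q^{\times}|$.) This is why the theorem can only be expected in $\R/\log|\ov\Q^{\times}|$: the \'etale-covering reduction and the appeal to Shimura's period relations both introduce such ambiguities.

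Next I would apply Corollary \ref{cor:ARR}(2) to the arithmetic threefold $\Mcal^{\star}_{\Kstar}$ and Corollary \ref{cor:ARR}(1) to the arithmetic surface $\Scal_1$, and form the $\ell$-new combination as in the proof of Proposition \ref{prop:geom-deg}, i.e.\ subtract twice the contribution of $\Kstar'$ (maximal at $\ell$). Taking arithmetic degrees, the arithmetic GRR formulas read, schematically,
\begin{align*}
\widehat\deg\ \ac_{1}(\ov{\det Rf_*\Ocal}_Q)^{\ell-\mathrm{new}}_{\Mcal^{\star}_{\Kstar}}
&= -\tfrac{1}{24}\left(\ac_{1}(\ov\Omega)\ac_{2}(\ov\Omega)\right)^{\ell-\mathrm{new}}
-\tfrac14[2\zeta'(-1)+\zeta(-1)]\,\deg^{\ell-\mathrm{new}} c_1(\Omega(\C))^2,\\
\widehat\deg\ \ac_{1}(\ov{\det Rf_*\Ocal}_Q)_{\Scal_1}
&= \tfrac{1}{12}\,\ac_{1}(\ov\Omega_{\Scal_1})^2
+\tfrac12[2\zeta'(-1)+\zeta(-1)]\,\deg c_1(\Omega_{\Scal_1(\C)}).
\end{align*}
Using Proposition \ref{prop:geom-deg}, which gives $\deg^{\ell-\mathrm{new}}c_1(\Omega_{\Mcal^{\star}_{\Kstar}(\C)})^2 = 2\deg c_1(\Omega_{\Scal_1(\C)})$, the archimedean $R$-genus contributions on the two sides are proportional in exactly the right ratio, so that after dividing through by the geometric degrees the two identities become compatible provided one knows the corresponding relation between the left-hand sides $\widehat\deg\ \ac_1(\ov{\det Rf_*\Ocal}_Q)$. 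Concretely, one needs
\[
\widehat\deg\ \ac_{1}(\ov{\det Rf_*\Ocal}_Q)^{\ell-\mathrm{new}}_{\Mcal^{\star}_{\Kstar}}
\ \equiv\ -\tfrac12\,\widehat\deg\ \ac_{1}(\ov{\det Rf_*\Ocal}_Q)_{\Scal_1}
\quad\text{in }\R/\log|\ov\Q^{\times}|,
\]
mirroring \eqref{eq:relation-chi}.

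The heart of the proof, and the main obstacle, is establishing this last relation between Quillen-metrized determinants of cohomology. The Quillen metric decomposes (Definition of $h_Q$) as the $L^2$ metric rescaled by the analytic torsion, so $\widehat\deg\ \ac_1(\ov{\det Rf_*\Ocal}_Q)$ splits into a purely archimedean ``torsion" part and an ``$L^2$/arithmetic" part coming from $\det H^\bullet(\Ocal)$ with its Petersson-type $L^2$ metric. For the torsion part, Theorem \ref{thm:JL-an-torsion} gives exactly $T^{\ell-\mathrm{new}}(\Mcal^{\star}_{\Kstar}(\C),\ov\Ocal) = -T(\Scal_1(\C),\ov\Ocal)$, which after accounting for the weighting by $k(-1)^k$ in the Euler characteristic and the factor $\tfrac12$ from restriction of scalars yields precisely the needed archimedean comparison. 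For the $L^2$ part, I would argue as follows: the relevant cohomology groups are $H^0$ and $H^2$ (resp.\ $H^0$ and $H^1$ for $\Scal_1$), with $H^1$ of the threefold vanishing (proved in Proposition \ref{prop:geom-deg} via Jacquet-Langlands and cuspidality); $H^0$ contributes only through its dimension and a metric that is a power of the volume, so its arithmetic degree is a rational multiple of $\log(\text{volume})$, hence lands in $\log|\ov\Q^{\times}|$ after the reductions; and the interesting piece, $\det H^2(\Mcal^{\star}_{\Kstar},\Ocal)$ versus $\det H^1(\Scal_1,\Ocal)$, is matched by Jacquet-Langlands on spaces of holomorphic quaternionic modular forms of parallel weight $2$ together with Shimura's period relations for the Petersson norms of the corresponding newforms — this is the non-holomorphic-to-holomorphic dichotomy alluded to in the introduction, and it is precisely Shimura's theorem that forces the ambiguity $\log|\ov\Q^{\times}|$ (rather than $\log|\Q^{\times}|$), since Galois-equivariance of the period relations is not known. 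Assembling the archimedean (torsion) and the $L^2$ (period) comparisons gives the required relation between the determinants of cohomology, and feeding it back into the two arithmetic GRR identities above yields \eqref{eq:main-comparison}. I expect the careful bookkeeping of normalizations — the weighting factors in the analytic torsion, the $\tfrac12$ from $\mathrm{Res}_{F/\Q}$, and the precise form of the Petersson normalizations used in Shimura's period relations — to be the delicate part, but no new conceptual input beyond Theorems \ref{thm:JL-an-torsion} and \ref{prop:geom-deg} and Shimura's periods should be needed.
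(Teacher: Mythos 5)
Your overall strategy is exactly the paper's: reduce to the coarse variety $\Mcal^{\star}_{\Kstar}$ with $\Kstar=\KstarN$, apply arithmetic Grothendieck--Riemann--Roch to the $\ell$-new combination and to $\Scal_1$, and prove the resulting comparison of Quillen-metrized determinants of cohomology by splitting off the analytic torsion (Theorem \ref{thm:JL-an-torsion}) and matching the $L^2$ parts via Jacquet--Langlands and Shimura's period relations. (One small simplification you miss: since the identity lives in $\R/\log|\ov\Q^{\times}|$ only the generic fibres matter, so no integral model of $\Mcal^{\star}_{\Kstar}$ is needed.) However, the central intermediate identity you assert carries the wrong constant, and with that constant the argument does not close. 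The correct relation is
\begin{displaymath}
	\adeg^{\ell-\mathrm{new}}\det H^{\bullet}(\Mcal_{\Kstar}^{\star},\Ocal)_{Q}\ \equiv\ -\,\adeg\det H^{\bullet}(\Scal_{1},\Ocal)_{Q}\quad\text{in }\R/\log|\ov{\Q}^{\times}|,
\end{displaymath}
with factor $-1$, not $-\tfrac12$. Indeed, matching the $R$-genus terms in the two GRR formulas forces the factor: the threefold contributes $-\tfrac14[2\zeta'(-1)+\zeta(-1)]\deg^{\ell-\mathrm{new}}c_1^2=-\tfrac12[2\zeta'(-1)+\zeta(-1)]\deg c_1(\Omega_{\Scal_1(\C)})$ by Proposition \ref{prop:geom-deg}, and the curve contributes $+\tfrac12[2\zeta'(-1)+\zeta(-1)]\deg c_1(\Omega_{\Scal_1(\C)})$; these cancel only with factor $-1$. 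With $-\tfrac12$ you are left with a spurious multiple of $2\zeta'(-1)+\zeta(-1)$ and the wrong coefficient on $\ac_1(\ov\Omega_{\Scal_1})^2$.

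The reason the geometric factor $-\tfrac12$ from \eqref{eq:relation-chi} does \emph{not} transport to the arithmetic setting is precisely the point where Shimura's periods enter: an $\ell$-new eigenform $f$ on $\Mcal^{\star}_{\Kstar}$ corresponds to a \emph{pair} $(g_1,g_2)$ on $S_1\sqcup S_2$ (hence the $\tfrac12$ in dimensions), but Shimura's relation $\|f\|_{L^2}=C\,\|g_1\|_{L^2}\|g_2\|_{L^2}$ is multiplicative, so $-\log\|f\|^2$ equals the \emph{sum} of the two $-\log\|g_j\|^2$; combined with $\adeg_F=\tfrac12\sum_{\sigma\colon F\to\C}$ versus $\adeg_{\Q}=\tfrac12(\cdot)$, the degrees of the determinant lines match with factor $1$, not $\tfrac12$. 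Relatedly, your claim that the $H^0$ contribution ``lands in $\log|\ov\Q^{\times}|$'' because it is a rational multiple of a log of a volume is false: the volumes are rational multiples of powers of $\pi$, so these terms are integer multiples of $\log\pi$, which is not in $\log|\ov\Q^{\times}|$. They do not vanish individually; one must check (as the paper does, using $\sharp\pi_0(\Scal_1(\C))=2\,\sharp\pi_0(\Mcal^{\star}_{\Kstar}(\C))$ and the different powers of $\pi$ in the volumes of a surface versus a curve) that they cancel between the two sides — again with factor $-1$. Once these two constants are corrected, your argument is the paper's proof.
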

Note that since this equality takes place in $\R / \log|\bar\Q^{\times}|$, only the generic fibres of $\Mcal_K$ and $\Scal_1$ play a role.
An immediate consequence of the theorem and Theorem \ref{thm:c1c2thm} is:
\begin{corollary}
For any sufficiently small level structure $\Kstar_1 \subset \Gbf_1(\A_{\Q,f})$,
\begin{displaymath}
	\frac{\ac_{1}(\ov{\Omega}_{\Scal_{1}/F})^{2}}{\deg c_{1}(\Omega_{\Scal_{1}(\C)})}\equiv \  -4 \log \pi - 2\gamma + 1  +  \frac{\zeta'_F(2) }{\zeta_F(2)}  \in \R / \log|\ov\Q^{\times}|.
\end{displaymath}
\end{corollary}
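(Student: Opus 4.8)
The plan is to obtain the statement as a direct combination of the paper's two main results. Fix a sufficiently small compact open subgroup $K_1 \subset \Gbf_1(\A_{\Q,f})$, and choose a sufficiently small $K \subset \Gbf(\A_{\Q,f})$ which is moreover maximal outside $N$ (with $2\,\dF\,D_B \mid N$) as in Section~\ref{sec:HMSIntModel}; such a $K$ meets the hypotheses of both Theorem~\ref{theorem:comparison} and Theorem~\ref{thm:c1c2thm} at once. Theorem~\ref{theorem:comparison} then supplies
\begin{displaymath}
	\frac{\ac_{1}(\ov{\Omega}_{\Scal_{1}/F})^{2}}{\deg c_{1}(\Omega_{\Scal_{1}(\C)})} \ \equiv \ \frac{\ac_{1}(\ov{\Omega}_{\Mcal_{K}})\,\ac_{2}(\ov{\Omega}_{\Mcal_{K}})}{\deg c_{1}(\Omega_{\Mcal_{K}(\C)})^{2}} \quad\text{in } \R/\log|\ov{\Q}^{\times}|,
\end{displaymath}
so it remains only to identify the right-hand side.

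Next I would invoke Theorem~\ref{thm:c1c2thm}, which evaluates $\widehat{\deg}\,\ChernHatOne(\ov{\Omega}_{\Mcal_K})\cdot\ChernHatTwo(\ov{\Omega}_{\Mcal_K})/\deg(c_1(\Omega_{\Mcal_K(\C)})^2)$ as $-4\log\pi-2\gamma+1+\zeta_F'(2)/\zeta_F(2)$ in $\R_N=\R/\bigoplus_{q\mid N}\Q\log q$. Since for each prime $q$ one has $\tfrac1n\log q=\log|q^{1/n}|\in\log|\ov{\Q}^{\times}|$, the subgroup $\bigoplus_{q\mid N}\Q\log q$ is contained in $\log|\ov{\Q}^{\times}|$, so there is a well-defined projection $\R_N\to\R/\log|\ov{\Q}^{\times}|$ under which the identity of Theorem~\ref{thm:c1c2thm} remains valid. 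One should also note that the difference between the arithmetic intersection number computed on the integral model $\Mcal_K$ over $\Z[1/N]$ and the generic-fibre quantity appearing in Theorem~\ref{theorem:comparison} is a $\Q$-linear combination of terms $\log q$ with $q\mid N$, hence vanishes in $\R/\log|\ov{\Q}^{\times}|$. Chaining the two congruences in the common target $\R/\log|\ov{\Q}^{\times}|$ then yields
\begin{displaymath}
	\frac{\ac_{1}(\ov{\Omega}_{\Scal_{1}/F})^{2}}{\deg c_{1}(\Omega_{\Scal_{1}(\C)})} \ \equiv \ -4\log\pi - 2\gamma + 1 + \frac{\zeta_F'(2)}{\zeta_F(2)} \quad\text{in } \R/\log|\ov{\Q}^{\times}|,
\end{displaymath}
which is the asserted formula for the arbitrary sufficiently small $K_1$ fixed at the outset.

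Since every input is one of the already-established theorems, there is no substantive obstacle in this argument; the only points meriting care are bookkeeping ones — that the ratio on the $\Mcal_K$-side is insensitive to the choice of level (Remark~\ref{rmk:postmainthm}~(ii)), so that one is free to take $K$ as above, and that the quotient maps between $\R_N$, $\R/\log|\Q^{\times}|$ and $\R/\log|\ov{\Q}^{\times}|$ are compatible, so that both congruences may legitimately be read in $\R/\log|\ov{\Q}^{\times}|$.
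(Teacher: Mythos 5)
Your argument is exactly the paper's: the corollary is stated there as an immediate consequence of Theorem \ref{theorem:comparison} combined with Theorem \ref{thm:c1c2thm}, read in the common quotient $\R/\log|\ov{\Q}^{\times}|$. Your additional bookkeeping about the compatibility of the quotient maps $\R_N \to \R/\log|\ov{\Q}^{\times}|$ is correct and only makes explicit what the paper leaves implicit.
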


The main step towards the comparison theorem is a consequence of the Jacquet-Langlands correspondence applied to determinants of cohomology for the coarse Hilbert modular surface $\Mcal^{\star}_{\Kstar}$. We continue with the notations from the previous sections; in particular, $\Kstar \subset \Gbf^{\star}(\A_{\Q, f})$ is sufficiently small and Iwahori at the fixed prime $\ell$, and $\Kstar'$ is maximal at $\ell$ and agrees with $\Kstar$ everywhere else. 

In analogy with $\deg^{\ell-\mathrm{new}}$ and $\chi^{\ell-\mathrm{new}}$ in \textsection \ref{subsec:comparison-volumes}, we introduce
\begin{align*}
	\widehat{\deg}\;^{\ell-\mathrm{new}}\det H^{\bullet}&(\Mcal_{\Kstar}^{\star},\Ocal)_{Q} := \\ 
	&\widehat{\deg}\;\det H^{\bullet}(\Mcal_{\Kstar}^{\star},\Ocal)_{Q} 
	-2\,\widehat{\deg}\;\det  H^{\bullet}(\Mcal_{\Kstar'}^{\star},\Ocal)_{Q}  
	 \ \in \ \R/\log|\Q^{\times}|.
\end{align*}
This quantity takes values in $\R/\log|\Q^{\times}|$, because $\Mcal_{\Kstar}^{\star}$ is just defined over $\Q$. However, the application of the Jacquet-Langlands correspondence will force us to work with its reduction modulo $\log |\ov{\Q}^{\times}|$.
\begin{proposition} Suppose $\Kstar$ is Iwahori at $\ell$, and $\Kstar_1 \subset \Gbf_1(\A_{\Q,f})$ matches $\Kstar$. Then
\begin{equation}\label{eq:equality-det-coh}
	\adeg^{\ell-\mathrm{new}}\det H^{\bullet}(\Mcal_{\Kstar}^{\star},\Ocal)_{Q} 
	\equiv -\adeg\det H^{\bullet}(\Scal_{1},\Ocal)_{Q}
\end{equation}
in $\R/\log |\ov{\Q}^{\times}|$.
\end{proposition}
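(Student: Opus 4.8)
The plan is to mirror the proof of \refProp{prop:geom-deg} in the arithmetic setting: split the Quillen metric into its $L^{2}$-part and its holomorphic-torsion part, identify the torsion contributions by means of \refThm{thm:JL-an-torsion}, and identify the $L^{2}$-contributions using Serre duality, the vanishing $H^{1}(\Mcal_{\Kstar}^{\star},\Ocal)=0$, the comparison of volumes, and the Jacquet--Langlands correspondence together with Shimura's period relations. As in \refProp{prop:geom-deg}, both sides of \eqref{eq:equality-det-coh} scale by the degree under passage to a smaller normal compact open subgroup agreeing with $\Kstar$ at $\ell$ (all the arithmetic intersection numbers, archimedean integrals and analytic torsions entering the arithmetic Grothendieck--Riemann--Roch formula of \refCor{cor:ARR} are multiplicative under the corresponding \'etale cover, for which one may use the integral model of $\Mcal_{\Kstar}^{\star}$ over $\Z[1/N]$ of \cite{Dimitrov-Tilouine}), so we may assume $\Kstar=\KstarN$ as in \refSec{subsubsec:ell-new-torsion} and that $\Kstar_{1}$ is the matching level with $\Kstar_{1,\ell}$ maximal, so that \refThm{thm:JL-an-torsion} applies.

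Since $\Mcal_{\Kstar}^{\star}$ and $\Scal_{1}$ are defined over number fields, $\adeg\det H^{\bullet}(\cdot,\Ocal)_{Q}$ is purely archimedean, and by the definition $h_{Q}=h_{L^{2}}\cdot\exp(T)$ of the Quillen metric it decomposes as
\begin{displaymath}
	\adeg\det H^{\bullet}(X,\Ocal)_{Q}=\sum_{i}(-1)^{i}\,\adeg\det H^{i}(X,\Ocal)_{L^{2}}\;-\;T(X(\C),\ov{\Ocal}).
\end{displaymath}
Forming the $\ell$-new combination for $X=\Mcal_{\Kstar}^{\star}$ and applying \refThm{thm:JL-an-torsion}, the torsion contribution to the left-hand side of \eqref{eq:equality-det-coh} equals $-T^{\ell-\mathrm{new}}(\Mcal_{\Kstar}^{\star}(\C),\ov{\Ocal})=T(\Scal_{1}(\C),\ov{\Ocal})$, which is exactly the torsion contribution to $-\adeg\det H^{\bullet}(\Scal_{1},\Ocal)_{Q}$; cf.\ \eqref{eqn:AnTorScal1}. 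It then remains to establish the corresponding identity for the $L^{2}$-parts, modulo $\log|\ov{\Q}^{\times}|$.

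For the $L^{2}$-parts we argue as in \refProp{prop:geom-deg}. First, $H^{1}(\Mcal_{\Kstar}^{\star},\Ocal)=H^{1}(\Mcal_{\Kstar'}^{\star},\Ocal)=0$, so only $H^{0}$ and the top cohomology contribute, and Serre duality identifies $\det H^{\mathrm{top}}(X,\Ocal)$ with $(\det H^{0}(X,\Omega^{\dim X}))^{-1}$, the comparison of metrics introducing only powers of $2\pi i$. The contributions of $H^{0}(\cdot,\Ocal)$ are $L^{2}$-norms of locally constant functions, i.e.\ products of volumes of connected components: by \cite[Variante 2.5]{Deligne:Shimura} the varieties $\Mcal_{\Kstar}^{\star}(\C)$, $\Mcal_{\Kstar'}^{\star}(\C)$, $S_{1}$ and $S_{2}$ have the same number of components, and the explicit volume formulas, together with the Petersson normalization of \refSec{subsubsec:Petersson-metric}, pin down the relevant powers of $\pi$; a direct computation then shows that these, combined with the $2\pi i$'s from Serre duality, cancel between the two sides of \eqref{eq:equality-det-coh}. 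Finally, the contribution of $H^{0}(\cdot,\Omega^{\dim})$ is handled by the Jacquet--Langlands correspondence for holomorphic quaternionic forms of parallel weight two (the same correspondence underlying \eqref{eq:comp-H2}): a rational basis $\{f_{i}\}$ of the $\ell$-new parallel-weight-two forms on $\Mcal_{\Kstar}^{\star}$ corresponds, with matching multiplicities, both to a rational basis $\{g_{i}\}$ of weight-two forms on $S_{1}$ and to a rational basis $\{g_{i}'\}$ on $S_{2}$, and Shimura's period relations give $\langle f_{i},f_{i}\rangle\equiv\langle g_{i},g_{i}\rangle\cdot\langle g_{i}',g_{i}'\rangle$ modulo $\ov{\Q}^{\times}$. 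Taking products over a basis adapted to the Hecke decomposition turns this into the desired identity for $\adeg\det H^{0}(\cdot,\Omega^{\dim})_{L^{2}}$, and assembling all the pieces yields \eqref{eq:equality-det-coh}.

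The main difficulty will be the last step: Shimura's period relations are what transfer the holomorphic (algebraic) part of the cohomology, yet they are known only up to algebraic factors and are not known to be compatible with the Galois action --- this is precisely why \eqref{eq:equality-det-coh}, and hence the main comparison theorem, can be stated only modulo $\log|\ov{\Q}^{\times}|$ rather than $\log|\Q^{\times}|$. A secondary, more routine but delicate point is the bookkeeping of the transcendental normalizing constants appearing along the way --- the $2\pi i$ of Serre duality, the $16\pi^{2}$ of the Petersson metric, and the normalization of the K\"ahler volume forms --- which has to be organized so that everything outside $\ov{\Q}^{\times}$ cancels.
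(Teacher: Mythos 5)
Your proposal follows essentially the same route as the paper's proof: reduction to $\Kstar=\KstarN$ by multiplicativity under finite \'etale covers, cancellation of the torsion parts via Theorem \ref{thm:JL-an-torsion}, vanishing of $H^{1}$, comparison of volumes of connected components for $H^{0}(\Ocal)$ using \cite[Variante 2.5]{Deligne:Shimura}, Serre duality to reduce the top cohomology to holomorphic forms, and finally Jacquet--Langlands plus Shimura's period relation $\|f\|_{L^{2}}=C\,\|g_{1}\|_{L^{2}}\|g_{2}\|_{L^{2}}$ with $C\in\ov{\Q}^{\times}$. Your closing remarks correctly identify why the statement lives only in $\R/\log|\ov{\Q}^{\times}|$; the only cosmetic difference is that the paper invokes the compatibility of Serre duality with the $L^{2}$ metric directly rather than tracking $2\pi i$ factors, and takes an $F$-basis of $H^{0}(\Scal_{1},\omega_{\Scal_{1}/F})$ rather than separate bases on $S_{1}$ and $S_{2}$.
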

\begin{proof}
First, note that the arithmetic Grothendieck-Riemann-Roch theorem implies that the arithmetic degrees in question are multiplicative for finite \'etale covers. Hence, arguing as in the proof of Proposition \ref{prop:geom-deg}, we may reduce to the case $\Kstar = \KstarN$ as in \eqref{eqn:KstarNDef}.

Second, recall that the Quillen metric is a rescaling of $L^{2}$ metric by the exponential of the holomorphic analytic torsion. By definition of $\adeg^{\ell-\mathrm{new}}$ and Theorem \ref{thm:JL-an-torsion}, we only need to prove the analogue of \eqref{eq:equality-det-coh} for the $L^{2}$ metric instead of the Quillen metric.

Recall that $H^{1}(\Mcal_{\Kstar}^{\star}(\C),\Ocal)$ and $H^{1}(\Mcal_{\Kstar'}^{\star}(\C),\Ocal)$ vanish, hence the $\Q$ vector spaces $H^{1}(\Mcal_{\Kstar}^{\star},\Ocal)$ and $H^{1}(\Mcal_{\Kstar'}^{\star},\Ocal)$ vanish as well. Consequently,
\begin{equation}\label{eq:JL-1}
	\adeg^{\ell-\mathrm{new}} \det H^{1}(\Mcal^{\star}_{\Kstar},\Ocal)_{L^{2}}=0\quad\text{in }\R/\log|\Q^{\times}|.
\end{equation}
The next easiest cohomology groups are in degree 0. We observe that
\begin{displaymath}
	\adeg \det H^{0}(\Mcal_{\Kstar}^{\star},\Ocal)_{L^{2}}=-\frac{1}{2}\sum_{C}\log\left(\int_{C}\frac{\omega^{2}}{2}\right),
\end{displaymath}
where $C$ runs over the connected components of $\Mcal_{\Kstar}^{\star}(\C)$ and $\omega$ is the normalized K\"ahler form used to define the $L^2$ metric. Similar statements hold for $\Mcal_{\Kstar'}^{\star}$ and $\Scal_{1}$. A straightforward computation shows that
\begin{displaymath}
	\omega=\frac{1}{8\pi^{2}}c_{1}(\ov{\Omega}_{\Mcal_{\Kstar}^{\star}(\C)}),
\end{displaymath}
and analogously for $\Mcal_{\Kstar'}^{\star}$ and $\Scal_{1}$. Because
\begin{displaymath}
	\int_{C}c_{1}(\ov{\Omega}_{\Mcal_{\Kstar}^{\star}(\C)})^{2}\in\Z,
\end{displaymath}
the definition of $\adeg^{\ell-\mathrm{new}}$ and the fact that $\pi_{0}(\Mcal_{\Kstar}^{\star}(\C))=\pi_{0}(\Mcal_{\Kstar'}^{\star}(\C))$ implies that
\begin{displaymath}
	\adeg^{\ell-\mathrm{new}}\det H^{0}(\Mcal_{\Kstar}^{\star},\Ocal_{L^{2}}) \equiv -2(\log\pi)\ \sharp\pi_{0}(\Mcal_{\Kstar}^{\star}(\C))\quad\text{in }\R/\log|\Q^{\times}|.
\end{displaymath}
Similarly,
\begin{displaymath}
	\adeg\det H^{0}(\Scal_{1},\Ocal)_{L^{2}}\equiv(\log\pi)\sharp\pi_{0}(\Scal_{1}(\C))\quad\text{in }\R/\log|\Q^{\times}|.
\end{displaymath}
Since 
\[\sharp\pi_{0}(\Scal_{1}(\C))= \# \pi_0(S_1)  +  \sharp \pi_0 (S_2) \ = \  2\ \sharp\pi_{0}(\Mcal_{\Kstar}^{\star}(\C)),\] 
we infer
\begin{equation}\label{eq:JL-2}
	\adeg^{\ell-\mathrm{new}} \det H^{0}(\Mcal_{\Kstar}^{\star},\Ocal)_{L^{2}}=-\adeg\det H^{0}(\Scal_{1},\Ocal)_{L^{2}}.
\end{equation}
For the remaining cohomology groups, Serre duality (and its compatibility with the $L^{2}$ metric) gives
\begin{equation}\label{eq:JL-3}
	\adeg^{\ell-\mathrm{new}}\det H^{2}(\Mcal_{\Kstar}^{\star},\Ocal)_{L^{2}}=-\adeg^{\ell-\mathrm{new}}\det H^{0}(\Mcal_{\Kstar}^{\star},\omega_{\Mcal_{\Kstar}^{\star}/\Q})_{L^{2}}
\end{equation}
and
\begin{equation}\label{eq:JL-4}
	\adeg \det H^{1}(\Scal_{1},\Ocal)_{L^{2}}=-\adeg \det H^{0}(\Scal_{1},\omega_{\Scal_{1}/F})_{L^{2}}.
\end{equation}
We are thus reduced to compare spaces of holomorphic quaternionic forms. Let us introduce the $\ell$-new quotient
\begin{displaymath}
	 \xymatrix{
	 	H^{0}(\Mcal_{\Kstar}^{\star},\omega_{\Mcal_{\Kstar}^{\star}/\Q})\ar@{->>}[r] 	& H^{0}(\Mcal_{\Kstar}^{\star},\omega_{\Mcal_{\Kstar}^{\star}/\Q})^{\ell-\mathrm{new}},
	}
\end{displaymath}
with the induced $L^{2}$ metric. Because $\ell$-old-forms are orthogonal to $\ell$-new-forms, we actually have
\begin{equation}\label{eq:JL-5}
	\adeg^{\ell-\mathrm{new}}\det H^{0}(\Mcal_{\Kstar}^{\star},\omega_{\Mcal_{\Kstar}^{\star}/\Q})_{L^{2}} \equiv \adeg \det H^{0}(\Mcal_{\Kstar}^{\star},\omega_{\Mcal_{\Kstar}^{\star}/\Q})^{\ell-\mathrm{new}}_{L^{2}}
\end{equation}
in $\R/\log |\Q^{\times}|$. By the Jacquet-Langlands correspondence, there are Hecke equivariant isomorphisms of $\C$-vector spaces
\begin{displaymath}
	H^{0}(\Mcal_{\Kstar}^{\star},\omega_{\Mcal_{\Kstar}^{\star}/\Q})^{\ell-\mathrm{new}}_{\C}\overset{\sim}{\longrightarrow} H^{0}(S_{j},\omega_{S_{j} }),\quad j=1,2.
\end{displaymath}
The isomorphism can be normalized so that it descends to $\ov{\Q}$ (the curves $S_{j} = \Scal_{1,v_j}(\C)$ are defined over $\ov{\Q}$). 
If $f\in H^{0}(\Mcal_{\Kstar}^{\star},\omega_{\Mcal_{\Kstar}^{\star}/\Q})^{\ell-\mathrm{new}}_{\ov{\Q}}$ is a Hecke eigenform that corresponds to a pair of Hecke eigenforms 
\[ g_{j}\in H^{0}(S_{j},\omega_{S_{j}/\ov{\Q}}), \qquad j=1,2 \]
 then, by \cite[Thm. 5.4]{Shimura}, there is a relation of $L^{2}$ norms
\[
	\|f\|_{L^{2}} =  C \cdot \|g_{1}\|_{L^{2}}\|g_{2}\|_{L^{2}}
\]
for some $C \in \ov{\Q}^{\times}$.
Now take a $\Q$-basis of $H^{0}(\Mcal_{\Kstar}^{\star},\omega_{\Mcal_{\Kstar}^{\star}/\Q})^{\ell-\mathrm{new}}$ (resp. an $F$-basis of $H^{0}(\Scal_{1},\omega_{\Scal_{1}/F})$) and express it in terms of a $\ov{\Q}$-basis of Hecke eigenforms. Recall that a basis of Hecke eigenforms is an orthogonal set. Recall as well that $\Scal_{1}(\C)=S_{1}\sqcup S_{2}$. We thus derive from Shimura's relation of norms that
\begin{equation}\label{eq:JL-6}
	\adeg \det H^{0}(\Mcal_{\Kstar}^{\star},\omega_{\Mcal^{\star}_{\Kstar}/\Q})^{\ell-\mathrm{new}}_{L^{2}}\equiv\adeg \det H^{0}(\Scal_{1},\omega_{\Scal_{1}/F})_{L^{2}}\quad \in  \ \R / \log |\ov{\Q}^{\times}|.
\end{equation}
Collecting equations \eqref{eq:JL-1}--\eqref{eq:JL-6}, and by the definition of determinant of cohomology, we conclude the proof.
\end{proof}
We are now in position to prove the comparison theorem.
\begin{proof}[Proof of Theorem \ref{theorem:comparison}]
As usual, by the projection formula and functoriality of Chern classes, the ratio of arithmetic and geometric degrees is invariant under finite \'etale covers. As a consequence, working in $\R / \log|\Q^{\times}|$, we have
\[
	\frac{\ac_{1}(\ov{\Omega}_{\Mcal_{K}})\ac_{2}(\ov{\Omega}_{\Mcal_{K}})}{\deg c_{1}(\Omega_{\Mcal_{\Kstar}(\C)})^{2}}\equiv 	\frac{\ac_{1}(\ov{\Omega}_{\Mcal_{K'}})\ac_{2}(\ov{\Omega}_{\Mcal_{K'}})}{\deg c_{1}(\Omega_{\Mcal_{K'}(\C)})^{2}} \in \R / \log|\Q^{\times}|
\]
for any sufficiently small $K, K' \subset \Gbf(\A_{\Q,f})$. Now consider the coarse Shimura variety $\Mcal^{\star}_{\Kstar}$ over $\Q$ attached to $\Kstar = \KstarN$ as in \eqref{eqn:KstarNDef}. By Section \ref{subsec:restriction-scalars}, there exists $K' \subset \Gbf(\A_{\Q,f})$ and a finite \'etale cover of $\Q$-schemes
\begin{displaymath}
	\pi\colon\coprod_{h}(\Mcal_{K'} )_{/ \Q} \longrightarrow\Mcal_{\Kstar}^{\star}
\end{displaymath}
such that  $\pi^{\ast}(\ov{\Omega}_{\Mcal_{K}^{\star}/\Q})=\ov{\Omega}_{\coprod \Mcal_{K'}/\Q}$; it follows that
\[
	\frac{\ac_{1}(\ov{\Omega}_{\Mcal_{K}})\ac_{2}(\ov{\Omega}_{\Mcal_{K}})}{\deg c_{1}(\Omega_{\Mcal_{K}(\C)})^{2}} \equiv 	\frac{\ac_{1}(\ov{\Omega}_{\Mcal_{K'}})\ac_{2}(\ov{\Omega}_{\Mcal_{K'}})}{\deg c_{1}(\Omega_{\Mcal_{K'}(\C)})^{2}} \equiv 	\frac{\ac_{1}(\ov{\Omega}_{\Mcal_{\Kstar}^{\star}})\ac_{2}(\ov{\Omega}_{\Mcal_{\Kstar}^{\star}})}{\deg c_{1}(\Omega_{\Mcal_{\Kstar}^{\star}(\C)})^{2}} \in \R / \log|\Q^{\times}|. 
\]
Analagous considerations show that for the Shimura curve $\Scal_1$, the ratio of arithmetic and geometric degrees is independent of the compact open $\Kstar_1 \subset \Gbf_1(\A_{\Q,f})$, and so we may assume that $\Kstar_1$ matches with $\Kstar=\KstarN$.
%

By the arithmetic Grothendieck-Riemann-Roch theorem and the previous definitions, we may write
\begin{align}\label{eq:ARR-HMS}
	\adeg^{\ell-\mathrm{new}} \det & H^{\bullet}(\Mcal_{\Kstar}^{\star},\Ocal)_{Q} \equiv \notag \\
		&-\frac{1}{24}\deg^{\ell-\mathrm{new}} \left(c_{1}(\Omega_{\Mcal_{\Kstar}^{\star}(\C)})^{2}\right) \,
	\frac{\ac_{1}(\ov{\Omega}_{\Mcal_{\Kstar}^{\star}/\Q})\ac_{2}(\ov{\Omega}_{\Mcal_{\Kstar}^{\star}/\Q})}{\deg c_{1}(\Omega_{\Mcal_{\Kstar}^{\star}(\C)})^{2}}  \notag \\
	& \qquad \qquad -\frac{1}{4}(2\zeta^{\prime}(-1)+\zeta(-1))\deg^{\ell-\mathrm{new}} \left(c_{1}(\Omega_{\Mcal_{\Kstar}^{\star}(\C)})^{2}\right).
\end{align}
Analogously, for the Shimura curve $\Scal_{1}$, the arithmetic Grothendieck-Riemann-Roch gives
\begin{equation}\label{eq:ARR-SC}
	\begin{split}
	\adeg\det H^{\bullet}(\Scal_{1},\Ocal)_{Q} \equiv &\frac{1}{12}\deg c_{1}(\Omega_{\Scal_{1}(\C)})\,
	\frac{\ac_{1}(\ov{\Omega}_{\Scal_{1}/F})^{2}}{\deg c_{1}(\Omega_{\Scal_{1}(\C)})}\\
	&+\frac{1}{2}(2\zeta^{\prime}(-1)+\zeta(-1))\deg c_{1}(\Omega_{\Scal_{1}(\C)}).
	\end{split}
\end{equation}
By the previous proposition, we have an equality \eqref{eq:ARR-HMS}$=-$\eqref{eq:ARR-SC}. Furthermore, by Proposition \ref{prop:geom-deg} there is a relation of degrees $\deg^{\ell-\mathrm{new}}c_{1}(\Omega_{\Mcal_{K}^{\star}(\C)})^{2}=2\,\deg c_{1}(\Omega_{\Scal_{1}(\C)})$. The theorem now follows from the last two facts.
\end{proof}

\end{document}